\documentclass[11pt]{amsart}
\usepackage{amssymb}
\usepackage{amsmath}
\usepackage{mathtools} 
\usepackage{graphicx}
\usepackage{tikz}
\usepackage{fancyhdr}
\usepackage[top=1.1in, bottom=0.9in, left=1in, right=1in]{geometry}

\usepackage{enumerate}

\numberwithin{equation}{section}

\addtolength{\hoffset}{-0.5cm}
\addtolength{\textwidth}{1cm}

\title{Clique decompositions of multipartite graphs and completion of Latin squares}

\date{\today}
\author{Ben Barber, Daniela K\"uhn, Allan Lo, Deryk Osthus and Amelia Taylor}
\thanks{The research leading to these results was partially supported by the European Research Council
under the European Union's Seventh Framework Programme (FP/2007--2013) / ERC Grant
Agreement no. 258345 (B.~Barber and D.~K\"uhn) and 306349 (D.~Osthus). The research was also partially supported by the EPSRC, grant no. EP/M009408/1 (D.~K\"uhn and D.~Osthus), and by the Royal Society and the Wolfson Foundation (D.~K\"uhn).}

\newtheorem{firstthm}{Proposition}[section]
\newtheorem{theorem}[firstthm]{Theorem}
\newtheorem{prop}[firstthm]{Proposition}
\newtheorem{lemma}[firstthm]{Lemma}
\newtheorem{cor}[firstthm]{Corollary}

\newtheorem{conj}[firstthm]{Conjecture}
\newtheorem{fact}[firstthm]{Fact}

\newdimen\margin   
\def\textno#1&#2\par{%
   \margin=\hsize
   \advance\margin by -4\parindent
          \setbox1=\hbox{\sl#1}%
   \ifdim\wd1 < \margin
      $$\box1\eqno#2$$%
   \else
      \bigbreak
      \hbox to \hsize{\indent$\vcenter{\advance\hsize by -3\parindent
      \it\noindent#1}\hfil#2$}%
      \bigbreak
   \fi}

\newcommand{\claim}[2]{\medskip \noindent \textbf{#1: } \emph{#2}}
\newcommand{\pclaim}[1]{\medskip \noindent \textit{#1.}}

\begin{document}

\def\COMMENT#1{}
\def\TASK#1{}
\let\TASK=\footnote

\def\eps{{\varepsilon}}
\newcommand{\ex}{\mathbb{E}}
\newcommand{\pr}{\mathbb{P}}
\newcommand{\N}{\mathbb{N}}
\newcommand{\cP}{\mathcal{P}}
\newcommand{\cF}{\mathcal{F}}
\newcommand{\cM}{\mathcal{M}}
\newcommand{\cA}{\mathcal{A}}
\newcommand{\cC}{\mathcal{C}}
\newcommand{\cD}{\mathcal{D}}
\newcommand{\cK}{\mathcal{K}}
\newcommand{\cH}{\mathcal{H}}
\newcommand{\cS}{\mathcal{S}}
\newcommand{\cL}{\mathcal{L}}
\newcommand{\cI}{\mathcal{I}}
\newcommand{\cQ}{\mathcal{Q}}
\newcommand{\cT}{\mathcal{T}}

\newcommand{\defn}{\emph}

\begin{abstract}  \noindent
Our main result essentially reduces the problem of finding an edge-decomposition of a balanced $r$-partite graph of large minimum degree
into $r$-cliques to the problem of finding a fractional $r$-clique decomposition or an approximate one.
Together with very recent results of Bowditch and Dukes as well as Montgomery on fractional decompositions into triangles and cliques 
respectively, this gives the best known bounds on the minimum degree which ensures an edge-decomposition of an $r$-partite graph into $r$-cliques
(subject to trivially necessary divisibility conditions).
The case of triangles translates into the setting of partially completed Latin squares 
and more generally the case of $r$-cliques translates into the setting of partially completed mutually orthogonal Latin squares.
\end{abstract}

\maketitle

\section{Introduction}\label{sec:intro}

A \defn{$K_r$-decomposition} of a graph $G$ is a partition of its edge set $E(G)$ into cliques of order $r$.
If $G$ has a $K_r$-decomposition, then certainly $e(G)$ is divisible by $\binom r 2$ and the degree of every vertex is divisible by $r-1$.
A classical result of Kirkman~\cite{kirk} asserts that, when $r=3$, these two conditions  ensure that $K_n$ has a triangle decomposition
(i.e.~Steiner triple systems exist).
This was generalized to arbitrary $r$ (for large $n$) by Wilson~\cite{Wilson} and to hypergraphs by Keevash~\cite{keev}.
Recently, there has been much progress in extending this from decompositions of complete host graphs to decompositions of graphs which are allowed to be far from complete (see the final paragraphs in Section~\ref{sec:subintro}).
In this paper, we investigate this question in the $r$-partite setting.
This is of particular interest as it implies results on the completion of partial Latin squares 
and more generally partial mutually orthogonal Latin squares.

\subsection{Clique decompositions of $r$-partite graphs} \label{sec:subintro}
Our main result (Theorem~\ref{thm:main}) states that if $G$ is (i) balanced $r$-partite, (ii) satisfies the necessary divisibility conditions and 
(iii) its minimum degree is at least a little larger than the minimum degree which guarantees an approximate decomposition into $r$-cliques,
then $G$ in fact has a decomposition into $r$-cliques.
(Here an approximate decomposition is a set of edge-disjoint copies of $K_r$ which cover almost all edges of $G$.)
To state this result precisely, we need the following definitions.

We say that a graph or multigraph $G$ on $(V_1, \dots, V_r)$ is \emph{$K_r$-divisible} if $G$ is $r$-partite with vertex classes $V_1, \dots, V_r$ and for all $1\leq j_1,j_2\leq r$ and every $v\in V(G)\setminus(V_{j_1}\cup V_{j_2})$, $$d(v,V_{j_1})=d(v,V_{j_2}).$$
Note that in this case, for all $1\leq j_1, j_2, j_3, j_4\leq r$ with $j_1\neq j_2$, $j_3\neq j_4$, we automatically have $e(V_{j_1}, V_{j_2})=e(V_{j_3}, V_{j_4})$. In particular, $e(G)$ is divisible by $e(K_r)=\binom{r}{2}$.

Let $G$ be an $r$-partite graph on $(V_1, \dots, V_r)$ with $|V_1|=\dots=|V_r|=n$. 
Let $$\hat{\delta}(G):=\min \{d(v, V_j): 1\leq j\leq r,\; v\in V(G)\setminus V_j\}.$$
An 
\emph{$\eta$-approximate} $K_r$-decomposition of $G$ is a set of edge-disjoint copies of $K_r$ covering all but at most $\eta n^2$ edges of $G$. We define $\hat{\delta}^\eta_{K_r}(n)$ to be the infimum over all  $\delta$ such that every $K_r$-divisible graph $G$ on $(V_1, \dots, V_r)$ with $|V_1|=\dots=|V_r|=n$ and $\hat{\delta}(G)\geq \delta n$ has an $\eta$-approximate $K_r$-decomposition.
Let $\hat{\delta}^\eta_{K_r}:=\limsup_{n\rightarrow \infty} \hat{\delta}^\eta_{K_r}(n)$.
So if $\eps>0$ and $G$ is sufficiently large, $K_r$-divisible and $\hat\delta(G)>(\hat{\delta}^\eta_{K_r}+\eps)n$, then $G$ has an $\eta$-approximate $K_r$-decomposition. Note that it is important here that $G$ is $K_r$-divisible. Take, for example, the complete $r$-partite graph with vertex classes of size $n$ and remove $\lceil\eta n\rceil$ edge-disjoint perfect matchings between one pair of vertex classes. The resulting graph $G$ satisfies $\hat\delta(G)=n-\lceil\eta n\rceil$, yet has no $\eta$-approximate $K_r$-decomposition whenever $r\geq 3$.

\begin{theorem}\label{thm:main}
For every $r \ge 3$ and every $\eps>0$ there exists an $n_0 \in \mathbb{N}$ and an $\eta>0$ such that the following holds for all $n \ge n_0$.
Suppose $G$ is a $K_r$-divisible graph on $(V_1, \dots, V_r)$ with $|V_1|=\dots=|V_r|=n$. 
If $\hat{\delta}(G) \geq (\hat{\delta}^\eta_{K_r}+\eps)n$, then $G$ has a $K_r$-decomposition.
\end{theorem}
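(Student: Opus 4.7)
The plan is to prove Theorem~\ref{thm:main} by the iterative absorption method, which has become the standard framework for converting approximate decomposition thresholds into exact ones. The argument has three main ingredients: a nested vortex, an absorbing gadget living inside the innermost vortex, and a cover-down lemma that invokes the approximate decomposition hypothesis at each stage.

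First, I would fix constants $1/n_0 \ll \eta \ll \mu \ll \eps$ and construct a \emph{vortex} in $G$: a nested sequence $V(G) = U_0 \supseteq U_1 \supseteq \cdots \supseteq U_\ell$ of balanced $r$-partite subsets with $|U_i \cap V_j| = m_i$ for every $j$, where each $U_{i+1}$ is chosen uniformly at random inside $U_i$ and $m_\ell$ is a small target size (say polylogarithmic in $n$, or a fixed small constant fraction of the previous level). A Chernoff-type concentration argument then ensures that every vertex $v \in V(G)$ satisfies $d(v, U_i \cap V_j) \geq (\hat\delta^\eta_{K_r} + \eps/2)m_i$ for all levels $i$ and all $j$ with $v \notin V_j$, so the minimum-degree hypothesis passes to every scale.

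Next, I would build an \emph{absorber} $A \subseteq G[U_\ell]$ with the property that for every $K_r$-divisible graph $L$ on $(U_\ell \cap V_1, \dots, U_\ell \cap V_r)$ which is edge-disjoint from $A$ and satisfies $\Delta(L) \leq \mu m_\ell$, the union $A \cup L$ has a $K_r$-decomposition. The standard template is to insert, for each potential leftover edge $e$, a small $r$-partite gadget in $A$ that admits two $K_r$-decompositions differing exactly on $e$; then for any admissible leftover $L$ one flips precisely the gadgets corresponding to edges of $L$. Making this work in the $r$-partite setting is delicate, because every copy of $K_r$ must use exactly one vertex from each class; the inherited minimum-degree condition is what allows the gadgets to be located.

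Finally, I would cover-down iteratively. At level $i$, I apply the definition of $\hat\delta^\eta_{K_r}$ to the currently uncovered edges of $G[U_i]$ to obtain an $\eta$-approximate $K_r$-decomposition, leaving at most $\eta m_i^2$ uncovered edges. A \emph{cover-down lemma} then uses edges between $U_i \setminus U_{i+1}$ and $U_{i+1}$ to find further edge-disjoint copies of $K_r$ that eliminate every remaining edge incident to $U_i \setminus U_{i+1}$, so that the residual leftover sits inside $U_{i+1}$, is again $K_r$-divisible, and has maximum degree at most $\mu m_{i+1}$. Iterating $\ell$ times pushes the leftover into $U_\ell$, where it is absorbed by $A$. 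I expect the main obstacle to be the cover-down lemma in the $r$-partite setting: it has to treat the approximate decomposition as a black box, preserve $K_r$-divisibility, control the maximum degree of the leftover so it is absorbable, and respect the partition structure — all simultaneously. The absorber construction is the other genuinely delicate ingredient, but once both are in place the rest of the argument is a bookkeeping exercise in combining the ingredients at each vortex level.
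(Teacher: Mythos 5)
Your overall framework (iterative absorption: split the graph into scales, apply the $\eta$-approximate decomposition as a black box at each scale, push the leftover inward, absorb at the innermost scale) is the same as the paper's, but two of your three ingredients are not just ``delicate'' -- as described they do not work. First, the absorber. A gadget admitting two $K_r$-decompositions ``differing exactly on $e$'' cannot exist: the two decompositions would cover edge sets differing in a single edge, which is impossible since both covered sets have size divisible by $\binom r2$ (and all degrees divisible by $r-1$). Absorbers only make sense for $K_r$-divisible leftovers $H$, and even then you cannot get a single absorber $A\subseteq G[U_\ell]$ that handles \emph{every} divisible $L$ with $\Delta(L)\le \mu m_\ell$ by flipping per-edge gadgets independently; moreover, if $m_\ell$ is polylogarithmic, the number of possible leftovers is superpolynomial, so you also cannot reserve a separate absorber for each. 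The paper resolves exactly this tension by iterating until the leftover is confined to parts of \emph{bounded} size, so that only $O(n)$ candidate leftover graphs exist, and by constructing, for each candidate, an individual absorber out of transformers (Lemmas~\ref{lem:transformer}, \ref{lem:embtransf2}, \ref{lem:absorber}), all set aside edge-disjointly at the start via Lemma~\ref{lem:absorbset}. Your proposal would need either this reduction to bounded-size cells or a genuinely new universal-absorber construction; neither is supplied.

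Second, the cover-down step hides the main $r$-partite difficulty rather than solving it. To cover the leftover edges between a vertex $x$ and the next cell $U$ you need a perfect $K_{r-1}$-matching on $N(x)\cap U$, which requires the leftover degrees $d(x,U_{j})$ to be \emph{exactly equal} for all classes $j$ (see condition (i) of Corollary~\ref{cor:pseud:parts}); an approximate decomposition plus random splitting only gives approximate equality, and ordinary divisibility-fixing does not restore it. This is precisely where the paper's new content lies: the balancing graph of Section~\ref{sec:bal} (edge balancing, Proposition~\ref{prop:PE}, plus degree balancing, Proposition~\ref{prop:PD}, combined in Lemma~\ref{lem:balance}), reserved in advance and guaranteed to remain $K_r$-decomposable after the corrections are made, together with a preprocessing step (Proposition~\ref{prop:fixing}) that restores $K_r$-divisibility before each application of the approximate decomposition. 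Your proposal names these requirements (``preserve divisibility, respect the partition structure'') but offers no mechanism for achieving exact balance, so as it stands there is a genuine gap at the heart of the argument rather than an alternative route to the theorem.
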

By a result of Haxell and R\"odl~\cite{HaxellRodl}, the existence of an approximate decomposition follows from that of a fractional decomposition.
So together with very recent results of Bowditch and Dukes~\cite{Dukes2} as well as Montgomery~\cite{mont} on fractional decompositions into 
triangles and cliques respectively, Theorem~\ref{thm:main} implies the following explicit bounds.
We discuss this derivation in Section~\ref{fractional}.
\begin{theorem} \label{thm:explicit}
For every $r \ge 3$ and every $\eps>0$ there exists an $n_0 \in \mathbb{N}$ such that the following holds for all $n \ge n_0$.
Suppose $G$ is a $K_r$-divisible graph on $(V_1, \dots, V_r)$ with $|V_1|=\dots=|V_r|=n$. 
\begin{itemize}
\item[(i)] If $r=3$ and $\hat{\delta}(G) \geq \left( \frac{24}{25}+\eps \right)n$, then $G$ has a $K_3$-decomposition.
\item[(ii)] If $r\geq 4$ and $\hat{\delta}(G) \geq \left(1- \frac{1}{10^6r^3}+\eps \right)n$, then $G$ has a $K_r$-decomposition.
\end{itemize}
\end{theorem}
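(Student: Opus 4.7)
The plan is to deduce Theorem~\ref{thm:explicit} from Theorem~\ref{thm:main} by supplying an explicit upper bound on the approximate-decomposition threshold $\hat{\delta}^\eta_{K_r}$. Concretely, writing $c_3=24/25$ and $c_r=1-1/(10^6 r^3)$ for $r\geq 4$, it suffices to show: for every $r\geq 3$ and every $\eps>0$, there exists $\eta>0$ (and $n_0$) such that every sufficiently large $K_r$-divisible $r$-partite graph $G$ on $(V_1,\dots,V_r)$ with $|V_i|=n$ and $\hat{\delta}(G)\geq (c_r+\eps/2)n$ admits an $\eta$-approximate $K_r$-decomposition. Given such a bound, one applies Theorem~\ref{thm:main} with $\eps/2$ in place of $\eps$: the degree hypothesis $\hat{\delta}(G)\geq (c_r+\eps)n$ then forces $\hat{\delta}(G)\geq (\hat{\delta}^\eta_{K_r}+\eps/2)n$ once $\eta$ is sufficiently small, so $G$ has an honest $K_r$-decomposition.

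To obtain the fractional threshold, I would invoke the two input results cited in the statement. The Bowditch–Dukes theorem~\cite{Dukes2} supplies, for $r=3$, a fractional $K_3$-decomposition of any balanced tripartite graph whose tripartite minimum degree $\hat{\delta}$ exceeds $24n/25$; that is, a nonnegative weighting of the copies of $K_3$ in $G$ whose weights sum to $1$ across each edge. Montgomery's theorem~\cite{mont} gives the analogous statement for $r\geq 4$ with threshold $1-1/(10^6 r^3)$. A small additional slack of $\eps/2$ in the minimum-degree hypothesis of Theorem~\ref{thm:explicit} accommodates the strict-inequality form of these fractional results and absorbs rounding issues.

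To pass from a fractional $K_r$-decomposition to an $\eta$-approximate one, I would apply the Haxell–R\"odl theorem~\cite{HaxellRodl}, which shows that whenever $G$ has a fractional $F$-decomposition, $G$ has an $F$-packing covering all but $o(|V(G)|^2)$ edges. Specializing to $F=K_r$ and invoking this in the $r$-partite host graph $G$ yields a set of edge-disjoint $K_r$'s leaving at most $\eta n^2$ edges uncovered, provided $n$ is large enough in terms of $\eta$. Consequently $\hat{\delta}^\eta_{K_r}\leq c_r$, which is what we needed.

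The principal point that requires care, rather than the strategy itself, is aligning the three results: one needs the fractional-decomposition theorems of Bowditch–Dukes and Montgomery in a form tailored to the balanced $r$-partite setting and to the parameter $\hat\delta(G)$ used here (not just to ordinary minimum degree on the full vertex set), and one needs to confirm that the Haxell–R\"odl conversion, which is originally phrased for general host graphs, goes through without loss when $G$ is $r$-partite so that the resulting approximate packing consists of $r$-partite copies of $K_r$. Granted these routine adaptations, the two cases $r=3$ and $r\geq 4$ of Theorem~\ref{thm:explicit} follow directly from Theorem~\ref{thm:main} with the claimed constants.
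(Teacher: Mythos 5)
Your proposal is correct and follows essentially the same route as the paper: Theorem~\ref{thm:main} combined with the Haxell--R\"odl conversion (Theorem~\ref{HRthm}, giving $\hat\delta^\eta_{K_r}\le\hat\delta^*_{K_r}$) and the fractional bounds of Bowditch--Dukes and Montgomery, which the paper indeed quotes already in the $r$-partite form for the parameter $\hat\delta^*_{K_r}$. The $\eps$ versus $\eps/2$ bookkeeping you describe is exactly the intended derivation via Corollary~\ref{thm:mainfrac}.
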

If $G$ is the complete $r$-partite graph, this corresponds to a theorem of Chowla, Erd\H{o}s and Straus~\cite{cherst}.
A bound of $(1-1/(10^{16}r^{29}))n$ was claimed by Gustavsson~\cite{Gustavsson}.
The following conjecture seems natural (and is implicit in~\cite{Gustavsson}).
\begin{conj} \label{conj:decomp}
For every $r \ge 3$ there exists an $n_0 \in \mathbb{N}$ such that the following holds for all $n \ge n_0$.
Suppose $G$ is a $K_r$-divisible graph on $(V_1, \dots, V_r)$ with $|V_1|=\dots=|V_r|=n$. 
If $\hat{\delta}(G) \geq (1-1/(r+1))n$, then $G$ has a $K_r$-decomposition.
\end{conj}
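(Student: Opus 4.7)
The plan is to combine Theorem~\ref{thm:main} with a sharp bound on the approximate (equivalently, fractional) $K_r$-decomposition threshold. Concretely, it would suffice to show that for every $r \geq 3$ there exists $\alpha > 0$ such that $\hat{\delta}^\eta_{K_r} \leq 1 - 1/(r+1) - \alpha$ for all sufficiently small $\eta > 0$; applying Theorem~\ref{thm:main} with $\eps = \alpha/2$ then yields Conjecture~\ref{conj:decomp}. By the Haxell--R\"odl transfer cited after Theorem~\ref{thm:main}, it is enough to prove that every $K_r$-divisible balanced $r$-partite graph $G$ on $n$ vertices per part with $\hat{\delta}(G) \geq (1 - 1/(r+1) - \alpha/2)n$ admits a fractional $K_r$-decomposition, and this is where the main work lies.

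I would attack the fractional problem through LP duality: a fractional $K_r$-decomposition of $G$ fails to exist precisely when there is an edge-weighting $w \colon E(G) \to \mathbb{R}$ with $\sum_{e \in E(K)} w(e) \leq 0$ for every copy $K$ of $K_r$ in $G$ and $\sum_{e \in E(G)} w(e) > 0$. One aims to rule out such a certificate by exploiting the abundance of $K_r$s through every edge. A natural implementation starts with the uniform weighting $w_0(K) = c$ for every copy of $K_r$ in $G$, computes the local defect $d(e) = 1 - \sum_{K \ni e} w_0(K)$, and iteratively corrects it using signed combinations of small ``switching'' gadgets, for example pairs of $K_r$s sharing an $(r-1)$-clique. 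The minimum-degree assumption guarantees that any two adjacent vertices in different parts share at least $(1 - 2/(r+1) + 2\alpha)n$ common neighbours in each remaining part, which should provide enough flexibility for the correction to converge.

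Sharpness of the threshold should be verified against the natural extremal construction: partition each $V_i$ into $A_i$ of size $rn/(r+1)$ and $B_i$ of size $n/(r+1)$ and delete all edges between $B$-parts, yielding a $K_r$-divisible graph with $\hat{\delta} = rn/(r+1)$; one then expects small perturbations below this threshold to produce $K_r$-divisible graphs with no $K_r$-decomposition. The main obstacle, however, is closing the gap between the conjectured fractional threshold $1 - 1/(r+1)$ and the current state of the art ($24/25$ for $r = 3$ by Bowditch--Dukes, and $1 - 1/(10^6 r^3)$ for $r \geq 4$ by Montgomery). The gap is vast, and the analogous non-partite fractional conjecture of Nash-Williams for triangles remains open after decades, so a genuinely new idea seems required. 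A pragmatic first target would be the $r = 3$ case, where the Latin-square interpretation of balanced $3$-partite graphs provides additional algebraic structure that might be exploited.
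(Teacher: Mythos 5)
This statement is a \emph{conjecture} in the paper: the authors do not prove it, and neither does your proposal. What you have written is a programme, and you say so yourself at the end (``a genuinely new idea seems required''), so the core of the matter --- establishing a fractional/approximate $K_r$-decomposition threshold anywhere near $1-1/(r+1)$ --- is left entirely open. What the paper actually provides is the reduction you describe in your first paragraph (Theorem~\ref{thm:main}, Theorem~\ref{HRthm} and Corollary~\ref{thm:mainfrac}), together with the extremal construction (Proposition~\ref{prop:extrem}) showing the bound would be best possible; the conclusion drawn there is only that $\hat{\delta}^*_{K_r}\le 1-1/(r+1)$ would yield the conjecture \emph{asymptotically}, i.e.\ with an extra $\eps n$ in the degree hypothesis.

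Beyond being incomplete, your reduction step is formulated in a way that cannot be realised. You ask for an $\alpha>0$ with $\hat{\delta}^\eta_{K_r}\le 1-1/(r+1)-\alpha$ for all small $\eta$, but Proposition~\ref{prop:extrem} shows $\hat{\delta}^\eta_{K_r}\ge 1-1/(r+1)-\eta$, so this fails as soon as $\eta<\alpha$; likewise $\hat{\delta}^*_{K_r}\ge 1-1/(r+1)$, so no fractional bound strictly below the conjectured threshold exists. Consequently, even a complete solution of the fractional problem at the optimal value would, through Theorem~\ref{thm:main}, only give the asymptotic version of Conjecture~\ref{conj:decomp}; removing the $\eps$ to get the exact statement requires ideas not present in the paper or in your sketch. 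Two smaller points: the LP-duality/switching argument for the fractional threshold is only a heuristic as written (the known bounds of Bowditch--Dukes and Montgomery, $24/25$ and $1-1/(10^6r^3)$, are far from $1-1/(r+1)$, and nothing in your sketch indicates how to close that gap); and your ``sharpness'' construction (deleting all edges between sets $B_i$ of size $n/(r+1)$) is not the paper's extremal example --- it sits exactly at the threshold and is not shown to lack a $K_r$-decomposition, whereas Proposition~\ref{prop:extrem} uses a different construction whose minimum degree is $\lceil(1-1/(r+1))n\rceil-1$ and which provably has no $K_r$-decomposition.
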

A construction which matches the lower bound in Conjecture~\ref{conj:decomp} is described in Section~\ref{sec:extremal} (this construction also gives a similar lower bound on $\hat\delta^\eta_{K_r}$).
In the non-partite setting, the triangle case is a long-standing conjecture by Nash-Williams~\cite{NashWilliams}
that every graph $G$ on $n$ vertices with minimum degree at least $3n/4$ has a triangle decomposition
(subject to divisibility conditions).
Barber, K\"uhn, Lo and Osthus~\cite{mindeg} recently reduced its asymptotic version to proving an approximate or fractional version.
Corresponding results on fractional triangle decompositions were proved by Yuster~\cite{YusterFractKr}, Dukes~\cite{Dukes}, Garaschuk~\cite{Garaschuk} and Dross~\cite{dross}.

More generally~\cite{mindeg} also gives results for arbitrary graphs, and corresponding 
fractional decomposition results have been obtained by
Yuster~\cite{YusterFractKr}, Dukes~\cite{Dukes} as well as Barber, K\"uhn, Lo, Montgomery and Osthus~\cite{bklmo}.
Further results on $F$-decompositions of non-partite graphs (leading on from \cite{mindeg}) have been obtained by Glock, K\"uhn,
Lo, Montgomery and Osthus~\cite{GKLMO}. Amongst others, for any bipartite graph~$F$, they asymptotically determine the minimum degree
threshold which guarantees an $F$-decomposition.
Finally, Glock, K\"uhn, Lo and Osthus~\cite{GKLO} gave a new (combinatorial) proof of the existence of designs.
The results in~\cite{GKLO} generalize those in~\cite{keev}, in particular, they imply a resilience version and a decomposition result for hypergraphs of large minimum degree.

\subsection{Mutually orthogonal Latin squares and $K_r$-decompositions of $r$-partite graphs} \label{latin}

A \defn{Latin square} $\mathcal T$ of order $n$ is an $n \times n$ grid of cells, each containing a symbol from $[n]$, such that no symbol appears twice in any row or column.
It is easy to see that $\mathcal T$  corresponds to a $K_3$-decomposition of the complete tripartite graph $K_{n,n,n}$ with vertex classes 
consisting of the rows, columns and symbols.%
\COMMENT{
Indeed, $|\mathcal T| = n^2$ and $e(K_{n,n,n}) = 3n^2$, so $\mathcal T$ contains the right number of edges in total.
And by the conditions defining a Latin square, specifying any two of $i$, $j$ and $L_{ij}$ determines the third uniquely, so the triangles in $\mathcal T$ are edge-disjoint.
Similarly, each $K_3$-decomposition of $K_{n,n,n}$ defines a Latin square in the natural way, so this correspondence is bijective.}

Now suppose that we have a partial Latin square; that is, a partially filled in grid of cells satisfying the conditions defining a Latin square.
When can it be completed to a Latin square?%
\COMMENT{Colbourn has a paper showing that this problem in NP-complete.
It's referenced in Bartlett's thesis, but I've only seen a preview of the first 20 pages, and Colbourn has about 5000 papers about Latin squares.}
This natural question has received much attention.
For example, a classical theorem of Smetaniuk~\cite{smet} as well as Anderson and Hilton~\cite{andhil} 
states that this is always possible if at most $n-1$ entries have been made
(this bound is best possible).
The case $r=3$ of Conjecture~\ref{conj:decomp} implies that,
provided we have used each row, column and symbol at most $n/4$ times, it should also still be possible to complete a partial Latin square.
This was conjectured by Daykin and H\"aggkvist~\cite{DH}.
(For a discussion of constructions which match this conjectured bound, see Wanless~\cite{Wanless}.)
Note that the conjecture of  Daykin and H\"aggkvist corresponds to the special case of Conjecture~\ref{conj:decomp} when $r=3$
and the condition of $G$ being $K_r$-divisible is replaced by that of $G$ being obtained from $K_{n,n,n}$ by deleting edge-disjoint triangles.

More generally, we say that two Latin squares $R$ (red) and $B$ (blue) drawn in the same $n \times n$ grid of cells are \defn{orthogonal} if no two cells contain the same combination of red symbol and blue symbol.
In the same way that a Latin square corresponds to a $K_3$-decomposition of $K_{n,n,n}$, 
a pair of orthogonal Latin squares corresponds to a $K_4$-decomposition of $K_{n,n,n,n}$ 
where the vertex classes are rows, columns, red symbols and blue symbols.
More generally, there is a natural bijection between sequences of $r-2$ \defn{mutually orthogonal} 
Latin squares (where every pair from the sequence are orthogonal) and $K_r$-decompositions of complete 
$r$-partite graphs with vertex classes of equal size. Sequences of mutually orthogonal Latin squares are also known as \defn{transversal designs}.
Theorem~\ref{thm:explicit} can be used to show the following (see Section~\ref{sec:latinproof} for details).
\begin{theorem}\label{thm:mols}
For every $r \ge 3$ and every $\eps>0$ there exists an $n_0 \in \mathbb{N}$ such that the following holds for all $n \ge n_0$. Let
	\begin{equation*}
	c_r:=
	\begin{cases}
	\frac{1}{25} & \text{if}\ r=3,\\
	\frac{9}{10^7r^3} & \text{if}\ r\geq 4.
	\end{cases}
	\end{equation*}
Let $\cT_1, \dots, \cT_{r-2}$ be a sequence of mutually orthogonal partial $n\times n$ Latin squares (drawn in the same $n\times n$ grid).
Suppose that each row and each column of the grid contains at most $(c_r-\eps)n$ non-empty cells and each coloured symbol is used at most $(c_r-\eps)n$ times.
Then $\cT_1, \dots, \cT_{r-2}$ can be completed to a sequence of mutually orthogonal Latin squares.
\end{theorem}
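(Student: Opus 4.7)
The plan is to translate the partial MOLS completion problem into a $K_r$-decomposition question on an $r$-partite graph and then invoke Theorem~\ref{thm:explicit}.

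First I would set up the correspondence explicitly. Let $V_1$ be the set of rows, $V_2$ the set of columns, and for $3\leq j\leq r$ let $V_j$ be the set of symbols of $\cT_{j-2}$, so $|V_1|=\dots=|V_r|=n$. Every filled cell of the partial MOLS determines an $r$-clique in $K_{n,\dots,n}$ (with vertex classes $V_1,\dots,V_r$) using one vertex from each class, and the mutual orthogonality together with the Latin square property guarantees that these cliques are pairwise edge-disjoint. Let $G$ be the graph obtained from $K_{n,\dots,n}$ by deleting the edges of all cliques arising from filled cells. Completing the partial MOLS is then equivalent to finding a $K_r$-decomposition of $G$.

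Next I would verify the two hypotheses of Theorem~\ref{thm:explicit} for $G$. For $K_r$-divisibility, fix $v\in V_i$ and $j\neq i$; then $d_{K_{n,\dots,n}}(v,V_j)=n$, and the number of edges removed from $v$ to $V_j$ equals the number of filled cells incident to $v$ (as a row, column or symbol), which does not depend on $j$. Indeed, in a partial Latin square every filled cell in a given row uses a distinct symbol and every row of $\cT_k$ has each symbol at most once; the analogous statements hold for columns and for symbols (the latter using that distinct occurrences of a fixed symbol of $\cT_k$ lie in distinct rows and columns and, by orthogonality, are accompanied by distinct symbols of any other $\cT_{k'}$). Hence $d_G(v,V_{j_1})=d_G(v,V_{j_2})$ for all $j_1,j_2\neq i$, so $G$ is $K_r$-divisible. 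For the minimum-degree hypothesis, the assumption that each row, each column and each coloured symbol appears at most $(c_r-\eps)n$ times immediately gives
\[
\hat\delta(G)\ge n-(c_r-\eps)n=(1-c_r+\eps)n.
\]
Since $1-c_3=24/25$ and $1-c_r=1-9/(10^7r^3)\ge 1-1/(10^6r^3)+\eps/2$ for $r\ge 4$ (after absorbing the slack $0.1/(10^6r^3)$ into $\eps$), the corresponding hypothesis of Theorem~\ref{thm:explicit} is satisfied with room to spare.

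Finally I would apply Theorem~\ref{thm:explicit} to obtain a $K_r$-decomposition of $G$; together with the cliques already removed this gives a $K_r$-decomposition of $K_{n,\dots,n}$, which via the correspondence above yields a completion of $\cT_1,\dots,\cT_{r-2}$ to a sequence of mutually orthogonal Latin squares. There is essentially no obstacle beyond bookkeeping: the only subtlety is checking $K_r$-divisibility, and the constants have been chosen precisely so that the degree condition in Theorem~\ref{thm:explicit} is met with a positive margin.
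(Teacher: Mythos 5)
Your reduction is fine for $r=3$, and in that case it coincides with the paper's argument: a non-empty cell then carries a row, a column and a symbol, so the removed graph is an edge-disjoint union of triangles, its complement is $K_3$-divisible with $\hat\delta\geq(24/25+\eps)n$, and Theorem~\ref{thm:explicit} applies. For $r\geq 4$, however, there is a genuine gap. The theorem allows a cell to be filled in only some of the colours (a non-empty cell is one containing at least one symbol in at least one colour), so a filled cell does \emph{not} in general determine a copy of $K_r$ with one vertex in every class; it only determines a clique on the row, the column and those symbol classes in which that cell is filled. Consequently your divisibility check breaks down: for a row vertex $v$, a cell filled only in $\cT_1$ removes an edge from $v$ to the symbol class of $\cT_1$ but not to the symbol class of $\cT_2$, so $d_G(v,V_{j_1})$ and $d_G(v,V_{j_2})$ need not agree and $G$ need not be $K_r$-divisible. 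This is exactly the obstruction the paper flags after Corollary~\ref{cor:fracmols}: only under the extra hypothesis that every cell is filled in all colours or none is the deleted graph automatically $K_r$-decomposable (and then one even gets a better constant).

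The paper's proof for $r\geq 4$ deals with this by first \emph{completing the cliques, not the squares}: it greedily extends each partial clique $H_i$ (coming from a non-empty cell) to a copy $H_i'$ of $K_r$, choosing the new vertices outside a small set of overused vertices so that every vertex lies in at most $10(c_r-\eps^2)n/9$ of the graphs $H_1',\dots,H_q'$. Only then is Theorem~\ref{thm:explicit} applied, to the complement of $G_q=\bigcup_i H_i'$, which \emph{is} $K_r$-divisible because $G_q$ is an edge-disjoint union of $r$-cliques; the resulting decomposition together with the $H_i'$ decomposes the complete $r$-partite graph and hence completes the MOLS. This greedy extension is also where the constant comes from: the factor $10/9$ lost in the degree bound is precisely why $c_r=9/(10^7r^3)$ rather than $1/(10^6r^3)$, so your attempt to recover the hypothesis of Theorem~\ref{thm:explicit} by ``absorbing the slack into $\eps$'' is masking the missing step rather than supplying it. To repair your proof you would need to add this extension argument (or restrict to the special case where each cell is filled in every colour or empty).
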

Here, by a non-empty cell we mean a cell containing at least one symbol (in at least one of the colours).
The best previous bound for the triangle case $r=3$ is due to Bartlett~\cite{bart},
who obtained a minimum degree bound of $(1- 10^{-4})n$. 
This improved an earlier bound of Chetwynd and H\"aggkvist~\cite{chehag} as well as the one claimed by Gustavsson~\cite{Gustavsson}. We are not aware of any previous upper or lower bounds  for $r \ge 4$.

\subsection{Fractional and approximate decompositions} \label{fractional}

A \defn{fractional $K_r$-decomposition} of a graph $G$ is a non-negative weighting of the copies of $K_r$ in $G$ such that the total weight of all the copies of $K_r$ containing any fixed edge of $G$ is exactly $1$.
Fractional decompositions are of particular interest to us because of the following result of Haxell and R\"odl, of which we state only a very special case (see~\cite{Yuster} for a shorter proof).

\begin{theorem}[Haxell and R\"odl \cite{HaxellRodl}] \label{HRthm}
For every $r\geq 3$ and every $\eta > 0$ there exists an $n_0 \in \N$ such that the following holds.
Let $G$ be a graph on $n \geq n_0$ vertices that has a fractional $K_r$-decomposition.
Then $G$ has an $\eta$-approximate $K_r$-decomposition.
\end{theorem}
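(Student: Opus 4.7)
The plan is to reduce the statement to a near-perfect matching question in an auxiliary hypergraph and then apply the Rödl nibble / Pippenger--Spencer--Kahn semirandom matching method. Let $H$ denote the $\binom{r}{2}$-uniform hypergraph with vertex set $E(G)$, whose hyperedges are the edge sets of the copies of $K_r$ in $G$. Under this dictionary, a fractional $K_r$-decomposition of $G$ is exactly a fractional perfect matching of $H$ (each vertex of $H$ has weighted degree $1$), and an $\eta$-approximate $K_r$-decomposition of $G$ is a matching of $H$ that leaves at most $\eta n^2$ vertices uncovered.

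First I would check the hypotheses needed by the nibble. Trivially $\sum_{K \ni e_1, e_2} w(K) \leq \sum_{K \ni e_1} w(K) = 1$, but to apply the nibble one needs the weighted pairwise codegree to be $o(1)$. I would preprocess $w$ by replacing it with $w' = (1-\alpha)w + \alpha u$, where $u$ is the ``uniform'' fractional matching placing equal weight $1/N(e)$ on the $N(e)$ copies of $K_r$ through each edge $e$; a small $\alpha$ leaves every weighted degree equal to $1$ but forces $w'(K) = O(n^{-(r-2)})$. Using the crude bound that at most $O(n^{r-3})$ copies of $K_r$ contain two fixed edges of $G$, the weighted codegree becomes $O(n^{r-3}) \cdot O(n^{-(r-2)}) = O(1/n) = o(1)$, which is well within the nibble regime.

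Next I would execute the weighted nibble in $O_\eta(1)$ rounds. At round $i$, sample each copy $K$ independently with probability $\gamma\, w_i(K)$ for a fixed small $\gamma > 0$, keep only those sampled copies whose $\binom{r}{2}$ edges are disjoint from the edges of all other sampled copies, and pass to the residual graph $G_{i+1}$ of uncovered edges. Standard Chernoff/Azuma/Talagrand concentration shows that a fraction $1 - \Theta(\gamma)$ of weight at each vertex of $H$ survives with small multiplicative error, so $G_{i+1}$ carries a natural updated fractional matching $w_{i+1}$ with near-uniform weighted degree and small weighted codegree. The main obstacle is precisely this rebalancing step: one must verify that $w_{i+1}$ remains an approximate fractional perfect matching of the residual hypergraph and that the sparseness property persists into the next round, which requires handling a small set of exceptional vertices where concentration fails — typically by a short greedy clean-up and a rescaling of the surviving weights. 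After $O_\eta(1)$ rounds the uncovered edge set has size at most $\eta n^2$, and the union of the retained cliques over all rounds gives the desired $\eta$-approximate $K_r$-decomposition.
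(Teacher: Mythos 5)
This theorem is not proved in the paper at all: it is quoted from Haxell and R\"odl \cite{HaxellRodl} (with a shorter proof due to Yuster \cite{Yuster}), so there is no internal proof to compare against. Judged on its own merits, your proposal has a genuine gap at its central step. The nibble (in Kahn's weighted/LP form) does apply once one has a fractional perfect matching of the auxiliary $\binom{r}{2}$-uniform hypergraph whose \emph{weighted codegrees} $\sum_{K\supseteq\{e_1,e_2\}}w(K)$ are $o(1)$, but your preprocessing does not deliver this. First, the ``uniform'' fractional matching $u$ is not well defined: a copy $K$ of $K_r$ contains $\binom{r}{2}$ edges $e$ with different values $N(e)$, so ``weight $1/N(e)$ on the copies through $e$'' assigns conflicting weights to $K$, and no single choice makes all weighted degrees equal to $1$. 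Second, and more importantly, mixing $w'=(1-\alpha)w+\alpha u$ cannot force $w'(K)=O(n^{-(r-2)})$: a convex combination retains $(1-\alpha)w(K)$, and $w(K)$ may be $1$ (for instance when some edge of $G$ lies in a unique copy of $K_r$, which is perfectly compatible with $G$ having a fractional $K_r$-decomposition). In that situation pairs of edges inside such a copy have weighted codegree bounded away from $0$, so the hypothesis you need for the nibble simply fails and cannot be restored by rescaling or averaging the given weighting.

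The deeper point is that the theorem cannot be reduced to a statement about general bounded-uniformity hypergraphs with a fractional perfect matching: for $3$-uniform hypergraphs, a disjoint union of Fano planes has a fractional perfect matching but every matching misses a constant fraction of the vertices, so ``fractional perfect matching $\Rightarrow$ matching covering all but an $\eta$-fraction'' is false at that level of generality. Any correct proof must therefore use the structure of the clique hypergraph of a \emph{graph} to first spread the fractional weights out (so that no copy carries non-negligible weight and weighted codegrees become $o(1)$), and only then run the semi-random argument. This redistribution is exactly the substantive content of the known proofs --- Haxell and R\"odl do it via Szemer\'edi's regularity lemma together with counting/embedding inside dense regular tuples, and Yuster's shorter argument replaces this by a sampling scheme --- and it is the ingredient your outline is missing. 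The final nibble paragraph of your plan is fine in outline, but as written the argument does not get to the point where the nibble is applicable.
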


We define $\hat{\delta}^*_{K_r}(n)$ to be the infimum over all $\delta$ such that every $K_r$-divisible graph $G$ on $(V_1, \dots, V_r)$ with $|V_1|=\dots=|V_r|=n$ and $\hat{\delta}(G)\geq \delta n$ has a fractional $K_r$-decomposition.
Let $\hat{\delta}^*_{K_r}:=\limsup_{n\rightarrow \infty} \hat{\delta}^*_{K_r}(n)$.
Theorem~\ref{HRthm} implies that, for every $\eta > 0$, $\hat \delta^\eta_{K_r} \leq \hat \delta^*_{K_r}$.
Together with Theorem~\ref{thm:main}, this yields the following.
\begin{cor}\label{thm:mainfrac}
For every $r \ge 3$ and every $\eps>0$ there exists an $n_0 \in \mathbb{N}$ such that the following holds for all $n \ge n_0$.
Suppose $G$ is a $K_r$-divisible graph on $(V_1, \dots, V_r)$ with $|V_1|=\dots=|V_r|=n$. 
If $\hat{\delta}(G) \geq (\hat{\delta}^*_{K_r}+\eps)n$, then $G$ has a $K_r$-decomposition.
\end{cor}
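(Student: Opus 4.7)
The plan is to deduce this directly from Theorem~\ref{thm:main} and Theorem~\ref{HRthm}, leaning on the inequality $\hat\delta^\eta_{K_r} \le \hat\delta^*_{K_r}$ that is flagged in the paragraph preceding the statement. All of the combinatorial heavy lifting has already been placed in Theorem~\ref{thm:main}, so the corollary will come out as a formal consequence; there is no real obstacle here, only definitional bookkeeping.

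Given $r \ge 3$ and $\eps>0$, I would first feed the pair $(r,\eps)$ into Theorem~\ref{thm:main}, which returns some $\eta=\eta(r,\eps)>0$ and a threshold $n_0^{(1)}$ with the property that every $K_r$-divisible $G$ on balanced vertex classes of size $n \ge n_0^{(1)}$ satisfying $\hat\delta(G)\ge (\hat\delta^\eta_{K_r}+\eps)n$ admits a $K_r$-decomposition. It then suffices to show that the hypothesis of the corollary, $\hat\delta(G)\ge (\hat\delta^*_{K_r}+\eps)n$, implies this; equivalently, that $\hat\delta^\eta_{K_r}\le \hat\delta^*_{K_r}$ for the specific $\eta$ produced above.

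To verify the inequality I would unpack the two definitions. Fix any $\delta>\hat\delta^*_{K_r}$; by the definition of $\hat\delta^*_{K_r}$ as a limsup, for all sufficiently large $n$ one has $\delta>\hat\delta^*_{K_r}(n)$, so every $K_r$-divisible $G$ on balanced $r$-partite classes of size $n$ with $\hat\delta(G)\ge \delta n$ possesses a fractional $K_r$-decomposition. Applying Theorem~\ref{HRthm} with parameter $\eta$ (absorbing the harmless factor $r^2$ between $\eta n^2$ and $\eta |V(G)|^2$ into the choice of parameter fed to Haxell--R\"odl), any such $G$ on enough vertices then has an $\eta$-approximate $K_r$-decomposition, so $\hat\delta^\eta_{K_r}(n)\le \delta$ for all large $n$. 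Letting $\delta\downarrow \hat\delta^*_{K_r}$ and taking the limsup in $n$ yields $\hat\delta^\eta_{K_r}\le \hat\delta^*_{K_r}$. Setting $n_0$ to be the maximum of the thresholds produced by Theorems~\ref{thm:main} and~\ref{HRthm} then completes the plan: any $G$ meeting the hypothesis of the corollary automatically meets the hypothesis of Theorem~\ref{thm:main} and therefore has a $K_r$-decomposition.
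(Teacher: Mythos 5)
Your proposal is correct and follows the paper's own route exactly: the paper likewise derives the corollary by combining Theorem~\ref{thm:main} with the inequality $\hat\delta^\eta_{K_r}\le\hat\delta^*_{K_r}$, which it obtains from Theorem~\ref{HRthm}. Your unpacking of the limsup definitions and the remark about the harmless normalisation factor in the Haxell--R\"odl statement are just the details the paper leaves implicit.
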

In particular, to prove Conjecture~\ref{conj:decomp} asymptotically, it suffices to show that $\hat \delta^*_{K_r} \leq 1 - 1/(r+1)$.%
\COMMENT{Extremal example can also be made to work fractionally.}
Similarly, improved bounds on $\hat \delta^*_{K_r}$ would lead to improved bounds in Theorem~\ref{thm:mols}
(see Corollary~\ref{cor:fracmols}).

For triangles, the best bound on the `fractional decomposition threshold' is due to Bowditch and Dukes~\cite{Dukes2}.
\begin{theorem}[Bowditch and Dukes~\cite{Dukes2}]
$\hat \delta^*_{K_3} \le \frac{24}{25}$.
\end{theorem}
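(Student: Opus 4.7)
The plan is to exhibit, for every $\eps>0$ and every sufficiently large $K_3$-divisible tripartite graph $G$ on vertex classes $V_1,V_2,V_3$ of size $n$ with $\hat\delta(G)\ge(24/25+\eps)n$, an explicit non-negative fractional $K_3$-decomposition. I would follow the edge-correction (``gadget'') strategy pioneered by Yuster in the non-partite setting and sharpened by Dukes, Garaschuk and Dross, specialised to the tripartite structure, where every triangle consists of exactly one vertex from each class and every edge lies between two classes.

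The first step is to choose a near-uniform base weighting $w_0(T):=\alpha$ on every triangle $T$ of $G$, with $\alpha$ chosen so that the total weights on the edges are close to $1$; the natural scale is $\alpha\approx 1/\bar t$, where $\bar t$ is the average of the codegrees $t(e)=|N(u)\cap N(v)|$ over edges $e=uv$. In the tripartite setting $t(uv)$ equals $|N(u)\cap N(v)\cap V_j|$, where $V_j$ is the third vertex class, and a one-line inclusion-exclusion using $\hat\delta(G)\ge(24/25+\eps)n$ gives $t(e)\ge(23/25+2\eps-o(1))n$ for every edge $e$. In particular each ``defect'' $\mathrm{def}(e):=1-\alpha\,t(e)$ is uniformly small.

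The second step is to cancel the defects using local gadgets. The natural gadget is an ``edge-swap'' $T_1-T_2$, where $T_1,T_2$ are two distinct triangles sharing an edge $f$: placing $+\gamma$ on $T_1$ and $-\gamma$ on $T_2$ transfers weight $\gamma$ between the pairs of edges incident to $f$, without affecting $f$ or any edge outside the two triangles. Using a carefully chosen family of such gadgets (and their compositions), one obtains a linear system whose unknowns are the gadget coefficients $\gamma$ and whose right-hand side is $(\mathrm{def}(e))_e$. The main task is to show this system is solvable with the $\gamma$'s small enough that the final weighting $w(T)=\alpha+\sum_{\text{gadgets containing }T}(\pm\gamma)$ is non-negative on every triangle; the large codegree guarantees that enough gadgets are available at each edge, and the small defect size keeps the required corrections small.

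The main obstacle, and what pins the constant down to exactly $24/25$ rather than a weaker bound, is optimising over the choice of gadget family and the distribution of corrections. I would expect the threshold $24/25$ to emerge as the value of a small extremal optimisation capturing the trade-off between the size of the base weight $\alpha$ and the worst-case magnitude of the corrections; equivalently, by LP duality, as the minimum-degree barrier at which any putative obstructing edge-weighting can be ruled out. The cleanest presentation would likely combine the above gadget analysis with a charging or averaging argument that converts the local non-negativity constraints into a single global inequality, tight at precisely $\hat\delta(G)=24n/25$.
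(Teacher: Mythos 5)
This statement is not proved in the paper at all: it is quoted as an external black box from Bowditch and Dukes~\cite{Dukes2}, and the paper only uses it (together with Corollary~\ref{thm:mainfrac}) to deduce Theorem~\ref{thm:explicit}(i). So there is no internal proof to compare your argument against; the question is whether your sketch would itself constitute a proof, and it would not.

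The gap is that the entire quantitative content of the theorem is concentrated in the step you defer. You set up a plausible framework (uniform base weight $\alpha\approx 1/\bar t$ on all triangles, plus edge-swap gadgets $T_1-T_2$ sharing an edge to cancel the defects $\mathrm{def}(e)=1-\alpha t(e)$), but you never construct the gadget family, never show the resulting linear system is solvable, and never bound the accumulated corrections at a single triangle — you only say you ``would expect the threshold $24/25$ to emerge'' from an unspecified extremal optimisation. That expectation is the theorem; without carrying it out, no specific constant (let alone $24/25$) is established. Moreover, your claim that the defects are ``uniformly small'' is misleading in the relevant sense: codegrees range over an interval of length up to roughly $\frac{2}{25}n$, so $|\mathrm{def}(e)|$ is a constant proportion of the target edge weight $1$, while each triangle carries base weight only $\alpha=\Theta(1/n)$. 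The corrections routed through a triangle are therefore comparable in size to its base weight, and non-negativity of the final weighting is exactly the delicate balance that forces the degree threshold; asserting that ``the small defect size keeps the required corrections small'' skips the heart of the argument. As written, the proposal is a research plan in the spirit of Yuster, Dukes, Garaschuk and Dross rather than a proof of $\hat\delta^*_{K_3}\le \frac{24}{25}$.
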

\noindent For arbitrary cliques, Montgomery obtained the following bound. Somewhat weaker bounds (obtained by different methods) are also proved in \cite{Dukes2}.
\begin{theorem}[Montgomery~\cite{mont}]
For every $r\ge 3$, $\hat \delta^*_{K_r} \le 1-\frac{1}{10^6r^3}$.
\end{theorem}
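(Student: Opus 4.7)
The plan is to establish the bound by constructing an explicit fractional $K_r$-decomposition. Write $\gamma:=1/(10^6r^3)$ and let $G$ be $K_r$-divisible on $(V_1,\dots,V_r)$ with $|V_i|=n$ and $\hat\delta(G)\ge(1-\gamma)n$. I would first show that for every edge $e=uv$ between $V_i$ and $V_j$, the count $t(e)$ of $K_r$-copies through $e$ lies in $(1\pm O(r\gamma))n^{r-2}$: this follows inductively, since $|(N(u)\cap N(v))\cap V_k|\ge(1-2\gamma)n$ for each $k\notin\{i,j\}$, and the induced $(r-2)$-partite graph on these common neighbourhoods inherits a minimum degree of $(1-O(r\gamma))n$. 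Consequently, setting every $K_r$-copy to initial weight $w_0:=1/\overline t$ (where $\overline t$ is the mean of $t(e)$) produces a fractional cover whose defect $d(e):=1-w_0 t(e)$ satisfies $|d(e)|=O(r\gamma)$ at every edge. By $K_r$-divisibility, the defects even out within each bipartite piece $G[V_i,V_j]$, and in particular $\sum_e d(e)=0$.

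Next, I would design local correction gadgets. Given two edges $e\ne f$ of $G$, I want a signed weighting $g_{e,f}$ of $K_r$-copies whose induced edge-weight is $+\alpha$ on $e$, $-\alpha$ on $f$, and $0$ on every other edge of $G$. A natural construction takes an extension of $e\cup f$ to a common $K_r$-configuration using one vertex from each remaining class, and forms an alternating sum of the $2^{r-c}$ cliques obtained by swapping the extension vertex in each class (where $c\le 4$ is the number of classes already hit by $e\cup f$). Inclusion–exclusion cancellation kills the contribution to every edge other than $e$ and $f$. The minimum degree assumption guarantees that at least $(1-O(r\gamma))n^{r-c}$ valid extensions exist for each pair $(e,f)$, giving plenty of gadgets to play with.

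Finally, I would combine the gadgets to cancel the defects. For each edge $f$ with $d(f)>0$, distribute the required negative correction uniformly across all valid gadgets $g_{e,f}$ with $d(e)<0$, pairing so that the total flow is consistent with $\sum_e d(e)=0$ within each bipartite piece (here the stronger $K_r$-divisibility balance is used). The resulting weight on a $K_r$-copy $K$ is $w_0$ plus a sum of gadget contributions of total magnitude at most $O(r\gamma)\cdot w_0$, which is strictly positive once $\gamma$ is small compared with $1/r$.

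The main obstacle is showing that the corrections remain non-negative \emph{uniformly} over all $K_r$-copies. A single copy $K$ can be hit by up to $\Theta(n^{c})$ gadgets for each pair of edges $(e,f)$ whose extensions pass through $K$, so the per-gadget weight, after averaging, must be at most $\gamma\cdot w_0$ divided by the number of gadgets containing $K$. Tracking the ratio between the global defect correction needed and the local capacity per $K_r$-copy introduces factors of $r$ from the number of vertex classes, the $2^{r-c}$ cliques in each gadget, and the inductive counting of common neighbourhoods; balancing these carefully produces the stated bound $\gamma\le 1/(10^6 r^3)$. The $r^3$ in the denominator is the tightest obstruction and will dictate the final constant.
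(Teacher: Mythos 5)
This statement is not proved in the paper at all: it is an external result, quoted from Montgomery~\cite{mont}, and the present paper only uses it (together with Theorem~\ref{thm:main} and the Haxell--R\"odl result) to deduce Theorem~\ref{thm:explicit}. So the relevant comparison is with Montgomery's argument, and your sketch, while in the same general spirit as the known fractional-decomposition proofs (uniform initial weight on all $r$-cliques, then signed local corrections), has genuine gaps at exactly the points where that argument is hard.

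First, the correction gadgets. The alternating-sum gadget you describe only behaves as claimed when $e$ and $f$ sit in a compatible position (essentially, when they lie between the same pair of classes, or share a vertex); for an arbitrary pair of edges in different class pairs, swapping extension vertices does not produce a signed weighting with value $+\alpha$ on $e$, $-\alpha$ on $f$ and $0$ elsewhere, and in the partite setting you cannot fix this with edges inside classes because there are none. Relatedly, the assertion that ``the defects even out within each bipartite piece'' is not justified: $K_r$-divisibility controls degrees and gives $e(V_{j_1},V_{j_2})$ all equal, but the defect $d(e)=1-w_0t(e)$ depends on local clique counts $t(e)$, and one must prove (not assume) that the positive and negative defects can be matched by transfers that your gadgets can actually realise. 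Second, and more seriously, the final step is missing: you state yourself that the main obstacle is uniform non-negativity over all cliques, and then claim that ``balancing these carefully produces the stated bound $\gamma\le 1/(10^6r^3)$.'' That balancing \emph{is} the theorem. A uniform distribution of the correction for each edge over all valid gadgets gives, for a fixed clique $K$, a total negative load coming from roughly $e(G)$ source/sink edges, each routing a correction of order $\gamma w_0$ through a $1/n^{r-c}$-fraction of its gadgets, and one must show the accumulated load on $K$ stays below $w_0$; no such computation is given, and it is precisely here that the dependence on $r$ (whether one can achieve $r^3$ rather than a worse power) is decided. As it stands the proposal is a plausible outline of a strategy, not a proof of the stated bound.
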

\noindent Note that together with Corollary~\ref{thm:mainfrac}, these results immediately imply Theorem~\ref{thm:explicit}. 

This paper is organised as follows. In Section~\ref{sec:notat} we introduce some notation and tools which will be used throughout this paper. In Section~\ref{sec:extremlatin} we give extremal constructions which support the bounds in Conjecture~\ref{conj:decomp} and we provide a proof of Theorem~\ref{thm:mols}. Section~\ref{sec:sketch} outlines the proof of Theorem~\ref{thm:main} and guides the reader through the remaining sections in this paper.

\section{Notation and tools}\label{sec:notat}

Let $G$ be a graph and let $\cP=\{U^1, \dots, U^k\}$ be a partition of $V(G)$. We write $G[U^1]$ for the subgraph of $G$ induced by $U^1$ and $G[U^1, U^2]$ for the bipartite subgraph of $G$ induced by the vertex classes $U^1$ and $U^2$. We will also sometimes write $G[U^1, U^1]$ for $G[U^1]$. We write $G[\cP]:=G[U^1, \dots, U^k]$ for the $k$-partite subgraph of $G$ induced by the partition $\cP$. We write $U^{<i}$ for $U^1\cup \dots \cup U^{i-1}$. We say the partition $\cP$ is \emph{equitable} if its parts differ in size  by at most one. Given a set $U\subseteq V(G)$, we write $\cP[U]$ for the restriction of $\cP$ to $U$.

Let $G$ be a graph and let $U, V\subseteq V(G)$. We write $N_G(U,V):=\{v\in V: xv\in E(G)\text{ for all } x\in U\}$ and $d_G(U,V):=|N_G(U,V)|$. For $v\in V(G)$, we write $N_G(v,V)$ for $N_G(\{v\},V)$ and $d_G(v,V)$ for $d_G(\{v\},V)$. If $U$ and $V$ are disjoint, we let $e_G(U,V):=e(G[U,V])$. 

Let $G$ and $H$ be graphs. We write $G-H$ for the graph with vertex set $V(G)$ and edge set $E(G)\setminus E(H)$. We write $G\setminus H$ for the subgraph of $G$ induced by the vertex set $V(G)\setminus V(H)$. We call a vertex-disjoint collection of copies of $H$ in $G$ an \emph{$H$-matching}. If the $H$-matching covers all vertices in $G$, we say that it is \emph{perfect}.

Throughout this paper, we consider a partition $V_1, \dots, V_r$ of a vertex set $V$ such that $|V_j|=n$ for all $1\leq j\leq r$. Given a set $U\subseteq V$, we write
$$U_j:=U\cap V_j.$$ 
A \emph{$k$-partition} of $V$ is a partition $\cP=\{U^1, \dots, U^k\}$ of $V$ such that the following hold:
\begin{enumerate}[({Pa}1)]
	\item for each $1\leq j \leq r$, $\{U^i_j: 1\leq i\leq k\}$ is an equitable partition of $V_j$;\label{Pa1}
	\item for each $1\leq i\leq k$, $|U^i_1|=\dots=|U^i_r|$.\label{Pa2}%
	\COMMENT{just for convenience, can then assume graphs in each iteration have equal class sizes}
\end{enumerate}
If $G$ is an $r$-partite graph on $(V_1, \dots, V_r)$, we sometimes also refer to a $k$-partition of $G$ (instead of a $k$-partition of $V(G)$).
We write $K_r(k)$ for the complete $r$-partite graph with vertex classes of size $k$.
We say that an $r$-partite graph $G$ on $(V_1, \dots, V_r)$ is \emph{balanced} if $|V_1|=\dots=|V_r|$.

We use the symbol $\ll$ to denote hierarchies of constants, for example $1/n\ll a\ll b<1$, where the constants are chosen from right to left. The notation $a \ll b$ means that there exists an increasing function $f$ for which the result holds whenever $a \leq f(b)$.

Let $m,n,N\in \N$ with $m,n<N$. The \emph{hypergeometric distribution} with parameters $N$, $n$ and $m$ is the distribution of the random variable $X$ defined as follows. Let $S$ be a random subset of $\{1,2, \dots, N\}$ of size $n$ and let $X:=|S\cap \{1,2,\dots, m\}|$. We will frequently use the following bounds, which are simple forms of Hoeffding's inequality.

\begin{lemma}[see {\cite[Remark 2.5 and Theorem 2.10]{JLR}}] \label{lem:chernoff}
Let $X\sim B(n,p)$ or let $X$ have a hypergeometric distribution with parameters $N,n,m$.
Then
$\pr(|X - \ex(X)| \geq t) \leq 2e^{-2t^2/n}$.
\end{lemma}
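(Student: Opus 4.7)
The plan is to handle the two cases by the classical Chernoff/Cram\'er exponential-moment method, first for the binomial case and then reducing the hypergeometric case to it via a convexity argument of Hoeffding. So the first step is: for $X\sim B(n,p)$, write $X=\sum_{i=1}^n X_i$ with independent Bernoulli$(p)$ summands, and for any $\lambda>0$ apply Markov's inequality to $e^{\lambda(X-\ex X)}$ to get
$$\pr(X-\ex X\ge t)\le e^{-\lambda t}\prod_{i=1}^n\ex\bigl[e^{\lambda(X_i-p)}\bigr].$$
The lower tail is handled symmetrically by replacing $\lambda$ with $-\lambda$, and the two together contribute the factor $2$ in the bound.

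The heart of the argument is Hoeffding's moment generating function lemma: if $Y$ is a centered random variable supported in an interval of length $1$ (so $Y=X_i-p\in[-p,1-p]$ qualifies), then $\ex[e^{\lambda Y}]\le e^{\lambda^2/8}$. I would prove this by convexity: on $[a,b]$ the function $x\mapsto e^{\lambda x}$ lies below its chord, so $\ex[e^{\lambda Y}]\le \tfrac{b}{b-a}e^{\lambda a}-\tfrac{a}{b-a}e^{\lambda b}$, and then a short calculation (examining the logarithm and its derivatives, using $b-a=1$) gives the quadratic-in-$\lambda$ bound. Substituting back and optimizing over $\lambda$ (the optimum is $\lambda=4t/n$) yields $\pr(X-\ex X\ge t)\le e^{-2t^2/n}$.

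For the hypergeometric case the summands are no longer independent, so one cannot factor the MGF directly. The cleanest route is Hoeffding's observation that for every convex function $\varphi$,
$$\ex[\varphi(X_{\mathrm{hyp}})]\le \ex[\varphi(X_{\mathrm{bin}})],$$
where $X_{\mathrm{hyp}}$ is the hypergeometric variable with parameters $N,n,m$ and $X_{\mathrm{bin}}\sim B(n,m/N)$; this is proved by representing $X_{\mathrm{hyp}}$ as an average over all orderings of $X_{\mathrm{bin}}$-type samples and using Jensen's inequality on a suitable coupling. Applying this with $\varphi(x)=e^{\lambda(x-\ex X)}$ transfers the binomial MGF bound above to the hypergeometric setting, and the rest of the argument is identical. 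As an alternative, one can give a direct proof by forming the Doob martingale that exposes the sampled elements one at a time and applying Azuma--Hoeffding with increments bounded by $1$.

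The only genuinely nontrivial ingredient is Hoeffding's MGF lemma; the rest is the standard optimization of $\lambda$ plus the convex-domination (or martingale) reduction. Because the statement is quoted from \cite{JLR}, the proposal is really to point at the reference and note that either route above gives the stated constants.
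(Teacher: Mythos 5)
Your proposal is correct and is essentially the argument behind the cited result: the paper itself gives no proof of this lemma but simply quotes \cite{JLR}, where Remark 2.5 is exactly the Chernoff--Hoeffding MGF bound you describe (Hoeffding's lemma for increments in an interval of length $1$, optimized at $\lambda=4t/n$) and Theorem 2.10 transfers it to the hypergeometric case via precisely the convex-domination/conditional-expectation argument you invoke. One small caveat: your ``alternative'' via plain Azuma--Hoeffding with increments bounded by $1$ only yields $2e^{-t^2/2n}$, so to recover the stated constant $2e^{-2t^2/n}$ by that route you would need the range-based (length-$1$ conditional increment) form of the martingale inequality; your main route, however, already gives the stated constants.
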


\begin{lemma}[see {\cite[Corollary 2.3 and Theorem 2.10]{JLR}}] \label{lem:chernoff2}
Suppose that $X$ has binomial or hypergeometric distribution and $0<a<3/2$. Then $\pr(|X-\ex(X)|\geq a\ex(X))\leq 2e^{-a^2\ex(X)/3}$.
\end{lemma}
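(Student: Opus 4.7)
The plan is to prove the two-sided bound by the standard Chernoff / moment-generating-function method, treating the binomial case first and then transferring the estimate to the hypergeometric setting.

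In the binomial case write $X = \sum_{i=1}^n X_i$ with $X_i$ independent Bernoulli($p_i$) and set $\mu = \ex(X)$. For any $t > 0$, Markov's inequality applied to $e^{tX}$ together with the elementary bound $\ex(e^{tX_i}) = 1 + p_i(e^t - 1) \leq e^{p_i(e^t - 1)}$ gives
\[
\pr(X \geq (1+a)\mu) \leq e^{-t(1+a)\mu} \ex(e^{tX}) \leq \exp\bigl(\mu(e^t - 1 - t(1+a))\bigr).
\]
Optimising in $t$ via $t = \log(1+a)$ produces the classical estimate $\pr(X \geq (1+a)\mu) \leq \bigl(e^a/(1+a)^{1+a}\bigr)^\mu$. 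The next step is to check that $(1+a)\log(1+a) - a \geq a^2/3$ on $0 < a < 3/2$, which yields $\pr(X \geq (1+a)\mu) \leq e^{-a^2\mu/3}$. A parallel argument with $t < 0$ (which actually gives the stronger exponent $a^2/2$) bounds the lower tail by the same quantity, and a union bound supplies the factor of $2$.

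For the hypergeometric case I would invoke Hoeffding's classical observation that if $X$ has the hypergeometric distribution with parameters $N, n, m$ and $Y \sim B(n, m/N)$, then $\ex(\phi(X)) \leq \ex(\phi(Y))$ for every convex $\phi$. Applying this with $\phi(x) = e^{tx}$ shows that the MGF estimate used above holds verbatim for hypergeometric $X$, so the same Chernoff calculation goes through without change.

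The one genuinely delicate step is the Taylor / convexity inequality $f(a) := (1+a)\log(1+a) - a - a^2/3 \geq 0$ on the full range $0 < a < 3/2$. Near $a = 0$ this is immediate from $f(0) = f'(0) = 0$ and $f''(0) = 1/3 > 0$, but since $f''$ changes sign at $a = 1/2$ one has to verify that $f$ remains nonnegative all the way up to $a = 3/2$, where the bound is close to tight; this boils down to a short one-variable analysis of $f'(a) = \log(1+a) - 2a/3$. Everything else (including the hypergeometric extension) is then a routine consequence.
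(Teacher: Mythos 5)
The paper does not prove this lemma at all---it is quoted directly from the cited reference (\cite{JLR}, Corollary 2.3 and Theorem 2.10)---and your argument is a correct reconstruction of the standard proof behind that citation: the Chernoff moment-generating-function bound in the binomial case together with Hoeffding's convex-ordering observation to transfer the same MGF estimate to the hypergeometric distribution (note $\ex(X)$ is the same for both, so the bound carries over verbatim). Your one delicate step does check out: $g(a):=(1+a)\log(1+a)-a-a^2/3$ satisfies $g(0)=g'(0)=0$, $g'(a)=\log(1+a)-2a/3$ is positive and then negative on $(0,3/2)$, and $g(3/2)=\tfrac{5}{2}\log\tfrac{5}{2}-\tfrac{9}{4}>0$, so $g\geq 0$ on the whole range; for the lower tail you should just add that when $a>1$ the event $X\leq(1-a)\ex(X)$ is empty since $X\geq 0$, and for $a\leq 1$ your exponent $a^2/2$ indeed beats $a^2/3$.
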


\section{Extremal graphs and completion of Latin squares}\label{sec:extremlatin}

\subsection{Extremal graphs}\label{sec:extremal}

The following proposition shows that the minimum degree bound conjectured in Conjecture~\ref{conj:decomp} would be best possible. It also provides a lower bound on the approximate decomposition threshold $\hat\delta^{\eta}_{K_r}$ (and thus on the fractional decomposition threshold $\hat\delta^*_{K_r}$).

\begin{prop}\label{prop:extrem}
Let $r\in \N$ with $r\geq 3$ and let $\eta > 0$. For infinitely many $n$, there exists a $K_r$-divisible graph $G$ on $(V_1, \dots, V_r)$ with $|V_1|=\dots=|V_r|=n$ and $\hat\delta(G)=\lceil (1-1/(r+1))n\rceil -1$ which does not have a $K_r$-decomposition.
Moreover, $\hat\delta^{\eta}_{K_r} \geq 1-1/(r+1) - \eta$.
\end{prop}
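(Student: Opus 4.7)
My plan is to construct an explicit extremal graph. Set $a := \lfloor n/(r+1) \rfloor + 1$ and, for each $j$, fix $A_j \subseteq V_j$ with $|A_j| = a$; write $B_j := V_j \setminus A_j$. Let $G$ be the complete $r$-partite graph on $(V_1, \dots, V_r)$ with all edges between pairs $A_j, A_{j'}$ (for $j \ne j'$) deleted.

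As a first step I would check the basic properties. Each vertex of $A_j$ has degree $n - a$ to every other class, while each vertex of $B_j$ retains degree $n$; hence $G$ is $K_r$-divisible and $\hat\delta(G) = n - a = \lceil (1 - 1/(r+1))n \rceil - 1$, as required.

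To prove non-decomposability, I would double-count cliques in any putative $K_r$-decomposition $\mathcal{D}$. Since no edge of $G$ joins two vertices of $A := A_1 \cup \dots \cup A_r$, every clique in $\mathcal{D}$ meets $A$ in at most one vertex. Write $x$ and $y$ for the numbers of cliques of $\mathcal{D}$ meeting $A$ in zero and in one vertex, respectively. Counting the edges between $A$ and $B := B_1 \cup \dots \cup B_r$ yields $y = ra(n-a)$, and then counting the edges inside $B$ yields
$$x = (n-a)\bigl(n - (r-1)a\bigr).$$
A decomposition therefore requires $x \ge 0$, i.e.\ $a \le n/(r-1)$.

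The main obstacle is that the inequality $\lfloor n/(r+1) \rfloor + 1 > n/(r-1)$ holds only for finitely many $n$, so the crude counting obstruction alone is insufficient. To handle infinitely many $n$, I would refine the construction by additionally deleting a small $K_r$-divisible subgraph from $G[B_1, \dots, B_r]$ chosen so that, at some $v \in A$, the local requirement that the cliques of $\mathcal{D}$ through $v$ form a $K_{r-1}$-factor in the induced neighbourhood fails by a Hall-type violation. For $r = 3$ this is equivalent to exhibiting uncompletable partial Latin squares of order $n$ whose row, column, and symbol multiplicities are each at most $\lfloor n/4 \rfloor + 1$, which can be done for infinitely many $n$ by standard constructions; the analogous statement for partial mutually orthogonal Latin squares handles $r \ge 4$.

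For the moreover statement I would use essentially the same construction with $a$ enlarged by $\Theta(\eta n)$, so that $\hat\delta(G) \ge (1 - 1/(r+1) - \eta)n$. A quantitative version of the above Hall-type argument, which allows for up to $\eta n^2$ uncovered edges in an approximate decomposition, still yields an unresolvable deficit of order $\eta n^2$ and hence shows $\hat\delta^\eta_{K_r} \ge 1 - 1/(r+1) - \eta$.
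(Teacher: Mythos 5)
Your base construction and the verification of $K_r$-divisibility and of $\hat\delta(G)=\lceil(1-1/(r+1))n\rceil-1$ are fine, and your double count of the cliques meeting $A$ is correct. But, as you yourself observe, the resulting obstruction $x\ge 0$ only fails when $\lfloor n/(r+1)\rfloor+1>n/(r-1)$, i.e.\ for $n<\tfrac{r^2-1}{2}$, so the counting argument proves nothing for large $n$ — and the proposition needs infinitely many $n$ (indeed the ``moreover'' part needs $n\to\infty$). Everything therefore rests on the proposed repair: deleting a further $K_r$-divisible subgraph of $G[B_1,\dots,B_r]$ so that some $v\in A$ sees a ``Hall-type violation''. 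This step is never carried out: no such subgraph is exhibited, no Hall-type condition is formulated or shown to fail, and deleting edges inside $B$ can lower $\hat\delta(G)$ below the required exact value unless done with care. The appeal to ``standard constructions'' of uncompletable partial Latin squares is not a substitute — those constructions (e.g.\ Wanless's, matching the Daykin--H\"aggkvist bound) are themselves nontrivial, concern only $r=3$, and it is not shown that they arise as instances of your particular graph. The same unproven Hall-type refinement is also what your ``moreover'' argument relies on, since for $\eta<2/(r^2-1)$ the enlarged $a$ still satisfies $a<n/(r-1)$ and the crude count again gives no contradiction. So there is a genuine gap.

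For comparison, the paper avoids this by making the bottleneck a set of \emph{edges} rather than vertices: it takes $n=(r-1)m$, partitions $V(G)$ into $r-1$ classes $U^1,\dots,U^{r-1}$ crossing the $V_j$, lets $G_0$ be the ($K_r$-free) intersection of the complete $r$-partite and complete $(r-1)$-partite graphs, and adds a $q$-regular graph $H_q$ inside each $U^i$. Every copy of $K_r$ must then use at least one edge of $H_q$, so an edge-disjoint collection has at most $e(H_q)$ cliques and covers at most $\binom{r}{2}e(H_q)$ edges; choosing $q$ just below $2m/(r+1)$ makes $e(G_q)>\binom{r}{2}e(H_q)$ while $\hat\delta(G_q)=(r-2)m+q$ hits the required value, and taking $q$ slightly smaller leaves more than $\eta n^2$ edges uncovered, giving the ``moreover'' part. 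You may want to adopt that construction, or else supply the missing Hall-type obstruction explicitly.
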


\begin{proof}
Let $m\in \N$ with $1/m\ll \eta$%
\COMMENT{$(r+1)(r-2)r(r-1)(\eta n-1)/4>\eta n$ so $\eta n>6/5$ would do.}
and let $n:=(r-1)m$. Let $\{U^1, \dots, U^{r-1}\}$ be a partition of $V_1\cup \dots \cup V_r$ such that, for each $1\leq i\leq r-1$ and each $1\leq j\leq r$, $U^i_j=U^i\cap V_j$ has size $m$. 

Let $G_0$ be the intersection of the complete $r$-partite graph on $(V_1, \ldots, V_r)$ and the complete $(r-1)$-partite graph on $(U^1, \ldots, U^{r-1})$.%
\COMMENT{Hall's theorem implies any regular bipartite graph has a perfect matching, so $K_{m,m}$ decomposes into $m$ perfect matchings. Take $q$ edge-disjoint perfect matchings between each pair $(U^i_{j_1}, U^i_{j_2})$.}
For each $1\leq q \leq m$ and each  $1\leq i\leq r-1$,  let $H^i_q$ be a graph formed by starting with the empty graph on $U^i$ and including a $q$-regular bipartite graph with vertex classes $(U^i_{j_1}, U^i_{j_2})$ for each $1\leq j_1<j_2\leq r$. 
Let $H_q := H^1_q \cup \cdots \cup H^{r-1}_q$ and let $G_q := G_0 \cup H_q$.
Observe that $G_q$ is regular, $K_r$-divisible and $$\hat \delta(G_q) = (r-2)m + q.$$

Now $G_0$ is $(r-1)$-partite, so every copy of $K_r$ in $G_q$ contains at least one edge of $H_q$.
Therefore, any collection of edge-disjoint copies of $K_r$ in $G$ will leave at least
\begin{align*}
\ell(G_q):=e(G_q)-e(H_q)\binom{r}{2}&=\big( (r-2)m+q-\binom{r}{2}q\big)\binom{r}{2}n\\
&=(m-(r+1)q/2)(r-2)\binom{r}{2}n
\end{align*}
edges of $G_q$ uncovered.
Let $q_0:= \lceil 2m/(r+1) \rceil -1$. Then $\ell(G_{q_0})>0$, so $G_{q_0}$ does not have a $K_r$-decomposition. Also, 
$$\hat \delta(G_{q_0})=(r-2)m +\lceil 2m/(r+1) \rceil-1 = \lceil (1-1/(r+1))n\rceil -1.$$

Now let $q_\eta:=\lceil 2m/(r+1)-\eta n \rceil$. We have $\hat \delta(G_{q_\eta})\geq (1-1/(r+1)-\eta)n$ and
\begin{align*}
\ell(G_{q_\eta})&\geq (m-(2m/(r+1)-\eta n+1)(r+1)/2)(r-2)\binom{r}{2}n\\
&=(\eta n-1)(r+1)(r-2)r(r-1)n/4\geq 6(\eta n-1)n>\eta n^2.
\end{align*}
Thus, $\hat\delta^{\eta}_{K_r} \geq 1-1/(r+1) - \eta$.
\end{proof}

\subsection{Completion of mutually orthogonal Latin squares}\label{sec:latinproof}

In this section, we give a proof of Theorem~\ref{thm:mols}. We also discuss how better bounds on the fractional decomposition threshold would
immediately lead to better bounds on $c_r$. For any $r$-partite graph $H$ on $(V_1, \dots, V_r)$, we let $\overline H$ denote the $r$-partite complement of $H$ on $(V_1, \dots, V_r)$.

\begin{proofof}{Theorem~\ref{thm:mols}}
By making $\eps$ smaller if necessary, we may assume that $\eps\ll 1$.
Let $n_0\in\mathbb{N}$ be such that $1/n_0\ll \eps, 1/r$.
Use $\cT_1, \dots, \cT_{r-2}$ to construct a balanced $r$-partite graph $G$ with vertex classes $V_j=[n]$ for $1\leq j\leq r$ as follows.
For each $1\leq i,j,k\leq n$ and each $1\leq m\leq r-2$, if in $\cT_m$ the cell $(i,j)$ contains the symbol $k$, include a $K_3$ on the vertices
$i\in V_{r-1}$, $j\in V_r$ and $k\in V_{m}$. (If the cell $(i,j)$ is filled in different $\cT_m$, this leads to multiple edges between
$i\in V_{r-1}$ and $j\in V_r$, which we disregard.) For each $1\leq i,j,k,k'\leq n$ and each $1\leq m<m'\leq r-2$ such that
the cell $(i,j)$ contains symbol $k$ in $\cT_m$ and symbol $k'$ in $\cT_{m'}$, add an edge between the vertices $k\in V_{m}$ and $k'\in V_{m'}$. 

If $r=3$, then $G$ is an edge-disjoint union of copies of $K_3$, so $G$ is $K_3$-divisible. Then $\overline G$ is also $K_3$-divisible and $\hat\delta(\overline G)\geq (24/25+\eps)n$. So we can apply Theorem~\ref{thm:explicit} to find a $K_3$-decomposition of $\overline G$ which we can then use to complete $\cT_1$ to a Latin square.

Suppose now that $r\geq 4$. Observe that $G$ consists of an edge-disjoint union of cliques $H_1, \dots, H_q$ such that,
for each $1\leq i\leq q$, $H_i$ contains an edge of the form $xy$ where $x\in V_{r-1}$ and $y\in V_r$. We have $q\leq (c_r-\eps)n^2$. 
We now show that we can extend $G$ to a graph of small maximum degree which can be decomposed into $q$ copies of~$K_r$.  We will do this by greedily extending each $H_i$ in turn to a copy $H_i'$ of $K_r$.
Suppose that $1\leq p\leq q$ and we have already found edge-disjoint $H_1', \dots, H_{p-1}'$. Given $v\in V(G)$, let $s(v, p-1)$ be the number of graphs in $\{H_1', \dots, H_{p-1}'\}\cup \{H_p, \dots, H_q\}$ which contain $v$. 
Suppose inductively that $s(v,p-1)\leq 10(c_r-\eps^2)n/9$ for all $v\in V(G)$. (This holds when $p=1$ by our assumption that
each row and each column of the grid contains at most $(c_r-\eps)n$ non-empty cells and each coloured symbol is used at most $(c_r-\eps)n$ times.)
For each $1\leq j\leq r$, let $B_j:=\{v\in V_j: s(v,p-1)\geq 10(c_r-\eps)n/9\}$. We have
\begin{equation}
|B_j|\leq \frac{q}{10(c_r-\eps)n/9}\leq \frac{9n}{10}.\label{eq:latin:a}
\end{equation}
Let $G_{p-1}:=G\cup \bigcup_{i=1}^{p-1}(H_i'-H_i)$. Note that
\begin{equation}
\hat\delta(\overline G_{p-1})\geq (1-10(c_r-\eps^2)/9)n,\label{eq:latin:b}
\end{equation}
by our inductive assumption.
We will extend $H_p$ to a copy of $K_r$ as follows. Let $\{j_1, \dots, j_m\}=\{j:1\leq j\leq r \text{ and }V(H_p)\cap V_j=\emptyset\}$. For each $j_i$ in turn, starting with $j_1$, choose one vertex $x_{j_i}$ from the set $N_{\overline G_{p-1}}(V(H_p)\cup \{x_{j_1}, \dots, x_{j_{i-1}}\}, V_{j_i}\setminus B_{j_i})$.  This is possible since \eqref{eq:latin:a} and \eqref{eq:latin:b} imply 
\begin{align*}
d_{\overline G_{p-1}}(V(H_p)\cup \{x_{j_1}, \dots, x_{j_{i-1}}\}, V_{j_i}\setminus B_{j_i})\geq (1/10-(r-1)10(c_r-\eps^2)/9)n>0.
\end{align*}
\COMMENT{for any $c_r\leq 9/10^2(r-1)$.}
 Let $H_p'$ be the copy of $K_r$ with vertex set $V(H_p)\cup \{x_j: 1\leq j\leq r \text{ and }V(H_p)\cap V_j=\emptyset\}$.  By construction, for every $v\in V(G)$, the number $s(v,p)$ of graphs in $\{H_1', \dots, H_{p}'\}\cup \{H_{p+1}, \dots, H_q\}$ which contain $v$ satisfies $s(v, p)\leq 10(c_r-\eps^2)n/9$.

Continue in this way to find edge-disjoint $H_1', \dots, H_q'$ such that $s(v,q)\leq 10(c_r-\eps^2)n/9$. Let $G_q:=\bigcup_{1\leq i\leq q}H_i'$. We have 
$\hat\delta(\overline G_q)\geq (1-10(c_r-\eps^2)/9)n= (1-1/10^6r^3+10\eps^2/9)n$
and, since $G_q$ is an edge-disjoint union of copies of $K_r$, we know that $\overline G_q$ is $K_r$-divisible. So we can apply Theorem~\ref{thm:explicit} to find a $K_r$-decomposition $\cF$ of $\overline G_q$. Note that $\cF':=\cF\cup \bigcup_{1\leq i\leq q}H_i'$ is a $K_r$-decomposition of the complete $r$-partite graph. Since $H_i\subseteq H_i'$ for each $1\leq i \leq q$, we can use $\cF'$ to complete $\cT_1, \dots, \cT_{r-2}$ to a sequence of mutually orthogonal Latin squares.
\end{proofof}

The proof of Theorem~\ref{thm:mols} also shows how better bounds for the fractional 
decomposition threshold $\hat{\delta}^*_{K_r}$ lead to better bounds on $c_r$.
More precisely, by replacing the `10/9' in the above inductive upper bound on $s(v,p-1)$ by `2'
and making the obvious adjustments to the calculations we obtain the following result.
\begin{cor} \label{cor:fracmols}
For all $r \ge 3$ and $n \in \mathbb{N}$, define $\beta_r(n)$ to be the supremum over all
$\beta$ so that the following holds:
Let $\cT_1, \dots, \cT_{r-2}$ be a sequence of mutually orthogonal partial $n\times n$ Latin squares (drawn in the same $n\times n$ grid).
Suppose that each row and each column of the grid contains at most $\beta n$ non-empty cells and each coloured symbol is used at most $\beta n$ times.
Then $\cT_1, \dots, \cT_{r-2}$ can be completed to a sequence of mutually orthogonal Latin squares.

Let $\beta_r:= \liminf_{n \to \infty} \beta_r(n)/n$. Also, for every $r \ge 3$, let
	\begin{equation*}
	\beta'_r:=
	\begin{cases}
	1-\hat{\delta}^*_{K_3} & \text{if}\ r=3,\\
	(1-\hat{\delta}^*_{K_r})/4 & \text{if}\ r\geq 4.
	\end{cases}
	\end{equation*}
	Then $\beta_r \ge \beta_r'$.
\end{cor}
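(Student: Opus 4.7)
The plan is to follow the proof of Theorem~\ref{thm:mols} almost verbatim, making two adjustments: replace the appeal to Theorem~\ref{thm:explicit} by an appeal to Corollary~\ref{thm:mainfrac}, and replace the inductive occupancy threshold $10(c_r-\eps^2)n/9$ used in the greedy extension by $2(\beta-\eps^2)n$. Fix $\beta<\beta'_r$ and choose $\eps>0$ so that $\beta\le\beta'_r-\eps$. As in the proof of Theorem~\ref{thm:mols}, encode $\cT_1,\dots,\cT_{r-2}$ as an edge-disjoint union of cliques $H_1,\dots,H_q$ (each containing a base edge between $V_{r-1}$ and $V_r$) inside an $r$-partite graph $G$ on $(V_1,\dots,V_r)$ with $|V_j|=n$. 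The hypothesis that each row, each column and each coloured symbol is used at most $\beta n$ times translates into $d_G(v,V_{j'})\le\beta n$ for $v\in V_j$ and $j'\ne j$; in particular $q\le\beta n^2$.

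If $r=3$, then $G$ is already an edge-disjoint union of triangles and hence $K_3$-divisible, so its $r$-partite complement $\overline G$ is $K_3$-divisible and satisfies $\hat\delta(\overline G)\ge(1-\beta)n\ge(\hat\delta^*_{K_3}+\eps)n$, because $\beta'_3=1-\hat\delta^*_{K_3}$. Corollary~\ref{thm:mainfrac} then produces a $K_3$-decomposition of $\overline G$, which combined with $G$ completes $\cT_1$.

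For $r\ge 4$, I extend each $H_p$ in turn to a copy $H'_p$ of $K_r$ exactly as in the proof of Theorem~\ref{thm:mols}, but maintaining the relaxed inductive bound $s(v,p-1)\le 2(\beta-\eps^2)n$ on the number of cliques in $\{H'_1,\dots,H'_{p-1}\}\cup\{H_p,\dots,H_q\}$ through $v$. With this threshold the bad set $B_j:=\{v\in V_j:s(v,p-1)\ge 2(\beta-\eps^2)n\}$ satisfies $|B_j|\le q/(2(\beta-\eps^2)n)\le n/2+o(n)$, and $\hat\delta(\overline G_{p-1})\ge(1-2(\beta-\eps^2))n$ for the auxiliary graph $G_{p-1}:=G\cup\bigcup_{i<p}(H'_i-H_i)$. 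Picking $x_{j_i}\in N_{\overline G_{p-1}}(V(H_p)\cup\{x_{j_1},\dots,x_{j_{i-1}}\},V_{j_i}\setminus B_{j_i})$ is possible because
\[
n-|B_{j_i}|-(r-1)\cdot 2(\beta-\eps^2)n\;\ge\;\tfrac{n}{2}-2(r-1)\beta n\;>\;0,
\]
where the last inequality uses $\beta\le(1-\hat\delta^*_{K_r})/4\le 1/(4(r+1))<1/(4(r-1))$; here the bound $\hat\delta^*_{K_r}\ge 1-1/(r+1)$ is supplied by Proposition~\ref{prop:extrem} together with Theorem~\ref{HRthm}. Since new vertices are taken outside $B_j$, the occupancy bound is preserved at step $p$.

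At the end, $G_q=\bigcup_i H'_i$ is an edge-disjoint union of copies of $K_r$, so $\overline G_q$ is $K_r$-divisible, and
\[
\hat\delta(\overline G_q)\;\ge\;(1-2(\beta-\eps^2))n\;\ge\;(\hat\delta^*_{K_r}+\eps)n
\]
as soon as $\beta\le(1-\hat\delta^*_{K_r})/4-\eps$ (a fortiori as soon as $\beta\le\beta'_r-\eps$). Corollary~\ref{thm:mainfrac} then yields a $K_r$-decomposition $\cF$ of $\overline G_q$; $\cF\cup\{H'_i\}_{i\le q}$ is a $K_r$-decomposition of the complete $r$-partite graph, and since $H_i\subseteq H'_i$ this completes $\cT_1,\dots,\cT_{r-2}$ to a sequence of mutually orthogonal Latin squares. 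The real work is just to verify that the single threshold $2\beta n$ is simultaneously (a)~large enough to swallow the initial degrees and the unit increments of the greedy step, (b)~small enough to keep $|B_j|\le n/2+o(n)$ and make the common-neighbourhood argument go through, and (c)~consistent with the minimum-degree hypothesis of Corollary~\ref{thm:mainfrac}; the factor $1/4$ (rather than $1/2$) in the definition of $\beta'_r$ is chosen so that (b) and (c) hold in tandem, using the unconditional lower bound $\hat\delta^*_{K_r}\ge 1-1/(r+1)$.
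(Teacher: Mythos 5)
Your argument is essentially the paper's own proof of this corollary: the same encoding as in Theorem~\ref{thm:mols}, the same greedy extension with the occupancy threshold relaxed from $10(c_r-\eps^2)n/9$ to roughly $2\beta n$, the same use of $\hat\delta^*_{K_r}\ge 1-1/(r+1)$ (via Proposition~\ref{prop:extrem} and Theorem~\ref{HRthm}) to bound $\beta'_r\le 1/(4(r+1))$, and the final appeal to Corollary~\ref{thm:mainfrac} in place of Theorem~\ref{thm:explicit}. The only slip is that you use the single threshold $2(\beta-\eps^2)n$ both for the inductive bound on $s(v,p-1)$ and for the definition of $B_j$, so a vertex just below the threshold can be pushed above it by the $+1$ increment; this is repaired exactly as in the paper by defining $B_j$ with the slightly smaller cutoff $2(\beta-\eps)n$ while keeping $2(\beta-\eps^2)n$ as the inductive bound.
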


\COMMENT{
\begin{proof}
Choose $0<\eps\ll 1$ and consider any $n\in\mathbb{N}$ with $1/n\ll \eps, 1/r$.
Let $\cT_1, \dots, \cT_{r-2}$ be a sequence of mutually orthogonal partial $n\times n$ Latin squares. 
Suppose that each row and each column of the grid contains at most $(\beta_r' -\eps) n$ non-empty cells and each coloured symbol is used at most $(\beta'_r -\eps)n$ times. We will show that
$\cT_1, \dots, \cT_{r-2}$ can be extended to a sequence of mutually orthogonal $n\times n$ Latin squares.
To do this, we first construct a balanced $r$-partite graph $G$ with vertex classes $V_j=[n]$ for $1\leq j\leq r$ as follows.
For each $1\leq i,j,k\leq n$ and each $1\leq m\leq r-2$, if in $\cT_m$ the cell $(i,j)$ contains the symbol $k$, include a $K_3$ on the vertices
$i\in V_{r-1}$, $j\in V_r$ and $k\in V_{m}$. (If the cell $(i,j)$ is filled in different $\cT_m$, this leads to multiple edges between
$i\in V_{r-1}$ and $j\in V_r$, which we disregard.) For each $1\leq i,j,k,k'\leq n$ and each $1\leq m<m'\leq r-2$ such that
the cell $(i,j)$ contains symbol $k$ in $\cT_m$ and symbol $k'$ in $\cT_{m'}$, add an edge between the vertices $k\in V_{m}$ and $k'\in V_{m'}$. 
If $r=3$, then $G$ is an edge-disjoint union of copies of $K_3$, so $G$ is $K_3$-divisible. Then $\overline G$ is also $K_3$-divisible and $\hat\delta(\overline G)\geq (\hat\delta^*_{K_3}+\eps)n$. So we can apply Theorem~\ref{thm:mainfrac} to find a $K_3$-decomposition of $\overline G$ which we can then use to complete $\cT_1$ to a Latin square.
Suppose now that $r\geq 4$. Observe that $G$ consists of an edge-disjoint union of cliques $H_1, \dots, H_q$ such that,
for each $1\leq i\leq q$, $H_i$ contains an edge of the form $xy$ where $x\in V_{r-1}$ and $y\in V_r$. We have $q\leq (\beta'_r-\eps)n^2$. 
We now show that we can extend $G$ to a graph of small maximum degree which can be decomposed into $q$ copies of~$K_r$.  We will do this by greedily extending each $H_i$ in turn to a copy $H_i'$ of $K_r$.
Suppose that $1\leq p\leq q$ and we have already found edge-disjoint $H_1', \dots, H_{p-1}'$. Given $v\in V(G)$, let $s(v, p-1)$ be the number of graphs in $\{H_1', \dots, H_{p-1}'\}\cup \{H_p, \dots, H_q\}$ which contain $v$. 
Suppose inductively that $s(v,p-1)\leq 2(\beta'_r-\eps^2)n$ for all $v\in V(G)$.
For each $1\leq j\leq r$, let $B_j:=\{v\in V_j: s(v,p-1)\geq 2(\beta'_r-\eps)n\}$. We have
\begin{equation}
|B_j|\leq \frac{q}{2(\beta'_r-\eps)n}\leq \frac{n}{2}.\label{eq:latin:a2}
\end{equation}
Also note that Proposition~\ref{prop:extrem} and Theorem~\ref{HRthm} together imply that
\begin{equation} \label{eq:betabound}
\beta'_r \le \frac{1}{4(r+1)}.
\end{equation} 
Let $G_{p-1}:=G\cup \bigcup_{i=1}^{p-1}(H_i'-H_i)$. Note that our inductive assumption implies that
\begin{equation}\label{eq:latin:b2}
\hat\delta(\overline G_{p-1})\geq (1-2(\beta'_r-\eps^2))n 
\stackrel{(\ref{eq:betabound})}{\ge} 1- \frac{1}{2(r+1)}.
\end{equation}
We will extend $H_p$ to a copy of $K_r$ as follows. Let $\{j_1, \dots, j_m\}=\{j:1\leq j\leq r \text{ and }V(H_p)\cap V_j=\emptyset\}$. For each $j_i$ in turn, starting with $j_1$, choose one vertex $x_{j_i}$ from the set $N_{\overline G_{p-1}}(V(H_p)\cup \{x_{j_1}, \dots, x_{j_{i-1}}\}, V_{j_i}\setminus B_{j_i})$.  This is possible since \eqref{eq:latin:a2} and \eqref{eq:latin:b2} imply 
\begin{align*}
d_{\overline G_{p-1}}(V(H_p)\cup \{x_{j_1}, \dots, x_{j_{i-1}}\}, V_{j_i}\setminus B_{j_i})\geq (\frac{1}{2}-\frac{r-1}{2(r+1)})n>0.
\end{align*}
 Let $H_p'$ be the copy of $K_r$ with vertex set $V(H_p)\cup \{x_j: 1\leq j\leq r \text{ and }V(H_p)\cap V_j=\emptyset\}$.  By construction, for every $v\in V(G)$, the number $s(v,p)$ of graphs in $\{H_1', \dots, H_{p}'\}\cup \{H_{p+1}, \dots, H_q\}$ which contain $v$ satisfies $s(v, p)\leq 2(\beta'_r-\eps^2)n$.
Continue in this way to find edge-disjoint $H_1', \dots, H_q'$ such that $s(v,q)\leq 2(\beta'_r-\eps^2)n$. Let $G_q:=\bigcup_{1\leq i\leq q}H_i'$. We have 
$\hat\delta(\overline G_q)\geq (1-2(\beta'_r-\eps^2))n \ge (\hat{\delta}^*_{K_r}+2\eps^2)n$
and, since $G_q$ is an edge-disjoint union of copies of $K_r$, we know that $\overline G_q$ is $K_r$-divisible. So we can apply Theorem~\ref{thm:mainfrac} to find a $K_r$-decomposition $\cF$ of $\overline G_q$. Note that $\cF':=\cF\cup \bigcup_{1\leq i\leq q}H_i'$ is a $K_r$-decomposition of the complete $r$-partite graph. Since $H_i\subseteq H_i'$ for each $1\leq i \leq q$, we can use $\cF'$ to complete $\cT_1, \dots, \cT_{r-2}$ to a sequence of mutually orthogonal Latin squares. This shows that $\beta_r(n)\ge (\beta'_r-\eps)n$. Since we can choose $\eps$ arbitrary small, this implies that
$\beta_r\ge \beta'_r$.
\end{proof}}

If, in addition, we know that, for each $1\leq i,j\leq n$, the entry $(i,j)$ of the grid is either filled by a symbol of every colour or it is empty, we can omit the factor $4$ in the definition of $\beta'_r$ for each $r\geq 4$. We obtain this stronger result since the graph $G$ obtained from 
$\cT_1, \dots, \cT_{r-2}$ will automatically be $K_r$-decomposable.

\section{Proof Sketch}\label{sec:sketch}

Our proof of Theorem~\ref{thm:main} builds on the proof of the main results of~\cite{mindeg}, but requires significant new ideas.
In particular, the $r$-partite setting involves a stronger notion of divisibility (the non-partite setting simply requires that $r-1$ divides the degree of each vertex of $G$ and that $\binom{r}{2}$ divides $e(G)$) and we have to work much harder to preserve it during our proof. This necessitates a delicate `balancing' argument (see Section~\ref{sec:bal}).
In addition, we use a new construction for our absorbers, which allows us to obtain the best possible version of Theorem~\ref{thm:main}.
(The construction of~\cite{mindeg} would only achieve $1-1/3(r-1)$ in place of $1-1/(r+1)$.)

The idea behind the proof is as follows.
We are assuming that we have access to a black box approximate decomposition result: given a $K_r$-divisible graph $G$ on vertex classes of size $n$ with $\hat\delta(G) \geq (\hat\delta^\eta_{K_r}+\eps) n$ we can obtain an approximate $K_r$-decomposition that leaves only $\eta n^2$ edges uncovered.
We would like to obtain an exact decomposition by `absorbing' this small remainder.
By an \defn{absorber} for a $K_r$-divisible graph $H$ we mean a graph $A_H$ such that both $A_H$ and $A_H \cup H$ have a $K_r$-decomposition.
For any fixed $H$ we can construct an absorber $A_H$.
But there are far too many possibilities for the remainder $H$ to allow us to reserve individual absorbers for each in advance.

To bridge the gap between the output of the approximate result and the capabilities of our absorbers, we use an iterative absorption approach (see also \cite{mindeg} and \cite{kko}).
Our guiding principle is that, since we have no control on the remainder if we apply the approximate decomposition result all in one go, we should apply it more carefully.
More precisely, we begin by partitioning $V(G)$ at random into a large number of parts $U^1, \ldots, U^k$.
Since $k$ is large, $G[U^1, \ldots, U^k]$ still has high minimum degree, and, since the partition is random, each $G[U^i]$ also has high minimum degree.
We first reserve a sparse and well structured subgraph $J$ of $G[U^1, \ldots, U^k]$, then we obtain an approximate decomposition of $G[U^1, \ldots, U^k] - J$ leaving a sparse remainder $H$.
We then use a small number of edges from the $G[U^i]$ to cover all edges of $H \cup J$ by copies of $K_r$.
Let $G'$ be the subgraph of $G$ consisting of those edges not yet used in the approximate decomposition.
Then all edges of $G'$ lie in some $G'[U^i]$, and each $G'[U^i]$ has high minimum degree, so we can repeat this argument on each $G'[U^i]$.
Suppose that we can iterate in this way until we obtain a partition $W_1 \cup \cdots \cup W_m$ of $V(G)$ such that each $W_i$ has size at most some constant $M$ and all edges of $G$ have been used in the approximate decomposition except for those contained entirely within some $W_i$.
Then the remainder is a vertex-disjoint union of graphs $H_1, \ldots, H_m$, with each $H_i$ contained within $W_i$.
At this point we have already achieved that the total leftover $H_1\cup \dots \cup H_m$ has only $O(n)$ edges. More importantly, the set of all possibilities for the graphs $H_i$ has size at most $2^{M^2}m = O(n)$, which is a small enough number that we are able to reserve special purpose absorbers for each of them in advance (i.e. right at the start of the proof).

The above sketch passes over one genuine difficulty. Recall that $H\subseteq G[U^1, \ldots, U^k]$ denotes the sparse remainder obtained
from the approximate decomposition, which we aim to `clean up' using a well structured graph $J$ set aside at the beginning of the proof,
i.e.~we aim to cover all edges of $H\cup J$ with copies of $K_r$ by using a few additional edges from the $G[U^i]$. So consider any vertex $v\in U_1^1$ (recall that $U^i_j=U^i\cap V_j$). In order to cover the edges in $H\cup J$ between $v$ and $U^2$, we would like to find a perfect $K_{r-1}$-matching in $N(v)\cap U^2$. However,
for this to work, the number of neighbours of $v$ inside each of $U^2_2,\dots,U^2_r$ must be the same, and the analogue must hold with $U^2$ replaced by
any of $U^3,\dots,U^k$. (This is in contrast to \cite{mindeg}, where one only needs that the number of leftover edges between $v$ and
any of the parts $U^i$ is divisible by~$r$, which is much easier to achieve.) We ensure this balancedness condition by constructing a `balancing graph' which can be used to transfer a surplus of edges or degrees from one part to another. This `balancing graph' will be the main ingredient of $J$.
Another difficulty is that whenever we apply the approximate decomposition result, we need to ensure that the graph is $K_r$-divisible.
This means that we need to `preprocess' the graph at each step of the iteration.

The rest of this paper is organised as follows.
In Section~\ref{sec:emb}, we present general purpose embedding lemmas that allow us to find a wide range of desirable structures within our graph.
In Section~\ref{sec:absorb}, we detail the construction of our absorbers.
In Section~\ref{sec:partn}, we prove some basic properties of random subgraphs and partitions.
In Section~\ref{sec:degred}, we show how we can assume that our approximate decomposition result produces a remainder with low maximum degree rather than simply a small number of edges.
In Section~\ref{sec:pseud}, we clean up the edges in the remainder using a few additional edges from inside each part of the current partition.
However, we assume in this section that our remainder is balanced in the sense described above.
In Section~\ref{sec:bal}, we describe the balancing operation which ensures that we can make this assumption.
Finally, in Section~\ref{sec:proof} we put everything together to prove Theorem~\ref{thm:main}.


\section{Embedding lemmas}\label{sec:emb}

Let $G$ be an $r$-partite graph on $(V_1,\dots, V_r)$ and let $\cP=\{U^1, U^2, \dots, U^k\}$ be a partition of $V(G)$. Recall that $U^i_j:=U^i\cap V_j$ for each $1\leq i \leq k$ and each $1\leq j\leq r$. We say that a graph (or multigraph) $H$ is \emph{$\cP$-labelled} if:
\begin{enumerate}[(a)]
\item every vertex of $H$ is labelled by one of: $\{v\}$ for some $v\in V(G)$; $U^i_j$ for some $1\leq i\leq k$, $1\leq j\leq r$ or $V_j$ for some $1\leq j\leq r$;\label{label:a}
\item the vertices labelled by singletons (called \emph{root vertices}) form an independent set in $H$, and each $v\in V(G)$ appears as a label $\{v\}$ at most once;\label{label:b}
\item for each $1\leq j\leq r$, the set of vertices $v\in V(H)$ such that $v$ is labelled $L$ for some $L\subseteq V_j$ forms an independent set in $H$.\label{label:c}%
\COMMENT{implies $H$ is $r$-partite (and gives vtx classes)}
\end{enumerate}
Any vertex which is not a root vertex is called a \emph{free vertex}. Throughout this paper, we will always have the situation that all the sets $U^i_j$ are large, so there will be no ambiguity between the labels of the form $\{v\}$ and $U^i_j$ in (b).

Let $H$ be a $\cP$-labelled graph and let $H'$ be a copy of $H$ in $G$. We say that $H'$ is \emph{compatible with its labelling} if each vertex of $H$ gets mapped to a vertex in its label.

Given a graph $H$ and $U \subseteq V(H)$ with $e (H[U]) = 0$, we define the \emph{degeneracy of $H$ rooted at $U$} to be the least $d$ for which there is an ordering $v_1, \ldots, v_b$ of the vertices of $H$ such that
\begin{itemize}
	\item there is an $a$ such that $U  = \{v_1, \ldots, v_a \}$ (the ordering of $U$ is unimportant);
	\item for $a < j \leq b$, $v_j$ is adjacent to at most $d$ of the $v_i$ with $1\leq i < j$.
\end{itemize}
The \emph{degeneracy of a $\cP$-labelled graph $H$} is the degeneracy of $H$ rooted at $U$, where $U$ is the set of root vertices of $H$.

In the proof of Lemma~\ref{lem:mover2}, we use the following special case of Lemma~5.1%
\COMMENT{not a `\textbackslash ref' - this is number in \cite{mindeg}}
 from \cite{mindeg} to find copies of labelled graphs inside a graph $G$, provided their degeneracy is small. Moreover, this lemma allows us to assume that the subgraph of $G$ used to embed these graphs has low maximum degree.

\begin{lemma} \label{lma:finding}
Let $1/ n \ll \eta \ll \eps, 1/d, 1/b \leq 1$ and let $G$ be a graph on $n$ vertices.
Suppose that:
\begin{enumerate}[\rm(i)]
\item  for each $S \subseteq V(G)$ with $|S| \leq d$, $d_G(S,V(G)) \geq \eps n$.
\end{enumerate}
Let $m \leq \eta n^2$ and let $H_1, \ldots, H_m$ be labelled graphs such that, for every $1\leq i\leq m$, every vertex of $H_i$ is labelled $\{v\}$ for some $v\in V(G)$ or labelled by $V(G)$ and that property \eqref{label:b} above holds for $H_i$. Moreover, suppose that:
\begin{enumerate}[\rm(i)]
\setcounter{enumi}{1}
	\item for each $1 \leq i \leq m$, $|H_i| \leq b$;
	\item for each $1\leq i \leq m$, the degeneracy of $H_i$ (rooted at the set of vertices labelled by singletons) is at most $d$;
	\item for each $v\in V(G)$, there are at most $\eta n$ graphs $H_i$ with some vertex labelled $\{v\}$.
\end{enumerate} 
Then there exist edge-disjoint embeddings $\phi(H_1), \ldots, \phi(H_m)$ of $H_1, \ldots, H_m$ compatible with their labellings such that the subgraph $H := \bigcup_{i=1}^m \phi(H_i)$ of $G$ satisfies $\Delta(H) \leq \eps n$.
\end{lemma}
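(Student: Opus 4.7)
My plan is a greedy embedding of the $H_i$ one at a time, maintaining $H := \bigcup_{j < i} \phi(H_j)$ and, for each $v \in V(G)$, a count $c^*(v)$ of the number of earlier embeddings in which $v$ was the image of a \emph{free} vertex. Fix a threshold $T := \eps n / (10 d b)$ and call $v$ \emph{bad} whenever $c^*(v) \geq T$. Because $\sum_v c^*(v) \leq mb \leq \eta b n^2$, the bad set has size at most $10\eta d b^2 n / \eps$ at every moment, which is $o(\eps n)$ since $\eta \ll \eps, 1/d, 1/b$.

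For each $H_i$ in turn, I would order its vertices as $v_1, \ldots, v_b$ so that the roots $v_1, \ldots, v_a$ come first and every subsequent free vertex $v_j$ has at most $d$ neighbours among $v_1, \ldots, v_{j-1}$ (available by hypothesis~(iii)). Root vertices are embedded to their forced labels; property~(b) ensures that roots form an independent set in $H_i$, so no edge-conflict arises among them. To embed a free vertex $v_j$ whose already-embedded $H_i$-neighbours map to $x_1, \ldots, x_t$ (with $t \leq d$), I would pick an image from
\[
N_G(\{x_1, \ldots, x_t\}, V(G)) \,\setminus\, \Big(B \,\cup\, \{\phi(v_1), \ldots, \phi(v_{j-1})\} \,\cup\, \bigcup_{s=1}^{t} N_H(x_s)\Big),
\]
where $B$ is the current bad set. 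Hypothesis~(i) gives a candidate set of size at least $\eps n$. Since each vertex appears as a root in at most $\eta n$ of the $H_j$ by hypothesis~(iv), and $c^*(y) < T$ is preserved by never embedding free vertices to bad vertices, we have $\deg_H(y) \leq (b-1)(\eta n + T) \leq \eps n / (5d)$ at every point in the process; summing over the $t \leq d$ choices of $x_s$ therefore removes at most $\eps n / 5$ vertices from the candidate set. Subtracting the further $o(n)$ vertices in $B \cup \{\phi(v_1), \ldots, \phi(v_{j-1})\}$ still leaves $\Omega(\eps n)$ valid candidates, and the rule $\phi(v_j) \notin N_H(x_s)$ automatically enforces edge-disjointness across embeddings. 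Applied once more after all embeddings finish, the same estimate gives $\Delta(H) \leq (b-1)(\eta n + T) \leq \eps n$, as required.

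The main obstacle I anticipate is that root vertices cannot be declared bad: a vertex $y$ receiving many root labels may acquire up to $(b-1)\eta n$ incident $H$-edges that we cannot control, and the $x_s$ in a given embedding step may all be of this type. The greedy step must absorb this contribution without exceeding the co-degree budget of $\eps n$ guaranteed by hypothesis~(i); this forces the free-image threshold $T$ to be of order $\eps/(db)$ and explains precisely why the hierarchy $\eta \ll \eps, 1/d, 1/b$ appears in the hypotheses and is exploited throughout the estimates.
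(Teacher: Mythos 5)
Your proposal is correct and is essentially the paper's argument: a greedy embedding of the $H_i$ in degeneracy order, embedding roots at their forced images and free vertices into the candidate set given by hypothesis~(i), minus the current $H$-neighbourhoods of the at most $d$ embedded neighbours (forcing edge-disjointness) and minus a small set of overused vertices, with the final degree bound split into the root contribution (at most $\eta n$ uses by~(iv), each adding at most $b-1$) and the capped free contribution. The only cosmetic difference is the bookkeeping: you cap the number of free-vertex uses at $T=\eps n/(10db)$, whereas the paper declares a vertex bad once its current degree in $H$ exceeds $\eta^{1/2}n$ and tracks root uses via a counter $s(v,j)$; both yield the same bounds.
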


We will also use the following partite version of the lemma to find copies of $\cP$-labelled graphs in an $r$-partite graph~$G$. We omit the proof since it is very similar to the proof of Lemma~5.1%
\COMMENT{not a `\textbackslash ref' - this is number in \cite{mindeg}}
 in \cite{mindeg}.
(See \cite[Lemma 4.5.2]{thesis} for a complete proof.)

\begin{lemma}\label{lem:emb}
Let $1/n \ll \eta \ll \eps, 1/d, 1/b, 1/k, 1/r\leq 1$ and let $G$ be an $r$-partite graph on $(V_1, \dots, V_r)$ where $|V_1|=\dots=|V_r|=n$. Let $\cP=\{U^1, \dots, U^k\}$ be a $k$-partition of $V(G)$. Suppose that:
\begin{enumerate}[\rm(i)]
\item for each $1\leq i \leq k$ and each $1\leq j \leq r$, if $S\subseteq V(G)\setminus V_j$ with $|S|\leq d$ then $d_G(S, U^i_j)\geq \eps |U^i_j|$.\label{lem:emb:1}
\end{enumerate}
Let $m\leq \eta n^2$ and let $H_1, \dots, H_m$ be $\cP$-labelled graphs such that the following hold:
\begin{enumerate}[\rm(i)]
\setcounter{enumi}{1}
	\item for each $1\leq i \leq m$, $|H_i|\leq b$;\label{lem:emb:2}
	\item for each $1\leq i \leq m$, the degeneracy of $H_i$ is at most $d$;\label{lem:emb:3}
	\item for each $v\in V(G)$, there are at most $\eta n$ graphs $H_i$ with some vertex labelled $\{v\}$.\label{lem:emb:4}
\end{enumerate}
Then there exist edge-disjoint embeddings $\phi(H_1), \dots, \phi(H_m)$ of $H_1, \dots, H_m$ in $G$ which are compatible with their labellings such that $H:=\bigcup_{1\leq i\leq m}\phi(H_i)$ satisfies $\Delta(H)\leq \eps n$. \qed
\end{lemma}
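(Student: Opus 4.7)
The plan is a sequential greedy embedding that closely mirrors the proof of Lemma~\ref{lma:finding}. I process $H_1,\dots,H_m$ one at a time, and for each $H_i$ I embed its vertices in a fixed degeneracy order $w_1,\dots,w_b$ with the root vertices coming first. The root images are forced by their labels; for each free vertex $w_j$ with label $L(w_j)\in\{U^{i'}_{j'},V_{j'}\}$ and backward-neighbour set $S_j=\{w_l:l<j,\, w_lw_j\in E(H_i)\}$, I pick $\phi(w_j)$ from the candidate set $N_G(\phi(S_j),L(w_j))$, whose size is at least $\eps|L(w_j)|\geq \eps n/(k+1)$ by condition~(i) (summing the bound over $i'$ when $L(w_j)=V_{j'}$).

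Let $H$ denote the running union of embeddings. The key invariant I maintain is $\Delta(H)\leq T$ for a threshold $T:=\eps n/(3dk)$, chosen so that $\eta\ll T/n\ll \eps/(dk)$. When choosing $\phi(w_j)$, three kinds of vertices in $L(w_j)$ have to be avoided: (a) vertices already used as images of earlier $w_l$ in this $H_i$, of which there are at most~$b$; (b) vertices $u$ for which $u\phi(w_l)\in E(H)$ for some $w_l\in S_j$, which would break edge-disjointness, and of which there are at most $\sum_{s\in S_j}\deg_H(\phi(s))\leq dT\leq \eps n/(3k)$; and (c) vertices $u$ with $\deg_H(u)\geq T-b$, which would break the invariant after embedding $w_j$, and of which there are at most $2e(H)/(T-b)=O(\eta b^2 dk\, n/\eps)$, using $e(H)\leq m\binom{b}{2}\leq \eta n^2 b^2$. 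The hierarchy $1/n\ll\eta\ll\eps,1/d,1/b,1/k,1/r$ makes the combined bound in (a)--(c) strictly smaller than the candidate size $\eps n/(k+1)$, so a valid image always exists. Condition~(iv) limits the contribution to any $\deg_H(v)$ coming from $v$ appearing as a root label to at most $\eta nb$ in total, comfortably absorbed into the slack between $T$ and $\eps n$.

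The resulting embeddings are compatible with their labellings by construction, injective on each $V(H_i)$ by~(a), pairwise edge-disjoint by~(b), and the invariant yields $\Delta(H)\leq T+O(\eta nb)\leq \eps n$ at termination. The only genuine difference from Lemma~\ref{lma:finding} is that the partite candidate set $N_G(\phi(S_j),U^{i'}_{j'})$ has size of order $n/k$ rather than $n$; this is what forces $T$ to be a factor of roughly $1/(dk)$ below $\eps n$, so that the edge-disjointness bad set in~(b) stays strictly below $\eps|L(w_j)|$. With this choice the rest is a routine sequential greedy argument, and the main obstacle is simply the bookkeeping of checking the competing bounds (a)--(c) against the correct candidate set $U^{i'}_{j'}$ or $V_{j'}$ at each step.
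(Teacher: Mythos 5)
Your proposal is correct and follows essentially the same route as the paper's proof: a sequential greedy embedding in degeneracy order (roots first), avoiding the small set of vertices whose current degree in the partial union is near a threshold, subtracting the degrees of already-embedded neighbours to guarantee edge-disjointness, and accounting separately for the forced root images via condition (iv). The only cosmetic differences are your choice of threshold $\eps n/(3dk)$ where the paper uses $\eta^{1/2}n$, and the minor point that in your step (b) the degrees of root images should strictly be bounded by $T+\eta nb$ rather than $T$, which changes nothing since $\eta nb\ll T$.
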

\COMMENT{proof - see appendix}


\section{Absorbers}\label{sec:absorb}

Let $H$ be any $r$-partite graph on the vertex set $V=(V_1, \dots , V_r)$. An \emph{absorber for $H$} is a graph $A$ such that both $A$ and $A \cup H$ have $K_r$-decompositions.

Our aim is to find an absorber for each small $K_r$-divisible graph $H$ on $V$.
The construction develops ideas in \cite{mindeg}. In particular, we will build the absorber in stages using transformers, introduced below, to move between $K_r$-divisible graphs.

Let $H$ and $H'$ be vertex-disjoint graphs.
An \emph{$(H,H')_{r}$-transformer} is a graph $T$ which is edge-disjoint from $H$ and $H'$ and is such that both $T\cup H$ and $T\cup H'$ have $K_r$-decompositions.
Note that if $H'$ has a $K_r$-decomposition, then $T\cup H'$ is an absorber for $H$. So the idea is that we can use a transformer to transform a given $H$ into a new graph $H'$, then into $H''$ and so on, until finally we arrive at a graph which has a $K_r$-decomposition.

Let $V=(V_1, \dots, V_r)$. 
Throughout this section, given two $r$-partite graphs $H$ and $H'$ on $V$, we say that $H'$ is a \emph{partition-respecting copy} of $H$
if there is an isomorphism $f: H\rightarrow H'$ such that $f(v)\in V_j$ for every vertex $v\in V(H)\cap V_j$.

Given $r$-partite graphs $H$ and $H'$ on $V$, we say that $H'$ is \emph{obtained from $H$ by identifying vertices}%
\COMMENT{Basically - if we can identify vertices in the same vertex class and no vertex is allowed to jump into a different vertex class.}
if there exists a sequence of $r$-partite graphs $H_0, \dots, H_{s}$ on $V$ such that $H_0=H$, $H_s=H'$ and the following holds.
For each $0\leq i < s$, there exists $1\leq j_i\leq r$ and vertices $x_i,y_i \in V(H_i)\cap V_{j_i}$ satisfying the following:
\begin{enumerate}[\rm(i)]
		\item $N_{H_i}(x_i) \cap N_{H_i}(y_i) = \emptyset$.\label{id:1}
		\item $H_{i+1}$ is the graph which has vertex set $V(H_i)\setminus \{y_i\}$ and edge set $E(H_i\setminus \{y_i\})\cup \{vx_i: vy_i\in E(H_i)\}$ (i.e., $H_{i+1}$ is obtained from $H_i$ by identifying the vertices $x_i$ and $y_i$).
\end{enumerate}
Condition~\eqref{id:1} ensures that the identifications do not produce multiple edges.
Note that if $H$ and $H'$ are $r$-partite graphs on $V$ and $H'$ is a partition-respecting copy of a graph obtained from $H$ by identifying vertices then there exists a graph homomorphism $\phi: H \rightarrow  H'$ that is edge-bijective and maps vertices in $V_j$ to vertices in $V_j$ for each $1\leq j\leq r$.

In the following lemma, we find a transformer between a pair of $K_r$-divisible graphs $H$ and $H'$ whenever $H'$ can be obtained from $H$ by identifying vertices.

\begin{lemma}\label{lem:transformer}
Let $r\geq 3$ and $1/n \ll \eta \ll 1/s\ll \eps, 1/b, 1/r\leq 1$. Let $G$ be an $r$-partite graph on $V=(V_1, \dots, V_r)$ with $|V_1|=\dots=|V_r|=n$. Suppose that $\hat\delta(G)\geq (1-1/(r+1)+\eps)n$. Let $H$ and $H'$ be vertex-disjoint $K_r$-divisible graphs on $V$ with $|H|\leq b$. Suppose further that $H'$ is a partition-respecting copy of a graph obtained from $H$ by identifying vertices. Let $B\subseteq V$ be a set of at most $\eta n$ vertices.
Then $G$ contains an $(H, H')_r$-transformer $T$ such that $V(T)\cap B\subseteq V(H\cup H')$ and $|T|\leq s^2$.
\end{lemma}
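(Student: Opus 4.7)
The plan is to factor the abstract identification $H\leadsto H'$ into a bounded-length sequence of elementary moves, build an abstract transformer for each move, glue them into one $\cP$-labelled graph $\Lambda^*$, and finally embed $\Lambda^*$ into $G$ via Lemma~\ref{lem:emb}. By hypothesis there is a sequence $H=H_0,H_1,\dots,H_\ell$ such that each $H_{i+1}$ arises from $H_i$ by identifying two vertices $x_i,y_i\in V_{j_i}$ with disjoint $H_i$-neighbourhoods, and $H_\ell$ is a partition-respecting copy of $H'$. I would extend this to end at $H'$ itself by appending at most $|V(H')|\le b$ elementary \emph{relabelling} moves, each transferring all edges incident to some vertex $z\in V_j$ onto a fresh vertex $z'\in V_j\setminus V(H\cup H')$; this produces $\ell'=O(b)$ elementary steps, each inducing an edge-bijection $\phi_i\colon E(H_{i-1})\to E(H_i)$.

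For each elementary step I would construct an abstract $\cP$-labelled graph $\Lambda_i$ (with $\cP$ the trivial one-part partition of $V$, so the only non-singleton labels are the classes $V_j$) on a bounded set of fresh free vertices. The basic building block is a \emph{swap gadget}: for each edge $e$ with $\phi_i(e)\ne e$, a small $r$-partite graph on $O(r)$ fresh vertices that, together with $e$, decomposes into $K_r$'s one way, and together with $\phi_i(e)$ decomposes another way, the two decompositions differing in exactly one $K_r$. Because a bare $K_r$ minus an edge is not $K_r$-divisible, each gadget must splice together several $K_r$ copies --- a partite analogue of a Pasch-like trade --- so that divisibility is restored at every auxiliary vertex; edges fixed by $\phi_i$ need no gadget. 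I would then chain the $\Lambda_i$ by introducing, on fresh vertices, auxiliary copies $\hat H_1,\dots,\hat H_{\ell'-1}$ of the intermediate graphs and setting $\Lambda^*:=\bigcup_i\Lambda_i\cup\bigcup_i\hat H_i$, with each $\Lambda_i$ arranged so that $\Lambda_i\cup\hat H_{i-1}$ and $\Lambda_i\cup\hat H_i$ both admit a $K_r$-decomposition (where $\hat H_0:=H$ and $\hat H_{\ell'}:=H'$). A telescoping argument then produces $K_r$-decompositions of both $\Lambda^*\cup H$ and $\Lambda^*\cup H'$, while $|V(\Lambda^*)|=O_r(b^2)\le s^2$ by the hierarchy $1/s\ll 1/b,1/r$.

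Finally I would embed $\Lambda^*$ into $G$ via Lemma~\ref{lem:emb}. To force the free vertices to avoid $B$, I would pass to a subgraph $G'$ obtained by deleting $B\setminus V(H\cup H')$ and trimming a few extra vertices so that all classes have a common size, noting that $|B|\le\eta n\ll\eps n$ gives $\hat\delta(G')\ge(1-1/(r+1)+\eps/2)n$. Applied to $G'$ with the trivial one-part partition and the single labelled graph $\Lambda^*$, the lemma produces an embedding compatible with the labelling: the bounded degeneracy of $\Lambda^*$ (each free vertex has at most $r-1$ earlier neighbours in its gadget) together with the large minimum degree of $G'$ yield the common-neighbourhood condition $d_{G'}(S,V_j\cap V(G'))\ge\eps|V_j\cap V(G')|$ for every $|S|\le d=O_r(1)$. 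The resulting image is the transformer $T$, with $|V(T)|\le s^2$ and $V(T)\cap B\subseteq V(H\cup H')$.

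The main obstacle is constructing the swap gadget. A naive pair of $K_r$ copies fails because $K_r$ minus an edge is not $K_r$-divisible, so each gadget must blend several cliques in a balanced way to preserve divisibility at every auxiliary vertex. Exhibiting a small, bounded-degeneracy, $\cP$-labelled gadget with two distinct $K_r$-decompositions in the $r$-partite setting is the technical heart of the argument; the chaining, the avoidance of $B$, and the final embedding are routine in comparison.
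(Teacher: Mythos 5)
Your high-level framework (chain transformers along the identification sequence, then embed a bounded-degeneracy labelled graph while avoiding $B$) is reasonable, and indeed the paper uses exactly this chaining idea one level up, in Lemmas~\ref{lem:embtransf2} and~\ref{lem:absorber}. But for the present lemma your proposal has two genuine gaps, and the first is precisely where the entire content of the proof lies.

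First, the ``swap gadget'' is postulated, not constructed, and there is a structural reason to doubt that a gadget of size $O_r(1)$ per edge (hence $|V(\Lambda^*)|=O_r(b^2)$) with degeneracy $r-1$ exists. After you replace an edge $xy$ of $H$ by an auxiliary $(r-2)$-set $Z^{xy}$ so that $\{x,y\}\cup Z^{xy}$ and $\{\phi(x),\phi(y)\}\cup Z^{xy}$ are cliques, you are left with stars from $x$ and from $\phi(x)$ to $Z^x=\bigcup_y Z^{xy}$ that must be absorbed into $K_r$'s in the \emph{other} decomposition; absorbing a star at a vertex requires a perfect $K_{r-1}$-matching on its leaf set, and $G$ restricted to a prescribed bounded set of already-embedded vertices need not contain one (it may contain no edges at all). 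This is why the paper's transformer attaches to each $x\in V(H)$ a set $S^x$ of $(r-1)s$ vertices chosen inside the common neighbourhood of $x$ and $\phi(x)$ --- so $|T|$ is of order $sb$, not $O_r(b^2)$ --- and finds the two required edge-disjoint perfect $K_{r-1}$-matchings $F^x_1,F^x_2$ via the multipartite Hajnal--Szemer\'edi theorem (Theorem~\ref{thm:hajszem}); the threshold $1-1/(r+1)$ is exactly what makes the relative minimum degree inside $N_G(x)\cap N_G(\phi(x))$ reach the Hajnal--Szemer\'edi bound $1-1/(r-1)$. No purely degeneracy-based embedding can produce these matchings, so Lemma~\ref{lem:emb} alone cannot deliver the transformer.

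Second, your chaining step embeds copies $\hat H_1,\dots,\hat H_{\ell'-1}$ of the intermediate graphs on fresh vertices. These intermediate graphs are obtained from $H$ by repeated identifications and can be dense: their degeneracy can be as large as roughly $b$, far exceeding $r+1$, whereas at minimum degree $(1-1/(r+1)+\eps)n$ a set of more than about $r+1$ vertices can have empty common neighbourhood in a class $V_j$. So the $\hat H_i$ are in general not embeddable in $G$ at all. The paper sidesteps this by never materializing intermediate graphs in this lemma: it works with a single edge-bijective homomorphism $\phi\colon H\to H'$ and builds one transformer directly between $H$ and $H'$ (intermediate graphs do appear later, but only ones such as $H_{\mathrm{exp}}$ and $M_h$ that are engineered via $K_r$-expansion to have degeneracy $r-1$). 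Your handling of $B$ and the final size bound would be fine, but the proof as proposed does not go through without the missing gadget and without repairing the chaining.
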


In our proof of Lemma~\ref{lem:transformer}, we will use the following multipartite asymptotic version of the Hajnal--Szemer\'edi theorem.

\begin{theorem}[\cite{kemy} and \cite{loma}]\label{thm:hajszem}
Let $r\geq 2$ and let $1/n\ll \eps,1/r$. Suppose that $G$ is an $r$-partite graph on $(V_1, \dots, V_r)$ with $|V_1|=\dots=|V_r|=n$ and $\hat{\delta}(G)\geq (1-1/r+\eps)n$. Then $G$ contains a perfect $K_r$-matching. 
\end{theorem}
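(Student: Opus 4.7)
The plan is to apply the absorbing method, as in \cite{kemy} and \cite{loma}. I will build a small \emph{absorbing set} $A\subseteq V(G)$ with enough internal flexibility to accommodate any balanced leftover, cover most of $G-A$ by a near-perfect $K_r$-matching, and finally use $A$ to complete the matching.

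Call an ordered $r$-tuple $X=(x_1,\dots,x_r)$ with $x_j\in V_j$ \emph{balanced}, and call $F\subseteq V(G)\setminus X$ with $|F\cap V_j|=\ell-1$ for every $j$ an \emph{$X$-absorber of order $\ell$} if both $G[F]$ and $G[F\cup X]$ admit a perfect $K_r$-matching. The key local claim is that for some fixed $\ell=\ell(r)$ and $c=c(\eps,r)>0$, every balanced $X$ has at least $cn^{r(\ell-1)}$ $X$-absorbers of order $\ell$. I would realise such an absorber with $\ell=r+1$ as a symmetric \emph{swap gadget}: pick vertex-disjoint $K_r$-copies $Y^{(1)},\dots,Y^{(r)}$ with $Y^{(i)}=\{y_1^{(i)},\dots,y_r^{(i)}\}$ and $y_j^{(i)}\in V_j$, satisfying also that $x_i$ is adjacent to $y_j^{(i)}$ for all $j\neq i$ and that $\{y_1^{(1)},\dots,y_r^{(r)}\}$ induces a further $K_r$. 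Then $F:=\bigcup_i V(Y^{(i)})$ decomposes into the $r$ copies $Y^{(i)}$, while $F\cup X$ decomposes into the $r$ rotated copies $(Y^{(i)}\setminus\{y_i^{(i)}\})\cup\{x_i\}$ together with $\{y_1^{(1)},\dots,y_r^{(r)}\}$. Greedy construction of such a gadget uses the minimum degree hypothesis via a standard common-neighbourhood bound: for any $S\subseteq V$ with $|S|<r$ containing at most one vertex per class, inclusion--exclusion on $\hat\delta(G)\geq(1-1/r+\eps)n$ yields $|N_G(S,V_j)|\geq \eps n$ in the remaining classes $V_j$, so each of the $r^2$ vertices of the gadget admits linearly many choices, producing $\Omega(n^{r^2})$ absorbers per balanced $X$.

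For the random selection, include each potential order-$\ell$ absorber independently with probability $p=\Theta(n^{-r(\ell-1)+1})$ and discard any that shares a vertex with another chosen one. Chernoff-type concentration (e.g.\ Lemma~\ref{lem:chernoff2}), combined with a union bound over the $O(n^r)$ balanced tuples $X$, shows that with positive probability the resulting set $A$ satisfies $|A\cap V_j|=o(n)$ uniformly in $j$ and, for every balanced $X\subseteq V\setminus A$, contains at least $\sqrt n$ vertex-disjoint $X$-absorbers; a small trim equalises $|A\cap V_j|$ across $j$. Setting $G':=G-A$, we have $\hat\delta(G')\geq(1-1/r+\eps/2)n$, so the $r$-uniform hypergraph whose edges are the $K_r$-copies in $G'$ is almost-regular with bounded codegrees, and a Pippenger--Spencer-type matching theorem yields a $K_r$-matching $M$ covering all but a balanced leftover set $L$ of size $o(n)$. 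Finally, repeatedly pick a balanced $r$-tuple $X\subseteq L$, locate an unused $X$-absorber $F\subseteq A$ (guaranteed, since at most $|L|/r$ swaps occur while $A$ houses $\sqrt n\gg |L|/r$ vertex-disjoint absorbers per $X$), and replace the current $K_r$-matching on $F$ by one on $F\cup X$. After at most $|L|/r$ swaps every leftover vertex has been absorbed, and the union of $M$, the default matchings on unused absorbers, and the swapped matchings is a perfect $K_r$-matching of $G$.

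The main obstacle is the first step: designing a gadget that is simultaneously small, respects the $r$-partite structure so that both $F$ and $F\cup X$ are balanced as required for a perfect $K_r$-matching, and exists in quantity $\Omega(n^{r(\ell-1)})$ for every balanced $X$. A naive chain of $K_r$'s, which suffices in the non-partite Hajnal--Szemer\'edi argument, does not automatically produce balanced swaps in the partite setting, so the symmetric $r$-fold rotation above is essential; the sharp $1-1/r+\eps$ threshold is dictated by the fact that the partite degree condition is exactly what is needed to guarantee a linear lower bound at each of the $r^2$ greedy extensions.
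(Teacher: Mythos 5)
First, note that the paper does not prove this statement at all: Theorem~\ref{thm:hajszem} is quoted as a black box from \cite{kemy} and \cite{loma}, so there is no internal proof to compare against. Judged on its own, your absorption skeleton is in the spirit of the proof in \cite{loma}, and your gadget is sound: the ``$r$-fold rotation'' absorber does give perfect $K_r$-matchings of both $F$ and $F\cup X$, it works for an arbitrary balanced tuple $X$ (no edges inside $X$ are needed), and it can be embedded greedily in abundance provided you choose the diagonal clique $\{y_1^{(1)},\dots,y_r^{(r)}\}$ first and the remaining vertices afterwards. Be careful with your counting claim, though: the constraint sets are of size up to $r$ (e.g.\ a non-diagonal vertex of $Y^{(i)}$ must be joined to $x_i$, to $y_i^{(i)}$ and to the $r-2$ other vertices of $Y^{(i)}$, and $x_i,y_i^{(i)}$ lie in the same class), so your stated bound ``$|S|<r$ with at most one vertex per class'' is not what you actually use; the estimate $|N_G(S,V_j)|\ge n-|S|(1/r-\eps)n\ge \eps n$ still holds for $|S|\le r$, but if you chose the diagonal vertices last their constraint sets would have size $2(r-1)$ and the bound would fail, so the order of embedding is essential and should be spelled out.

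The genuine gap is the almost-perfect matching step. Your justification -- that the $r$-uniform hypergraph of $K_r$-copies in $G'$ is ``almost-regular with bounded codegrees'', so a Pippenger--Spencer-type theorem applies -- is not correct: under the hypothesis $\hat\delta(G')\ge(1-1/r+\eps/2)n$ the number of $K_r$'s through a vertex can vary by constant factors (roughly between $(\eps n)^{r-1}$ and $n^{r-1}$), and semi-random nibble theorems require degrees $(1+o(1))D$, which you cannot guarantee. Nor does a soft maximality argument rescue you: if $M$ is a maximum $K_r$-matching with (automatically balanced) leftover of size $u$ per class, the induced minimum degree only forces a further $K_r$ when $u>(r-1)(1/r-\eps/2)n$, so you get a leftover of order $(1-1/r)n$, not $o(n)$. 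This almost-perfect matching statement at threshold $1-1/r$ is itself a nontrivial theorem (it is a substantial component of \cite{loma}, and can alternatively be derived via the regularity lemma together with a fractional perfect $K_r$-matching argument), so your proof is incomplete until you supply or cite it. Separately, fix the quantitative slip at the end: a leftover of size $o(n)$ is not dominated by $\sqrt n$ vertex-disjoint absorbers per tuple; keep the $\Omega(n)$ bound per balanced tuple that your probabilistic selection actually yields (choose $p=\gamma n^{-r(\ell-1)+1}$ with $\gamma$ small, so that $|A\cap V_j|\le \gamma' n$ while each tuple retains linearly many pairwise disjoint absorbers after deleting overlaps).
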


\begin{proofof}{Lemma~\ref{lem:transformer}}
Let $\phi: H \to H'$ be a graph homomorphism from $H$ to $H'$ that is edge-bijective and maps vertices in $V_j$ to $V_j$ for each $1\leq j\leq r$.

Let $T$ be any graph defined as follows:
\begin{enumerate}[(a)]
\item For each $xy \in E(H)$, $Z^{xy}:= \{ z_j^{xy}: 1\leq j\leq r \text{ and } x,y\notin V_j\}$ is a set of $r-2$ vertices. For each $x\in V(H)$, let $Z^x:=\bigcup_{y\in N_H(x)}Z^{xy}$.\label{transf0}
\item 
For each $x\in V(H)$, $S^{x}$ is a set of $(r-1)s$ vertices.
\item For all distinct $e,e'\in E(H)$ and all distinct $x,x'\in V(H)$, the sets $Z^e$, $Z^{e'}$, $S^x$, $S^{x'}$ and $V(H\cup H')$ are disjoint.
\item $V(T) := V(H) \cup V(H')  \cup \bigcup_{e \in E(H)} Z^e\cup \bigcup_{x\in V(H)}S^x$.\label{transf1}
\item $E_H := \{ x z: x \in V(H)$ and $z\in Z^x\}$.\label{transf2}
\item $E_{H'} := \{ \phi(x) z  : x \in V(H)$ and $z\in Z^x\}$.\label{transf4}
\item $E_Z := \{ wz : e \in E(H)\text{ and } w,z\in Z^{e}\}$.\label{transf3}
\item $E_S:= \{ x v : x\in V(H) \text{ and } v\in S^x\}$.\label{transf5}
\item $E_S':= \{ \phi(x) v  : x\in V(H) \text{ and } v\in S^x\}$.\label{transf6}
\item For each $x\in V(H)$, $F^x_1$ is a perfect $K_{r-1}$-matching on $S^x\cup Z^x$.\label{transf7}
\item For each $x\in V(H)$, $F^x_2$ is a perfect $K_{r-1}$-matching on $S^x$.\label{transf8}
\item For each $x\in V(H)$, $F^x_1$ and $F^x_2$ are edge-disjoint.\label{transf9}
\item For each $x\in V(H)$, $Z^x$ is independent in $F^x_1$.\label{transf10}
\item $E(T):=E_H\cup E_{H'}\cup E_Z\cup E_S\cup E_S'\cup \bigcup_{x\in V(H)} E(F^x_1\cup F^x_2)$.\label{transf12}
\end{enumerate}
Then
$$|T| =|H| + |H'|+\sum_{e\in E(H)}|Z^e| +\sum_{x\in V(H)}|S^x|= |H| + |H'| + (r-2)e(H)+(r-1)s|H|\leq s^2.$$
Let $T_1$ be the subgraph of $T$ with edge set $E_H\cup E_{H'} \cup E_Z$ and let $T_2:=T-T_1$. So $E(T_2)=E_S\cup E_S'\cup\bigcup_{x\in V(H)} E(F^x_1\cup F^x_2)$. In what follows, we will often identify certain subsets of the edge set of $T$ with the subgraphs of $T$ consisting of these edges. For example, we will write $E_S[\{x\}, S^x]$ for the subgraph of $T$ consisting of all the edges in $E_S$ between $x$ and $S^x$.
Note that there are several possibilities for $T$ as we have several choices for the perfect $K_{r-1}$-matchings in \eqref{transf7} and \eqref{transf8}.

Lemma~\ref{lem:transformer} will follow from Claims~1~and~2 below.

\begin{figure}[t]
\includegraphics[scale=0.8]{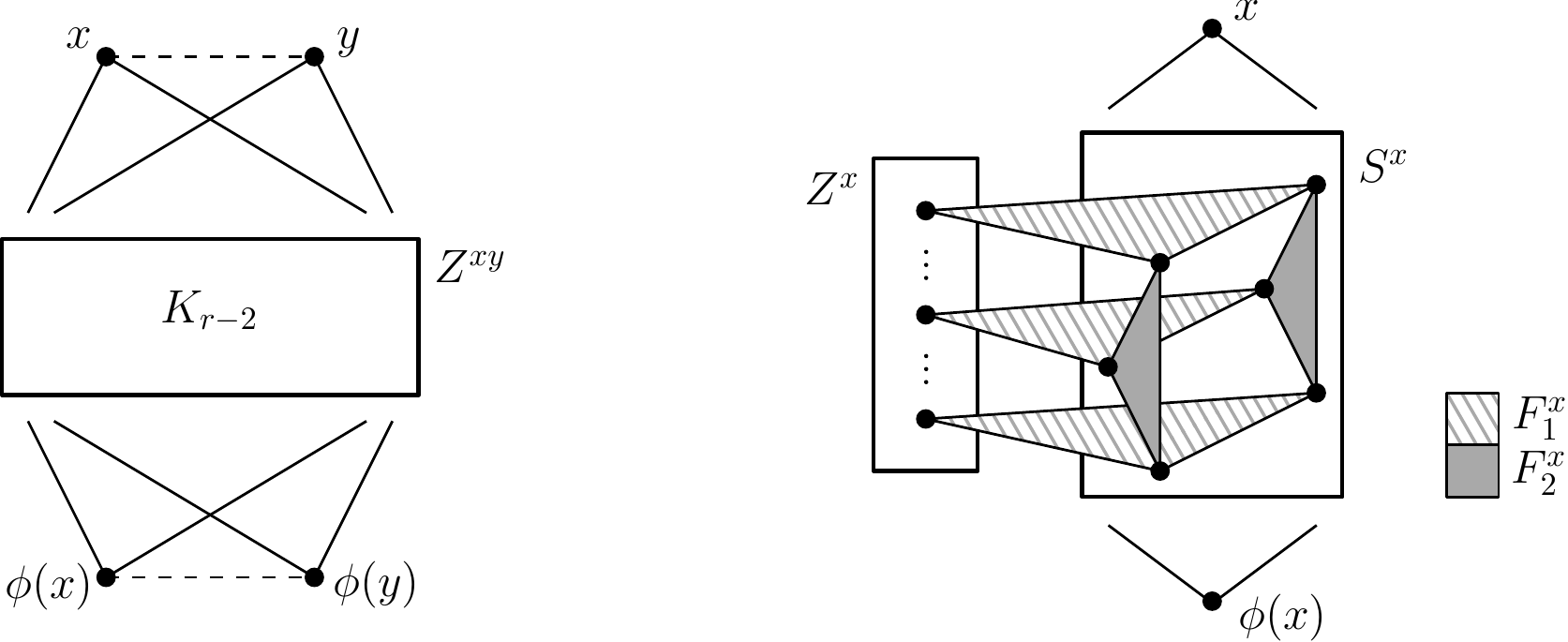}
\caption{Left: Subgraph of $T_1$ associated with $xy\in E(H)$. Right: Subgraph of $T_2$ associated with $x\in V(H)$ in the case when $r=4$.}
\end{figure}

\medskip
\claim{Claim 1}{If $T$ satisfies \eqref{transf0}--\eqref{transf12}, then $T$ is an $(H, H')_r$-transformer.}

\pclaim{Proof of Claim 1}
Note that $H \cup E_H \cup E_Z$ can be decomposed into $e(H)$ copies of $K_r$, where each copy of $K_r$ has vertex set $\{x, y\} \cup Z^{xy}$ for some edge $xy \in E(H)$.
Similarly, $H' \cup E_{H'} \cup E_Z$ can be decomposed into $e(H)$ copies of $K_r$.%
\COMMENT{where each $K_r$ has vertex set $\{\phi(x), \phi(y) \} \cup Z^{xy}$ for some edge $xy \in E(H)$.}

For each $x \in V(H)$, note that $(E_{H'}\cup E_S')[\{\phi(x)\}, S^x\cup Z^x] \cup F^x_1$ and $E_S[\{x\}, S^x]\cup F^x_2$ are edge-disjoint and have $K_r$-decompositions.
Since
$$T_2 \cup E_{H'}=\bigcup_{x\in V(H)}\big((E_{H'}\cup E_S')[\{\phi(x)\}, S^x\cup Z^x] \cup F^x_1\big)
\cup \bigcup_{x\in V(H)}\big(E_S[\{x\}, S^x]\cup F^x_2\big),$$
it follows that $T_2 \cup E_{H'}$ has a $K_r$-decomposition.
Similarly, for each $x \in V(H)$, $(E_H\cup E_S)[\{x\}, S^x\cup Z^x] \cup F^x_1$ and $E_S'[\{\phi(x)\}, S^x]\cup F^x_2$ are edge-disjoint and have $K_r$-decompositions, so $T_2 \cup E_H$ has a $K_r$-decomposition.

To summarise, $H \cup E_H \cup E_Z$, $H' \cup E_{H'} \cup E_Z$, $T_2\cup E_H$ and $T_2\cup E_{H'}$ all have $K_r$-decompositions. Therefore, 
$T \cup H = (H \cup E_H \cup E_Z) \cup (T_2\cup E_{H'})$ has a $K_r$-decomposition, as does $T \cup H' = (H' \cup E_{H'} \cup E_Z) \cup (T_2\cup E_H)$.
Hence $T$ is an $(H,H')_r$-transformer.

\medskip
\claim{Claim 2}{$G$ contains a graph $T$ satisfying \eqref{transf0}--\eqref{transf12} such that $V(T)\cap B\subseteq V(H\cup H')$.}

\pclaim{Proof of Claim 2}
We begin by finding a copy of $T_1$ in $G$. It will be useful to note that, for any graph $T$ which satisfies \eqref{transf0}--\eqref{transf12}, $T_1$ is $r$-partite with vertex classes $(V(H\cup H')\cap V_j)\cup \{z^{xy}_j: xy\in E(H) \text{ and } x,y\notin V_j\}$ where $1\leq j\leq r$.
Also, $T[V(H\cup H')]$ is empty and every vertex $z \in V(T_1) \setminus V(H \cup H')$ satisfies
\begin{align}
	d_{T_1}(z) =2 + (r-3)+2 = r+1. \label{eqn:regularT1}
\end{align}
So $T_1$ has degeneracy $r+1$ rooted at $V(H\cup H')$. Since $\hat\delta(G)\geq (1-1/(r+1)+\eps/2)n+|B|$, we can find a
copy of $T_1$ in $G$ such that $V(T_1)\cap B\subseteq V(H\cup H')$.

We now show that, after fixing $T_1$, we can extend $T_1$ to $T$ by finding a copy of $T_2$. Consider any ordering $x_1, \dots, x_{|H|}$ on the vertices of $H$. Suppose we have already chosen $S^{x_1}, \dots, S^{x_{q-1}}$, $F^{x_1}_1, \dots, F^{x_{q-1}}_1$ and $F^{x_1}_2, \dots, F^{x_{q-1}}_2$ and we are currently embedding $S^{x_{q}}$.
Let $B':=B\cup V(T_1)\cup \bigcup_{i=1}^{q-1}S^{x_i}$; that is, $B'$ is the set of vertices that are unavailable for $S^{x_{q}}$, either because they have been used previously or they lie $B$. Note that $|B'|\leq |T|+|B|\leq 2\eta n$.
We will choose suitable vertices for  $S^{x_{q}}$ in the common neighbourhood of $x_q$ and $\phi(x_q)$.

To simplify notation, we write $x:=x_q$ and assume that $x\in V_1$ (the argument is identical in the other cases). Choose a set $V'\subseteq (N_G(x)\cap N_G(\phi(x)))\setminus B'$ which is maximal subject to $|V_2'|=\dots =|V_r'|$ (recall that $V_j'=V'\cap V_j$). Note that for each $2\leq j\leq r$, we have
$$|V_{j}'| \geq (1-1/(r+1)+\eps)n-(1/(r+1)-\eps)n-|B'|\geq (1-2/(r+1))n.$$
Let $n':=|V_2'|$. For every $2\leq j\leq r$ and every $v\in V(G)\setminus V_j$, we have
\begin{equation}
d_G(v, V_j')\geq n'-(1/(r+1)-\eps) n\geq (1-1/(r-1)+\eps)n'.\label{eq:V'deg}
\end{equation}

Roughly speaking, we will choose $S^{x}$ as a random subset of $V'$. For each $2\leq j\leq r$, choose each vertex of $V_{j}'$ independently with probability $p:=(1+\eps/8)s/n'$ and let $S_{j}'$ be the set of chosen vertices. Note that, for each $j$, $\ex(|S_{j}'|)=n'p=(1+\eps/8)s$. We can apply Lemma~\ref{lem:chernoff2} to see that
\begin{align}
\pr(||S_{j}'|-(1+\eps/8)s|\geq \eps s/8)&\leq \pr(||S_{j}'|-(1+\eps/8)s|\geq \eps\ex(|S_{j}'|)/10)\nonumber\\
&\leq 2e^{-\eps^2s/300}\leq 1/4(r-1).\label{eq:prob1}
\end{align}

Given a vertex $v\in V(G)$ and $2\leq j\leq r$ such that $v\notin V_{j}$, note that
$$\ex(d_G(v, S'_{j})) \stackrel{(\ref{eq:V'deg})}{\geq} (1-1/(r-1)+\eps)n'p> (1-1/(r-1)+\eps)s.$$
We will say that a vertex $v\in V(G)$ is \emph{bad} if there exists $2\leq j\leq r$ such that $v\notin V_{j}$ and $d_G(v, S'_{j})<(1-1/(r-1)+3\eps/4)s$, that is, the degree of $v$ in $S'_{j}$ is lower than expected.
We can again apply Lemma~\ref{lem:chernoff2} to see that
\begin{align*}
\pr(d_G(v, S'_{j})\leq (1-1/(r-1)+3\eps/4)s)&\leq \pr(|d_G(v, S'_{j})-\ex(d_G(v, S'_{j}))|\geq \eps s/4)\\
&\leq \pr(|d_G(v, S'_{j})-\ex(d_G(v, S'_{j}))|\geq \eps \ex(d_G(v, S'_{j}))/10)\\
&\leq 2e^{-\eps^2 s/600}.
\end{align*}
So $\pr(v\text{ is bad})\leq 2(r-1)e^{-\eps^2 s/600}\leq e^{-s^{1/2}}$.
Let $S':=\bigcup_{j=2}^rS'_j$. We say that the set $S'$ is \emph{bad} if $S'\cup Z^x$ contains a bad vertex. We have
\begin{align}
\pr(S'\text{ is bad})&\leq \sum_{v\in V'}\pr(v\in S' \text{ and } v\text{ is bad})+\sum_{v \in Z^x}\pr(v\text{ is bad})\nonumber\\
&=\sum_{v\in V'}\pr(v\in S')\pr(v\text{ is bad})+\sum_{v \in Z^x}\pr(v\text{ is bad})\nonumber\\
&\leq (n'p+(b-1)(r-2))e^{-s^{1/2}}\leq 2se^{-s^{1/2}}\leq 1/4.\label{eq:prob2}
\end{align}
\COMMENT{used that $|Z^x|\leq \Delta(H)(r-2)\leq (b-1)(r-2)$.}

We apply \eqref{eq:prob1} and \eqref{eq:prob2} to see that with probability at least $1/2$, the set $S'$ chosen in this way is not bad and, for each $2\leq j\leq r$, we have $s\le |S_{j}'|\le (1+\eps/4)s$. Choose one such set $S'$. 
Delete at most $\eps s/4$ vertices  from each $S_{j}'$ to obtain sets $S^x_{j}$ satisfying $|S^x_2|=\dots =|S^x_r|= s$. Let $S^x:=\bigcup_{j=2}^r S^x_{j}$.
Since $S'$ was not bad, for each $2\leq j\leq r$ and each vertex $v\in (S^x\cup Z^x)\setminus V_j$,
\begin{equation}\label{eq:Sdeg}
d_G(v, S^x_j)\geq (1-1/(r-1)+3\eps/4)s-\eps s/4=(1-1/(r-1)+\eps/2)s.
\end{equation}

We now show that we can find $F^x_1$ and $F^x_2$ satisfying \eqref{transf7}--\eqref{transf10}. Let $G^x:=G[Z^x\cup S^x]-G[Z^x]$. Note that $G^x$ is a balanced $(r-1)$-partite graph with vertex classes of size $n_x$ where $s\le n_x\leq s+(r-2)(b-1)/(r-1)<s+b$.
Using \eqref{eq:Sdeg}, we see that
$$\hat\delta(G^x)\geq (1-1/(r-1)+\eps/2)s\geq (1-1/(r-1)+\eps/3)n_x.$$
So, using Theorem~\ref{thm:hajszem}, we can find a perfect $K_{r-1}$-matching $F^x_1$ in $G^x$.
Finally, let $G':=G-F^x_1$ and use \eqref{eq:Sdeg} to see that
$$\hat\delta(G'[S^x])\geq (1-1/(r-1)+\eps/3)s.$$
So we can again apply Theorem~\ref{thm:hajszem}, to find a perfect $K_{r-1}$-matching $F^x_2$ in $G'[S^x]$.
In this way, we find a copy of $T$ satisfying \eqref{transf0}--\eqref{transf12} such that $V(T)\cap B\subseteq V(H\cup H')$.

This completes the proof of Lemma~\ref{lem:transformer}.
\end{proofof}

We now construct our absorber by combining several suitable transformers.

\newcommand{\Hatt}{H_{\text{att}}}
\newcommand{\Hexp}{H_{\text{exp}}}

Let $H$ be an $r$-partite multigraph on $(\tilde V_1,\dots,\tilde V_r)$ with $\tilde V_i \subset V_i$ for each $1 \leq i \leq r$, and let $xy\in E(H)$. A \emph{$K_r$-expansion of $xy$} is defined as follows.
Consider a copy $F_{xy}$ of $K_r$ on vertex set $\{u_1,\dots,u_r\}$ such that $u_j\in V_j\setminus V(H)$ for all $1\le j\le r$.
Let $j_1,j_2$ be such that $x\in V_{j_1}$ and $y\in V_{j_2}$.
Delete $xy$ from $H$ and $u_{j_1}u_{j_2}$ from $F_{xy}$ and add edges joining $x$ to $u_{j_2}$ and joining $y$ to $u_{j_1}$.
Let $\Hexp$ be the graph obtained by $K_r$-expanding every edge of $H$, where the $F_{xy}$ are chosen to be vertex-disjoint for
different edges $xy\in E(H)$.

\begin{fact} \label{expand}
Suppose that the graph $H'$ is obtained from a graph $H$ by $K_r$-expanding the edge $xy \in E(H)$ as above.
Then the graph obtained from $H'$ by identifying $x$ and $u_{j_1}$ is $H$ with a copy of $K_r$ attached to $x$.\qedhere
\end{fact}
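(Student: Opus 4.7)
The plan is to simply unwind the definition of $K_r$-expansion and track each edge through the identification. Let me describe the bookkeeping explicitly. Starting from $H'$, I would first list its edges into three groups: (a) the edges of $H$ except $xy$; (b) the edges of $F_{xy}$ on $\{u_1,\dots,u_r\}$ except $u_{j_1}u_{j_2}$; and (c) the two new edges $xu_{j_2}$ and $yu_{j_1}$. Then I would identify $x$ with $u_{j_1}$ (permissible since both lie in $V_{j_1}$) and see what each group becomes.

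Group (a) is untouched, since $u_{j_1}$ does not occur in any of these edges. In group (c), the edge $xu_{j_2}$ is unchanged, while $yu_{j_1}$ becomes $yx$, restoring the edge $xy$ that was deleted from $H$. Together, (a) and the relevant part of (c) therefore give back all of $H$. In group (b), the edges within $\{u_k : k\ne j_1\}$ are unchanged and form a $K_{r-1}$ (since only one edge of $F_{xy}$, namely $u_{j_1}u_{j_2}$, was deleted and it was incident to $u_{j_1}$). The edges $u_{j_1}u_k$ for $k\ne j_1,j_2$ become $xu_k$. Combining these new $xu_k$ edges with the edge $xu_{j_2}$ from group (c) yields all $r-1$ edges from $x$ to $\{u_k : k\ne j_1\}$. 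Together with the $K_{r-1}$ on $\{u_k : k\ne j_1\}$, this is a $K_r$ on vertex set $\{x\}\cup\{u_k : k\ne j_1\}$.

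Since the vertices $u_k$ with $k\ne j_1$ lie in $V_k\setminus V(H)$ and hence are disjoint from $V(H)$, this $K_r$ meets $H$ only at $x$. That is exactly the assertion that the identified graph is $H$ with a copy of $K_r$ attached to $x$.

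There is no real obstacle here; the only thing to watch is that the identification $x \sim u_{j_1}$ produces no multiple edges, which is immediate because $u_{j_1}$'s neighbours in $H'$ are all in $\{u_k : k\ne j_1, j_2\}\cup\{y\}$, and of these only $y$ lies in $V(H)$, while the edge $xy$ has been deleted from $H$. Hence condition (i) of the definition of identifying vertices is satisfied and the counting above is valid.
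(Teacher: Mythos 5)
Your argument is correct: it is exactly the routine edge-by-edge verification that the paper leaves implicit (Fact~\ref{expand} is stated without proof as immediate from the definitions of $K_r$-expansion and identification), including the check that condition~(i) for identifying $x$ and $u_{j_1}$ holds so no multiple edges arise. Nothing further is needed.
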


Let $h\in \N$. We define a graph $M_h$ as follows. Take a copy of $K_r$ on $V$ (consisting of one vertex in each $V_j$) and replace each edge by $h$ multiedges. Let $M$ denote the resulting multigraph. Let $M_h:=M_\text{exp}$ be the graph obtained by $K_r$-expanding every edge of $M$.
We have $|M_h|=r+hr\binom{r}{2}$. Note that $M_h$ has degeneracy $r-1$. To see this, list all vertices in $V(M)$ (in any order) followed by the vertices in $V(M_h\setminus M)$ (in any order).

We will now apply Lemma~\ref{lem:transformer} twice in order to find an $(H, M_h)_r$-transformer in $G$.

\begin{lemma}\label{lem:embtransf2}
Let $r\geq 3$ and $1/n \ll \eta \ll 1/s\ll \eps, 1/b, 1/r\leq 1$. Let $G$ be an $r$-partite graph on $V=(V_1, \dots, V_r)$ with $|V_1|=\dots=|V_r|=n$. Suppose that $\hat\delta(G)\geq (1-1/(r+1)+\eps)n$. Let $H$ be a $K_r$-divisible graph on $V$ with $|H|\leq b$. Let $h:=e(H)/\binom{r}{2}$.
Let $M'_h$ be a partition-respecting copy of $M_h$ on $V$ which is vertex-disjoint from $H$. Let $B\subseteq V$ be a set of at most $\eta n$ vertices.
Then $G$ contains an $(H, M'_h)_r$-transformer $T$ such that $V(T)\cap B\subseteq V(H\cup M'_h)$ and $|T|\leq 3s^2$.
\end{lemma}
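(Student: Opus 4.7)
The plan is to apply Lemma~\ref{lem:transformer} twice, using the $K_r$-expansion $H_{\text{exp}}$ as a common intermediate. The key observation is that two quite different graphs are both partition-respecting copies of identifications of $H_{\text{exp}}$. First, $M'_h$ itself is such a copy: collapsing $V(H)\cap V_j$ to a single vertex (playing the role of $x_j^*$) for each $j$ yields $M_h$, since by $K_r$-divisibility $H$ has exactly $h$ edges between each pair $(V_{j_1},V_{j_2})$, matching the multiplicity structure of $M$. Each identification step is valid because distinct vertices of $V(H)$ have disjoint neighbourhoods in $H_{\text{exp}}$ (their neighbours are the $u$-vertices of their own edge-gadgets, and distinct edges contribute disjoint $u$-sets). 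Second, an auxiliary graph $H^*$, defined as $H$ together with a disjoint union $F$ of $e(H)$ copies of $K_r$ on fresh vertices (one copy attached at a designated endpoint $x_e$ of each edge $e\in E(H)$), is also such a copy: it is obtained from $H_{\text{exp}}$ by iteratively invoking Fact~\ref{expand}, one step per edge $e$, which turns each gadget into a $K_r$ attached at $x_e$ while restoring the edge $e$. Both $H_{\text{exp}}$ and $H^*$ are routinely checked to be $K_r$-divisible and to have $O(rb^2)$ vertices.

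I would begin by placing partition-respecting copies of $H_{\text{exp}}$ and $H^*$ inside $G$, with their fresh extra vertices chosen in $V\setminus(V(H)\cup V(M'_h)\cup B)$ and pairwise disjoint, so that $H$, $H_{\text{exp}}$, $H^*$ and $M'_h$ are pairwise edge-disjoint. Next, I would apply Lemma~\ref{lem:transformer} to the subgraph $G':=G-(H\cup F\cup M'_h)$, which still satisfies $\hat\delta(G')\ge(1-1/(r+1)+\eps/2)n$ since $H$, $F$, $M'_h$ have bounded maximum degree. With inputs $H_{\text{exp}}$ and $M'_h$, and with forbidden set $B\cup(V(H^*)\setminus V(H))$, this produces an $(H_{\text{exp}},M'_h)_r$-transformer $T_A$ with $|T_A|\le s^2$. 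I would then apply Lemma~\ref{lem:transformer} once more to the subgraph $G'-T_A$ with inputs $H_{\text{exp}}$ and $H^*$, taking the forbidden set to be $B\cup V(M'_h)\cup(V(T_A)\setminus V(H_{\text{exp}}))$, to produce an $(H_{\text{exp}},H^*)_r$-transformer $T_B$ with $|T_B|\le s^2$, automatically edge-disjoint from $T_A$, $M'_h$, $H$ and $F$.

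Finally, I would set $T:=T_A\cup T_B\cup H_{\text{exp}}\cup F$. Since $H\cup F=H^*$, the identities
\[
T\cup H \;=\; (T_A\cup H_{\text{exp}})\cup(T_B\cup H^*), \qquad T\cup M'_h \;=\; (T_A\cup M'_h)\cup(T_B\cup H_{\text{exp}})\cup F
\]
each exhibit $T\cup H$ and $T\cup M'_h$ as edge-disjoint unions of graphs with $K_r$-decompositions, where the transformer pairs decompose by the defining property of $T_A$ and $T_B$, and $F$ is a disjoint union of $K_r$'s. Thus $T$ is an $(H,M'_h)_r$-transformer. The bound $|T|\le|T_A|+|T_B|+|H_{\text{exp}}|+|F|\le 2s^2+O(rb^2)\le 3s^2$ follows from the hierarchy $1/s\ll 1/b,1/r$, and $V(T)\cap B\subseteq V(H\cup M'_h)$ holds by our choice of forbidden sets at each application of Lemma~\ref{lem:transformer}.

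The main obstacles are organisational rather than conceptual: verifying the sequential validity of the two identification schemes (both reduce to the gadget-disjointness observation), and checking that each application of Lemma~\ref{lem:transformer} sees sufficiently large $\hat\delta$ after passing to the subgraphs $G'$ and $G'-T_A$. Since $|H_{\text{exp}}|$, $|H^*|$, $|T_A|$ are all bounded by constants depending on $r$ and $b$, and $n$ is large with $s$ chosen from the prescribed hierarchy, the degree reductions incurred at each stage are negligible, so the hypotheses of Lemma~\ref{lem:transformer} remain comfortably satisfied.
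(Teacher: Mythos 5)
Your proposal is correct and follows essentially the same route as the paper: the paper also introduces the attached graph $\Hatt=H\cup F$ and a copy of $\Hexp$, applies Lemma~\ref{lem:transformer} twice (to the pairs $(\Hexp,\Hatt)$ and $(\Hexp,M'_h)$, in the opposite order to yours, which is immaterial), and assembles $T=T'\cup T''\cup \Hexp\cup(\Hatt-H)$ exactly as you do. The one point you should make explicit is that the copy of $\Hexp$ must be placed entirely on fresh vertices, in particular vertex-disjoint from $V(H)\subseteq V(H^*)$, since Lemma~\ref{lem:transformer} requires its two input graphs to be vertex-disjoint; the paper ensures this by embedding $\Hexp'$ in $G\setminus(V(\Hatt\cup M'_h)\cup B)$.
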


\begin{proof}
We construct a graph $\Hatt$ as follows. Start with the graph $H$. For each edge of $H$, arbitrarily choose one of it endpoints $x$ and attach a copy of $K_r$ (found in
$G\setminus ((V(H\cup M'_h)\cup B)\setminus \{x\})$) to $x$. The copies of $K_r$ should be chosen to be vertex-disjoint outside $V(H)$. Write $\Hatt$ for the resulting graph. 
Let $\Hexp'$ be a partition-respecting copy of $\Hexp$ in $G\setminus (V(\Hatt\cup M'_h)\cup B)$.
Note that we are able to find these graphs since both have degeneracy $r-1$ and 
$\hat\delta(G)\geq (1-1/(r+1)+\eps)n$.

By Fact~\ref{expand}, $\Hatt$ is a partition-respecting copy of a graph obtained from $\Hexp'$ by identifying vertices, and this
is also the case for~$M'_h$. To see the latter, for each $1\leq j\leq r$, identify all vertices of $\Hexp'$ lying in~$V_j$.
(We are able to do this since these vertices are non-adjacent with disjoint neighbourhoods.)

Apply Lemma~\ref{lem:transformer} to find an $(\Hexp',\Hatt)_r$-transformer $T'$ in $G-M'_h$ such that $V(T')\cap B \subseteq V(H)$ and $|T'|\leq s^2$.
Then apply Lemma~\ref{lem:transformer} again to find an $(\Hexp',M'_h)_r$-transformer $T''$ in $G-(\Hatt\cup T')$ such that $V(T'')\cap B \subseteq V(M'_h)$ and $|T''|\leq s^2$.

Let $T:=T'\cup T''\cup \Hexp' \cup (\Hatt-H)$. Then $T$ is edge-disjoint from $H\cup M'_h$. Note that
\begin{align*}
&T\cup H=(T'\cup \Hatt) \cup (T''\cup \Hexp')\hspace{6pt}\text{ and}\\
&T\cup M'_h=(T'\cup \Hexp')\cup (T''\cup M'_h) \cup (\Hatt-H),
\end{align*}
both of which have $K_r$-decompositions. Therefore $T$ is an $(H, M'_h)_r$-transformer. Moreover, $|T|\leq 3s^2$.
Finally, observe that $V(T)\cap B=V(T'\cup T''\cup \Hatt)\cap B\subseteq V(H\cup M'_h)$.
\end{proof}

We now have all of the necessary tools to find an absorber for $H$ in $G$.

\begin{lemma}\label{lem:absorber}
Let $r\geq 3$ and let $1/n \ll \eta \ll 1/s\ll \eps, 1/b, 1/r\leq 1$. Let $G$ be an $r$-partite graph on $V=(V_1, \dots, V_r)$ with $|V_1|=\dots=|V_r|=n$. Suppose that $\hat\delta(G)\geq (1-1/(r+1)+\eps)n$. Let $H$ be a $K_r$-divisible graph on $V$ with $|H|\leq b$. Let $B\subseteq V$ be a set of at most $\eta n$ vertices.
Then $G$ contains an absorber $A$ for $H$ such that $V(A)\cap B\subseteq V(H)$
and $|A|\leq s^3$.
\end{lemma}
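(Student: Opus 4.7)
The plan is to construct the absorber $A$ using Lemma~\ref{lem:embtransf2} to reduce the absorption problem for $H$ to that for the fixed canonical graph $M_h$, and then to build an absorber for $M_h$ explicitly using its gadget structure. Recall that an absorber for $X$ is any graph $T$ such that both $T$ and $T\cup X$ admit $K_r$-decompositions.

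First, I would greedily find a partition-respecting copy $M'_h$ of $M_h$ in $G$ which is vertex-disjoint from $H$ and satisfies $V(M'_h)\cap B=\emptyset$. Since $|M_h|=r+rh\binom{r}{2}=O(rb^2)$ and $M_h$ has degeneracy at most $r-1$, this embedding is straightforward from the minimum degree hypothesis (one may apply Lemma~\ref{lem:emb} to the single $\cP$-labelled graph obtained from $M_h$). Next, I would apply Lemma~\ref{lem:embtransf2} with forbidden set $B\cup V(M'_h)$ (still of size at most $2\eta n$) to obtain an $(H,M'_h)_r$-transformer $T_1\subseteq G$ with $|T_1|\le 3s^2$, edge-disjoint from $H\cup M'_h$, and satisfying $V(T_1)\cap B\subseteq V(H\cup M'_h)$.

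The crucial remaining step is to produce a graph $T_2\subseteq G$, edge-disjoint from $T_1\cup H\cup M'_h$ and with $V(T_2)\cap B\subseteq V(M'_h)$, such that $T_2$ and $T_2\cup M'_h$ both admit $K_r$-decompositions, i.e.\ $T_2$ is an absorber for $M'_h$. The point is that $M_h$ is a fixed graph whose structure depends only on $r$ and $h\le b^2$: it consists of $r$ central vertices together with $h\binom{r}{2}$ vertex-disjoint gadgets, each obtained by $K_r$-expanding a single multi-edge. For each gadget I would attach a bounded-size auxiliary piece (using a small number of fresh vertices and $O(r^2)$ edges per gadget, embedded via Lemma~\ref{lem:emb}) that, on the one hand, completes the gadget so that the gadget together with its auxiliary piece splits into copies of $K_r$, and, on the other hand, glues across gadgets (together with a ``central'' $K_r$-structure on $\{v_1,\dots,v_r\}$) so that the union of all auxiliary pieces itself splits into copies of $K_r$. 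A direct check handles $r=3$ (where each gadget is a path and the completion uses two edges per gadget plus a central $K_3$), and an analogous construction works for $r\ge 4$ by introducing $O(r)$ fresh vertices per gadget. In all cases $|T_2|\le O(r^3 h)\le s^3/2$.

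Setting $A:=T_1\cup M'_h\cup T_2$, both required $K_r$-decompositions are obtained by matching up the pieces: $A=(T_1\cup M'_h)\sqcup T_2$ decomposes since each part does (by the transformer property and the absorber property of $T_2$, respectively), while $A\cup H=(T_1\cup H)\sqcup(T_2\cup M'_h)$ decomposes since each part does. The vertex condition $V(A)\cap B\subseteq V(H)$ follows from Steps~1--3: $V(M'_h)\cap B=\emptyset$, $V(T_1)\cap B\subseteq V(H\cup M'_h)$ and $V(T_2)\cap B\subseteq V(M'_h)=\emptyset\cap B$. The size bound $|A|\le 3s^2+O(rb^2)+O(r^3h)\le s^3$ holds for sufficiently large $s$.

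The main obstacle is Step~3: although $M_h$ has bounded size, it is itself \emph{not} $K_r$-decomposable (each gadget has $\binom{r}{2}+1$ edges), and the challenge is to design the auxiliary completion so that both it and its union with $M_h$ decompose. Unlike the transformer step, this is not a reduction to another canonical graph but an explicit construction exploiting the symmetric ``central-vertices-plus-gadgets'' structure of $M_h$.
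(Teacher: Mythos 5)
Your Steps 1--2 and the final assembly mirror the paper's argument, but Step 3 contains a genuine gap. Producing $T_2$ such that both $T_2$ and $T_2\cup M'_h$ have $K_r$-decompositions is exactly the absorber-construction problem for $M_h$, which is the heart of the lemma, and you essentially assert it (``a direct check handles $r=3$ \dots an analogous construction works for $r\ge 4$'') rather than prove it. Moreover, the one concrete completion you sketch is incorrect for $h\ge 2$. For $r=3$ a gadget over the pair $(j_1,j_2)$ is the path $v_{j_1}u_{j_2}u_{j_3}u_{j_1}v_{j_2}$, and closing it with the two auxiliary edges $v_{j_1}u_{j_3}$, $v_{j_2}u_{j_3}$ leaves, in the auxiliary graph alone, a cherry $v_{j_1}u_{j_3}v_{j_2}$ for each of the $h$ gadgets over that pair; since $u_{j_3}$ has no other neighbours there, any triangle decomposition of the auxiliary graph must close every such cherry through the single edge $v_{j_1}v_{j_2}$ of your central $K_3$, which would have to lie in $h$ distinct triangles --- impossible in a simple graph once $h\ge 2$. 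For $r\ge 4$ matters are worse: each pendant edge $v_{j_1}u_{j_2}$ of a gadget must be completed to a $K_r$ through $r-2$ further common neighbours, and one must simultaneously arrange that the auxiliary edges on their own decompose; no construction is given, and ``analogous'' does not follow from the $r=3$ case (which itself fails as above).

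The paper sidesteps this entirely by a second application of the transformer machinery rather than a bespoke completion: since $e(hK_r)=h\binom{r}{2}$, the canonical graph associated with $hK_r$ is again $M_h$, so after reserving a copy of $hK_r$ (vertex-disjoint from $H\cup M'_h$ and avoiding $B$) one can apply Lemma~\ref{lem:embtransf2} a second time, with $hK_r$ playing the role of $H$, to obtain an $(hK_r,M'_h)_r$-transformer $T''$ edge-disjoint from everything used so far. Then $T'\cup T''\cup M'_h$ is an $(H,hK_r)_r$-transformer, and $A:=T'\cup T''\cup M'_h\cup hK_r$ is the required absorber; equivalently, $T_2:=T''\cup hK_r$ is precisely the absorber for $M'_h$ that your Step 3 needs. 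If you replace your hand-crafted completion by this second application of Lemma~\ref{lem:embtransf2}, the remaining parts of your argument (the vertex condition $V(A)\cap B\subseteq V(H)$ and the bound $|A|\le s^3$) go through.
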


\begin{proof}
Let $h:=e(H)/\binom{r}{2}$. Let $G':=G\setminus (V(H)\cup B)$. Write $hK_r$ for the graph consisting of $h$ vertex-disjoint copies of $K_r$. Since $\hat\delta(G')\geq (1-1/(r+1)+\eps/2)n$, we can choose vertex-disjoint (partition-respecting) copies of $M_h$ and $hK_r$ in $G'$
(and call these $M_h$ and $hK_r$ again).
Use Lemma~\ref{lem:embtransf2} to find an $(H,M_h)_r$-transformer $T'$ 
in $G-hK_r$ such that $V(T')\cap B \subseteq V(H)$ and $|T'|\leq 3s^2$.  Apply Lemma~\ref{lem:embtransf2} again to find an $(h K_r, M_h )_r$-transformer $T''$ in $G-(H\cup T')$ which avoids $B$ and satisfies
$|T''|\leq 3s^2$.
It is easy to see that $T:=T'\cup T'' \cup M_h$ is an $(H, hK_r)_r$-transformer.%
\COMMENT{$T$ is edge-disjoint from $H\cup hK_r$. $T\cup H=(T'\cup H)\cup (T'' \cup M_h)$ and $T\cup hK_r=(T'\cup M_h)\cup (T''\cup hK_r)$ have $K_r$-decompositions.}

Let $A:=T \cup h K_r$. Note that both $A$ and $A\cup H=(T\cup H)\cup h K_r$ have $K_r$-decompositions. So $A$ is an absorber for $H$. 
Moreover, $V(A)\cap B\subseteq V(T')\cap B\subseteq V(H)$ and $|A| \leq s^3$.
\end{proof}

\subsection{Absorbing sets}

Let $\cH$ be a collection of graphs on the vertex set $V=(V_1, \dots, V_r)$. We say that $\cA$ is an \emph{absorbing set for $\cH$} if $\cA$ is a collection of edge-disjoint graphs and, for every $H\in \cH$ and every $K_r$-divisible subgraph $H'\subseteq H$, there is a distinct $A_{H'}\in \cA$ such that $A_{H'}$ is an absorber for $H'$.

\begin{lemma}\label{lem:absorbset}
Let $r\geq 3$ and $1/n \ll \eta \ll \eps, 1/b, 1/r\leq 1$. Let $G$ be an $r$-partite graph on $V=(V_1, \dots, V_r)$ with $|V_1|=\dots=|V_r|=n$. Suppose that $\hat\delta(G)\geq (1-1/(r+1)+\eps)n$.
Let $m\leq \eta n^2$ and let $\cH$ be a collection of $m$ edge-disjoint graphs on $V=(V_1, \dots, V_r)$ such that each vertex $v\in V$ appears in at most $\eta n$ of the elements of $\cH$ and $|H|\leq b$ for each $H\in \cH$. 
Then $G$ contains an absorbing set $\cA$ for $\cH$ such that $\Delta(\bigcup \cA)\leq  \eps n$.
\end{lemma}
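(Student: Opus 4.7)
The plan is to build $\cA$ one absorber at a time, using Lemma~\ref{lem:absorber} to find the individual absorbers and removing each newly found absorber's edges from $G$ before the next step. First, I would fix $s$ satisfying $\eta \ll 1/s \ll \eps, 1/b, 1/r$ so that Lemma~\ref{lem:absorber} (with $\eps/4$ in place of $\eps$) applies and yields absorbers of order at most $s^3$, provided the forbidden set $B$ has size at most some $\eta_0=\eta_0(\eps,b,r) \cdot n$. I would then enumerate all pairs $(H,H')$ with $H\in\cH$ and $H'\subseteq H$ a $K_r$-divisible subgraph as $(H_1,H_1'),\dots,(H_N,H_N')$, where $N\leq 2^{b^2}m\leq 2^{b^2}\eta n^2$ and, by the assumption on $\cH$, every $v\in V$ lies in $V(H_i')$ for at most $2^{b^2}\eta n$ indices $i$.

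The iteration is as follows. Start with $\cA_0=\emptyset$, and at step $i$ set $G_{i-1}:=G-\bigcup\cA_{i-1}$ and
\[
B_i := \bigl\{ v \in V : d_{\bigcup \cA_{i-1}}(v) \geq \eps n/4 - s^3 \bigr\},
\]
then apply Lemma~\ref{lem:absorber} to $G_{i-1}$ with target $H_i'$ and forbidden set $B_i$ to obtain an absorber $A_i$ with $|A_i|\leq s^3$ and $V(A_i)\cap B_i\subseteq V(H_i')$; set $\cA_i:=\cA_{i-1}\cup\{A_i\}$. Edge-disjointness of $\cA$ is automatic from the removal step.

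The main obstacle is verifying at every step that Lemma~\ref{lem:absorber} still applies, i.e.\ that $\hat\delta(G_{i-1})\geq (1-1/(r+1)+\eps/4)n$ and $|B_i|\leq \eta_0 n$. Both will follow from the bound $\Delta(\bigcup\cA_{i-1})\leq \eps n/2$, which I would establish by splitting, for each vertex $v$, its appearances in the $A_j$ with $j<i$ into ``root'' appearances (those with $v\in V(H_j')$) and ``non-root'' appearances. The property $V(A_j)\cap B_j\subseteq V(H_j')$ from Lemma~\ref{lem:absorber} forces $v\notin B_j$ at every step $j$ of a non-root appearance, so the non-root contribution to $d_{\bigcup\cA_{i-1}}(v)$ is at most $\eps n/4$ by the definition of $B_j$. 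The root contribution is at most $2^{b^2}\eta n\cdot s^3\leq \eps n/4$ since $v$ lies in at most $2^{b^2}\eta n$ of the $V(H_j')$ and each absorber has at most $s^3$ vertices. Adding gives $\Delta(\bigcup\cA_{i-1})\leq\eps n/2$, hence $\hat\delta(G_{i-1})\geq (1-1/(r+1)+\eps/2)n$; averaging over vertices yields $(\eps n/4-s^3)|B_i|\leq 2e(\bigcup\cA_{i-1})\leq N\binom{s^3}{2}\leq 2^{b^2}s^6\eta n^2$, so $|B_i|\leq \eta_0 n$ for $\eta$ small enough. Applying the same degree count after step $N$ gives $\Delta(\bigcup\cA)\leq \eps n$, completing the proof. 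The heart of the argument is the splitting of degree contributions into root and non-root parts, which depends crucially on the avoidance clause $V(A)\cap B\subseteq V(H)$ in Lemma~\ref{lem:absorber}, together with the hypothesis bounding the number of elements of $\cH$ in which any fixed vertex appears.
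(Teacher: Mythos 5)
Your proposal is correct and follows essentially the same route as the paper: a greedy, absorber-by-absorber application of Lemma~\ref{lem:absorber} on the remaining graph, with the forbidden set taken to be the currently high-degree vertices, whose size is controlled by an edge count and whose reuse is controlled by the clause $V(A)\cap B\subseteq V(H)$ together with the bound on how many elements of $\cH$ contain a fixed vertex. The only differences are cosmetic bookkeeping: the paper keeps the explicit inductive invariant $d_{G_{j-1}}(v)\leq \eta^{1/2}n+(s(v,j-1)+1)s^3$ with threshold $\eta^{1/2}n$, whereas you use the threshold $\eps n/4-s^3$ and a ``last non-root appearance'' argument, which encode the same root/non-root splitting.
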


We repeatedly use Lemma~\ref{lem:absorber} and aim to avoid any vertices which have been used too often.

\begin{proof}
Enumerate the $K_r$-divisible subgraphs of all $H\in \cH$ as $H_1,\dots, H_{m'}$. Note that each $H\in \cH$ can have at most $2^{e(H)}\leq 2^{\binom{b}{2}}$ $K_r$-divisible subgraphs so $m'\leq 2^{\binom{b}{2}}\eta n^2$.
For each $v\in V(G)$ and each $0\leq j\leq m'$, let $s(v,j)$ be the number of indices $1\leq i\leq j$ such that $v\in V(H_i)$. Note that $s(v,j)\leq 2^{\binom{b}{2}}\eta n$.

Let $s\in \N$ be such that $\eta \ll 1/s\ll \eps, 1/b,1/r$.
Suppose that we have already found absorbers $A_1, \dots, A_{j-1}$ for $H_1, \dots, H_{j-1}$ respectively such that
$|A_i|\leq s^3$, for all $1\leq i\leq j-1$,
and, for every $v\in V(G)$,
\begin{equation}\label{eq:emb1'}
d_{G_{j-1}}(v)\leq \eta^{1/2} n+(s(v,j-1)+1)s^3,
\end{equation}
where $G_{j-1}:=\bigcup_{1\leq i\leq j-1} A_i$. We show that we can find an absorber $A_j$ for $H_j$ in $G-G_{j-1}$ which satisfies \eqref{eq:emb1'} with $j$ replacing $j-1$.

Let $B:= \{v\in V(G): d_{G_{j-1}}(v)\geq \eta^{1/2}n\}$. We have
$$|B|\leq \frac{2e(G_{j-1})}{\eta^{1/2}n}\leq \frac{2m'\binom{s^3}{2}}{\eta^{1/2}n}\leq \frac{2^{\binom{b}{2}+1}\eta n^2s^6}{\eta^{1/2}n}\leq \eta^{1/3}n.$$
We have
\begin{align*}
\hat\delta(G-G_{j-1})&\stackrel{\mathclap{\eqref{eq:emb1'}}}{\geq} (1-1/(r+1)+\eps) n- \eta^{1/2} n-(s(v,j-1)+1)s^3\\
&\geq (1-1/(r+1)+\eps) n- \eta^{1/2} n-(2^{\binom{b}{2}}\eta n+1)s^3
> (1-1/(r+1)+\eps/2)n.
\end{align*}
So we can apply Lemma~\ref{lem:absorber} (with $\eps/2$, $\eta^{1/3}$, $G-G_{j-1}$ and $H_j$ playing the roles of $\eps$, $\eta$, $G$ and $H$) to find an absorber $A_j$ for $H_j$ in $G-G_{j-1}$ such that $V(A_j)\cap B\subseteq V(H_j)$ and $|A_j|\leq s^3$.

We now check that \eqref{eq:emb1'} holds with $j$ replacing $j-1$. If $v\in V(G)\setminus B$, this is clear. Suppose then that $v\in B$. If $v\in V(A_j)$, then $v\in V(H_j)$ and $s(v,j)=s(v,j-1)+1$. So in all cases,
$$d_{G_{j}}(v)\leq \eta^{1/2} n+(s(v,j)+1)s^3,$$
as required.

Continue in this way until we have found an absorber $A_i$ for each $H_i$. Then $\cA:=\{A_i: 1\leq i\leq m'\}$ is an absorbing set. Using \eqref{eq:emb1'}, $$\Delta\big(\bigcup \cA\big)=\Delta(G_{m'})\leq \eta^{1/2}n+(2^{\binom{b}{2}}\eta n+1)s^3\leq \eps n,$$
as required.
\end{proof}


\section{Partitions and random subgraphs}\label{sec:partn}

In this section we consider a sequence $\cP_1, \dots, \cP_\ell$ of successively finer partitions which will underlie our iterative absorption process. We will also construct corresponding sparse quasirandom subgraphs $R_i$ which will be used to `smooth out' the leftover from the approximate decomposition in each step of the process.

Recall from Section~\ref{sec:notat} that a $k$-partition is a partition satisfying (Pa\ref{Pa1}) and  (Pa\ref{Pa2}). Let $G$ be an $r$-partite graph on $(V_1,\dots, V_r)$. An \emph{$(\alpha,k,\delta)$-partition} for $G$ on $(V_1,\dots, V_r)$ is a $k$-partition $\cP=\{U^1, \dots, U^k\}$ of $V(G)$ such that in the following hold:
\begin{enumerate}[({Pa}1)]
 \setcounter{enumi}{2}
	\item for each $v\in V(G)$, each $1\leq i \leq k$ and each $1\leq j\leq r$,
	$$|d_G(v, U^i_j)-d_G(v, V_j)/k|<\alpha|U^i_j|;$$\label{Pa3}
	\item for each $1\leq i \leq k$, each $1\leq j\leq r$ and each $v\notin V_j$,
	$d_G(v, U^i_j)\geq \delta|U^i_j|$.\label{Pa4}
\end{enumerate}

The following proposition guarantees a $(n^{-1/3}/2,k, \delta-n^{-1/3}/2)$-partition of any sufficiently large balanced $r$-partite graph $G$ with  $\hat{\delta}(G)\geq \delta n$. To prove this result, it suffices to consider an equitable partition $U^1_j, U^2_j, \dots, U^k_j$ of $V_j$ chosen uniformly at random (with $|U^1_j|\leq \dots \leq |U^k_j|$).

\begin{prop}\label{prop:partition}
Let $k, r\in \N$. There exists $n_0$ such that if $n\geq n_0$ and $G$ is any $r$-partite graph on $(V_1,\dots, V_r)$ with $|V_1|=\dots=|V_r|=n$ and $\hat{\delta}(G)\geq \delta n$, then $G$ has a $(\nu,k, \delta-\nu)$-partition, where  $\nu:=n^{-1/3}/2$.\qed
\end{prop}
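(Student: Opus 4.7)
My plan is to construct the partition by choosing an equitable random partition of each $V_j$ independently, with a fixed labelling convention that makes (Pa\ref{Pa1}) and (Pa\ref{Pa2}) hold deterministically; (Pa\ref{Pa3}) and (Pa\ref{Pa4}) will then follow from a Hoeffding-type concentration inequality (Lemma~\ref{lem:chernoff}) together with a union bound. Concretely, for each $1\le j\le r$ I would pick, uniformly at random, a partition of $V_j$ into parts $U^1_j,\dots,U^k_j$ with $|U^i_j|=\lceil n/k\rceil$ for $1\le i\le n\bmod k$ and $|U^i_j|=\lfloor n/k\rfloor$ otherwise. Since $|V_1|=\dots=|V_r|=n$, the sizes $|U^i_j|$ depend only on $i$, so setting $U^i:=U^i_1\cup\dots\cup U^i_r$ yields a $k$-partition in the sense of (Pa\ref{Pa1}) and (Pa\ref{Pa2}) with no further work.

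For a fixed vertex $v$ and a fixed pair $(i,j)$ with $v\notin V_j$, the variable $X:=d_G(v,U^i_j)$ is hypergeometric with mean $\mu:=d_G(v,V_j)|U^i_j|/n$; the offset between $\mu$ and $d_G(v,V_j)/k$ is bounded by $d_G(v,V_j)\cdot\bigl||U^i_j|/n-1/k\bigr|\le 1$, since $\bigl||U^i_j|-n/k\bigr|\le 1$ and $d_G(v,V_j)\le n$. Applying Lemma~\ref{lem:chernoff} with deviation parameter $t:=\nu|U^i_j|/3$ I would obtain
\[
\pr\bigl(|X-\mu|\ge t\bigr)\le 2\exp\bigl(-2t^2/|U^i_j|\bigr)\le 2\exp\bigl(-cn^{1/3}\bigr)
\]
for some constant $c=c(k)>0$, using $\nu=n^{-1/3}/2$ and $|U^i_j|=\Theta(n/k)$. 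A union bound over the at most $r^2kn$ triples $(v,i,j)$ still gives overall failure probability $o(1)$, so for $n$ sufficiently large there exists an outcome on which $|d_G(v,U^i_j)-d_G(v,V_j)/k|<\nu|U^i_j|/2$ holds for every such triple (using $\nu|U^i_j|/6>1$ to absorb the offset of $1$); this is (Pa\ref{Pa3}), with a factor $2$ to spare.

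To deduce (Pa\ref{Pa4}) from this strengthened form of (Pa\ref{Pa3}), fix the resulting partition and let $v\notin V_j$. Using $\hat\delta(G)\ge\delta n$, the bound $|U^i_j|\le n/k+1$, and $\nu|U^i_j|/2\ge 1$ (which holds for $n$ large), I get
\[
d_G(v,U^i_j)>\frac{d_G(v,V_j)}{k}-\frac{\nu|U^i_j|}{2}\ge\frac{\delta n}{k}-\frac{\nu|U^i_j|}{2}\ge\delta|U^i_j|-1-\frac{\nu|U^i_j|}{2}\ge(\delta-\nu)|U^i_j|,
\]
which is exactly (Pa\ref{Pa4}). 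No step presents a serious obstacle; the only mild technicality is tracking the offset of at most $1$ between $d_G(v,V_j)/k$ and the hypergeometric mean $d_G(v,V_j)|U^i_j|/n$, which is comfortably absorbed by the factor-$2$ slack built into the concentration step.
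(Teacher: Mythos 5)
Your proposal is correct and follows essentially the same route as the paper's own proof: take a random equitable partition of each $V_j$ (labelled so that (Pa\ref{Pa1}) and (Pa\ref{Pa2}) hold automatically), apply the hypergeometric concentration bound of Lemma~\ref{lem:chernoff} to each $d_G(v,U^i_j)$, take a union bound over the polynomially many triples, and deduce (Pa\ref{Pa3}) and then (Pa\ref{Pa4}) from the minimum degree condition. The only difference is bookkeeping: the paper absorbs the $O(1)$ offset between the hypergeometric mean and $d_G(v,V_j)/k$ by taking deviation $\nu|U^i_j|-1$, whereas you use deviation $\nu|U^i_j|/3$ and track the offset separately, which is equally valid.
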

\COMMENT{proof - see appendix}

We say that $\cP_1, \cP_2, \dots, \cP_\ell$ is an \emph{$(\alpha, k, \delta, m)$-partition sequence} for $G$ on $(V_1, \dots, V_r)$ if, writing $\cP_0:=\{V(G)\}$,
\begin{enumerate}[(S1)]
	\item for each $1\leq i \leq \ell$, $\cP_i$ refines $\cP_{i-1}$;\label{S2}
	\item for each $1\leq i \leq \ell$ and each $W\in \cP_{i-1}$, $\cP_i[W]$ is an $(\alpha,k,\delta)$-partition for $G[W]$;\label{S4}
	\item for each $1\leq i \leq \ell$, all $1\leq j_1, j_2, j_3\leq r$ with $j_1\neq j_2, j_3$, each $W\in \cP_{i-1}$, each $U\in \cP_i[W]$ and each $v\in W_{j_1}$,
	$$|d_G(v, U_{j_2})-d_G(v, U_{j_3})|<\alpha |U_{j_1}|;$$\label{S3}
	\item for each $U\in \cP_\ell$ and each $1\leq j\leq r$, $|U_j|=m$ or $m-1$.\label{S5}
\end{enumerate}
Note that (S\ref{S4}) and (Pa\ref{Pa2}) together imply that $|U_{j_1}|=|U_{j_2}|$ for each $1\leq i \leq \ell$, each $U\in \cP_i$ and all $1\leq j_1,j_2\leq r$. 

By successive applications of Proposition~\ref{prop:partition}, we immediately obtain the following result which guarantees the existence of a suitable partition sequence (for details see~\cite{thesis}). 

\begin{lemma}\label{lem:partseq}
Let $k,r\in \N$ with $k\geq 2$ and let $0<\alpha<1$. There exists $m_0$ such that, for all $m'\geq m_0$, any $K_r$-divisible  graph $G$ on $(V_1, \dots, V_r)$ with $|V_1|=\dots=|V_r|=n\geq km'$ and $\hat{\delta}(G)\geq \delta n$ has an $(\alpha, k, \delta-\alpha, m)$-partition sequence for some $m'\leq m \leq km'$.	\qed
\end{lemma}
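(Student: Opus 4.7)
The plan is to build the partition sequence $\cP_1, \ldots, \cP_\ell$ iteratively, at each level applying Proposition~\ref{prop:partition} to refine every part of the current partition into $k$ pieces. We take $\ell$ to be the largest integer with $n / k^\ell \geq m'$, so that the final parts in $\cP_\ell$ have class sizes between $m'$ and $km'$; together with (Pa1) at each level, this will directly yield both (S1) (refinement) and (S4) (final part sizes $m$ or $m-1$ for some $m'\leq m\leq km'$).

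At each level $i \geq 1$, for each $W \in \cP_{i-1}$ we apply Proposition~\ref{prop:partition} to $G[W]$, which is a balanced $r$-partite graph by (Pa2) at level $i-1$ and has sufficiently large $\hat\delta(G[W])$ by an inductive application of (Pa4) at previous levels. This will produce a $(\nu_i, k, \delta_i - \nu_i)$-partition of $G[W]$, where $\nu_i := |W_j|^{-1/3}/2$ and $\delta_i$ is the current minimum relative degree; taking the union of these refinements over all $W \in \cP_{i-1}$ defines $\cP_i$ and gives (S2) directly.

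The non-trivial point is to verify (S3) by induction on $i$, exploiting the $K_r$-divisibility of $G$. For the base case $i=1$ with $\cP_0 = \{V(G)\}$, $K_r$-divisibility gives $d_G(v, V_{j_2}) = d_G(v, V_{j_3})$ for every $v \in V_{j_1}$; combined with (Pa3) for $\cP_1$ and the equality $|U_{j_1}| = |U_{j_2}| = |U_{j_3}|$ (from (Pa2)), this gives (S3) at level $1$ with error $2\nu_1 |U_{j_1}|$. For the inductive step at level $i \geq 2$, (S3) at level $i-1$ yields $|d_G(v, W_{j_2}) - d_G(v, W_{j_3})| \leq \beta_{i-1}|W_{j_1}|$ for $v \in W_{j_1}$, where $\beta_{i-1}$ denotes the running error; dividing by $k$ and adding the two errors from (Pa3) applied to the level-$i$ refinement of $G[W]$ yields the bound $\beta_i = \beta_{i-1} + 2\nu_i$ for (S3) at level $i$.

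The main obstacle will be to control the accumulated error $\beta_\ell = 2\sum_{i=1}^\ell \nu_i$ and the total minimum-degree loss $\sum_{i=1}^\ell \nu_i$ incurred across all levels. Since $|W_j| \approx n/k^{i-1}$ at level $i$, the $\nu_i$ form a geometric series dominated by the last term $\nu_\ell = \Theta(m^{-1/3})$, so choosing $m_0$ sufficiently large in terms of $\alpha$ and $k$ will force $\sum_i \nu_i \leq \alpha/2$. This will give $\beta_\ell < \alpha$ and $\delta_\ell \geq \delta - \alpha$ throughout the construction, yielding the desired $(\alpha, k, \delta - \alpha, m)$-partition sequence.
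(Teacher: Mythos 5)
Your proposal is correct and follows essentially the same route as the paper's proof: iterate Proposition~\ref{prop:partition} level by level, use $K_r$-divisibility for the base case of (S3) and the "divide the previous imbalance by $k$, add the two (Pa3) errors" recursion for the inductive step, and control the accumulated error and degree loss via a geometric series dominated by its last term $\Theta(m_0^{-1/3})$. No substantive differences.
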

\COMMENT{proof - see appendix}


Suppose that we are given a $k$-partition $\cP$ of $G$. The following proposition finds a quasirandom spanning subgraph $R$ of $G$ so that each vertex in $R$ has roughly the expected number of neighbours in each set $U\in \cP$. The proof is an easy application of Lemma~\ref{lem:chernoff}.

\begin{prop}\label{prop:random2}
Let $1/n\ll\alpha, \rho,1/k,1/r\leq 1$.  Let $G$ be an $r$-partite graph on $(V_1, \dots, V_r)$ with $|V_1|=\dots=|V_r|=n$. Suppose that $\cP$ is a $k$-partition for $G$. Let $\cS$ be a collection of at most $n^2$ subsets of $V(G)$. Then there exists $R\subseteq G[\cP]$ such that for all $1\leq j\leq r$, all distinct $x,y\in V(G)$, all $U\in \cP$ and all $S\in \cS$:
\begin{itemize}
\item $|d_{R}(x, U_j)-\rho d_{G[\cP]}(x, U_j)|< \alpha |U_j|$;
\item $|d_{R}(\{x,y\}, U_j)-\rho^2 d_{G[\cP]}(\{x,y\}, U_j)|< \alpha |U_j|$;
\item $|d_{G}(y, N_{R}(x, U_j))-\rho d_{G}(y, N_{G[\cP]}(x, U_j))|< \alpha |U_j|$;
\item $|d_{R}(y, S_j)-\rho d_{G[\cP]}(y, S_j)|< \alpha n$. \qed
\end{itemize}
\end{prop}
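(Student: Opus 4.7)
The plan is to take $R$ to be the random subgraph of $G[\cP]$ obtained by retaining each edge of $G[\cP]$ independently with probability $\rho$, and then to verify the four quasirandomness conditions by Chernoff/Hoeffding estimates together with a union bound.

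First I would dispose of conditions (i), (ii) and (iv) directly. For (i), fix $x$, $U$ and $j$: the quantity $d_R(x, U_j) = \sum_{v \in N_{G[\cP]}(x, U_j)} \mathbf{1}[xv \in E(R)]$ is a sum of at most $|U_j|$ independent Bernoulli indicators of mean $\rho$, with expectation $\rho\,d_{G[\cP]}(x, U_j)$, so Lemma~\ref{lem:chernoff} bounds the probability of a deviation of at least $\alpha |U_j|$ by $2\exp(-2\alpha^2 |U_j|)$. For (ii), fix also $y\neq x$ and note that $d_R(\{x,y\}, U_j)$ is the sum, over $v\in N_{G[\cP]}(\{x,y\}, U_j)$, of the product $\mathbf{1}[xv\in E(R)]\mathbf{1}[yv\in E(R)]$; these summands are independent (the edge pairs $\{xv,yv\}$ are disjoint for distinct~$v$) and have mean $\rho^2$, so Chernoff again gives a bound of $2\exp(-2\alpha^2 |U_j|)$. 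For (iv), $d_R(y, S_j)$ is a sum of at most $|S_j|\le n$ independent mean-$\rho$ indicators, so Lemma~\ref{lem:chernoff} bounds its deviation by $\alpha n$ with probability at most $2\exp(-2\alpha^2 n)$.

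Condition (iii) fits the same template: writing
\[
d_G(y, N_R(x, U_j)) \;=\; \sum_{v \in N_{G[\cP]}(x, U_j)\cap N_G(y, U_j)} \mathbf{1}[xv \in E(R)]
\]
exhibits it as a sum of independent mean-$\rho$ Bernoulli variables with expectation $\rho\,d_G(y, N_{G[\cP]}(x, U_j))$, so the estimate used for (i) applies.

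Finally I would union-bound. By (Pa\ref{Pa1}) we have $|U_j|\ge \lfloor n/k\rfloor$, so each event above has failure probability at most $2\exp(-2\alpha^2\lfloor n/k\rfloor)$. The total number of events across (i)--(iv) is polynomial in $n$ (using $|\cS|\le n^2$), so the hierarchy $1/n\ll \alpha,\rho, 1/k, 1/r$ makes the sum of failure probabilities strictly less than $1$, yielding some outcome $R$ that satisfies all four conditions simultaneously. The proof is essentially routine; the only genuine check is that in each case the random variable is a sum of independent indicators, which holds because distinct edges of $G[\cP]$ are sampled independently.
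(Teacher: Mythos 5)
Your proposal is correct and takes essentially the same route as the paper: choose $R$ by keeping each edge of $G[\cP]$ independently with probability $\rho$, observe that each of the four quantities is a sum of independent indicators (binomial) with the stated expectation, apply Lemma~\ref{lem:chernoff} to get failure probability at most $2e^{-2\alpha^2\lfloor n/k\rfloor}$ (resp.\ $2e^{-2\alpha^2 n}$ for the last condition), and union bound over the polynomially many choices of $j,x,y,U,S$. Your extra remarks justifying independence in conditions (ii) and (iii) are accurate and simply make explicit what the paper leaves implicit.
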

\COMMENT{proof - see appendix}

We need to reserve some quasirandom subgraphs $R_i$ of $G$ at the start of our proof, whilst the graph $G$ is still almost balanced with respect to the partition sequence. We will add the edges of $R_i$ back after finding an approximate decomposition of $G[\cP_i]$ in order to assume the leftover from this approximate decomposition is quasirandom. The next lemma gives us suitable subgraphs for $R_i$.

\begin{lemma}\label{lem:randoms}
Let $1/m\ll \alpha\ll \rho, 1/k,1/r \leq 1$. Let $G$ be an $r$-partite graph on $(V_1, \dots, V_r)$ with $|V_1|=\dots=|V_r|$. Suppose that $\cP_1, \dots, \cP_\ell$ is a $(1, k, 0, m)$-partition sequence for $G$. Let $\cP_0:=\{V(G)\}$ and, for each $0\leq q \leq \ell$, let $G_q:=G[\cP_q]$. Then there exists a sequence of graphs $R_1, \dots, R_\ell$ such that $R_q\subseteq G_q-G_{q-1}$ for each $q$ and the following holds. For all $1\leq q\leq \ell$, all $1\leq j \leq r$, all $W\in \cP_{q-1}$, all distinct $x,y\in W$ and all $U\in \cP_q[W]$:
\begin{enumerate}[\rm (i)]
\item $|d_{R_q}(x, U_j)-\rho d_{G_q}(x, U_j)|< \alpha |U_j|$;\label{lem:randoms:i}
\item $|d_{R_q}(\{x,y\}, U_j)-\rho^2 d_{G_q}(\{x,y\}, U_j)|< \alpha |U_j|$;\label{lem:randoms:ii}
\item $d_{G_{q+1}'}(y, N_{R_q}(x, U_j))\geq \rho d_{G_{q+1}}(y, N_{G_q}(x, U_j))-3\rho^2|U_j|$, where $G_{q+1}':=G_{q+1}-R_{q+1}$ if $q\leq \ell-1$, $G_{\ell+1}':=G$ and $G_{\ell+1}:=G$.\label{lem:randoms:iii}
\end{enumerate}
\end{lemma}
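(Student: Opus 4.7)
The plan is to construct each $R_q$ by independent random edge selection: include every edge of $G_q - G_{q-1}$ in $R_q$ independently with probability $\rho$. The key structural observation is that the edge sets $E(G_q - G_{q-1})$ are pairwise disjoint across $q$ (an edge of $G_q - G_{q-1}$ joins two vertices lying in the same $\cP_{q-1}$-part but in different $\cP_q$-parts), so the selections for different $q$ can be made independently, making $R_1, \dots, R_\ell$ mutually independent. Since $\alpha \gg 1/m$ and each $|U_j| \geq m-1$, Chernoff failure probabilities of the form $\exp(-\Omega(\alpha^2 m))$ will comfortably absorb the union bound over the $n^{O(1)}$ tuples $(q, W, U, j, x, y)$ arising in the statement.

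For \eqref{lem:randoms:i}, fix $x \in W \in \cP_{q-1}$ and $U \in \cP_q[W]$. If $x \in U$ then $d_{G_q}(x, U_j) = d_{R_q}(x, U_j) = 0$ and there is nothing to prove. Otherwise $N_{G_q}(x, U_j) \subseteq W$ lies entirely in $G_q - G_{q-1}$, so $d_{R_q}(x, U_j)$ is binomial with mean $\rho d_{G_q}(x, U_j)$ and Chernoff (Lemma~\ref{lem:chernoff}) supplies the required deviation bound. Property \eqref{lem:randoms:ii} is analogous: for each common $G_q$-neighbour $v$ of $x, y$ in $U_j$ (the only nontrivial case is $x, y \in W \setminus U$), the edges $xv$ and $yv$ are distinct elements of $G_q - G_{q-1}$ and so are included in $R_q$ independently, so $d_{R_q}(\{x,y\}, U_j)$ is binomial with parameter $\rho^2$.

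The substantive step is \eqref{lem:randoms:iii}. Setting $Z := N_{R_q}(x, U_j)$, I decompose $d_{G_{q+1}'}(y, Z) = d_{G_{q+1}}(y, Z) - d_{R_{q+1}}(y, Z)$ (with the subtracted term absent in the boundary case $q = \ell$). For the first term, each $v \in N_{G_q}(x, U_j)$ lies in $Z$ independently with probability $\rho$, so $d_{G_{q+1}}(y, Z)$ is binomial with mean $\rho\, d_{G_{q+1}}(y, N_{G_q}(x, U_j))$, and Chernoff yields a lower bound of $\rho\, d_{G_{q+1}}(y, N_{G_q}(x, U_j)) - \alpha |U_j|$ with very high probability. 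For the correction term, I condition on $R_q$: since $R_{q+1}$ is built from edges in the disjoint set $G_{q+1} - G_q$, its conditional distribution is unchanged, so $d_{R_{q+1}}(y, Z)$ is binomial with conditional mean at most $\rho |Z| \leq \rho(\rho + \alpha)|U_j|$ (using the bound $|Z| \leq (\rho + \alpha)|U_j|$, which holds whp by the same Chernoff argument as for \eqref{lem:randoms:i}). A further Chernoff bound then gives $d_{R_{q+1}}(y, Z) \leq \rho^2 |U_j| + 2\alpha |U_j|$ whp, so $d_{G_{q+1}'}(y, Z) \geq \rho\, d_{G_{q+1}}(y, N_{G_q}(x, U_j)) - \rho^2 |U_j| - 3\alpha |U_j|$. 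The hierarchy $\alpha \ll \rho$ lets me take $\alpha$ small enough that $\rho^2 + 3\alpha \leq 3\rho^2$, delivering the claimed bound.

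The one delicate point is the order of conditioning in \eqref{lem:randoms:iii}: the first Chernoff application exploits the randomness of $R_q$ alone, while the second operates on $R_{q+1}$ after conditioning on $R_q$ (both to freeze $Z$ and to invoke the size bound coming from \eqref{lem:randoms:i}). Apart from this, the argument is a routine Chernoff-and-union-bound calculation.
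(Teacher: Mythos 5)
Your random model (include every edge of $G_q-G_{q-1}$ independently with probability $\rho$; the edge sets for different $q$ are disjoint, so the $R_q$ are independent) is the same randomness that underlies the paper's proof, and your per-tuple estimates for (i)--(iii), including the conditioning on $R_q$ before using the fresh randomness of $R_{q+1}$, are fine. The genuine gap is the probabilistic bookkeeping: a \emph{global} union bound over all tuples does not work. The Chernoff exponent for a tuple at level $q$ is of order $\alpha^2|U_j|$, and $|U_j|\approx n/k^q$; at the finest levels of the partition sequence this is just $m$ or $m-1$, so the per-tuple failure probability is only $e^{-\Theta(\alpha^2 m)}$ -- a constant independent of $n$, since $m$ sits at the bottom of the hierarchy $1/m\ll\alpha\ll\rho,1/k,1/r$ and $n$ may be arbitrarily large compared with $m$. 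The number of tuples, however, grows with $n$: already at level $\ell$ there are roughly $k^{\ell-1}\cdot (rkm)^2\cdot kr=\Theta_{m,k,r}(n)$ of them. Hence $n^{O(1)}\cdot e^{-\Omega(\alpha^2 m)}$ is not $o(1)$; it is not even bounded. Worse, the bad events attached to different $W\in\cP_{\ell-1}$ depend on disjoint edge sets and are therefore independent, each failing with a fixed positive probability for a typical dense $G$, so the single global random choice is with high probability \emph{not} good -- no sharper union bound over the same events can rescue the argument as stated.

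The repair is to use the fact that, for each fixed $q$, every required property only involves vertices and edges inside one part $W\in\cP_{q-1}$ (for (iii), note $N_{R_q}(x,U_j)\subseteq U\subseteq W$ and the relevant $R_{q+1}$-edges lie inside $U$). So the choice of $R_q$ should be made part-by-part and level-by-level: within a single $W$ the number of tuples is polynomial in $|W|$ while the exponent is $\Omega(\alpha^2|W|/k)$, and since every class has size at least $m-1$ with $1/m\ll\alpha,1/k$, the within-$W$ union bound does succeed; one then fixes a good restriction of $R_q$ to $W$ deterministically before proceeding. This is exactly how the paper argues: it builds $R_1,\dots,R_\ell$ inductively, applying Proposition~\ref{prop:random2} separately to each $G_{q+1}[W]$, and -- the second point your sketch passes over -- it feeds the already-fixed neighbourhoods $N_{R_{q-1}}(x,\cdot)$ into the next application as the set system $\cS$, so that the upper bound on $d_{R_{q+1}}(y,N_{R_q}(x,U_j))$ (your correction term) is secured at the moment $R_{q+1}$ is chosen, rather than by conditioning inside one global probability space. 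Your conditioning idea handles a single tuple correctly, but making it hold for all tuples simultaneously requires this per-part, sequential scheme.
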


\begin{proof}
For $1\leq q\leq \ell$, we say that the sequence of graphs $R_1, \ldots, R_q$ is \emph{good} if $R_i\subseteq G_i-G_{i-1}$ and for all $1\leq i\leq q$, all $1\leq j\leq r$, all $W\in \cP_{i-1}$, all distinct $x,y\in W$ and all $U\in \cP_i[W]$:
\begin{enumerate}[\rm(a)]
\item \eqref{lem:randoms:i} and \eqref{lem:randoms:ii} hold (with $q$ replaced by $i$);\label{item:a}
\item $|d_{G_{i+1}}(y, N_{R_i}(x, U_j))- \rho d_{G_{i+1}}(y, N_{G_i}(x, U_j))|< \alpha |U_j|$;\label{item:b}
\item if $i\leq q-1$, $d_{R_{i+1}}(y, N_{R_{i}}(x, U_j))< \rho d_{G_{i+1}}(y, N_{R_{i}}(x, U_j))+ \alpha |U_j|$.\label{item:c}
\end{enumerate}

Suppose $1\leq q\leq\ell$ and we have found a good sequence of graphs $R_1, \ldots, R_{q-1}$. We will find $R_q$ such that $R_1, \dots, R_q$ is good.
Let $W\in \cP_{q-1}$, let $\cS_1$ be the empty set and, if $q\geq 2$, let $W'\in \cP_{q-2}$ be such that $W\subseteq W'$ and let $\cS_q:=\{N_{R_{q-1}}(x, W): x\in W'\}$. Apply Proposition~\ref{prop:random2} (with $|W|/r$, $G_{q+1}[W]$, $\cP_q[W]$ and $\cS_q$ playing the roles of $n$, $G$, $\cP$ and $\cS$) to find $R_W\subseteq G_{q+1}[W][\cP_q[W]]=G_q[W]$ such that:%
\COMMENT{Note that $d_{G_{q+1}[W][\cP_q[W]]}(x, U_j)= d_{G_q}(x, U_j)$.}
\begin{align}
|d_{R_W}(x, U_j)-\rho d_{G_q}(x, U_j)|< \alpha |U_j|,\nonumber\\
|d_{R_W}(\{x,y\}, U_j)-\rho^2 d_{G_q}(\{x,y\}, U_j)|< \alpha |U_j|,\nonumber\\
|d_{G_{q+1}}(y, N_{R_W}(x, U_j))-\rho d_{G_{q+1}}(y, N_{G_q}(x, U_j))|< \alpha |U_j|,\nonumber\\
|d_{R_W}(y, S_j)-\rho d_{G_{q}}(y, S_j)|< \alpha |W_j|,\label{eq:sset}
\end{align}
for all $1\leq j\leq r$, all distinct $x,y\in W$, all $U\in \cP_q[W]$ and all $S\in \cS_q$. Set $R_q:=\bigcup_{W\in \cP_{q-1}}R_W$. It is clear that $R_1, \dots, R_q$ satisfy \eqref{item:a} and \eqref{item:b}. We now check that \eqref{item:c} holds when $1\leq i=q-1$. Let $1\leq j\leq r$, $W\in \cP_{q-2}$, $x,y\in W$ be distinct and $U\in \cP_{q-1}[W]$. If $y\notin U$, then $d_{R_q}(y, U_j)=0$ and so \eqref{item:c} holds. If $y\in U$, then $d_{R_q}(y,N_{R_{q-1}}(x, U))=d_{R_U}(y, N_{R_{q-1}}(x, U))$ and \eqref{item:c} follows by replacing $W$ and $S$ by $U$ and $N_{R_{q-1}}(x, U)$ in property \eqref{eq:sset}.
So $R_1, \dots, R_q$ is good.

So $G$ contains a good sequence of graphs $R_1, \dots, R_\ell$. We will now check that this sequence also satisfies \eqref{lem:randoms:iii}. If $q=\ell$, this follows immediately from \eqref{item:b}. Let $1\leq q< \ell$, $1\leq j\leq r$, $W\in \cP_{q-1}$, $x,y\in W$ be distinct and $U\in \cP_q[W]$. We have
\begin{align*}
d_{R_{q+1}}(y, N_{R_{q}}(x, U_j))&\stackrel{\mathclap{\eqref{item:c}}}{<} \rho d_{G_{q+1}}(y, N_{R_{q}}(x, U_j)) + \alpha |U_j| \\
&\stackrel{\mathclap{\eqref{item:b}}}{<} \rho^2 d_{G_{q+1}}(y, N_{G_q}(x, U_j))+ (\alpha\rho+\alpha)|U_j|\leq 2\rho^2|U_j|.
\end{align*}
Therefore,
\begin{align*}
d_{G_{q+1}'}(y, N_{R_q}(x, U_j))&=d_{G_{q+1}}(y, N_{R_q}(x, U_j))-d_{R_{q+1}}(y, N_{R_q}(x, U_j))\\
&\stackrel{\mathclap{\eqref{item:b}}}{\geq} \rho d_{G_{q+1}}(y, N_{G_q}(x, U_j))-3\rho^2|U_j|.
\end{align*}
So $R_1, \dots, R_\ell$ satisfy \eqref{lem:randoms:i}--\eqref{lem:randoms:iii}.
\end{proof}

We apply Lemma~\ref{lem:randoms} when $\cP_1, \dots, \cP_\ell$ is an $(\alpha, k, 1-1/r+\eps, m)$-partition sequence for $G$ to obtain the following result. For details of the proof, see \cite{thesis}.

\begin{cor}\label{cor:randoms}
Let $1/m\ll \alpha\ll \rho, 1/k \ll \eps, 1/r\leq 1$. Let $G$ be a $K_r$-divisible graph on $(V_1, \dots, V_r)$ with $|V_1|=\dots=|V_r|$. Suppose that $\cP_1, \dots, \cP_\ell$ is an $(\alpha, k, 1-1/r+\eps, m)$-partition sequence for $G$. Let $\cP_0:=\{V(G)\}$ and $G_q:=G[\cP_q]$ for $0\leq q\leq \ell$. There exists a sequence of graphs $R_1, \dots, R_\ell$ such that $R_q\subseteq G_q-G_{q-1}$ for each $1\leq q\leq \ell$ and the following holds. For all $1\leq q\leq \ell$, all $1\leq j, j'\leq r$, all $W\in \cP_{q-1}$, all distinct $x,y\in W$ and all $U, U'\in \cP_q[W]$:
\begin{enumerate}[\rm (i)]
\item $d_{R_q}(x, U_j)<\rho d_{G_q}(x, U_j)+\alpha |U_j|$;\label{cor:rand1}
\item $d_{R_q}(\{x,y\}, U_j)<(\rho^2+\alpha)|U_j|$;\label{cor:rand2}
\item if $x\notin U\cup U'\cup V_j \cup V_{j'}$, $|d_{R_q}(x, U_{j})-d_{R_q}(x, U'_{j'})|< 3\alpha |U_j|$;\label{cor:rand3}
\item if $x\notin U$, $y\in U$ and $x,y \notin V_j$, then
$$d_{G_{q+1}'}(y, N_{R_q}(x, U_j))\geq \rho (1-1/(r-1))d_{G_{q}}(x, U_j)+\rho^{5/4}|U_j|,$$
where $G_{q+1}':=G_{q+1}-R_{q+1}$ if $q\leq \ell-1$ and $G_{\ell+1}':=G$.\label{cor:rand4} \qed
\end{enumerate}
\end{cor}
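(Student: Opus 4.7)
The plan is to apply Lemma~\ref{lem:randoms} to the sequence $\cP_1, \dots, \cP_\ell$, which (since $\alpha \leq 1$ and $1-1/r+\eps \geq 0$) is in particular a $(1, k, 0, m)$-partition sequence, with a sufficiently small $\alpha'$ (say $\alpha' = \alpha/4$) playing the role of $\alpha$. This produces graphs $R_1, \dots, R_\ell$ with $R_q \subseteq G_q - G_{q-1}$ satisfying conclusions (i)--(iii) of Lemma~\ref{lem:randoms}. I then verify each of (i)--(iv) in turn.

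Conclusion (i) is Lemma~\ref{lem:randoms}(i), and (ii) follows from Lemma~\ref{lem:randoms}(ii) together with the trivial bound $d_{G_q}(\{x,y\}, U_j) \leq |U_j|$. For (iii), suppose $x \in V_{j^*}$ with $j^* \notin \{j, j'\}$. Since $x, U_j, U'_{j'}$ all lie in $W$, the $G_q$-degrees from $x$ to these sets coincide with the corresponding $G[W]$-degrees. I would split
\[
|d_{G[W]}(x, U_j) - d_{G[W]}(x, U'_{j'})| \leq |d_{G[W]}(x, U_j) - d_{G[W]}(x, U_{j'})| + |d_{G[W]}(x, U_{j'}) - d_{G[W]}(x, U'_{j'})|,
\]
bounding the first summand by $\alpha |U_j|$ via property (S\ref{S3}) and the second by $2\alpha |U_j|$ via two applications of (Pa\ref{Pa3}) to $\cP_q[W]$ (both degrees lie within $\alpha|U_j|$ of the common average $d_{G[W]}(x, W_{j'})/k$). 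Combined with Lemma~\ref{lem:randoms}(i) and the choice $\alpha' = \alpha/4$, this yields (iii) with the required $3\alpha$ slack.

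For (iv), the key input is inclusion-exclusion in $U_j$. By (Pa\ref{Pa4}) applied to $\cP_q[W]$, both $d_G(x, U_j) =: a$ and $d_G(y, U_j)$ are at least $(1-1/r+\eps)|U_j|$, so $d_G(y, N_G(x, U_j)) \geq a - (1/r - \eps)|U_j|$. Using the identity $(1-1/r)/(r-1) = 1/r$, this rearranges to $d_G(y, N_G(x, U_j)) \geq (1-1/(r-1))a + (\eps r/(r-1))|U_j|$. Removing the edges of $G \setminus G_{q+1}$ incident to $y$ loses at most $|U^*_j| \leq |U_j|/k + 1$ such vertices, where $U^* \in \cP_{q+1}$ contains $y$; this is negligible since $1/k \ll \eps$. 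Finally, Lemma~\ref{lem:randoms}(iii) converts this bound into $(1-1/(r-1))\rho a + \rho^{5/4}|U_j|$ once the additive errors $3\rho^2|U_j|$ and $\rho^{5/4}|U_j|$ are absorbed by the surviving $\eps$ slack, which is valid provided $\rho$ is small enough relative to $\eps$ (guaranteed by the hierarchy $\rho \ll \eps$).

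The main obstacle is the tightness of (iv): the minimum-degree hypothesis $1-1/r+\eps$ matches the target ratio $1-1/(r-1)$ only through the exact arithmetic identity $(1-1/r)/(r-1) = 1/r$, so essentially no slack can be spared and the constants must be tracked carefully through each approximation layer (random subgraph, partition refinement, inclusion-exclusion loss).
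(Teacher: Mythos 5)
Your proposal is correct and follows essentially the same route as the paper's proof: apply Lemma~\ref{lem:randoms} (noting the hypotheses weaken to a $(1,k,0,m)$-partition sequence), read off (i) and (ii), derive (iii) by a triangle inequality through an intermediate degree using (S\ref{S3}) and (Pa\ref{Pa3}) (the paper pivots through $U'_j$ where you pivot through $U_{j'}$, an immaterial difference), and derive (iv) by inclusion--exclusion from (Pa\ref{Pa4}) together with the identity $(1-1/r)\cdot\tfrac{r}{r-1}=1$ and absorption of the $3\rho^2|U_j|$ and refinement losses into the $\eps$-surplus.
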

\COMMENT{proof - see appendix}


\section{A remainder of low maximum degree}\label{sec:degred}

The aim of this section is to prove the following lemma which lets us assume that the remainder of $G$ after finding an $\eta$-approximate decomposition has small maximum degree.

\begin{lemma}\label{lem:degree}
Let $1/n \ll \alpha \ll \eta \ll \gamma\ll\eps<1/r<1$. Let $G$ be an $r$-partite graph on $(V_1, \dots, V_r)$ with $|V_1|=\dots=|V_r|=n$ and $\hat{\delta}(G)\geq (\hat{\delta}^\eta_{K_r} +\eps)n$. Suppose also that, for all $1\leq j_1, j_2 \leq r$ and every $v\notin V_{j_1}\cup V_{j_2}$, 
\begin{equation}
|d_G(v, V_{j_1})-d_G(v, V_{j_2})|< \alpha n.\label{eq:lem:degree:1}
\end{equation}
Then there exists $H\subseteq G$ such that $G-H$ has a $K_r$-decomposition and $\Delta(H)\leq \gamma n$.
\end{lemma}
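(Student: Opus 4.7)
The idea is to reserve a sparse quasirandom \emph{reservoir} $R \subseteq G$ and apply the approximate-decomposition hypothesis to a $K_r$-divisible subgraph of $G - R$ of nearly the same minimum degree, then use $R$ to mop up the high-degree part of the leftover. Concretely, pick a probability $p$ with $\eta \ll p \ll \gamma$ and include each edge of $G$ in $R$ independently with probability $p$. Lemma~\ref{lem:chernoff} combined with a union bound yields, with positive probability, all of the following simultaneously: $\Delta(R) \leq 2pn$; $d_R(v,V_j) = p\,d_G(v,V_j) \pm n^{2/3}$ for every $v \notin V_j$; the analogous codegree estimate $d_R(\{u,v\},V_j) = p^2 d_G(\{u,v\},V_j) \pm n^{2/3}$; and the quasirandomness bound $d_R(v,S) \geq p|S|/2$ for every $v \notin V_j$ and every $S \subseteq V_j$ with $|S| \geq pn/3$. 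Fix such an $R$ and set $G_0 := G - R$, so that $\hat\delta(G_0) \geq (\hat\delta^\eta_{K_r} + 3\eps/4)n$.

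Next, I extract from $G_0$ a $K_r$-divisible subgraph of nearly the same minimum degree. For each vertex $v$ in vertex class $V_{j(v)}$, set $m_v := \min_{j \neq j(v)} d_{G_0}(v, V_j)$; by \eqref{eq:lem:degree:1} and the first-moment bounds on $R$, $d_{G_0}(v, V_j) - m_v \leq 2\alpha n$ for every $j \neq j(v)$. The task is to find a subgraph $B \subseteq G_0$ with $d_B(v, V_j) = d_{G_0}(v, V_j) - m_v$ for every $v$ and every $j \neq j(v)$; then $G_1 := G_0 - B$ is $K_r$-divisible. Since each prescribed degree is at most $2\alpha n$ and $G_0$ is almost complete, $B$ can be built (greedily or via an auxiliary degree-prescribed-subgraph/flow argument) with $\Delta(B) \leq 4r\alpha n$. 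Hence $\hat\delta(G_1) \geq (\hat\delta^\eta_{K_r} + \eps/2)n$, and the definition of $\hat\delta^\eta_{K_r}$ yields an $\eta$-approximate $K_r$-decomposition $\cF$ of $G_1$ whose leftover $L := G_1 - \bigcup \cF$ satisfies $e(L) \leq \eta n^2$.

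The final step cleans up the high-degree vertices of $L$ using the reservoir $R$. Let $B^\ast := \{v \in V(G) : d_L(v) \geq \gamma n / 4\}$; a double count gives $|B^\ast| \leq 8\eta n / \gamma$. Process each edge $uv \in L$ with $\{u, v\} \cap B^\ast \neq \emptyset$ in turn, greedily extending $uv$ to a copy of $K_r$ by choosing, from each of the remaining $r - 2$ vertex classes, a vertex in the common neighbourhood of $\{u, v\}$ inside the still-unused portion of $R$. The quasirandomness of $R$ guarantees that every such common neighbourhood has size at least $(p/2)^{r-2} n - \gamma n \gg 0$; and since the whole cleanup consumes at most $\binom{r-1}{2} \cdot 2|B^\ast| \cdot n = O(\eta n^2 / \gamma)$ reservoir edges, no single vertex loses more than $O(\eta n / \gamma)$ reservoir-edges, which is still much smaller than $\gamma n$. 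Writing $\cF'$ for the newly built cliques and setting
\[
H := B \cup \bigl(L \setminus \textstyle\bigcup \cF'\bigr) \cup \bigl(R \setminus \textstyle\bigcup \cF'\bigr),
\]
the graph $G - H = \bigcup \cF \cup \bigcup \cF'$ has a $K_r$-decomposition and $\Delta(H) \leq 4r\alpha n + \gamma n/4 + 2pn + O(\eta n/\gamma) < \gamma n$, as required.

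The main obstacle is the divisibility-correction step: constructing $B$ so that $G_0 - B$ is $K_r$-divisible with $\Delta(B) \ll \gamma n$. A naive greedy deletion can propagate errors, since removing an edge $uv$ at $v$ to repair $v$'s imbalance perturbs $u$'s imbalance too. The cleanest implementation casts $B$ as a prescribed-degree subgraph problem and solves it by a Hall- or flow-type argument, using the uniform bound $2\alpha n$ on every prescribed degree and the near-completeness of $G_0$. An equally viable alternative is to encode the divisibility correction directly into the random choice of $R$ (first pin down a small deterministic surplus subgraph, then sample the rest), so that $G - R$ is already $K_r$-divisible and $B$ is empty.
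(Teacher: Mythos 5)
Your overall architecture (random reservoir, divisibility correction via a prescribed-degree/flow argument, approximate decomposition, then a cleanup of high-degree leftover vertices using the reservoir) matches the paper's, and your divisibility-correction step is essentially Proposition~\ref{prop:fixing}. However, the cleanup step contains a genuine gap that the rest of the argument cannot absorb.

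The problem is quantitative. An $\eta$-approximate decomposition only bounds the \emph{total} leftover, $e(L)\leq \eta n^2$; an individual vertex $v\in B^*$ may have leftover degree $d_L(v)=\Theta(n)$ (indeed up to $(r-1)n$), and eliminating such vertices is the entire point of the lemma. In your scheme, each leftover edge $uv$ at $v$ is completed to a $K_r$ whose remaining $\binom{r}{2}-1$ edges, in particular the $r-2$ spokes $vw_1,\dots,vw_{r-2}$, are taken from $R$. Since the resulting cliques must be edge-disjoint, the vertices $w$ chosen in a fixed class $V_j$ over all leftover edges at $v$ must be distinct, so covering all but $\gamma n$ of the leftover edges at $v$ requires at least $d_L(v)-\gamma n$ distinct vertices in $N_R(v,V_j)$. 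But $d_R(v,V_j)\approx pn\ll\gamma n\ll d_L(v)$, so the reservoir at $v$ is exhausted almost immediately. No choice of $p$ fixes this: $p$ must be small for $G-R$ to retain minimum degree above $\hat\delta^\eta_{K_r}+\eps/2$, yet the cleanup needs $\Omega(n)$ reservoir edges at $v$ in each class. (Separately, your quasirandomness claim $d_R(v,S)\geq p|S|/2$ for \emph{all} large $S\subseteq V_j$ cannot be obtained by a union bound over the $2^n$ sets $S$, but this is minor by comparison.)

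The paper's proof resolves exactly this obstacle, and does so differently: it first \emph{discards} from the approximate decomposition every clique meeting a bad vertex, so that for each $v\in B$ the whole of $N_G(v)$ is uncovered, and then covers all but $\gamma n$ of these edges in one step by finding a near-spanning $K_{r-1}$-matching in $H_1[N_G(v)]$ — the spokes $vw$ are then ordinary edges of $G$, not reservoir edges, so no counting problem arises at $v$. Because $H_1$ is sparse, the minimum degree of $H_1[N_G(v)]$ is far too low to apply the multipartite Hajnal–Szemer\'edi theorem directly; instead one applies the Regularity Lemma to $G[N_G(v)]$, finds a perfect $K_{r-1}$-matching in the reduced graph, and uses the fact that a random sparse slice of a regular pair stays regular (Propositions~\ref{prop:randomg1}, \ref{prop:slice} and \ref{prop:matching}). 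This regularity machinery is the missing idea in your proposal; without it (or some substitute that covers $\Theta(n)$ edges at each bad vertex using interior edges from a sparse reserved graph), the final step does not go through.
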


Our strategy for the proof of Lemma~\ref{lem:degree} is as follows. We first remove a sparse random subgraph $H_1$ from $G$. We will then remove a further graph $H_2$ of small maximum degree from $G-H_1$ to achieve that $G-(H_1\cup H_2)$ is $K_r$-divisible. (The existence of such a graph $H_2$ is shown in Proposition~\ref{prop:fixing}.) The definition of $\delta^\eta_{K_r}$ then ensures that   $G-(H_1\cup H_2)$ has an $\eta$-approximate $K_r$-decomposition. We now consider the graph $R$ obtained from $G-H_2$ by deleting all edges in the copies of $K_r$ in this decomposition. Suppose that $v$ is a vertex whose degree in $R$ is too high. Our aim will be to find a $K_{r-1}$-matching in $H_1$ whose vertex set is the neighbourhood of $v$ in $G$. If $\rho$ denotes the edge-probability for the random subgraph $H_1$, then each vertex in $H_1$ is, on average, joined to at most $\rho d_G(v)/(r-1)\ll (1-1/(r-1)+\eps)d_G(v)/(r-1)$ vertices in each other part,%
\COMMENT{`$n$' should really be `$d_G(v, V_j)$' where $v\notin V_j$.}
so Theorem~\ref{thm:hajszem} alone is of no use. But Theorem~\ref{thm:hajszem} can be combined with the Regularity lemma in order to find the desired $K_{r-1}$-matching in $H_1$ (see Proposition~\ref{prop:randomg1}).

\subsection{Regularity}\label{subsec:reg}

In this section, we introduce a version of the Regularity lemma which we will use to prove Lemma~\ref{lem:degree}.

Let $G$ be a bipartite graph on $(A,B)$. For non-empty sets $X\subseteq A$, $Y\subseteq B$, we define the \emph{density of $G[X,Y]$} to be $d_G(X, Y):=e_G(X,Y)/|X||Y|$.  Let $\eps>0$. We say that $G$ is \emph{$\eps$-regular} if for all sets $X \subseteq A$ and $Y \subseteq B$ with $|X|\geq \eps |A|$ and $|Y| \geq \eps |B|$ we have $$|d_G(A,B) - d_G(X,Y)| < \eps.$$

The following simple result follows immediately from this definition.

\begin{prop}\label{prop:subsets}
Suppose that $0< \varepsilon \leq \alpha \leq 1/2$. Let $G$ be a bipartite graph on $(A,B)$. Suppose that $G$ is $\eps$-regular with density $d$. If $A' \subseteq A, B' \subseteq B$ with $|A'| \geq \alpha |A|$ and $|B'| \geq \alpha |B|$ then $G[A',B']$ is $\eps / \alpha$-regular and has density greater than $d-\varepsilon$.\qed
\end{prop}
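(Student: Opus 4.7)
The proof is standard and short; it is essentially just unpacking the definition of $\varepsilon$-regularity together with the size bounds on $A'$ and $B'$.

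First, for the density bound, I would simply apply the definition of $\varepsilon$-regularity directly to the pair $(A',B')$. Since $|A'| \geq \alpha |A| \geq \varepsilon |A|$ and $|B'| \geq \alpha |B| \geq \varepsilon |B|$, the $\varepsilon$-regularity of $G$ yields $|d_G(A,B) - d_G(A',B')| < \varepsilon$, which gives $d_G(A',B') > d - \varepsilon$, as required.

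For the regularity statement, let $X \subseteq A'$ and $Y \subseteq B'$ with $|X| \geq (\varepsilon/\alpha)|A'|$ and $|Y| \geq (\varepsilon/\alpha)|B'|$. Then
\[
|X| \geq (\varepsilon/\alpha)|A'| \geq (\varepsilon/\alpha)\alpha|A| = \varepsilon |A|,
\]
and analogously $|Y| \geq \varepsilon |B|$. Together with the bounds $|A'| \geq \varepsilon|A|$ and $|B'| \geq \varepsilon|B|$ already noted, two applications of $\varepsilon$-regularity of $G$ give $|d_G(A,B) - d_G(X,Y)| < \varepsilon$ and $|d_G(A,B) - d_G(A',B')| < \varepsilon$. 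The triangle inequality then yields $|d_G(A',B') - d_G(X,Y)| < 2\varepsilon$.

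The final step is to observe that $\alpha \leq 1/2$ implies $2\varepsilon \leq \varepsilon/\alpha$, so we indeed obtain $|d_G(A',B') - d_G(X,Y)| < \varepsilon/\alpha$, establishing $\varepsilon/\alpha$-regularity of $G[A',B']$. There is no real obstacle here; the only small subtlety is making sure that the constant comes out as $\varepsilon/\alpha$ rather than $2\varepsilon$, which is exactly where the hypothesis $\alpha \leq 1/2$ is used.
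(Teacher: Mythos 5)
Your proof is correct and is exactly the standard argument the paper has in mind (the paper omits the proof, noting only that the result "follows immediately from the definition"). Both the density bound and the $\varepsilon/\alpha$-regularity via the triangle inequality and $2\varepsilon \le \varepsilon/\alpha$ are handled correctly.
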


Proposition~\ref{prop:subsets} shows that regularity is robust, that is, it is not destroyed by deleting even quite a large number of vertices. The next observation allows us to delete a small number of edges at each vertex and still maintain regularity. The proof again follows from the definition.

\begin{prop}\label{prop:regedges}
Let $n\in \N$ and let $0<\gamma \ll \varepsilon \leq 1$. Let $G$ be a bipartite graph on $(A,B)$ with $|A|=|B|=n$. Suppose that $G$ is $\eps$-regular with density $d$. Let $H\subseteq G$ with $\Delta(H)\leq \gamma n$ and let $G':=G-H$. Then $G'$ is $2\eps$-regular and has density greater than $d-\varepsilon/2$. \qed
\end{prop}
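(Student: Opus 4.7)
The proof is a short calculation based directly on the definitions, so my plan is to split it into two parts: first bound the change in overall density, then control the density on arbitrary large sub-rectangles.

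For the density bound, I would observe that since $H$ is bipartite with $\Delta(H)\le\gamma n$, the total number of edges removed satisfies $e(H)\le \gamma n\cdot n=\gamma n^2$. Hence the density of $G'$ is at least $d-\gamma$, which is greater than $d-\varepsilon/2$ by the hierarchy $\gamma\ll\varepsilon$. In particular, $|d_{G'}(A,B)-d|\le\gamma$.

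For the $2\varepsilon$-regularity, I would take arbitrary $X\subseteq A$ and $Y\subseteq B$ with $|X|\ge 2\varepsilon n$ and $|Y|\ge 2\varepsilon n$. Since $2\varepsilon\ge\varepsilon$, the $\varepsilon$-regularity of $G$ gives $|d_G(X,Y)-d|<\varepsilon$. Now the number of edges of $H$ between $X$ and $Y$ is at most $\Delta(H)|X|\le\gamma n|X|$, so
\[
0\le d_G(X,Y)-d_{G'}(X,Y)\le \frac{\gamma n|X|}{|X||Y|}=\frac{\gamma n}{|Y|}\le\frac{\gamma}{2\varepsilon}.
\]
Combining this with the first paragraph via the triangle inequality,
\[
|d_{G'}(X,Y)-d_{G'}(A,B)|\le |d_{G'}(X,Y)-d_G(X,Y)|+|d_G(X,Y)-d|+|d-d_{G'}(A,B)|<\tfrac{\gamma}{2\varepsilon}+\varepsilon+\gamma,
\]
which is below $2\varepsilon$ as soon as $\gamma$ is small enough in terms of $\varepsilon$ (guaranteed by $\gamma\ll\varepsilon$).

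There is no real obstacle here; the only subtlety is keeping track of which density one is comparing to (the density of $G$, of $G'$, or the local density $d_G(X,Y)$), and choosing the natural telescoping so that each individual difference is easily controlled either by $\varepsilon$-regularity of $G$ or by the maximum degree bound on $H$.
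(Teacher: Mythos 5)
Your proof is correct and follows essentially the same argument as the paper: bound the number of removed edges in any large sub-rectangle by $\Delta(H)|X|\le\gamma n|X|$, deduce that local densities change by at most $O(\gamma/\eps)$, and conclude via the same triangle inequality (the paper works with $|X|,|Y|\ge\eps n$ and bounds each error term by $\eps/2$, but this is an immaterial difference).
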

\COMMENT{proof - see appendix}

The following proposition takes a graph $G$ on $(V_1, \dots, V_r)$ where each pair of vertex classes induces an $\eps$-regular pair and allows us to find a $K_r$-matching covering most of the vertices in $G$. Part \eqref{prop:matching:1} follows from Proposition~\ref{prop:subsets} and the definition of regularity. For \eqref{prop:matching:2}, apply \eqref{prop:matching:1} repeatedly until only $\lceil\eps^{1/r}n\rceil$ vertices remain uncovered in each $V_j$. 

\begin{prop}\label{prop:matching}
Let $1/n \ll \eps \ll d, 1/r\leq 1$. Let $G$ be an $r$-partite graph on $(V_1, \dots, V_r)$ with $|V_1|=\dots =|V_r|=n$. Suppose that, for all $1\leq j_1<j_2\leq r$, the graph $G[V_{j_1}, V_{j_2}]$ is $\eps$-regular with density at least $d$.
\begin{enumerate}[\rm(i)]
\item For each $1\leq j\leq r$, let $W_j\subseteq V_j$ with $|W_j|= \lceil\eps^{1/r}n\rceil$. Then $G[W_1, \dots, W_r]$ contains a copy of $K_r$.\label{prop:matching:1}
\item The graph $G$ contains a $K_r$-matching which covers all but at most $2r\eps^{1/r}n$ vertices of $G$.\label{prop:matching:2}\qed
\end{enumerate}
\end{prop}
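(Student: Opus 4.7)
For part (i), the plan is to first apply Proposition~\ref{prop:subsets} with $\alpha = \eps^{1/r}$ (valid because $|W_j| = \lceil\eps^{1/r}n\rceil \geq \eps^{1/r}n$) to obtain that each restricted pair $G[W_{j_1}, W_{j_2}]$ is $\eps^{(r-1)/r}$-regular with density greater than $d-\eps$. Since $\eps \ll d$, both quantities are well separated. I would then find a $K_r$ by a standard greedy argument: pick $v_1 \in W_1$ whose neighbourhood in each $W_k$ ($k \geq 2$) has size at least $(d/2)|W_k|$. By the definition of regularity, for any fixed $k$ at most an $\eps^{(r-1)/r}$-fraction of vertices in $W_1$ fail this bound, and a union bound over $k$ still leaves a positive fraction of choices.

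Set $W_k^{(1)} := N(v_1) \cap W_k$ and iterate, picking $v_2 \in W_2^{(1)}$ with large common neighbourhood in each $W_k^{(1)}$ ($k \geq 3$), and so on. The key invariant is that after step $i$ every set $W_k^{(i)}$ has size at least $(d/2)^i \lceil \eps^{1/r} n\rceil$, which keeps Proposition~\ref{prop:subsets} applicable: the regularity parameter of $G[W_{j_1}^{(i)}, W_{j_2}^{(i)}]$ degrades only by a factor of roughly $(2/d)^i$, staying below $1/(r-i)$ throughout since $r$ is a constant and $\eps \ll d$. After $r-1$ steps the set $W_r^{(r-1)}$ is non-empty, and any vertex in it completes a $K_r$ with $v_1, \ldots, v_{r-1}$.

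For part (ii) I would iteratively apply (i). Set $U_j^{(0)} := V_j$; at step $k$, provided $|U_j^{(k-1)}| \geq \lceil\eps^{1/r}n\rceil$ for every $j$, choose arbitrary subsets $W_j \subseteq U_j^{(k-1)}$ of size exactly $\lceil\eps^{1/r}n\rceil$, apply part (i) to find a $K_r$ with vertex $v_j \in W_j$ for each $j$, and set $U_j^{(k)} := U_j^{(k-1)} \setminus \{v_j\}$. Since exactly one vertex is removed from each class per step, we have $|U_j^{(k)}| = n - k$, and the pairs $G[U_{j_1}^{(k)}, U_{j_2}^{(k)}]$ automatically satisfy the hypothesis of (i) via Proposition~\ref{prop:subsets}, so the argument does not deteriorate as $k$ grows. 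We halt precisely when $|U_j^{(k)}| = \lceil\eps^{1/r}n\rceil - 1$, leaving $r(\lceil\eps^{1/r}n\rceil - 1) \leq r\eps^{1/r}n \leq 2r\eps^{1/r}n$ uncovered vertices, as desired.

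The only real obstacle is the bookkeeping in the greedy step of (i): one must verify that the regularity parameters, which worsen by a multiplicative factor each time a common neighbourhood is restricted, remain small enough for the union bound at every one of the $r-1$ iterations. This is where the hierarchy $\eps \ll d, 1/r$ is used, and no other subtlety arises.
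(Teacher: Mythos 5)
Your argument is correct and is essentially the paper's: part (i) is the same iterated common-neighbourhood embedding (the paper packages it as an induction on $r$, shrinking the target sets from $\lceil\eps^{1/r}n\rceil$ to $\lceil\eps^{1/(r-1)}n\rceil$ so that regularity is always measured against the original $\eps$-regular pairs rather than compounding a $(2/d)$ factor per step, but for fixed $r$ both bookkeepings work), and part (ii) is the same greedy removal of copies of $K_r$ until $\lceil\eps^{1/r}n\rceil$ vertices remain in each class. One cosmetic point: in (ii) you need not invoke Proposition~\ref{prop:subsets} for the pairs $G[U_{j_1}^{(k)}, U_{j_2}^{(k)}]$ at all (and it would not literally restore the hypothesis ``$\eps$-regular with density at least $d$''); statement (i) only requires $W_j\subseteq V_j$ of size $\lceil\eps^{1/r}n\rceil$, so it applies directly to the original $G$ at every step.
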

\COMMENT{proof - see appendix}

We will use a version of Szemer\'edi's Regularity lemma~\cite{szem} stated for $r$-partite graphs. It is proved in the same way as the non-partite degree version.%
\COMMENT{Proof starts with partition refining $(V_1, \dots, V_r)$ and refines this partition in each step.}

\begin{lemma}[Degree form of the $r$-partite Regularity lemma]\label{lem:degreeform}
Let $0<\eps<1$ and $k_0,r\in \N$. Then there is an $N = N(\eps,k_0,r)$ such that the following holds for every $0\leq d < 1$ and for every $r$-partite graph $G$ on $(V_1, \dots, V_r)$ with $|V_1|=\dots=|V_r|=n \geq N$. There exists a partition $\cP=\{U^0, \dots, U^k\}$ of $V(G)$, $m\in \N$ and a spanning subgraph $G'$ of $G$ satisfying the following:
	\begin{enumerate}[\rm(i)]
	\item $k_0\leq k\leq N$;\label{item:k}
	\item for each $1\leq j\leq r$, $|U^0_j|\leq \eps n$;\label{item:exceptional}
	\item for each $1\leq i\leq k$ and each $1\leq j\leq r$, $|U^i_j|=m$;\label{item:equi}
	\item for each $1\leq j\leq r$ and each $v\in V(G)$, $d_{G'}(v, V_j) > d_G(v, V_j) - (d + \eps)n$;\label{item:degree}
	\item for all but at most $\eps k^2$ pairs $U^{i_1}_{j_1}, U^{i_2}_{j_2}$ where $1\leq i_1, i_2\leq k$ and $1\leq j_1<j_2\leq r$, the graph $G'[U^{i_1}_{j_1}, U^{i_2}_{j_2}]$ is $\eps$-regular and has density either $0$ or $>d$.\label{item:regularpairs}
	\end{enumerate}
\end{lemma}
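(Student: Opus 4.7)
The plan is to apply the standard partite Szemer\'edi Regularity lemma (without the degree refinement) with a much smaller parameter $\eps' \ll \eps, d, 1/r$ and with minimum number of parts at least $k_0$. This produces, for each $1 \leq j \leq r$, an equitable partition of $V_j$ into classes $U^1_j, \ldots, U^k_j$ each of size $m = \lfloor |V_j|/k \rfloor$ with at most $k\leq N(\eps',k_0,r)$ leftover vertices placed in $U^0_j$, and such that all but at most $\eps' k^2$ of the pairs $(U^{i_1}_{j_1}, U^{i_2}_{j_2})$ with $1 \leq j_1 < j_2 \leq r$ are $\eps$-regular. I would define $G'$ by removing from $G$ every edge lying in a non-regular pair or in a regular pair of density at most $d$.

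Next, to secure the degree condition (iv), I would move certain \emph{bad} vertices into $U^0_j$: (a) for each $1\leq j_1\leq r$, any vertex $v\in V_{j_1}$ incident to more than $\sqrt{\eps'}\,n$ edges of $G$ lying in irregular pairs with $V_{j_2}$ on the other side, for some $j_2\neq j_1$; since the total number of such edges is at most $\eps' n^2$, an averaging argument bounds the number of such vertices in $V_{j_1}$ by $r\sqrt{\eps'}\,n$; and (b) for each regular pair $(U^{i_1}_{j_1}, U^{i_2}_{j_2})$ of density at most $d$, the at-most-$\eps m$ vertices of $U^{i_1}_{j_1}$ with $d_G(v, U^{i_2}_{j_2}) > (d+\eps/2)m$, which is a standard consequence of $\eps$-regularity (otherwise the set of such vertices witnesses a density discrepancy). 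The total number of additional bad vertices of either type is at most $(r\sqrt{\eps'} + r\eps)n$, so with $\eps' \ll \eps$ we can ensure $|U^0_j| \leq \eps n$, giving (ii). Property (iii) holds by construction of the equitized partition, and (v) follows directly from the input Regularity lemma since $\eps'\leq\eps$.

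To verify (iv), fix $v\in V_{j_1}\setminus U^0_{j_1}$ and $j_2\neq j_1$. The edges of $G$ from $v$ to $V_{j_2}$ deleted in forming $G'$ split into three groups: those with endpoint in $U^0_{j_2}$ (at most $|U^0_{j_2}|\leq\eps n$); those in irregular pairs (at most $\sqrt{\eps'}\,n$, because $v$ is not bad of type (a)); and those in low-density regular pairs (at most $k\cdot(d+\eps/2)m \leq (d+\eps/2)n$, because $v$ is not bad of type (b)). Summing, the loss is at most $(d+3\eps)n$, which we convert to the required $(d+\eps)n$ by rerunning the argument with $\eps/3$ in place of $\eps$ throughout.

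The main technical obstacle is calibrating the parameters so that all of (ii)--(v) hold simultaneously: the exceptional set $U^0$ must accommodate both kinds of bad vertices while remaining of size at most $\eps n$, and the degree loss from the three sources (exceptional vertices, irregular pairs, low-density pairs) must together not exceed $(d+\eps)n$. Both constraints are handled by the single choice $\eps' \ll \eps, d, 1/r$, but the hierarchy must be fixed in the correct order and the output $N$ inflated accordingly.
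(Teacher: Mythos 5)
There is a genuine counting gap in your treatment of the type (b) bad vertices. For each regular pair of density at most $d$ you move up to $\eps m$ vertices into the exceptional set, but a fixed cluster $U^{i_1}_{j_1}$ lies in up to $(r-1)k$ such pairs and there are $rk$ clusters, so the total number of vertices moved can be of order $r^2\eps k m\cdot k= r^2\eps k n$, not $(r\sqrt{\eps'}+r\eps)n$ as you claim. Since $k$ is an \emph{output} of the regularity lemma applied with parameter $\eps'$ (and can be tower-type in $1/\eps'$), you cannot arrange $\eps' k\ll\eps$ or $\eps k\ll \eps$ by fixing the hierarchy first -- that choice is circular -- so property (ii) fails as written. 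The standard repair is to move a vertex $v\in U^{i_1}_{j_1}$ only if it has degree greater than $(d+\eps')m$ into more than (say) $\sqrt{\eps'}k$ of its low-density partner clusters inside a single class $V_{j_2}$: per cluster, $\sum_{v}\#\{\text{bad partners}\}\le (r-1)k\cdot\eps' m$, so only $O(r\sqrt{\eps'}m)$ vertices per cluster (hence $O(r\sqrt{\eps'}n)$ in total) are moved, while every surviving vertex loses at most $\sqrt{\eps'}k\cdot m+k(d+\eps')m\le (d+\eps'+\sqrt{\eps'})n$ edges towards each $V_{j_2}$ from low-density pairs, which fits the $(d+\eps)n$ budget.

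A second, related omission: moving vertices into $U^0$ after the clusters have been fixed destroys (iii) -- the clusters no longer share a common size -- and your statements that (iii) holds ``by construction'' and (v) ``follows directly'' ignore this. To restore equal sizes you must trim every cluster to a common size, and then a kept pair whose density was only guaranteed to exceed $d$ may drop to density at most $d$, and its regularity parameter degrades (this is exactly the slicing issue of Proposition~\ref{prop:subsets}). The usual patch is to delete, when forming $G'$, all pairs of density at most $d+\eps/2$ (charging the extra $\eps/2$ to the degree budget in (iv)) and to note that removing an $O(\sqrt{\eps'})$-fraction of each cluster keeps the surviving pairs $\eps$-regular with density still greater than $d$. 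Also, your claim that at most $\eps m$ vertices have degree exceeding $(d+\eps/2)m$ into a low-density partner does not follow from $\eps$-regularity (the witnessed discrepancy is only $\eps/2$); it does follow from the $\eps'$-regularity you actually obtain, so this is only a calibration slip. None of this is unfixable, and your route (black-boxing the plain partite regularity lemma and cleaning) is genuinely different from the paper's, which simply reruns the proof of the non-partite degree form starting from a partition refining $(V_1,\dots,V_r)$; but as written the exceptional-set bound is wrong and conclusions (ii) and (iii) do not hold.
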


We define the \emph{reduced graph} $R$ as follows. The vertex set of $R$ is the set of clusters $\{U^i_j:1\leq i \leq k \text{ and } 1\leq j\leq r\}$. For each $U,U' \in V(R)$, $UU'$ is an edge of $R$ if the subgraph $G'[U,U']$ is $\eps$-regular and has density greater than $d$. Note that $R$ is a balanced $r$-partite graph with vertex classes $W_j:=\{U^i_j:1\leq i \leq k\}$ for $1\leq j \leq r$. The following simple proposition relates the minimum degree of $G$ and the minimum degree of $R$.

\begin{prop}\label{prop:mindeg}
Suppose that $0<2\eps \leq d \leq c/2$. Let $G$ be an $r$-partite graph on $(V_1, \dots, V_r)$ with $|V_1|=\dots =|V_r|=n$ and $\hat\delta(G) \geq cn$. Suppose that $G$ has a partition $\cP=\{U^0, \dots, U^k\}$ and a subgraph $G'\subseteq G$ as given by Lemma~\ref{lem:degreeform}. Let $R$ be the reduced graph of $G$. Then $\hat\delta(R) \geq (c-2d)k$.\qed
\end{prop}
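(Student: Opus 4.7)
The plan is to prove the minimum-degree bound cluster by cluster. Fix an arbitrary cluster $U = U^i_{j_0} \in V(R)$ and a direction $j' \neq j_0$, and aim to show that $d_R(U, W_{j'}) \geq (c-2d)k$.

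I would start from a single vertex $v \in U$. Since $\hat\delta(G) \geq cn$, we have $d_G(v, V_{j'}) \geq cn$, and property (iv) of Lemma~\ref{lem:degreeform} gives $d_{G'}(v, V_{j'}) \geq cn - (d+\eps)n = (c-d-\eps)n$. These $G'$-neighbours of $v$ are distributed across the partition $V_{j'} = U^0_{j'} \cup U^1_{j'} \cup \dots \cup U^k_{j'}$. By (ii) at most $\eps n$ of them lie in $U^0_{j'}$. For each $i'$ with $UU^{i'}_{j'} \in E(R)$, at most $|U^{i'}_{j'}| = m$ neighbours lie in $U^{i'}_{j'}$. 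For each $i'$ with $UU^{i'}_{j'} \notin E(R)$ such that the pair is $\eps$-regular, property (v) forces the density to be $0$, so $v$ has no neighbour there. Only irregular pairs can contribute up to $m$ extra neighbours each.

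Writing $I$ for the number of $i' \in [k]$ such that $(U, U^{i'}_{j'})$ is $\eps$-irregular, this accounting gives
$$(c-d-\eps)n \;\leq\; \eps n + \bigl(d_R(U, W_{j'}) + I\bigr)m.$$
Rearranging, and using that $km \leq n$ by (iii), we get $d_R(U, W_{j'}) \geq (c-d-2\eps)k - I$. It then remains to bound $I$. Property (v) bounds the \emph{total} number of irregular pairs by $\eps k^2$; via the standard move of relocating any cluster with atypically many irregular partners into $U^0$ (which enlarges $U^0$ only by a lower-order amount and so is absorbed into the $\eps n$ slack in (ii)), one can assume $I \leq \sqrt{\eps}\, k$. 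Since $2\eps \leq d$, this concession is negligible compared with $dk$, and we obtain $d_R(U, W_{j'}) \geq (c-d-2\eps-\sqrt{\eps})k \geq (c-2d)k$, as required.

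The main obstacle is this last per-cluster bound on irregular pairs, since Lemma~\ref{lem:degreeform}(v) is stated only globally; the minimum-degree conclusion of the proposition really does need the per-cluster version, which is why one needs the standard trick of absorbing highly irregular clusters into the exceptional set. Everything else in the argument is a one-line edge-count using only properties (ii)--(v).
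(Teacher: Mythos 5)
Your overall strategy (count the $G'$-neighbours of a single vertex $x\in U$ in $V_{j'}$, discard the at most $\eps n$ that land in $U^0_{j'}$, and divide by the cluster size $m$) is exactly the paper's, but your treatment of irregular pairs introduces a genuine gap. The key point you are missing is what the passage from $G$ to $G'$ in the degree form of the Regularity Lemma buys you: $G'$ is obtained by deleting, at each vertex, all edges going to the exceptional set, inside clusters, into irregular pairs and into regular pairs of density at most $d$ — this deletion is precisely what is paid for in condition (iv). Consequently every pair of clusters that carries an edge of $G'$ is $\eps$-regular of density greater than $d$, i.e.\ is an edge of $R$. So every cluster of $W_{j'}$ outside $U^0_{j'}$ that contains a $G'$-neighbour of $x$ is automatically an $R$-neighbour of $U$, the quantity $I$ never appears, and one gets directly $d_R(U,W_{j'})\geq \bigl((c-d-\eps)n-\eps n\bigr)/m\geq (c-d-2\eps)k\geq (c-2d)k$, the last step using only $d+2\eps\leq 2d$.

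Your patch for the $I$ term does not work for two separate reasons. First, the ``relocation'' of clusters with many irregular partners into $U^0$ is not available here: Proposition~\ref{prop:mindeg} is a statement about the reduced graph of the partition \emph{produced by} Lemma~\ref{lem:degreeform}, so you may not modify $\cP$; moreover condition (ii) gives $|U^0_j|\leq \eps n$ with no slack, so moving up to $\sqrt{\eps}\,k$ clusters of size $m$ into $U^0$ would in general violate (ii) rather than be ``absorbed'' by it. Without relocation, the global bound of $\eps k^2$ irregular pairs allows a single cluster $U$ to have $I$ as large as $k$ (once $k\geq 1/\eps$), and your inequality $d_R(U,W_{j'})\geq (c-d-2\eps)k-I$ then gives nothing. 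Second, even granting $I\leq\sqrt{\eps}\,k$ for every cluster, your final step $(c-d-2\eps-\sqrt{\eps})k\geq (c-2d)k$ requires $2\eps+\sqrt{\eps}\leq d$, which does not follow from the hypothesis $2\eps\leq d$ (take $\eps=d/2$). So the arithmetic does not close either; the proof must instead use the property of $G'$ described above.
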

\COMMENT{proof - see appendix}

\subsection{Degree reduction}

At the beginning of our proof of Lemma~\ref{lem:degree}, we will reserve a random subgraph $H_1$ of $G$. Proposition~\ref{prop:randomg1} below ensures that we can partition the neighbourhood of each vertex so that $H_1$ induces $\eps$-regular graphs between these parts. 
In our proof of Proposition~\ref{prop:randomg1}, we will use the following well-known result for which we omit the proof.

\begin{prop}\label{prop:slice}
Let $1/n \ll \eps\ll d, \rho \leq 1$. Let $G$ be a bipartite graph on $(A,B)$ with $|A|=|B|=n$. Suppose that $G$ is $\eps$-regular with density at least $d$. Let $H$ be a graph formed by taking each edge of $G$ independently with probability $\rho$. Then, with probability at least $1-1/n^2$, $H$ is $4\eps$-regular with density at least $\rho d/2$.\qed
\end{prop}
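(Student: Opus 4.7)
The plan is a standard "random slice preserves regularity" argument: show that for each relevant pair of subsets, Chernoff concentration plus $\eps$-regularity of $G$ forces the correct edge count in $H$, then take a union bound over all such pairs.

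First I would establish the density claim. Let $d' := d_G(A,B) \ge d$. Then $\ex(e_H(A,B)) = \rho d' n^2$, so by Hoeffding's inequality (Lemma~\ref{lem:chernoff}) applied to $X = e_H(A,B) \sim B(e_G(A,B), \rho)$, we get
\[
\pr\bigl(|e_H(A,B) - \rho d' n^2| \ge \eps n^2/2\bigr) \le 2e^{-\eps^2 n^2/2} \le 1/(2n^2),
\]
using $1/n \ll \eps$. In particular, with probability at least $1 - 1/(2n^2)$, the density of $H$ is at least $\rho d' - \eps/2 \ge \rho d/2$ (since $\eps \ll d,\rho$), and also $d_H(A,B) = \rho d' \pm \eps/2$.

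For the regularity condition, consider any pair $(X,Y)$ with $X \subseteq A$, $Y \subseteq B$ and $|X|, |Y| \ge 4\eps n$. Since $G$ is $\eps$-regular, $|d_G(X,Y) - d'| < \eps$, hence $\ex(e_H(X,Y)) = \rho\, d_G(X,Y) |X||Y| = \rho d'|X||Y| \pm \eps |X||Y|$. Applying Hoeffding again to $e_H(X,Y) \sim B(e_G(X,Y), \rho)$,
\[
\pr\bigl(|e_H(X,Y) - \rho\, d_G(X,Y)|X||Y|| \ge \eps |X||Y|\bigr) \le 2 e^{-2\eps^2 |X|^2|Y|^2 / e_G(X,Y)} \le 2 e^{-2\eps^2 |X||Y|},
\]
and since $|X||Y| \ge 16 \eps^2 n^2$, this is at most $2 e^{-32\eps^4 n^2}$. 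The number of pairs $(X,Y)$ to consider is at most $4^n$, and $1/n \ll \eps$ means $4^n \cdot 2e^{-32\eps^4 n^2} \ll 1/n^2$. So by a union bound, with probability at least $1 - 1/(2n^2)$, every such pair satisfies $|d_H(X,Y) - \rho\,d_G(X,Y)| < \eps$, and combined with $\eps$-regularity of $G$ this gives $|d_H(X,Y) - \rho d'| < 2\eps$.

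Combining the two events, which together hold with probability at least $1 - 1/n^2$: the density of $H$ is at least $\rho d/2$, and for every pair $(X,Y)$ with $|X|,|Y| \ge 4\eps n$,
\[
|d_H(X,Y) - d_H(A,B)| \le |d_H(X,Y) - \rho d'| + |\rho d' - d_H(A,B)| < 2\eps + \eps/2 < 4\eps,
\]
so $H$ is $4\eps$-regular as required. The only slightly delicate point is ensuring that the Chernoff tail beats the $4^n$ union bound, but this is immediate from the hierarchy $1/n \ll \eps$ since the exponent is of order $\eps^4 n^2$ while the union bound contributes only an $n\log 4$ term.
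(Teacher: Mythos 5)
Your proposal is correct and follows essentially the same argument as the paper: Chernoff/Hoeffding concentration of $e_H(X,Y)$ around $\rho e_G(X,Y)$ for each pair with $|X|,|Y|\ge 4\eps n$, a union bound over at most $4^n$ pairs beaten by the $e^{-32\eps^4 n^2}$ tail, and the triangle inequality combining this with the $\eps$-regularity of $G$ to get $4\eps$-regularity and density at least $\rho d/2$. The only cosmetic difference is that you treat the pair $(A,B)$ by a separate concentration step, whereas the paper absorbs it into the same family of events.
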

\COMMENT{proof - see appendix}

\begin{prop}\label{prop:randomg1}
Let $1/n\ll \alpha \ll 1/N\ll 1/k_0\leq \eps^*\ll  d\ll \rho<\eps, 1/r<1$. Let $G$ be an $r$-partite graph on $(V_1,\dots, V_r)$ with $|V_1|=\dots=|V_r|=n$ and $\hat{\delta}(G)\geq (1-1/r+\eps)n$. Suppose that for all $1\leq j_1, j_2 \leq r$ and every $v\notin V_{j_1}\cup V_{j_2}$, $|d_G(v, V_{j_1})-d_G(v, V_{j_2})|< \alpha n$. Then there exists $H\subseteq G$ satisfying the following properties:
\begin{enumerate}[\rm(i)]
\item For each $1\leq j\leq r$ and each $v\in V(G)$, $|d_H(v, V_j)-\rho d_G(v, V_j)|<\alpha n$.
In particular, for any $1\leq j_1, j_2\leq r$ such that $v\notin V_{j_1}\cup V_{j_2}$, $|d_H(v, V_{j_1})-d_H(v, V_{j_2})|<3\alpha n$.\label{prop:randomg1:1}
\item For each vertex $v\in V(G)$, there exists a partition $\cP(v)=\{U^0(v), \dots, U^{k_v}(v)\}$ of $N_G(v)$  and $m_v\in \N$ such that:\label{prop:randomg1:2}
	\begin{itemize}
	\item $k_0 \leq k_v\leq N$;
	\item for each $1\leq j\leq r$, $|U^0_j(v)|\leq \eps^* n$;
	\item for each $1\leq i \leq k_v$ and each $1\leq j\leq r$ such that $v\notin V_j$, $|U^i_j(v)|=m_v$;
	\item for each $1\leq i\leq k_v$ and all $1\leq j_1<j_2\leq r$ such that $v\notin V_{j_1}\cup V_{j_2}$, the graph $H[U^i_{j_1}(v), U^i_{j_2}(v)]$ is $\eps^*$-regular with density greater than $d$.
	\end{itemize}
\end{enumerate}
\end{prop}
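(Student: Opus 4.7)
The plan is to let $H$ be the random subgraph of $G$ obtained by keeping each edge independently with probability $\rho$, verify~(\ref{prop:randomg1:1}) directly by a Chernoff bound, and for each vertex $v$ carve up $N_G(v)$ by first applying Lemma~\ref{lem:degreeform} inside $G$, transferring regularity from $G$ to $H$ via Proposition~\ref{prop:slice}, and finally tiling the resulting reduced graph by a perfect $K_{r-1}$-matching supplied by Theorem~\ref{thm:hajszem}. Since $d_H(v,V_j)\sim B(d_G(v,V_j),\rho)$, Lemma~\ref{lem:chernoff} and a union bound over the $rn$ pairs $(v,j)$ show that $|d_H(v,V_j)-\rho d_G(v,V_j)|<\alpha n/2$ holds simultaneously for all $v,j$ with probability $1-o(1)$, which together with~(\ref{eq:lem:degree:1}) yields~(\ref{prop:randomg1:1}).

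For~(\ref{prop:randomg1:2}), fix $v\in V_{j_0}$ and regard $G[N_G(v)]$ as an $(r-1)$-partite graph on the parts $N_G(v)\cap V_j$ for $j\ne j_0$, each of size in $[(1-1/r+\eps)n,(1-1/r+\eps+\alpha)n]$ by~(\ref{eq:lem:degree:1}). Its partite minimum degree is at least $(1-1/r+\eps)n-(1/r-\eps)n=(1-2/r+2\eps)n$, so after trimming each part to a common size $n_v$ (dumping the $\le\alpha n$ discarded vertices per part into $U^0(v)$), the normalised partite minimum degree is at least
\[
  \frac{(1-2/r+\eps)n}{(1-1/r+\eps+\alpha)n} \;=\; \frac{r-2}{r-1}+\Omega(\eps).
\]
Now I would apply Lemma~\ref{lem:degreeform} to this trimmed graph (with $r-1$ in place of $r$, regularity parameter $\eps_1:=\eps^*/5$, density threshold $d_0:=3d/\rho$, and the same $k_0$), obtaining an equitable partition $\{C^0(v),C^1(v),\dots,C^{K_v}(v)\}$ with common cluster size $M_v\ge n/(2N)$, exceptional part of size $\le\eps_1 n$ in each $V_j$, and at most $\eps_1 K_v^2$ irregular pairs. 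By Proposition~\ref{prop:slice}, each pair that is $\eps_1$-regular with density $>d_0$ in $G$ is $4\eps_1$-regular with density $\ge \rho d_0/2 = 3d/2>d$ in $H$, except with probability $\le 1/M_v^2$; a union bound over the $\le nN^2r^2$ such triples $(v,j_1,j_2)$ (and cluster choices) makes this transfer succeed for every $v$ simultaneously with probability $1-o(1)$.

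Condition on both good events. For each $v$, let $R^v$ denote the reduced graph of $H$, whose vertex set is $\{C^i(v)\cap V_j:i\ge1,\,j\ne j_0\}$ and whose edges are pairs that are $(4\eps_1)$-regular in $H$ of density greater than $d$. Since every pair declared regular of density $>d_0$ in $G$ also lies in $E(R^v)$ by the preceding step, Proposition~\ref{prop:mindeg} applied to the trimmed $(r-1)$-partite graph with $c=\tfrac{r-2}{r-1}+\Omega(\eps)$ and density threshold $d_0$ gives $\hat\delta(R^v)\ge\bigl(\tfrac{r-2}{r-1}+\Omega(\eps)\bigr)K_v$, comfortably above the Hajnal--Szemer\'edi threshold $1-1/(r-1)$. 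Theorem~\ref{thm:hajszem} therefore supplies a perfect $K_{r-1}$-matching in $R^v$, each of whose cliques is a slab $U^i(v)$ consisting of one cluster in each non-home class with all cross-pairs $\eps^*$-regular (using $4\eps_1<\eps^*$) of density $>d$ in $H$; the trimmed and Reg-Lemma-exceptional vertices form $U^0(v)$.

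The main obstacle is the lower bound on $\hat\delta(R^v)$: we need the partite minimum degree inside $N_G(v)$ to clear the Hajnal--Szemer\'edi threshold for $K_{r-1}$-tilings with enough slack to absorb the $2d_0$ loss in Proposition~\ref{prop:mindeg}. This works precisely because of the algebraic identity $(1-2/r)/(1-1/r)=(r-2)/(r-1)$, which converts the hypothesis $\hat\delta(G)\ge(1-1/r+\eps)n$ into exactly the threshold $1-1/(r-1)$ in the normalised graph $G[N_G(v)]$, plus an $\eps$-sized surplus. Everything else is bookkeeping of the hierarchy $\alpha\ll 1/N\ll 1/k_0\le\eps^*\ll d\ll\rho$.
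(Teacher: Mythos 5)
Your construction is essentially the paper's: take $H$ to be a $\rho$-random subgraph of $G$, verify (i) by Chernoff plus a union bound, and for each $v$ trim $N_G(v)$ to a balanced $(r-1)$-partite graph, apply the partite regularity lemma to it, find a perfect $K_{r-1}$-matching in the reduced graph via Theorem~\ref{thm:hajszem}, and transfer regularity from $G$ to $H$ via Proposition~\ref{prop:slice}. The only structural difference is the order of the last two steps — you slice every regular pair and then tile the reduced graph of $H$, whereas the paper tiles the reduced graph of $G$ and only slices the pairs used in the matching — and this is immaterial.

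One step is justified incorrectly, though. You assert that the parts $N_G(v)\cap V_j$ all have size in $[(1-1/r+\eps)n,(1-1/r+\eps+\alpha)n]$ ``by \eqref{eq:lem:degree:1}''. That hypothesis only says the quantities $d_G(v,V_j)$ are within $\alpha n$ of \emph{each other}; their common value can be anything up to $n$. Consequently your normalisation $\frac{(1-2/r+\eps)n}{(1-1/r+\eps+\alpha)n}$ --- and the ``algebraic identity'' $(1-2/r)/(1-1/r)=(r-2)/(r-1)$ you highlight at the end --- only treats the case $n_v\approx(1-1/r)n$. The robust computation bounds the codegree deficiency rather than the degree itself: each vertex of the trimmed graph misses at most $(1/r-\eps)n$ vertices of any other part, so $\hat\delta(G_v)\ge n_v-(1/r-\eps)n$, and since $n_v\ge(1-1/r)n$ gives $n/r\le n_v/(r-1)$, this is at least $(1-1/(r-1)+\eps)n_v$. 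The quantity $1-\tfrac{(1/r-\eps)n}{n_v}$ is increasing in $n_v$, so the case you computed happens to be the worst one and your target bound $\tfrac{r-2}{r-1}+\Omega(\eps)$ is correct --- but as written the step does not follow from what you cited, and for $r=3$ with $n_v$ close to $n$ your stated degree bound $(1-2/r+2\eps)n$ divided by the true part size would fall below the Hajnal--Szemer\'edi threshold $1/2$. With that recomputation the argument goes through.
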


Roughly speaking, \eqref{prop:randomg1:2} says that for each $v\in V(G)$ the reduced graph of $H[N_G(v)]$ has a perfect $K_{r-1}$-matching.

\begin{proof}
Let $H$ be the graph formed by taking each edge of $G$ independently with probability~$\rho$. For each $1\leq j\leq r$ and each $v\in V(G)$, Lemma~\ref{lem:chernoff} gives%
\COMMENT{$\alpha^2n^2/d_G(v, V_j)>\alpha^2n^2/n>3\log n$.}
$$\pr(|d_H(v, V_j)-\rho d_H(v, V_j)|\geq\alpha n)\leq 2e^{-2\alpha^2n}<1/rn^2.$$
So the probability that there exist $1\leq j\leq r$ and $v\in V(G)$ such that $|d_H(v, V_j)-\rho d_G(v,V_j)|\geq \alpha n$ is at most $rn/rn^2=1/n$.
Let $1\leq j_1, j_2\leq r$. Note that if $v\notin V_{j_1}\cup V_{j_2}$ and $|d_H(v, V_j)-\rho d_G(v, V_j)|<\alpha n$ for $j=j_1,j_2$, then
$$|d_H(v, V_{j_1})-d_H(v, V_{j_2})|< |\rho d_G(v, V_{j_1})-\rho d_G(v, V_{j_2})|+2\alpha n< 3\alpha n.$$
So $H$ satisfies \eqref{prop:randomg1:1} with probability at least $1-1/n$.

We will now show that $H$ satisfies \eqref{prop:randomg1:2} with probability at least $1/2$. We find partitions of the neighbourhood of each vertex $v\in V(G)$ as follows. To simplify notation, we will assume that $v\in V_1$ (the argument is identical for the other cases). For all $2\leq j_1, j_2\leq r$, we have $|d_G(v,V_{j_1})-d_G(v, V_{j_2})|<\alpha n$. So, there exists $n_v$ and, for each $2\leq j\leq r$, a subset $V_j(v)\subseteq N_G(v, V_j)$ such that $|V_j(v)|>d_G(v,V_j)-\alpha n$%
\COMMENT{need this so that not adding too many vertices to exceptional set.}
and 
$$|V_{j}(v)|=n_v\geq \hat\delta(G)\geq (1-1/r)n.$$
Let $G_v$ denote the balanced $(r-1)$-partite graph $G[V_2(v), \dots, V_r(v)]$. Note that
\begin{equation}
\hat\delta(G_v)\geq n_v-\frac{n}{r}+\eps n\geq \left(1-\frac{1}{r-1}+\eps\right)n_v.\label{eq:Gvdegree}
\end{equation}

Apply Lemma~\ref{lem:degreeform} (with $\eps^*/4$, $2d/\rho$, $k_0$ and $G_v$ playing the roles of $\eps$, $d$, $k_0$ and $G$) to find a partition $\cQ(v)=\{W^0(v), \dots, W^{k_v}(v)\}$ of $V(G_v)$ satisfying properties \eqref{item:k}--\eqref{item:regularpairs} of Lemma~\ref{lem:degreeform}. Let $m_v:=|W^1_2(v)|$.  Let $R_v$ denote the reduced graph corresponding to this partition. Proposition~\ref{prop:mindeg} together with \eqref{eq:Gvdegree} implies that%
\COMMENT{$d\leq \rho^2/8 \implies 4d/\rho \leq \eps/2$}
\begin{equation*}
\hat\delta(R_v) \geq (1-1/(r-1)+\eps/2)k_v.
\end{equation*}
So we can use Theorem~\ref{thm:hajszem} to find a perfect $K_{r-1}$-matching $M_v$ in $R_v$. Let $U^0(v):=W^0(v)\cup(N_G(v)\setminus V(G_v))$. Note that for each $2\leq j\leq r$,
$|U^0_j|< |W^0_j|+\alpha n\leq \eps^*n$. Let $\cP(v):=\{U^0(v), \dots, U^{k_v}(v)\}$ be a partition of $N_G(v)$ which is chosen such that, for each $1\leq i \leq k_v$, $\{U^i_j(v):2\leq j\leq r\}$ induces a copy of $K_{r-1}$ in $M_v$. By the definition of $R_v$, for each $1\leq i \leq k_v$ and all $2\leq j_1<j_2\leq r$, the graph $G[U^i_{j_1}(v), U^i_{j_2}(v)]$ is $\eps^*/4$-regular with density greater than $2d/\rho$. 

Fix $1\leq i\leq k_v$ and $2\leq j_1<j_2\leq r$. Proposition~\ref{prop:slice} (with $m_v$, $\eps^*/4$, $2d/\rho$, $G[U^i_{j_1}(v), U^i_{j_2}(v)]$ and $H[U^i_{j_1}(v), U^i_{j_2}(v)]$ playing the roles of $n$, $\eps$, $d$, $G$ and $H$) gives that $H[U^{i}_{j_1}(v), U^{i}_{j_2}(v)]$ is $\eps^*$-regular and has density greater than $d$ with probability at least $1-1/m_v^2$.

We require the graph $H[U^{i}_{j_1}(v), U^{i}_{j_2}(v)]$ to be $\eps^*$-regular with density greater than $d$ for every edge $U^{i}_{j_1}(v) U^{i}_{j_2}(v)\in E(M_v)$. There are $k_v$ choices for $i$ and, for each $i$, there are $\binom{r-1}{2}$ choices for $j_1$ and $j_2$. So the probability that, for fixed $v\in V(G)$, there exists an edge $U^{i}_{j_1}(v) U^{i}_{j_2}(v)\in E(M_v)$ which fails to be $\eps^*$-regular with density greater than $d$ is at most%
\COMMENT{$k_vr^2\frac{1}{m_v^2}\leq \frac{k_vr^2}{(n/2k_v)^2}=\frac{4k_v^3r^2}{n^2}<\frac{1}{2rn}$}
$$k_vr^2\frac{1}{m_v^2}<\frac{1}{2rn}.$$
We multiply this probability by $rn$ for each of the $rn$ choices of $v$ to see that $H$ satisfies property (ii) with probability at least $1-rn/2rn=1/2$. Hence, the graph $H$ satisfies both (i) and (ii) with probability at least $1/2-1/n>0$. So we can choose such a graph $H$.
\end{proof}

Recall that in order to prove Lemma~\ref{lem:degree}, we will first remove a sparse random subgraph $H_1$ from $G$. In order to find an $\eta$-approximate $K_r$-decomposition in $G':=G-H_1$, we would like to use the definition of $\hat\delta^\eta_{K_r}$ which requires $G'$ to be $K_r$-divisible. The next proposition shows that, provided that $d_{G'}(v,V_{j_1})$ is close to $d_{G'}(v, V_{j_2})$ for all $1\leq j_1, j_2\leq r$ and $v\notin V_{j_1}\cup V_{j_2}$, the graph $G'$ can be made $K_r$-divisible by removing a further subgraph $H_1$ of small maximum degree.

\begin{prop}\label{prop:fixing}
Let $1/n \ll \alpha \ll \gamma\ll 1/r<1$. Let $G$ be an $r$-partite graph on $(V_1,\dots, V_r)$ with $|V_1|=\dots=|V_r|=n$ and $\hat{\delta}(G)\geq (1/2 +2\gamma/r)n$.
Suppose that, for all $1\leq j_1, j_2 \leq r$ and every $v\in V(G)\setminus (V_{j_1}\cup V_{j_2})$, $|d_{G}(v, V_{j_1})-d_G(v, V_{j_2})|< \alpha n$.
Then there exists $H\subseteq G$ such that $G-H$ is $K_r$-divisible and $\Delta(H)\leq \gamma n$.
\end{prop}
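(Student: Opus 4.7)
The plan is to construct $H$ by prescribing its bipartite degrees at every vertex. For each $v\in V_i$, let $m(v):=\min_{l\neq i}d_G(v,V_l)$ and $\beta_l(v):=d_G(v,V_l)-m(v)\in[0,\alpha n)$. I aim for $H\subseteq G$ satisfying $d_H(v,V_l)=\beta_l(v)+\xi(v)$ for non-negative integers $\xi(v)$ chosen below; this forces $d_{G-H}(v,V_l)=m(v)-\xi(v)$ to be independent of $l$, so $G-H$ is $K_r$-divisible.

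For each pair $\{j_1,j_2\}$, the bipartite degree-sum identity forces $\sum_{v\in V_{j_1}}\xi(v)-M_{j_1}=\sum_{v\in V_{j_2}}\xi(v)-M_{j_2}$, where $M_j:=\sum_{v\in V_j}m(v)$. Picking any $j_3\notin\{j_1,j_2\}$, the sandwich $M_j\in[e_G(V_j,V_{j_3})-\alpha n^2,e_G(V_j,V_{j_3})]$ combined with $|e_G(V_{j_1},V_{j_3})-e_G(V_{j_2},V_{j_3})|\le\alpha n^2$ yields $|M_{j_1}-M_{j_2}|\le 3\alpha n^2$. I choose a baseline $c:=C\alpha n/\gamma$ with a sufficiently large constant $C=C(r)$ and target $T:=\min_j M_j-nc$, then distribute $\xi$ nearly evenly so that $\xi(v)\in[c,c+4\alpha n]$ and $\sum_{v\in V_j}\xi(v)=M_j-T$ for every $j$. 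The prescribed degrees $f(v,l):=\beta_l(v)+\xi(v)$ then lie in $[c,c+5\alpha n]=O(\alpha n/\gamma)$, so $\Delta(H)\le(r-1)(c+5\alpha n)\le\gamma n$, using $\alpha\ll\gamma^2/r^2$ (which fits within the given hierarchy).

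It remains to realize this degree sequence: for each pair $\{j_1,j_2\}$, I find $H_{j_1j_2}\subseteq G[V_{j_1},V_{j_2}]$ with $d_{H_{j_1j_2}}(v)=f(v,j_2)$ on $V_{j_1}$ and $d_{H_{j_1j_2}}(w)=f(w,j_1)$ on $V_{j_2}$, and set $H:=\bigcup_{j_1<j_2}H_{j_1j_2}$. By max-flow--min-cut, realizability is equivalent to $e_G(A,B)+F\ge\sum_A f+\sum_B f$ for all $A\subseteq V_{j_1},B\subseteq V_{j_2}$, where $F:=\sum_{v\in V_{j_1}}f(v,j_2)\ge nc$. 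The bound $\hat\delta(G)\ge(1/2+2\gamma/r)n$ ensures $e_G(A,B)$ is large whenever $\max(|A|,|B|)>(1/2-2\gamma/r)n$, while in the complementary regime the baseline $c=\Theta(\alpha n/\gamma)$ makes $F$ dominate $\sum_A f+\sum_B f\le(c+5\alpha n)(|A|+|B|)\le(c+5\alpha n)(1-4\gamma/r)n$. Verifying this final case — where $A,B$ are both small enough that $e_G(A,B)$ may vanish — is the main technical obstacle, and is precisely what the choice $c=\Theta(\alpha n/\gamma)$ is engineered to handle, by forcing enough baseline demand at every vertex that the total $F$ beats the bilinear term.
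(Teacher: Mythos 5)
Your construction is essentially the paper's own proof of Proposition~\ref{prop:fixing}: there one sets $m_v:=\min_j d_G(v,V_j)$, $a_{v,j}:=d_G(v,V_j)-m_v$, spreads the imbalance $M_j:=N_j-\min_{j'}N_{j'}$ (where $N_j=\sum_{v\in V_j}m_v$) over $V_j$ as values $p_v<\alpha n+1$, prescribes degrees $n_{v,j}:=\lceil\xi n\rceil+a_{v,j}+p_v$ with a uniform baseline $\xi=\gamma/2r$, and realizes each pair $(V_{j_1},V_{j_2})$ by a max-flow--min-cut argument (Proposition~\ref{prop:flow}). Your $\beta_l(v)+\xi(v)$ is exactly $a_{v,l}+(p_v+\text{baseline})$, and your degree-sum bookkeeping is the same (the paper gets $|N_{j_1}-N_{j_2}|<\alpha n^2$ directly by double counting $e_G(V_{j_1},V_{j_2})$, so it does not need the third class $j_3$, which your bound silently requires $r\ge 3$ for; for $r=2$ the statement is trivial anyway). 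The one substantive difference is the size of the baseline: you take $c=\Theta(\alpha n/\gamma)$ rather than $\Theta(\gamma n/r)$, which is legitimate and in fact gives the stronger bound $\Delta(H)=O(r\alpha n/\gamma)\ll\gamma n$.

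The cut verification, however, which you leave as an assertion, does not work with the threshold you state. If the ``dense'' case is $\max(|A|,|B|)>(1/2-2\gamma/r)n$, then for $w\in B$ you only get $d_G(w,A)\ge \hat\delta(G)-(n-|A|)>0$, so $e_G(A,B)\ge|B|$, which does not cover $\sum_{w\in B}f(w)\approx c|B|$; the surplus over $n/2$ has been used up exactly. The repair is to move the threshold: if $|A|\ge(1/2-\gamma/r)n$ (the case $|B|\ge(1/2-\gamma/r)n$ being symmetric), then every $w\in B$ satisfies $d_G(w,A)\ge\gamma n/r\ge c+5\alpha n\ge f(w)$, whence $e_G(A,B)\ge\sum_{w\in B}f(w)$, and since $\sum_{v\in A}f(v)\le F$ the cut condition holds; in the complementary case $|A|,|B|<(1/2-\gamma/r)n$ one needs $(c+5\alpha n)(1-2\gamma/r)n\le cn\le F$, i.e.\ $2c\gamma/r\ge 5\alpha n$, which your choice provides once $C\ge 5r/2$, while $\alpha\ll\gamma^2/r^2$ keeps $c+5\alpha n\le\gamma n/r$ and $\Delta(H)\le\gamma n$. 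With these adjustments your argument is complete, and it is the same argument as Proposition~\ref{prop:flow}, run with a smaller baseline demand.
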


To prove Proposition~\ref{prop:fixing}, we require the following result whose proof is based on the Max-Flow-Min-Cut theorem. 

\begin{prop}\label{prop:flow}
Suppose that $1/n \ll \alpha\ll \xi\ll 1$. Let $G$ be a bipartite graph on $(A,B)$ with $|A|=|B|=n$. Suppose that $\delta(G)\geq (1/2+4\xi)n$. For every vertex $v\in V(G)$, let $n_v\in \N$ be such that $(\xi-\alpha)n\leq n_v\leq (\xi+\alpha)n$ and such that $\sum_{a\in A}n_a=\sum_{b\in B}n_b$. Then $G$ contains a spanning graph $G'$ such that $d_{G'}(v)=n_v$ for every $v\in V(G)$.
\end{prop}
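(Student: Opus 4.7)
The plan is to reformulate the problem as a maximum flow question. Build a directed network with source $s^\ast$ and sink $t^\ast$ as follows: add an arc from $s^\ast$ to each $a\in A$ of capacity $n_a$; an arc from each $b\in B$ to $t^\ast$ of capacity $n_b$; and a forward arc $a\to b$ of capacity $1$ for every $ab\in E(G)$. Set $N:=\sum_{a\in A}n_a=\sum_{b\in B}n_b$. Since all capacities are integers, by the integrality of the maximum flow, a flow of value $N$ corresponds exactly to a spanning subgraph $G'\subseteq G$ with $d_{G'}(v)=n_v$ for every $v$. The total capacity out of $s^\ast$ is $N$, so by the Max-Flow-Min-Cut theorem it suffices to show that every $s^\ast t^\ast$-cut has capacity at least $N$. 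Parameterising cuts by sets $S\subseteq A$ and $T\subseteq B$ (the $A\cup B$-vertices on the source side), this condition becomes
\begin{equation*}
e_G(S,B\setminus T)+\sum_{b\in T}n_b\ \geq\ \sum_{a\in S}n_a \qquad\text{for all } S\subseteq A,\ T\subseteq B. \quad (\ast)
\end{equation*}

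To verify $(\ast)$, I will use two lower bounds on $e_G(S,B\setminus T)$ coming from the minimum degree $D:=(1/2+4\xi)n$: counting degrees from the $A$-side gives $e_G(S,B\setminus T)\geq |S|\max(0,D-|T|)$, and counting from the $B$-side gives $e_G(S,B\setminus T)\geq |B\setminus T|\max(0,|S|-(n-D))$; together with the bounds $(\xi-\alpha)n\leq n_v\leq(\xi+\alpha)n$. The key division is by the size of $|T|$. If $|T|\leq(1/2+3\xi-\alpha)n$, then the $A$-side bound gives $e_G(S,B\setminus T)\geq |S|(\xi+\alpha)n\geq\sum_{a\in S}n_a$ at once. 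Otherwise $|T|$ is large, so $|B\setminus T|$ is small, and I subdivide on $|S|$: if $|S|\leq(1/2-4\xi)n$ is also small, a direct comparison of $\sum_{b\in T}n_b\geq |T|(\xi-\alpha)n$ with $\sum_{a\in S}n_a\leq |S|(\xi+\alpha)n$ gives a surplus of order $\xi^2n^2$, which dominates the $O(\alpha n^2)$ error under $\alpha\ll\xi$; while if $|S|\geq(1/2-3\xi+\alpha)n$ is large, the $B$-side bound yields $e_G(S,B\setminus T)\geq |B\setminus T|(\xi+\alpha)n$, and combining this with the refined estimates $\sum_{a\in S}n_a\leq N-(n-|S|)(\xi-\alpha)n$ and $\sum_{b\in T}n_b\geq N-|B\setminus T|(\xi+\alpha)n$ proves $(\ast)$.

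The main obstacle is the narrow intermediate strip $(1/2-4\xi)n<|S|<(1/2-3\xi+\alpha)n$ (still with $|T|>(1/2+3\xi-\alpha)n$), where neither $e_G$ lower bound alone is strong enough to close the gap; there one has to combine the $B$-side bound $e_G(S,B\setminus T)\geq |B\setminus T|(|S|-(n-D))$ with the direct comparison $\sum_{b\in T}n_b-\sum_{a\in S}n_a\geq |T|(\xi-\alpha)n-|S|(\xi+\alpha)n$ and expand. In every case the leading positive term is of order $\xi^2n^2$, which absorbs the $O(\alpha n^2)$ deviation of the $n_v$ from $\xi n$ since $\alpha\ll\xi$. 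With $(\ast)$ established, the minimum cut has capacity at least $N$, so the integer maximum flow has value exactly $N$ and yields the required spanning subgraph $G'$.
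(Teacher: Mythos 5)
Your proof is correct and takes essentially the same approach as the paper: the identical flow network, the Max-Flow-Min-Cut reduction to the cut inequality $(\ast)$, and a case analysis on the sizes of the source-side sets driven by the minimum degree condition. The only difference is cosmetic — the paper splits into three cases whose thresholds (at $(1/2-2\xi)n$) dovetail exactly and so avoid your intermediate strip, but your extra case B3 does close with a surplus of order $\xi^2 n^2$, so nothing is lost.
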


\begin{proof}
We will use the Max-Flow-Min-Cut theorem.  Orient every edge of $G$ towards $B$ and give each edge capacity one. Add a source vertex $s^*$ which is attached to every vertex $a\in A$ by an edge of capacity $n_a$. Add a sink vertex $t^*$ which is attached to every vertex in $b\in B$ by an edge of capacity $n_b$. Let $c_0:=\sum_{a\in A}n_a=\sum_{b\in B}n_b$. Note that an integer-valued $c_0$-flow corresponds to the desired spanning graph $G'$ in $G$. So, by the Max-Flow-Min-Cut theorem, it suffices to show that every cut has capacity at least $c_0$.

Consider a minimal cut $C$. Let $S\subseteq A$ be the set of all vertices $a\in A$ for which $s^*a\notin C$ and let $T\subseteq B$ be the set of all $b\in B$ for which $bt^*\notin C$. Let $S':=A\setminus S$ and $T':=B\setminus T$. Then $C$ has capacity
$$c:=\sum_{s\in S'}n_s+e_G(S, T)+\sum_{t\in T'}n_t.$$

First suppose that $|S|\geq (1/2-2\xi) n$. In this case, since $\delta(G)\geq (1/2+4\xi) n$, each vertex in $T$ receives at least $2\xi n$ edges from $S$. So
$$c\geq \sum_{t\in T'}n_t+2|T|\xi n \geq \sum_{t\in T'}n_t+|T|(\xi+\alpha) n\geq c_0.$$ 
A similar argument works if $|T|\geq (1/2-2\xi) n$.%
\COMMENT{each vertex in $S$ sends at least $2\xi n$ edges to $T$. So $$c\geq \sum_{s\in S'}n_s+2|S|\xi n \geq \sum_{s\in S'}n_s+|S|(\xi+\alpha) n\geq c_0.$$}
Suppose then that $|S|, |T|< (1/2-2\xi) n$. Then $|S'|,|T'|> (1/2+2\xi) n$ and
$$c\geq \sum_{s\in S'}n_s+\sum_{t\in T'}n_t\geq (|S'|+|T'|)(\xi-\alpha) n> (n+4\xi n)(\xi-\alpha) n \geq (\xi+\alpha) n^2\geq c_0,$$
\COMMENT{$\xi\geq 2\alpha$ and $\xi^2\geq \alpha \implies 2\xi(\xi-\alpha)\geq \alpha\iff (1+4\xi)(\xi-\alpha)=\xi-\alpha+4\xi(\xi-\alpha)\geq \xi+\alpha$}
as required.
\end{proof}

We now use Proposition~\ref{prop:flow} to prove Proposition~\ref{prop:fixing}.

\begin{proofof}{Proposition~\ref{prop:fixing}}
For each $v\in V(G)$, let $$m_v:=\min\{d_G(v, V_j): 1\leq j\leq r\text{ with } v\notin V_j\}.$$
For each $1\leq j\leq r$ and each $v\notin V_j$, let $a_{v,j}:=d_G(v, V_j)-m_v$.
Note that, 
\begin{equation}
0\leq a_{v,j}<\alpha n.
\label{eq:avj}
\end{equation}
For each $1\leq j\leq r$, let $N_j:=\sum_{v\in V_j} m_v$. We have, for any $1\leq j_1, j_2\leq r$,
\begin{align}
|N_{j_1}-N_{j_2}|&=\left|\sum_{v\in V_{j_1}}(d_G(v, V_{j_2})-a_{v,j_2})-\sum_{v\in V_{j_2}}(d_G(v, V_{j_1})-a_{v,j_1})\right|\nonumber\\
&=\left|\sum_{v\in V_{j_1}}a_{v,j_2}-\sum_{v\in V_{j_2}}a_{v,j_1}\right|\stackrel{\eqref{eq:avj}}{<}\alpha n^2.\label{eq:Nj}
\end{align}
Let $N:=\min\{N_j: 1\leq j\leq r\}$ and, for each $1\leq j\leq r$, let $M_j:=N_j-N$. Note that \eqref{eq:Nj} implies $0\leq M_j< \alpha n^2$.
For each $1\leq j\leq r$ and each $v\in V_j$, choose $p_v\in \N$ to be as equal as possible such that $\sum_{v\in V_j}p_v=M_j$. Then%
\COMMENT{$0\leq p_v\leq \lceil \alpha n\rceil$}
\begin{equation}
0\leq p_v< \alpha n+1.
\label{eq:pv}
\end{equation}

Let $\xi:=\gamma/2r$. For each $1\leq j\leq r$ and each $v\notin V_j$, let
\begin{equation*}
n_{v,j}:=\lceil\xi n\rceil+a_{v,j}+p_v.
\end{equation*}
Using \eqref{eq:avj} and \eqref{eq:pv}, we see that,
\begin{equation}
\xi n\leq n_{v,j}\leq (\xi+3\alpha)n.\label{eq:nvj2}
\end{equation}

We will consider each pair $1\leq j_1<j_2\leq r$ separately and choose a subgraph $H_{j_1,j_2}$ that will become $H[V_{j_1}, V_{j_2}]$. Fix $1\leq j_1<j_2\leq r$ and observe that,
\begin{align*}
\sum_{v\in V_{j_1}}n_{v,j_2}&=\sum_{v\in V_{j_1}}(\lceil\xi n\rceil+a_{v,j_2}+p_v)=\lceil\xi n\rceil n+\sum_{v\in V_{j_1}}a_{v,j_2}+M_{j_1}\\
&=\lceil\xi n\rceil n+M_{j_1}+\sum_{v\in V_{j_1}}(d_G(v, V_{j_2})-m_v)
=\lceil\xi n\rceil n+M_{j_1}+e_G(V_{j_1}, V_{j_2})-N_{j_1}\\
&=\lceil\xi n\rceil n-N+e_G(V_{j_1}, V_{j_2})=\sum_{v\in V_{j_2}}n_{v,j_1}.
\end{align*}
Let $G_{j_1,j_2}:=G[V_{j_1}, V_{j_2}]$ and note that $\delta(G_{j_1,j_2})\geq (1/2+4\xi) n$. 
Apply Proposition~\ref{prop:flow} (with $3\alpha$, $\xi$, $G_{j_1,j_2}$, $V_{j_1}$ and $V_{j_2}$ playing the roles of $\alpha$, $\xi$, $G$, $A$ and $B$) to find $H_{j_1, j_2}\subseteq G_{j_1, j_2}$ such that $d_{H_{j_1, j_2}}(v)=n_{v, j_2}$ for every $v\in V_{j_1}$ and $d_{H_{j_1, j_2}}(v)=n_{v, j_1}$ for every $v\in V_{j_2}$.

Let $H:=\bigcup_{1\leq j_1<j_2\leq r}H_{j_1, j_2}$. By \eqref{eq:nvj2}, we have $\Delta(H)\leq 2r\xi n= \gamma n$. For any $1\leq j\leq r$ and any $v\notin V_j$, we have%
\begin{align*}
d_{G-H}(v, V_j)&= d_G(v, V_j)-d_H(v, V_j)=d_G(v, V_j)-n_{v, j}\\
&=d_G(v, V_j)-\lceil\xi n\rceil-d_G(v, V_j)+m_v-p_v=m_v-p_v-\lceil\xi n\rceil.
\end{align*}
So $G-H$ is $K_r$-divisible.
\end{proofof}

We now have all the necessary tools to prove Lemma~\ref{lem:degree}. This lemma finds an approximate $K_r$-decomposition which covers all but at most $\gamma n$ edges at any vertex.

\begin{proofof}{Lemma~\ref{lem:degree}}
The lemma trivially holds if $r=2$, so we may assume that $r\geq 3$. In particular, by Proposition~\ref{prop:extrem}, $\hat\delta(G)\geq (1-1/(r+1)+\eps/2)n$.
Choose constants $N$, $k_0$, $\eps^*$, $d$ and $\rho$ satisfying 
$$\eta \ll 1/N\ll 1/k_0\leq \eps^*\ll d \ll \rho\ll \gamma.$$
Apply Proposition~\ref{prop:randomg1} to find a subgraph $H_1\subseteq G$ satisfying properties \eqref{prop:randomg1:1}--\eqref{prop:randomg1:2}.

Let $G_1:=G-H_1$. Using \eqref{eq:lem:degree:1} and that $H_1$ satisfies Proposition~\ref{prop:randomg1}\eqref{prop:randomg1:1}, for all $1\leq j_1, j_2\leq r$ and each $v\notin V_{j_1} \cup V_{j_2}$,
\begin{align*}
|d_{G_1}(v, V_{j_1})-d_{G_1}(v, V_{j_2})|&\leq |d_G(v, V_{j_1})-d_G(v, V_{j_2})|+ |d_{H_1}(v, V_{j_1})-d_{H_1}(v, V_{j_2})|\\
&< \alpha n + 3\alpha n= 4\alpha n.
\end{align*}
Note also that $\hat\delta(G_1) \geq 3n/4$.
So we can apply Proposition~\ref{prop:fixing} (with $G_1$, $4\alpha$ and $\gamma/2$ playing the roles of $G$, $\alpha$ and $\gamma$) to obtain $H_2\subseteq G_1$ such that $G_1-H_2$ is $K_r$-divisible and $\Delta(H_2)\leq \gamma n/2$.
Then $\hat{\delta}(G_1-H_2)\geq (\hat{\delta}^\eta_{K_r}+\eps/2)n$, so we can find an $\eta$-approximate $K_r$-decomposition $\cF$ of $G_1-H_2$.

Let $G_2:=G_1-H_2-\bigcup\cF$ be the graph consisting of all the remaining edges in $G_1-H_2$.
Let $$B:=\{v\in V(G): d_{G_2}(v)>\eta^{1/2}n\}.$$ Note that 
\begin{equation}\label{eq:d1}
|B|\leq 2e(G_2)/\eta^{1/2}n\leq 2\eta^{1/2}n.
\end{equation}
Let $\cF_1:=\{F\in \cF: F\cap B=\emptyset\}$ and let $G_3:=G-\bigcup\cF_1$. If $v\in B$, then $N_{G_3}(v)=N_{G}(v)$. Suppose that $v\notin B$. For any $u\in B$, at most one copy of $K_r$ in $\cF\setminus \cF_1$ can contain both $u$ and $v$. So there can be at most $(r-1)|B|$ edges in $\bigcup (\cF\setminus \cF_1)$ that are incident to $v$ and so
\begin{align}
d_{G_3}(v)&\leq d_{H_1}(v)+d_{H_2}(v)+d_{G_2}(v)+(r-1)|B|\nonumber\\
&\leq (r-1)(\rho+\alpha)n+\gamma n/2+\eta^{1/2}n+2(r-1)\eta^{1/2}n \leq \gamma n.\label{eq:d5}
\end{align}

Label the vertices of $B=\{v_1, v_2, \dots, v_{|B|}\}$. We will use copies of $K_r$ to cover most of the edges at each vertex $v_i$ in turn. We do this by finding a $K_{r-1}$-matching $M_i$ in $H_1[N_{G_3}(v_i)]=H_1[N_{G}(v_i)]$ in turn for each $i$. Suppose that we are currently considering $v:=v_i$ and let $\cM:=\bigcup_{1\leq j<i}M_j$. To simplify notation, we will assume that $v\in V_1$ (the proof in the other cases is identical). 

Let $\cP(v)=\{U^0(v), \dots, U^{k_v}(v)\}$ be a partition of $N_{G}(v)$ satisfying Proposition~\ref{prop:randomg1}\eqref{prop:randomg1:2}. We can choose a partition $\cQ(v)=\{W^0(v), \dots, W^{k_v}(v)\}$ of $N_{G}(v)$ and $m_v'\geq m_v-|B|$ such that, for each $1\leq i\leq k_v$:
\begin{itemize}
\item $W^i(v)\subseteq U^i(v)$;
\item $W^i(v)\cap B=\emptyset$;
\item for each $2\leq j\leq r$, $|W^i_j(v)|=m_v'$.
\end{itemize}
Note that, using \eqref{eq:d1},  $|W^0(v)|\leq |U^0(v)|+|B|k_vr\leq r(\eps^*n+2\eta^{1/2}nk_v)\leq 2\eps^*rn$.

By Proposition~\ref{prop:randomg1}\eqref{prop:randomg1:2}, for each $1\leq i\leq k_v$ and all $2\leq j_1<j_2\leq r$, the graph $H_1[U^i_{j_1}(v), U^i_{j_2}(v)]$ is $\eps^*$-regular with density greater than $d$. So Proposition~\ref{prop:subsets} implies that $H_1[W^i_{j_1}(v), W^i_{j_2}(v)]$ is $2\eps^*$-regular with density greater than $d/2$. Let $H_1':=H_1-\cM$. Using \eqref{eq:d1}, we have $\Delta(\cM[W^i_{j_1}(v), W^i_{j_2}(v)])\leq |B|\leq \eta^{1/3}m_v'$.%
\COMMENT{$|B|\leq 2\eta^{1/2}n\leq 2\eta^{1/2}(N+1)m_v\leq 4\eta^{1/2}Nm_v'\leq \eta^{1/3}m_v'$}
 So we can apply Proposition~\ref{prop:regedges} (with $m_v'$, $\eta^{1/3}$ and $2\eps^*$ playing the roles of $n$, $\gamma$ and $\eps$) to see that $H_1'[W^i_{j_1}(v), W^i_{j_2}(v)]$ is $4\eps^*$-regular with density greater than $d/3$.

We use Proposition~\ref{prop:matching} (with $m_v'$, $4\eps^*$, $d/3$ and $r-1$ playing the roles of $n$, $\eps$, $d$ and $r$) to find a $K_{r-1}$-matching covering all but at most $2(r-1)(4\eps^*)^{1/(r-1)}m_v'$ vertices in $H_1'[W^i(v)]$ for each $1\leq i \leq k_v$. Write $M_i$ for the union of these $K_{r-1}$-matchings over $1\leq i \leq k_v$. Note that $M_i$ covers all but at most
\begin{equation}\label{eq:misses}
|W^0(v)|+2(r-1)(4\eps^*)^{1/(r-1)}m_v'k_v\leq 2\eps^*rn+2(r-1)(4\eps^*)^{1/(r-1)}n\leq \gamma n
\end{equation}
vertices in $N_{G}(v)$.

Continue to find edge-disjoint $M_1, \dots, M_{|B|}$. For each $1\leq i \leq |B|$, $M_i':=\{v_i\cup K: K\in M_i\}$ is an edge-disjoint collection of copies of $K_r$ in $G_3$ covering all but at most $\gamma n$ edges at $v_i$ in $G$. Write $\cM':=\bigcup_{1\leq i\leq |B|} M_i'$ and let $H:=G_3-\bigcup \cM'=G-\bigcup (\cF_1\cup\cM')$. Then $G-H=\bigcup(\cF_1\cup\cM')$ has a $K_r$-decomposition and $\Delta(H)\leq \gamma n$, by \eqref{eq:d5} and \eqref{eq:misses}.
\end{proofof}


\section{Covering a pseudorandom remainder between vertex classes}\label{sec:pseud}
Recall from Section~\ref{sec:sketch} that in each iteration step we are given an $r$-partite graph, $G'$ say,
as well as a $k$-partition $\cP$ and our aim is to 
cover all edges of $G'[\cP]$ (which consists of those edges of $G'$ joining different partition
classes of $\cP$) with edge-disjoint $r$-cliques.
Lemma~\ref{lem:degree} allows us to assume that $G'[\cP]$ has low maximum degree. 
When carrying out the actual iteration in Section~\ref{sec:proof}, we will also add a suitable graph $R$ to $G'$ to be able to assume additionally that the remainder $G''[\cP]$ is actually quasirandom, where $G'':=R\cup G'$.
The aim of this section is to prove Corollary~\ref{cor:pseud:parts}, which allows us to cover all edges of $G''[\cP]$ while using only a small number of edges from $G''-G''[\cP]$ (the latter property is vital in order to be able to carry out the next iteration step).
We achieve this by finding, for each $x\in V(G'')$, suitable vertex-disjoint copies of $K_{r-1}$ inside $G''-G''[\cP]$ such that each copy of $K_{r-1}$ forms a copy of $K_r$ together with the edges incident to $x$ in $G''[\cP]$.

Corollary~\ref{cor:pseud:parts} will follow easily from repeated applications of 
Lemma~\ref{lem:pseud:bip}. The quasirandomness of $G[\cP]$ in Lemma~\ref{lem:pseud:bip}
is formalized by conditions (iii) and (iv) (roughly speaking, the graph $G$ in Lemma~\ref{lem:pseud:bip} plays the role of $G''$ above).
The fact that we may assume the balancedness condition (i) will follow from the arguments in Section~\ref{sec:bal}.
We can assume (ii) since this part of the graph is essentially unaffected by previous iterations.
When deriving Corollary~\ref{cor:pseud:parts}, the $W^i$ in Lemma~\ref{lem:pseud:bip} will play the role of the  neighbourhoods of the vertices $x$ appearing in Corollary~\ref{cor:pseud:parts}.
\begin{lemma}\label{lem:pseud:bip}
Let $r\geq 2$ and $1/n \ll 1/k, 1/r,\rho \leq 1$. Let $G$ be an $r$-partite graph on $(V_1, \dots, V_r)$ with $|V_1|=\dots=|V_r|=n$. Let $q\leq krn$ and let $W^1, \dots, W^q\subseteq V(G)$. Suppose that:
\begin{enumerate}[\rm(i)]
\item for each $1\leq i\leq q$, there exists $1\leq j_i\leq r$ and $n_i\in \N$ such that, for each $1\leq j\leq r$, $|W^i_{j}|=0$ if $j=j_i$ and $|W^i_j|=n_i$ otherwise;
\item for each $1\leq i\leq q$, $\hat{\delta}(G[W^i])\geq (1-1/(r-1))n_i+9kr^2\rho^{3/2}n$;\label{pseud:bip:ii}
\item for all $1\leq i_1<i_2\leq q$, $|W^{i_1}\cap W^{i_2}|\leq 2r\rho^2 n$;
\item each $v\in V(G)$ is contained in at most $2k\rho n$ of the sets $W^1, \ldots, W^q$.
\end{enumerate}
Then there exist edge-disjoint $T_1, \dots, T_q$ in $G$ such that each $T_i$ is a perfect $K_{r-1}$-matching in $G[W^i]$.
\end{lemma}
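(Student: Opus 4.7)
The plan is to construct the matchings $T_1, \dots, T_q$ greedily. For $i=1,2,\dots,q$ in order, we will find $T_i$ as a perfect $K_{r-1}$-matching in the $(r-1)$-partite graph $G_i[W^i]$, where $G_i := G - \bigcup_{\ell<i} T_\ell$; its existence will follow from the partite Hajnal--Szemer\'edi theorem (Theorem~\ref{thm:hajszem}) once we verify that $\hat\delta(G_i[W^i]) \geq (1 - 1/(r-1) + \eps')n_i$ for some absolute $\eps'>0$. Condition (ii) together with $n_i \leq n$ gives an additive slack of $9kr^2\rho^{3/2} n$ above the Hajnal--Szemer\'edi threshold in $G[W^i]$; it also forces $n_i \geq 9kr^2(r-1)\rho^{3/2} n$, since $\hat\delta(G[W^i])\leq n_i$. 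So at step $i$ it suffices to show that for every $v\in W^i$ and every relevant $j$, the degree loss $d_{\bigcup_{\ell<i} T_\ell}(v, W^i_j)$ is less than, say, half of $9kr^2\rho^{3/2} n$.

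The naive deterministic estimate is too weak: each previous $T_\ell$ with $v\in W^\ell$ contributes exactly one neighbour of $v$ in any fixed $V_j$ (namely its $V_j$-partner inside the unique $K_{r-1}$ of $T_\ell$ through $v$), so $d_{\bigcup_{\ell<i}T_\ell}(v, W^i_j) \leq |\{\ell<i : v\in W^\ell\}| \leq 2k\rho n$ by (iv), and this bound can exceed the slack when $\rho$ is small. To gain the missing factor of $\rho^{1/2}$ we will choose each $T_\ell$ randomly, e.g.\ uniformly at random from the perfect $K_{r-1}$-matchings of $G_\ell[W^\ell]$, or by a more tractable random greedy algorithm that approximates uniformity. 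The heuristic is that the $V_j$-partner of a fixed $v\in W^\ell$ in such a random matching falls into the small intersection $W^\ell_j \cap W^i_j$ with probability at most $|W^\ell_j \cap W^i_j|/n_\ell \leq 2r\rho^2 n/n_\ell \leq 2\rho^{1/2}/(9kr(r-1))$ by (iii) and the lower bound on $n_\ell$. Summing over the at most $2k\rho n$ values of $\ell$ with $v\in W^\ell$ yields expected loss $O(\rho^{3/2}n/r^2)$, safely within the slack; this is precisely the $\rho^{1/2}$-improvement that the quasirandomness conditions (iii) and (iv) are designed to provide.

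The main obstacle is converting this first-moment calculation into a high-probability statement that holds simultaneously for all $O(kr^3 n^2)$ triples $(v,i,j)$. My plan is to expose the random matchings $T_1,T_2,\dots$ one at a time, track the accumulated degree $d_{\bigcup_{\ell \leq t} T_\ell}(v, W^i_j)$ as $t$ grows, and show that this sequence is suitably concentrated around its mean via an Azuma/Hoeffding-type martingale inequality (for which Lemmas~\ref{lem:chernoff}--\ref{lem:chernoff2} give convenient forms), followed by a union bound. Since uniformly random perfect matchings are awkward to analyse directly, it will be technically cleaner to replace them by a random greedy construction: at step $i$, repeatedly pick a random uncovered vertex $v\in W^i$ and a random admissible $(r-2)$-tuple extending $v$ to a $K_{r-1}$ in $G_i[W^i]$, proving along the way that such extensions almost always exist (again via Theorem~\ref{thm:hajszem} applied to the residual graph, using that the sparse sets of already-used edges do not significantly damage $\hat\delta$). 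Once this random construction is shown to succeed with positive probability, the desired edge-disjoint $T_1,\dots,T_q$ follow.
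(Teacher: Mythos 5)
Your overall skeleton (sequential random choice of the $T_i$, using (iii) and (iv) to gain a factor $\rho^{1/2}$ over the trivial bound, then concentration plus a union bound over all triples $(v,i,j)$) is the same ``random greedy'' philosophy as the paper's proof, and your accounting of the slack in (ii) and of why the deterministic bound $2k\rho n$ from (iv) is insufficient is correct. However, there is a genuine gap at the crux of the argument: the per-step conditional probability bound. Your heuristic that the partner of $v$ in a \emph{uniformly} random perfect $K_{r-1}$-matching of $G_\ell[W^\ell]$ lands in $W^\ell_j\cap W^i_j$ with probability about $|W^\ell_j\cap W^i_j|/n_\ell$ is unjustified: at minimum degree only marginally above the Hajnal--Szemer\'edi threshold the partner distribution can be far from uniform (already for $r-1=2$ one can confine the partner of $v$ to half of $N(v)$), and when $n_\ell$ is close to $n$ the relative slack is only of order $\rho^{3/2}$, so even a switching-type argument would have to beat a $1/\rho^{3/2}$ loss --- and for clique factors with $r-1\ge 3$ no simple switching argument is available. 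Your fallback, a vertex-by-vertex random greedy construction of each $T_\ell$, has the usual endgame problem: the last few cliques are essentially forced, so you cannot certify the required conditional bound uniformly over histories, which is exactly what any martingale/Azuma or Bernoulli-domination step needs. In addition, the Hajnal--Szemer\'edi application at step $i$ requires the accumulated degree loss to be small, i.e.\ the very event you are trying to establish, so the argument must be set up so that the conditional probability bound holds regardless of the history (or the process is defined so that it trivially continues when the good event fails).

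The paper's device, which is the missing idea, sidesteps all of this: at step $s$ one first constructs $t:=\lceil 8kr\rho^{3/2}n\rceil$ \emph{edge-disjoint} perfect $K_{r-1}$-matchings $T'_1,\dots,T'_t$ in the residual graph $G_s'[W^s]$ greedily via Theorem~\ref{thm:hajszem} (the slack in (ii) is calibrated precisely so that $(r-2)(t-1)$ edges per vertex can be deleted and the threshold still holds, with the convention that the $T'_j$ are empty if the maximum-degree hypothesis ever fails), and then chooses $T_s$ uniformly among these $t$ candidates. Edge-disjointness converts condition (iii) directly into the needed bound: at most $|W^i\cap W^s|\le 2r\rho^2 n$ of the candidates can contain an edge at a fixed $w$ inside $W^i$, so the conditional probability of hitting $w$ there is at most $2r\rho^2 n/t\le \rho^{1/2}/4k$ \emph{uniformly over all histories}. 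From there Proposition~\ref{generalised-chernoff} dominates the count by a binomial, Lemma~\ref{lem:chernoff} gives concentration, and the union bound over at most $kr^2n^2$ pairs $(i,w)$ finishes the proof. Without this (or some substitute establishing your heuristic rigorously), your proposal does not yet constitute a proof.
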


 The proof of Lemma~\ref{lem:pseud:bip} is similar to that of Lemma~10.7 in \cite{mindeg}, we include it here for completeness. The idea is to use a `random greedy' approach:
for each $s$ in turn, we find a suitable perfect $K_{r-1}$-matching $T_s$ in 
$G'_s:=G[W^s]-(T_1\cup \dots \cup T_{s-1})$. In order to ensure that $G'_s$ still has sufficiently large 
minimum degree for this to work, we choose the  $T_i$ uniformly at random from a suitable subset of 
the available candidates. To analyze this random choice, we will use the following result.

\begin{prop}[Jain, see {\cite{super-chernoff}}] \label{generalised-chernoff}
Let $X_1, \ldots, X_n$ be Bernoulli random variables such that, for any $1 \leq s \leq n$ and any $x_1, \ldots, x_{s-1}\in \{0,1\}$,
 \begin{align*}
\pr(X_s = 1 \mid X_1 = x_1, \ldots, X_{s-1} = x_{s-1}) \leq p.
\end{align*}
Let $X=\sum_{s=1}^n X_i$ and let $B \sim B(n,p)$.  Then $\pr(X \geq a) \leq \pr(B \geq a)$ for any~$a\ge 0$.
\end{prop}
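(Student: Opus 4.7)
The plan is to prove Proposition~\ref{generalised-chernoff} by constructing an explicit coupling of $(X_1,\dots,X_n)$ with a sequence $(Y_1,\dots,Y_n)$ of independent $\mathrm{Ber}(p)$ random variables such that $X_s \le Y_s$ almost surely for every $s$. Once such a coupling is in hand, the statement is immediate: summing gives $X=\sum_s X_s \le \sum_s Y_s$ pointwise, and $\sum_s Y_s \sim B(n,p)$, so $\pr(X\ge a)\le \pr(\sum_s Y_s \ge a)=\pr(B\ge a)$ for every $a\ge 0$.

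To build the coupling, I would work on a probability space carrying independent random variables $U_1,\dots,U_n$ uniform on $[0,1]$, together with enough extra randomness to realise $(X_1,\dots,X_n)$ with the given conditional law. Proceeding inductively in $s$, suppose $X_1,\dots,X_{s-1}$ have been defined and set $p_s:=\pr(X_s=1\mid X_1,\dots,X_{s-1})$; by hypothesis $p_s\le p$ almost surely. Define
\[
X_s := \mathbf{1}[U_s \le p_s], \qquad Y_s := \mathbf{1}[U_s \le p].
\]
Then conditional on $(X_1,\dots,X_{s-1})$, $X_s$ is Bernoulli with the correct parameter $p_s$ (since $U_s$ is independent of the past), so the joint law of $(X_1,\dots,X_n)$ agrees with the one prescribed in the statement. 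Moreover, because $U_1,\dots,U_n$ are i.i.d.\ uniform, the variables $Y_s=\mathbf{1}[U_s\le p]$ are i.i.d.\ $\mathrm{Ber}(p)$, so $\sum_s Y_s\sim B(n,p)$. Finally $p_s\le p$ forces $X_s\le Y_s$ pointwise.

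The only point that needs a little care is the measurability/construction step: one must make sure that $p_s$ is a well-defined $\sigma(X_1,\dots,X_{s-1})$-measurable function and that an independent $U_s$ is available at each stage, but this is standard (enlarge the probability space if necessary). No step is really an obstacle; the content of the proposition is just the monotone coupling $X_s\le Y_s$, and the sum inequality $\pr(X\ge a)\le \pr(B\ge a)$ then drops out immediately from $X\le \sum_s Y_s$ on every sample point.
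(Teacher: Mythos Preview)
Your coupling argument is correct and is the standard proof of this stochastic domination result. The paper does not give its own proof of Proposition~\ref{generalised-chernoff}; it simply attributes the result to Jain (via~\cite{super-chernoff}) and uses it as a black box in the proof of Lemma~\ref{lem:pseud:bip}. So there is nothing to compare against: you have supplied a clean proof where the paper chose to cite one.
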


\begin{proofof}{Lemma~\ref{lem:pseud:bip}}
Set $t:=\lceil 8kr\rho^{3/2}n\rceil$. Let $G_i:=G[W^i]$ for $1\leq i\leq q$. Suppose we have already found $T_1, \dots T_{s-1}$ for some $1\leq s \leq q$. We find $T_s$ as follows.

Let $H_{s-1}:=\bigcup_{i=1}^{s-1} T_i$ and $G_s':=G_s-H_{s-1}[W^s]$. If $\Delta(H_{s-1}[W^s])>(r-2)\rho^{3/2}n$, let $T'_1, \dots, T'_t$ be empty graphs on $W^s$. Otherwise, \eqref{pseud:bip:ii} implies
\begin{align*}
\hat\delta(G_s')&\geq \big(1-\frac{1}{r-1}\big)n_s+8kr^2\rho^{3/2}n
\geq \big(1-\frac{1}{r-1}+\rho^{3/2}\big)n_s+(r-2)(t-1)
\end{align*}
and we can greedily find $t$ edge-disjoint perfect $K_{r-1}$-matchings $T'_1, \dots, T'_t$ in $G_s'$ using Theorem~\ref{thm:hajszem}.%
\COMMENT{\eqref{pseud:bip:ii} implies that $n_s\geq 9kr^2\rho^{3/2}n\geq \sqrt n$ and so $1/n_s\ll \rho, 1/r$. So we can apply Theorem~\ref{thm:hajszem}.}
In either case, pick $1\leq i\leq t$ uniformly at random and set $T_s:=T'_i$. It suffices to show that, with positive probability,
\begin{equation*}
\Delta(H_{s-1}[W^s])\leq (r-2)\rho^{3/2}n \hspace{1cm}\text{ for all } 1\leq s\leq q.
\end{equation*}

Consider any $1\leq i\leq q$ and any $w\in W^i$. For $1\leq s\leq q$, let $Y_s^{i,w}$ be the indicator function of the event that $T_s$ contains an edge incident to $w$ in $G_i$. Let $X^{i,w}:=\sum_{s=1}^qY_s^{i,w}$. Note $d_{H_{q}}(w, W^i)\leq (r-2)X^{i,w}$. So it suffices to show that, with positive probability, $X^{i,w}\leq \rho^{3/2}n$ for all $1\leq i\leq q$ and all $w\in W^i$.

Fix $1\leq i\leq q$ and $w\in W^i$. Let $J^{i,w}$ be the set of indices $s\neq i$ such that $w\in W^s$; (iv) implies $|J^{i,w}|< 2k\rho n$. If $s\notin J^{i,w}\cup \{i\}$, then $w\notin W^s$ and $Y_s^{i,w}=0$. So
\begin{equation}
X^{i,w}\leq 1+\sum_{s\in J^{i,w}}Y_s^{i,w}.\label{eq:xju}
\end{equation}
Let $s_1< \dots< s_{|J^{i,w}|}$ be an enumeration of $J^{i,w}$. For any $b\leq |J^{i,w}|$, note that
$$d_{G_{s_b}}(w, W^i)\leq |W^i\cap W^{s_b}|\stackrel{\rm{(iii)}}{\leq} 2r\rho^2 n.$$
So at most $2r\rho^2n$ of the subgraphs $T'_j$ that we picked in $G_{s_b}'$ contain an edge incident to $w$ in $G_i$. Thus
$$\pr(Y_{s_b}^{i,w}=1\mid Y_{s_1}^{i,w}=y_1, \dots, Y_{s_{b-1}}^{i,w}=y_{b-1})\leq 2r\rho^2n/t\leq \rho^{1/2}/4k$$
for all $y_1, \dots, y_{b-1}\in \{0,1\}$ and $1\leq b\leq |J^{i,w}|$. Let $B\sim B(|J^{i,w}|, \rho^{1/2}/4k).$
Using Proposition~\ref{generalised-chernoff}, Lemma~\ref{lem:chernoff} and that $|J^{i,w}|\leq 2k\rho n$, we see that
\begin{align*}
\pr(X^{i,w}>\rho^{3/2}n)&\stackrel{\mathclap{\eqref{eq:xju}}}{\leq} \pr(\sum_{s\in J^{i,w}} Y_s^{i,w}> 3\rho^{3/2}n/4)\leq \pr(B>3\rho^{3/2}n/4)\\
&\leq \pr(|B-\ex(B)|>\rho^{3/2}n/4)\leq 2e^{-\rho^2n/16k}.
\end{align*}
There are at most $qrn\leq kr^2n^2$ pairs $(i,w)$, so there is a choice of $T_1, \dots, T_q$ such that $X^{i,w}\leq \rho^{3/2}n$ for all $1\leq i\leq q$ and all $w\in W^i$.
\end{proofof}

The following is an immediate consequence of Lemma~\ref{lem:pseud:bip}.

\begin{cor}\label{cor:pseud:trip}
Let $r\geq 2$ and $1/n \ll 1/k,1/r, \rho\leq 1$. Let $G$ be an $r$-partite graph on $(V_1, \dots, V_r)$ with $|V_1|=\dots=|V_r|=n$. Let $U, W\subseteq V(G)$ be disjoint with $|W_1|=\dots=|W_r|\geq \lfloor n/k \rfloor$. Suppose the following hold:
\begin{enumerate}[\rm(i)]
\item for all $1\leq j_1, j_2\leq r$ and all $x\in U\setminus(V_{j_1}\cup V_{j_2})$, $d_G(x, W_{j_1})=d_G(x, W_{j_2})$;
\item for all $1\leq j\leq r$ and all $x\in U\setminus U_j$, $\hat{\delta}(G[N_G(x, W)])\geq (1-1/(r-1))d_G(x, W_{j})+9kr\rho^{3/2}|W|$;
\item for all distinct $x, x'\in U$, $|N_G(x, W)\cap N_G(x', W)|\leq 2\rho^2 |W|$;
\item for all $y\in W$, $d_G(y,U)\leq 2k\rho |W_1|$.
\end{enumerate}
Then there exists $G_W\subseteq G[W]$ such that $G[U,W]\cup G_W$ has a $K_r$-decomposition and $\Delta(G_W)\leq 2kr\rho |W_1|$.
\end{cor}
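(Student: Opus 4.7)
The plan is to apply Lemma~\ref{lem:pseud:bip} inside $G[W]$, taking as the sets $W^i$ the neighbourhoods $N_G(x_i,W)$, where $x_1,\dots,x_q$ is an enumeration of $U$. Each perfect $K_{r-1}$-matching $T_i$ produced by the lemma, together with $x_i$, will give $n_i$ edge-disjoint copies of $K_r$ covering all edges from $x_i$ to $W$ while using only the edges of $T_i$ inside $W$. Setting $G_W:=\bigcup_i T_i$ should then supply both the desired $K_r$-decomposition of $G[U,W]\cup G_W$ and the required degree bound.

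The work is in verifying the hypotheses of Lemma~\ref{lem:pseud:bip} for this setup, with $|W_1|$ playing the role of $n$ in the lemma. Hypothesis (i) of the lemma is exactly where hypothesis (i) of the corollary is used: if $x_i\in V_{j_i}$, then by (i) the values $d_G(x_i,W_j)$ for $j\neq j_i$ are all equal to a common value $n_i$, while $|W^i_{j_i}|=0$. Hypothesis (ii) of the lemma follows from (ii) of the corollary after writing $|W|=r|W_1|$, which gives the right coefficient $9kr^2\rho^{3/2}|W_1|$. Hypothesis (iii) of the lemma is (iii) of the corollary rewritten with $|W|=r|W_1|$. Hypothesis (iv) of the lemma translates directly: a vertex $y\in W$ lies in $W^i$ exactly when $x_iy\in E(G)$, so the number of $W^i$ containing $y$ equals $d_G(y,U)\leq 2k\rho|W_1|$, as required. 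The size condition $q\leq kr|W_1|$ is satisfied since $|U|\leq rn\leq 2kr|W_1|$ (using $|W_1|\geq\lfloor n/k\rfloor$), and this modest slack does not affect the union bound in the proof of Lemma~\ref{lem:pseud:bip}.

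Given $T_1,\dots,T_q$ from the lemma, for each $i$ the graph obtained from $T_i$ by joining $x_i$ to every vertex of $W^i$ splits into $n_i$ copies of $K_r$, one per $K_{r-1}$ of the matching $T_i$, because $T_i$ is a perfect $K_{r-1}$-matching on $W^i=N_G(x_i,W)$. Edge-disjointness across different $i$ is immediate: the edges incident to $U$ lie in $G[U,W]$ and come from distinct $x_i$, while the edges inside $W$ come from the $T_i$, which are edge-disjoint by choice. Hypothesis (i) ensures that every edge of $G[U,W]$ incident to $x_i$ lands in one of these $K_r$'s, so the union is a $K_r$-decomposition of $G[U,W]\cup G_W$.

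For the maximum degree of $G_W=\bigcup_i T_i$, fix $y\in W$. Then $d_{T_i}(y)\leq r-2$ if $y\in W^i$ and equals $0$ otherwise, so
\[
d_{G_W}(y)\leq (r-2)\cdot\#\{i:y\in W^i\}=(r-2)\,d_G(y,U)\leq (r-2)\cdot 2k\rho|W_1|\leq 2kr\rho|W_1|,
\]
giving $\Delta(G_W)\leq 2kr\rho|W_1|$ as needed. There is no substantive obstacle beyond the bookkeeping described above: essentially all the work has already been done inside Lemma~\ref{lem:pseud:bip}, and the corollary is a clean packaging of its output.
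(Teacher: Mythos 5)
Your argument is correct and is essentially the paper's own proof: enumerate $U$, set $W^i:=N_G(x_i,W)$, apply Lemma~\ref{lem:pseud:bip} in $G[W]$ with $|W_1|$ playing the role of $n$ (your translations of hypotheses (i)--(iv) via $|W|=r|W_1|$ are exactly what the paper does), and assemble the copies of $K_r$ from each $x_i$ together with its matching $T_i$, giving $\Delta(G_W)\leq 2kr\rho|W_1|$. The only cosmetic point is the size condition: since $U$ and $W$ are disjoint, $q=|U|\leq rn-r|W_1|\leq kr|W_1|$ (using $|W_1|\geq\lfloor n/k\rfloor$ and $1/n\ll 1/k$), so the lemma applies verbatim and you do not need to peek into its proof to argue that your extra factor of $2$ in $q\leq 2kr|W_1|$ is harmless.
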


\begin{proof}
Let $q:=|U|$ and let $u^1, \dots, u^{q}$ be an enumeration of $U$. For each $1\leq i\leq q$, let $W^{i}:=N_G(u^i, W)$. Note that $q\leq kr|W_1|$. Apply Lemma~\ref{lem:pseud:bip} (with $G[W]$ and $|W_1|$ playing the roles of $G$ and $n$) to obtain edge-disjoint perfect $K_{r-1}$-matchings $T^i$ in each $G[W^i]$. Let $G_W:=\bigcup_{i=1}^{q}T^i$. Then $G[U, W]\cup G_W$ has a $K_r$-decomposition. For each $y\in W$, we use (iv) to see that
$d_{G_W}(y)\leq (r-1)d_{G}(y, U)<2kr\rho |W_1|$.
\end{proof}

If we are given a $k$-partition $\cP$ of the $r$-partite graph $G$, we can apply Corollary~\ref{cor:pseud:trip} repeatedly with each $U\in \cP$ playing the role of $W$ to obtain the following result.

\begin{cor}\label{cor:pseud:parts}
Let $r\geq 2$ and $1/n \ll \rho \ll 1/k, 1/r\leq 1$. Let $G$ be an $r$-partite graph on $(V_1, \dots, V_r)$ with $|V_1|=\dots=|V_r|=n$. Let $\cP=\{U^1, \dots, U^k\}$ be a $k$-partition for $G$. Suppose that the following hold for all $2\leq i \leq k$:
\begin{enumerate}[\rm(i)]
\item for all $1\leq j_1, j_2\leq r$ and all $x\in U^{<i}\setminus(V_{j_1}\cup V_{j_2})$, $d_G(x, U^i_{j_1})=d_G(x, U^i_{j_2})$;\label{cor:pseud1}
\item for all $1\leq j\leq r$ and all $x\in U^{<i}\setminus V_j$, $\hat{\delta}(G[N_G(x, U^i)]\geq (1-1/(r-1))d_G(x, U^i_j)+9kr\rho^{3/2}|U^i|$;\label{cor:pseud2}
\item for all distinct $x,x'\in U^{<i}$, $|N_G(x, U^i)\cap N_G(x', U^i)|\leq 2\rho^2 |U^i|$;\label{cor:pseud3}
\item for all $y\in U^i$, $d_G(y,U^{<i})\leq 2k\rho |U^i_1|$.\label{cor:pseud4}
\end{enumerate}
Then there exists $G_0\subseteq G-G[\cP]$ such that $G[\cP]\cup G_0$ has a $K_r$-decomposition and $\Delta(G_0)\leq 3r\rho n$.
\end{cor}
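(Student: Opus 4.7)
The plan is simply to iterate Corollary~\ref{cor:pseud:trip} across the parts of $\cP$, one at a time. Specifically, for each $2\leq i\leq k$, I would apply Corollary~\ref{cor:pseud:trip} with $U:=U^{<i}$ and $W:=U^i$, using the original graph $G$ throughout (we do not need to remove previously used edges between iterations, see below). Hypotheses (i)--(iv) of Corollary~\ref{cor:pseud:trip} for this choice of $(U,W)$ are literal restrictions of hypotheses \eqref{cor:pseud1}--\eqref{cor:pseud4} of Corollary~\ref{cor:pseud:parts}; moreover property (Pa\ref{Pa2}) gives $|U^i_1|=\dots=|U^i_r|\geq \lfloor n/k\rfloor$ as required. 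This produces, for each $i$, a graph $G_i\subseteq G[U^i]$ such that $G[U^{<i},U^i]\cup G_i$ has a $K_r$-decomposition and
\[
\Delta(G_i)\leq 2kr\rho\,|U^i_1|\leq 2kr\rho\bigl(\lceil n/k\rceil\bigr)\leq 2r\rho n+2kr\rho\leq 3r\rho n,
\]
where the last inequality uses $1/n\ll \rho,1/k$.

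Now set $G_0:=\bigcup_{i=2}^{k}G_i$. Since $G_i\subseteq G[U^i]$ and the parts $U^i$ are disjoint, the $G_i$ are pairwise edge-disjoint and $G_0\subseteq G-G[\cP]$. Similarly, the edge sets $E(G[U^{<i},U^i])$ for different $i$ are pairwise disjoint (each edge of $G[\cP]$ belongs to exactly one such pair, determined by the larger of the two indices of the parts containing its endpoints), and they are also disjoint from each $G_j\subseteq G[U^j]$. Therefore
\[
G[\cP]\cup G_0=\bigcup_{i=2}^{k}\bigl(G[U^{<i},U^i]\cup G_i\bigr)
\]
is an edge-disjoint union, and concatenating the $K_r$-decompositions supplied by Corollary~\ref{cor:pseud:trip} yields a $K_r$-decomposition of the whole. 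Finally, each vertex $y\in V(G)$ lies in exactly one part $U^i$, so $d_{G_0}(y)=d_{G_i}(y)\leq 3r\rho n$, giving $\Delta(G_0)\leq 3r\rho n$.

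There is essentially no obstacle here: the corollary is a routine bookkeeping exercise once Corollary~\ref{cor:pseud:trip} is in hand. The only points that need a word of care are (a) verifying that the different applications of Corollary~\ref{cor:pseud:trip} can be carried out on the original $G$ rather than on successive residuals (which works because they use disjoint edge sets), and (b) converting the per-part degree bound $2kr\rho |U^i_1|$ into the global bound $3r\rho n$, which requires the minor slack absorbed by the factor $3$ rather than $2$ to accommodate the rounding in $|U^i_1|\leq\lceil n/k\rceil$.
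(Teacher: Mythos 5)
Your proposal is correct and is essentially the paper's own proof: apply Corollary~\ref{cor:pseud:trip} for each $2\leq i\leq k$ with $U^{<i}$, $U^i$ playing the roles of $U$, $W$, take $G_0$ to be the union of the resulting graphs inside the parts, and use the fact that the relevant edge sets for different $i$ are disjoint to combine the decompositions and the degree bounds. The extra remarks (edge-disjointness across iterations, the rounding in $|U^i_1|\leq\lceil n/k\rceil$ absorbed by the factor $3$) just make explicit what the paper leaves implicit.
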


\begin{proof}
For each $2\leq i\leq k$, let $G_i:=G[U^{<i}, U^i]\cup G[U^i]$. Apply Corollary~\ref{cor:pseud:trip} to each $G_i$ with $U^{<i}$, $U^i$ playing the roles of $U$, $W$ to obtain $G_i'\subseteq G[U^i]$ such that $G[U^{<i}, U^i]\cup G_i'$ has a $K_r$-decomposition and $\Delta(G_i')\leq 2kr\rho\lceil n/k\rceil\leq 3r\rho n$. Let $G_0:=\bigcup_{i=2}^k G_i'$ . Then $G[\cP]\cup G_0$ has a $K_r$-decomposition and $\Delta(G_0)\leq 3r\rho n$.
\end{proof}


\section{Balancing graph}\label{sec:bal}

In our proof we will consider a sequence of successively finer partitions $\cP_1, \dots, \cP_\ell$ in turn. When considering $\cP_i$, we will assume the leftover is a subgraph of $G-G[\cP_{i-1}]$ and aim to use Lemma~\ref{lem:degree} and then Corollary~\ref{cor:pseud:parts} to find copies of $K_r$ such that the leftover is now contained in $G-G[\cP_{i}]$ (i.e. inside the smaller partition classes). However, to apply Corollary~\ref{cor:pseud:parts} we need the leftover to be balanced with respect to the partition classes. In this section we show how this can be achieved.

Let $\cP=\{U^1, \dots, U^k\}$ be a $k$-partition of the vertex set $V=(V_1, \dots, V_r)$ with $|V_1|=\dots=|V_r|=n$. We say that a graph $H$ on $(V_1,\dots, V_r)$ is \emph{locally $\cP$-balanced} if
$$d_H(v, U^i_{j_1})=d_H(v, U^i_{j_2})$$
for all $1\leq i\leq k$, all $1\leq j_1, j_2\leq r$ and all $v\in U^i\setminus (V_{j_1}\cup V_{j_2})$.
Note that a graph which is locally $\cP$-balanced is not necessarily $K_r$-divisible but that $H[U^i]$ is $K_r$-divisible for all $1\leq i \leq k$.

Let $\gamma>0$. A \emph{$(\gamma, \cP)$-balancing graph} is a $K_r$-decomposable graph $B$ on $V$ such that the following holds.
Let $H$ be any $K_r$-divisible  graph on $V$ with:
\begin{enumerate}[(P1)]
	\item $e(H\cap B)=0$; \label{item:P1}
	\item $|d_H(v, U^i_{j_1})-d_H(v, U^i_{j_2})|<\gamma n$
for all $1\leq i \leq k$, all $1\leq j_1, j_2\leq r$ and all $v\notin V_{j_1}\cup V_{j_2}$.\label{item:P2}
\end{enumerate}
Then there exists $B'\subseteq B$ such that $B-B'$ has a $K_r$-decomposition and 
$$d_{H\cup B'}(v, U^i_{j_1})=d_{H\cup B'}(v, U^i_{j_2})$$
for all $2\leq i\leq k$, all $1\leq j_1, j_2\leq r$ and all $v\in U^{<i}\setminus (V_{j_1}\cup V_{j_2})$.

Our aim in this section will be to prove Lemma~\ref{lem:balance} which finds a $(\gamma, \cP)$-balancing graph in a suitable graph $G$.

\begin{lemma}\label{lem:balance}
Let $1/n \ll \gamma \ll \gamma'\ll 1/k\ll \eps\ll 1/r\leq 1/3$.
Let $G$ be an $r$-partite graph on $(V_1, \dots, V_r)$ with $|V_1|=\dots=|V_r|=n$. Let $\cP=\{U^1, \dots, U^k\}$ be a $k$-partition for $G$.
Suppose $d_{G}(v, U^i_j)\geq (1-1/(r+1)+\eps)|U^i_j|$ for all $1\leq i\leq k$, all $1\leq j\leq r$ and all $v\notin V_j$.
Then there exists $B\subseteq G$ which is a $(\gamma, \cP)$-balancing graph such that $B$ is locally $\cP$-balanced and $\Delta(B)<\gamma' n$.
\end{lemma}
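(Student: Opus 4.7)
The plan is to construct $B$ as the edge-disjoint union of many simple $K_r$ gadgets, each a copy of $K_r$ placed so that toggling its membership in $B'$ changes only a small, controlled number of the constrained degrees $d(v, U^i_j)$. Both $B$ and $B - B'$ will then be $K_r$-decomposable, as each is a union of edge-disjoint $K_r$s.

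For each triple $(\ell_0, i, j_1)$ with $1 \le \ell_0 < i \le k$, a basic gadget of type $(\ell_0, i, j_1)$ is a copy of $K_r$ in $G$ whose vertex in class $V_{j_1}$ lies in $U^i_{j_1}$ and whose remaining $r-1$ vertices $(u_{j_0})_{j_0 \ne j_1}$ lie in the classes $U^{\ell_0}_{j_0}$. A direct calculation shows that, among the constrained degrees $d(u, U^{i'}_{j'})$ with $u \in U^{<i'}$, such a gadget contributes exactly one to each of the $r-1$ quantities $d(u_{j_0}, U^i_{j_1})$ and zero to all others: the $\binom{r-1}{2}$ edges inside $U^{\ell_0}$ affect only same-partition degrees, and the edges incident to the $U^i$-vertex $w$ affect only $d(w, U^{\ell_0}_\cdot)$, which is unconstrained since $w \in U^i$ has constraints only for partitions of index larger than $i$.

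To build $B$, for each $(\ell_0, i, j_1)$ we reserve an edge-disjoint family $\mathcal{F}_{\ell_0, i, j_1}$ of such gadgets in $G$, large enough that each $v \in U^{\ell_0} \setminus V_{j_1}$ participates in exactly $L := \lceil 3\gamma n \rceil$ gadgets of the family. All families are embedded simultaneously using Lemma~\ref{lem:emb}: each gadget (viewed as a $\cP$-labelled $K_r$) has degeneracy at most $r-1$, which is amply met by the hypothesis $d_G(u, U^i_j) \ge (1 - 1/(r+1) + \eps)|U^i_j|$. A routine count gives $\Delta(B) < \gamma' n$ thanks to $\gamma \ll \gamma' \ll 1/k$, and a short symmetry calculation over the role of $j_1$ shows that for every vertex $v\in U^\ell_{j_v}$ and every class $j'\ne j_v$ the degree $d_B(v, U^{\ell'}_{j'})$ depends only on $\ell'$ and not on $j'$, so $B$ is locally $\cP$-balanced.

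Given $H$ satisfying (P\ref{item:P1}), (P\ref{item:P2}), for each $v \in U^{\ell_0} \cap V_{j_0}$ with $\ell_0 < i \le k$ and each $j_1 \ne j_0$, set $a^i_v(j_1) := T^i_v - d_H(v, U^i_{j_1}) \in [0, 2\gamma n]$, where the common target $T^i_v$ is chosen just above $\max_{j_1 \ne j_0} d_H(v, U^i_{j_1})$. We then select within each $\mathcal{F}_{\ell_0, i, j_1}$ a sub-family $\mathcal{F}'$ in which every $v \in U^{\ell_0} \setminus V_{j_1}$ lies in exactly $a^i_v(j_1)$ gadgets, and take $B' := \bigcup \mathcal{F}'$. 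The main technical obstacle is this selection step: it amounts to realising a prescribed degree sequence in the complete $(r-1)$-partite hypergraph on $U^{\ell_0} \setminus V_{j_1}$, which is solvable via a greedy-plus-exchange argument thanks to the ample slack $L \ge 2 a^i_v(j_1)$, after a small adjustment of the common target $T^i_v$ to make the per-class sums of targets compatible (which is possible using the $K_r$-divisibility of $H$).
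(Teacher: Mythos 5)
Your construction of $B$ out of edge-disjoint full copies of $K_r$ is appealing (it makes the $K_r$-decomposability of $B$ and $B-B'$ automatic, whereas the paper has to package its gadgets with absorbers), and your accounting of which constrained degrees a single gadget touches is correct. However, there is a genuine gap at the selection step, and it is not a technicality. Fix $\ell_0<i$ and $j_1$. Every gadget in $\mathcal{F}_{\ell_0,i,j_1}$ contains exactly one vertex of each class $U^{\ell_0}_{j_0}$, $j_0\neq j_1$, so any subfamily in which $v$ appears exactly $a^i_v(j_1)$ times forces the column sums $\sum_{v\in U^{\ell_0}_{j_0}}a^i_v(j_1)=\sum_{v\in U^{\ell_0}_{j_0}}T^i_v-e_H(U^{\ell_0}_{j_0},U^i_{j_1})$ to be equal for all $j_0\neq j_1$. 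Writing $S_{j_0}:=\sum_{v\in U^{\ell_0}_{j_0}}T^i_v$, this demands $S_{j_0}-S_{j_0'}=e_H(U^{\ell_0}_{j_0},U^i_{j_1})-e_H(U^{\ell_0}_{j_0'},U^i_{j_1})$ for every admissible $j_1$; the left-hand side is independent of $j_1$, the right-hand side is not. Already for $r=3$, $k=2$ the three resulting equations in the two free differences $S_{j_0}-S_{j_0'}$ are consistent only if
\[
\bigl(e_H(U^1_2,U^2_1)-e_H(U^1_3,U^2_1)\bigr)+\bigl(e_H(U^1_3,U^2_2)-e_H(U^1_1,U^2_2)\bigr)+\bigl(e_H(U^1_1,U^2_3)-e_H(U^1_2,U^2_3)\bigr)=0,
\]
and $K_r$-divisibility of $H$ only controls the sums $\sum_i e_H(U^{\ell_0}_{j_0},U^i_{j})$ over $i$, not the individual terms, so this identity fails for generic $H$ satisfying (P\ref{item:P1}) and (P\ref{item:P2}). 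No adjustment of the per-vertex targets $T^i_v$ can repair an inconsistent system.

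This compatibility condition is exactly the statement (Q\ref{item:Q1}), and securing it is the entire purpose of the paper's first stage: the edge-balancing graph $B_{\textnormal{edge}}$ built from the $\theta(K(N))$ gadgets and a decomposition of the excess multigraph into irreducible $K_r$-divisible multigraphs. Only after the pairwise edge counts have been equalised does the paper run a degree-balancing stage, and it does so with \emph{transfer} gadgets $D^{j_2}_{x\to y}$ that move a unit of degree from one vertex of $U^{\ell_0}_{j_0}$ to another (conserving the column sums) rather than raising every vertex to a common target. Your proposal omits the edge-balancing stage entirely, so it cannot work as stated; separately, the ``greedy-plus-exchange'' realisation of the prescribed degree sequence inside each $\mathcal{F}_{\ell_0,i,j_1}$ is asserted rather than proved (the paper's analogue, Lemma~\ref{lem:mover2}, needs an auxiliary pseudorandom graph and a path-routing argument), but that is a secondary concern next to the missing edge-balancing step.
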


The balancing graph $B$ will be made up of two graphs: $B_{\text{edge}}$, an edge balancing graph (which balances the total number of edges between appropriate classes), and $B_{\text{deg}}$, a degree balancing graph (which balances individual vertex degrees). These are described in Sections~\ref{sec:edge} and \ref{sec:degree} respectively.

\subsection{Edge balancing}\label{sec:edge}

Let $\cP=\{U^1, \dots, U^k\}$ be a $k$-partition of the vertex set $V=(V_1,\dots, V_r)$ with $|V_1|=\dots=|V_r|=n$. Let $\gamma>0$. A \emph{$(\gamma, \cP)$-edge balancing graph} is a $K_r$-decomposable graph $B_{\text{edge}}$ on $V$ such that the following holds.
Let $H$ be any $K_r$-divisible  graph on $V$ which is edge-disjoint from $B_{\text{edge}}$ and satisfies (P\ref{item:P2}).
Then there exists $B_{\text{edge}}'\subseteq B_{\text{edge}}$ such that $B_{\text{edge}}-B_{\text{edge}}'$ has a $K_r$-decomposition and 
$$e_{H\cup B_{\text{edge}}'}(U^{i_1}_{j_1}, U^{i_2}_{j_2})=e_{H\cup B_{\text{edge}}'}(U^{i_1}_{j_1}, U^{i_2}_{j_3})$$
for all $1\leq i_1<i_2\leq k$ and all $1\leq j_1, j_2, j_3\leq r$ with $j_1\neq j_2, j_3$.

In this section, we first construct and then find a $(\gamma, \cP)$-edge balancing graph in $G$.

For any multigraph $G$ on $W$ and any $e\in W^{(2)}$, let $m_G(e)$ be the multiplicity of the edge $e$ in $G$. We say that a $K_r$-divisible multigraph $G$ on $W=(W_1, \dots, W_r)$ is \emph{irreducible} if $G$ has no non-trivial $K_r$-divisible proper subgraphs; that is, for every $H \subsetneq G$ with $e(H)>0$, $H$ is not $K_r$-divisible. It is easy to see that there are only finitely many irreducible $K_r$-divisible multigraphs on $W$. In particular, this implies the following proposition.

\begin{prop}\label{prop:irred}
Let $r\in \N$ and let $W=(W_1, \dots, W_r)$. 
Then there exists $N=N(W)$ such that every irreducible $K_r$-divisible multigraph on $W$ has edge multiplicity at most $N$.\qed
\end{prop}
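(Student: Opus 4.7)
The plan is to identify the set $\mathcal{S}$ of all $K_r$-divisible multigraphs on $W=(W_1,\dots,W_r)$ with the set of non-negative integer solutions of a system of homogeneous linear equations, and then to deduce the result from Dickson's lemma.

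First I would encode each multigraph $G$ on $W$ by its multiplicity vector $(m_G(e))_{e\in E}\in\mathbb{Z}_{\ge 0}^E$, where $E$ is the (finite) set of unordered pairs of vertices lying in different classes $W_{j_1},W_{j_2}$. The $K_r$-divisibility conditions $d(v,W_{j_1})=d(v,W_{j_2})$ are linear in the $m_G(e)$, so $\mathcal{S}$ is the set of non-negative integer points of a rational polyhedral cone. In particular, $\mathcal{S}$ is closed under addition, and whenever $H,G\in\mathcal{S}$ satisfy $m_H(e)\le m_G(e)$ for all $e\in E$, the difference $G-H$ (the multigraph with multiplicity vector $m_G-m_H$) also lies in $\mathcal{S}$.

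Next I would reformulate irreducibility in order-theoretic terms. Under the above encoding, $G\in\mathcal{S}$ is irreducible in the sense of the proposition if and only if $G$ is a minimal element of $\mathcal{S}\setminus\{0\}$ with respect to the componentwise partial order on $\mathbb{Z}_{\ge 0}^E$. Indeed, a non-trivial $K_r$-divisible proper subgraph $H\subsetneq G$ is precisely an element of $\mathcal{S}\setminus\{0\}$ strictly below $G$ in this order; conversely, any such $H$ gives rise to a non-trivial $K_r$-divisible proper subgraph of $G$.

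Finally, by Dickson's lemma every antichain in $(\mathbb{Z}_{\ge 0}^E,\le)$ is finite, so the set of $\le$-minimal elements of $\mathcal{S}\setminus\{0\}$ (which clearly forms an antichain) is finite. Hence there are only finitely many irreducible $K_r$-divisible multigraphs on $W$, and we may take $N=N(W)$ to be the maximum edge multiplicity appearing in any of them. There is no genuine obstacle; the only point that deserves explicit verification is the equivalence between irreducibility and $\le$-minimality, and this follows immediately from the linearity of the equations defining $\mathcal{S}$.
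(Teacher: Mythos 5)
Your argument is correct and is essentially the paper's: both rest on the fact that $(\mathbb{Z}_{\ge 0}^{E},\le)$ is a well-quasi-order, so the irreducible multigraphs (equivalently, the minimal elements of $\mathcal{S}\setminus\{0\}$) form a finite antichain. The paper simply inlines the standard proof of Dickson's lemma, repeatedly passing to subsequences on which each edge multiplicity is non-decreasing until two nested irreducible multigraphs appear, giving a contradiction.
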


Let $\cP=\{U^1, \dots, U^k\}$ be a partition of $V=(V_1,\dots, V_r)$. Take a copy $K$ of $K_r(k)$ with vertex set $(W_1, \dots, W_r)$ where $W_j=\{w^1_j, \dots, w^k_j\}$ for each $1\leq j\leq r$. For each $1\leq i \leq k$, let $W^i:=\{w^i_j: 1\leq j\leq r\}$. Given a graph $H$ on $V$, we define an \emph{excess multigraph} $\text{EM}(H)$ on the vertex set $V(K)$ as follows. Between each pair of vertices $w^{i_1}_{j_1}$, $w^{i_2}_{j_2}$ such that $w^{i_1}_{j_1}w^{i_2}_{j_2}\in E(K)$ there are exactly
$$e_H(U^{i_1}_{j_1},U^{i_2}_{j_2})-\min\{e_H(U^{i_1}_j,U^{i_2}_{j'}): 1\leq j,j'\leq r, j\neq j'\}$$
multiedges in $\text{EM}(H)$.

\begin{prop}\label{prop:redgraph}
Let $r\in \N$ with $r\geq 3$. Let $\cP=\{U^1, \dots, U^k\}$ be a $k$-partition of the vertex set $V=(V_1, \dots, V_r)$ with $|V_1|=\dots=|V_r|=n$. Let $H$ be any $K_r$-divisible graph on $V$ satisfying {\rm(P\ref{item:P2})}. Then the excess multigraph $\textnormal{EM}(H)$ has a decomposition into at most $3\gamma k^2r^2n^2$ irreducible $K_r$-divisible multigraphs.
\end{prop}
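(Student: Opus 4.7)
The plan is: (a) check that $\textnormal{EM}(H)$ is itself $K_r$-divisible, (b) bound $e(\textnormal{EM}(H))$ above by $3\gamma k^2 r^2 n^2$ using (P\ref{item:P2}), and (c) peel off irreducible $K_r$-divisible sub-multigraphs one at a time. Step (c) is routine: the difference of two $K_r$-divisible multigraphs on the same vertex set is again $K_r$-divisible, and any non-trivial $K_r$-divisible multigraph contains a minimal (hence irreducible) non-trivial $K_r$-divisible sub-multigraph. Each peeling removes at least one edge, so the process terminates after at most $e(\textnormal{EM}(H))$ steps.

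For (a), fix $v=w^i_j$ and $j_1,j_2\ne j$. Writing $m^{i,i'}:=\min\{e_H(U^i_a,U^{i'}_b):a\ne b\}$, the multiplicity of $w^i_jw^{i'}_{j'}$ in $\textnormal{EM}(H)$ is $e_H(U^i_j,U^{i'}_{j'})-m^{i,i'}$, so the $m^{i,i'}$ terms cancel in the difference and
\[
d_{\textnormal{EM}(H)}(w^i_j,W_{j_1})-d_{\textnormal{EM}(H)}(w^i_j,W_{j_2})=e_H(U^i_j,V_{j_1})-e_H(U^i_j,V_{j_2})=\sum_{u\in U^i_j}\bigl(d_H(u,V_{j_1})-d_H(u,V_{j_2})\bigr),
\]
which vanishes because $H$ is $K_r$-divisible on $V$.

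For (b), the contribution of a fixed pair $(i_1,i_2)$ to $e(\textnormal{EM}(H))$ is $\sum_{j_1\ne j_2}(e_H(U^{i_1}_{j_1},U^{i_2}_{j_2})-m^{i_1,i_2})$, so it is enough to bound the spread of $e_H(U^{i_1}_{j_1},U^{i_2}_{j_2})$ across ordered pairs $(j_1,j_2)$ with $j_1\ne j_2$. Changing one coordinate at a time, say $j_2\to j_2'$, and summing the bound $|d_H(v,U^{i_2}_{j_2})-d_H(v,U^{i_2}_{j_2'})|<\gamma n$ from (P\ref{item:P2}) over $v\in U^{i_1}_{j_1}$ (a set of size at most $\lceil n/k\rceil$), one sees that a single valid swap changes $e_H(U^{i_1}_{j_1},U^{i_2}_{j_2})$ by less than $\gamma n\lceil n/k\rceil$. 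Because $r\ge 3$, any two ordered pairs of distinct indices can be joined by at most three valid swaps, so the spread per $(i_1,i_2)$ is at most $3\gamma n\lceil n/k\rceil$. Summing $r(r-1)$ edge types per pair and over the $\le k^2$ pairs $(i_1,i_2)$ yields the bound $e(\textnormal{EM}(H))\le 3\gamma k^2 r^2 n^2$.

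The main subtlety lies in step (b): one must arrange the three-swap path so that every intermediate pair is a valid edge type and the required instance of (P\ref{item:P2}) applies (i.e.\ the summation class lies outside both vertex classes being swapped). The only obstructive case is the transposition $(j_1,j_2)\leftrightarrow(j_2,j_1)$, which is routed via an auxiliary index $j_3\notin\{j_1,j_2\}$, available precisely because $r\ge 3$ (as assumed in the statement).
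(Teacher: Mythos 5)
Your proposal is correct and follows essentially the same route as the paper: verify that $\textnormal{EM}(H)$ is $K_r$-divisible via the cancellation of the minimum terms, bound the edge multiplicities by a three-step triangle inequality derived from (P\ref{item:P2}) (with the transposition case routed through a third index, exactly as in the paper), and then bound the number of irreducible pieces by the total edge count. The only difference is bookkeeping — you bound $e(\textnormal{EM}(H))$ directly using $|U^{i_1}_{j_1}|\leq\lceil n/k\rceil$, while the paper bounds each multiplicity by $3\gamma n^2$ and multiplies by $e(K_r(k))$ — and both yield the stated bound.
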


\begin{proof}
First, note that for any $1\leq i_1, i_2\leq k$, any $1\leq j_1,j_2,j_3\leq r$ with $j_1\neq j_2, j_3$ and any $v\in U^{i_1}_{j_1}$, we have
$|d_H(v, U^{i_2}_{j_2})-d_H(v, U^{i_2}_{j_3})|<\gamma n$ by (P\ref{item:P2}). Therefore,
\begin{equation}
|e_H(U^{i_1}_{j_1}, U^{i_2}_{j_2})-e_H(U^{i_1}_{j_1}, U^{i_2}_{j_3})|<\gamma n|U^{i_1}_{j_1}|<\gamma n^2.\label{eq:gamma2}
\end{equation}

We claim that, for all $w^{i_1}_{j_1}w^{i_2}_{j_2}\in E(K)$,
\begin{equation}
m_{\text{EM}(H)}(w^{i_1}_{j_1}w^{i_2}_{j_2})< 3\gamma n^2.
\label{eq:redgraph}
\end{equation} 
Let $1\leq j_1', j_2'\leq r$ with $j_1'\neq j_2'$.
Let $1\leq j\leq r$ with $j\neq j_1, j_1'$. Then
\begin{align*}
|e_H(U^{i_1}_{j_1}, U^{i_2}_{j_2})- e_H(U^{i_1}_{j_1'}, U^{i_2}_{j_2'})|&\leq 
|e_H(U^{i_1}_{j_1}, U^{i_2}_{j_2})- e_H(U^{i_1}_{j_1}, U^{i_2}_j)|+|e_H(U^{i_1}_{j_1},U^{i_2}_j)- e_H(U^{i_1}_{j_1'},U^{i_2}_j)|
\\
&\hspace{1cm} +
|e_H(U^{i_1}_{j_1'},U^{i_2}_j)-e_H(U^{i_1}_{j_1'}, U^{i_2}_{j_2'})|\\
&\stackrel{\mathclap{\eqref{eq:gamma2}}}{<}3\gamma n^2.
\end{align*}
So \eqref{eq:redgraph} holds.

We will now show that $\text{EM}(H)$ is $K_r$-divisible. Consider any vertex $w^{i_1}_{j_1}\in V(\text{EM}(H))$ and any $1\leq j_2, j_3\leq r$ such that $j_1\neq j_2, j_3$. Note that, since $H$ is $K_r$-divisible,
\begin{align*}
d_{\text{EM}(H)}(w^{i_1}_{j_1}, W_{j_2})&=\sum_{i=1}^k m_{\text{EM}(H)}(w^{i_1}_{j_1}, w^{i}_{j_2})\\
&=e_H(U^{i_1}_{j_1},V_{j_2})-\sum_{i=1}^k \min\{e_H(U^{i_1}_{j},U^{i}_{j'}): 1\leq j,j'\leq r, j\neq j'\}\\
&=e_H(U^{i_1}_{j_1},V_{j_3})-\sum_{i=1}^k \min\{e_H(U^{i_1}_{j},U^{i}_{j'}): 1\leq j,j'\leq r, j\neq j'\}\\
&=\sum_{i=1}^k m_{\text{EM}(H)}(w^{i_1}_{j_1}, w^{i}_{j_3}) = d_{\text{EM}(H)}(w^{i_1}_{j_1}, W_{j_3}).
\end{align*}
So $\text{EM}(H)$ is $K_r$-divisible and therefore has a decomposition $\cF$ into irreducible $K_r$-divisible multigraphs. By \eqref{eq:redgraph}, there are at most $3\gamma n^2$ edges between any pair of vertices in $\text{EM}(H)$, so $|\cF|\leq (3\gamma n^2)e(K)<3\gamma k^2r^2n^2$.
\end{proof}

Recall that $K$ denotes a copy of $K_r(k)$ with vertex set $V(K)=(W_1, \dots, W_r)$ (see the paragraph after Proposition~\ref{prop:irred}).
Let $N=N(V(K))$ be the maximum multiplicity of an edge in any irreducible $K_r$-divisible multigraph on $V(K)$ ($N$ exists by Proposition~\ref{prop:irred}). Label each vertex $w^i_j$ of $K$ by $U^i_j$. Let $K(N)$ be the labelled multigraph obtained from $K$ by replacing each edge of $K$ by $N$ multiedges.

We now construct a $\cP$-labelled graph which resembles the multigraph $K(N)$ (when we compare relative differences in the numbers of edges between vertices) and has lower degeneracy. Consider any edge $e=w^{i_1}_{j_1}w^{i_2}_{j_2}\in E(K(N))$. Let $\theta(e)$ be the graph obtained by the following procedure. Take a copy $K_e$ of $K[W^{i_1}, W^{i_2}]-w^{i_1}_{j_1}w^{i_2}_{j_2}$ ($K_e$ inherits the labelling of $K[W^{i_1}, W^{i_2}]$). Note that $K[W^{i_1}, W^{i_2}]$ is a copy of $K_r$ if $i_1=i_2$ and a copy of the graph obtained from $K_{r,r}$ by deleting a perfect matching otherwise. Join $w^{i_1}_{j_1}$ to the copy of $w^{i_2}_{j_2}$ in $K_e$ and join $w^{i_2}_{j_2}$ to the copy of $w^{i_1}_{j_1}$ in $K_e$. Write $\theta(e)$ for the resulting $\cP$-labelled graph (so the vertex set of $\theta(e)$ consists of $w^{i_1}_{j_1}$, $w^{i_2}_{j_2}$ as well as all the vertices in $K_e$).
Choose the graphs $K_e$ to be vertex-disjoint for all $e\in E(K(N))$.
For any $K'\subseteq K(N)$, let $\theta(K'):=\bigcup\{\theta(e): e\in E(K')\}$.

To see that the labelling of $\theta(K(N))$ is actually a $\cP$-labelling, note that for any $U^i_j$, the set of vertices labelled $U^i_j$ forms an independent set in $\theta(K(N))$. Moreover, note that
\begin{equation}\label{eq:degK(N)}
\theta(K(N)) \text{ has degeneracy } r-1.
\end{equation}
To see this, list its vertices in the following order. First list all the original vertices of $V(K)$. These form an independent set in $\theta(K(N))$. Then list the remaining vertices of $\theta(K(N))$ in any order. Each of these vertices has degree $r-1$ in $\theta(K(N))$, so the degeneracy of $\theta(K(N))$ is $r-1$.

\begin{prop}\label{prop:edge}
Let $\cP=\{U^1, \dots, U^k\}$ be a $k$-partition of the vertex set $V=(V_1, \dots, V_r)$ with $|V_1|=\dots=|V_r|=n$. Let $J=\phi(\theta(K(N)))$ be a copy of $\theta(K(N))$ on $V$ which is compatible with its $\cP$-labelling. Then the following hold:
\begin{enumerate}[\rm(i)]
	\item $J$ is $K_r$-divisible and locally $\cP$-balanced;\label{prop:edge1}
	\item for any multigraph $H\subseteq K(N)$, any $1\leq i_1,i_2\leq k$ and any $1\leq j_1< j_2\leq r$, \label{prop:edge2}
	\begin{equation*}
	e_{\phi(\theta(H))}(U^{i_1}_{j_1}, U^{i_2}_{j_2})=
	e_H(W^{i_1}, W^{i_2})+m_H(w^{i_1}_{j_1}w^{i_2}_{j_2}).%
	\COMMENT{$m_H(w^{i_1}_{j_1}w^{i_2}_{j_2})$ is edge multiplicity of $w^{i_1}_{j_1}w^{i_2}_{j_2}$ in $H$}
	\end{equation*}
\end{enumerate}
\end{prop}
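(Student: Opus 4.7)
The plan is to compute degrees and edge counts in $J := \phi(\theta(K(N)))$ directly. For any $v \in V$ in the image of $\phi$, injectivity of $\phi$ together with compatibility of the labelling gives $d_J(v, U^i_j) = d_{\theta(K(N))}(\phi^{-1}(v), L^i_j)$, where $L^i_j$ denotes the set of vertices of $\theta(K(N))$ labelled $U^i_j$; for $v$ outside the image of $\phi$, both sides of every balance condition are zero. So (i) reduces to verifying uniform degree patterns on $\theta(K(N))$, independent of $\phi$. Vertices of $\theta(K(N))$ fall into two types: originals $w^a_b \in V(K(N))$, and internal copies within some $K_e$.

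For an original $v^* = w^a_b$, each edge $e$ of $K(N)$ incident to $w^a_b$ contributes a single edge in $\theta(e)$, namely from $w^a_b$ to the copy of the other endpoint of $e$. Since each edge of $K_r(k)$ has multiplicity $N$ in $K(N)$, the original $w^a_b$ has exactly $N$ edges into each label $U^{i'}_{j'}$ with $j' \neq b$ (for every $i' \in [k]$) and no edges into labels with $j' = b$. For an internal copy $v^*$ of $w^i_{j_0}$ inside $K_e$ (where $e$ has endpoints in $W^{i_1}$ and $W^{i_2}$ with $i \in \{i_1, i_2\}$), a short case split shows that $v^*$ has exactly one edge to each label $U^{i'}_{j'}$ with $j' \neq j_0$, where $i'$ is the index of the opposite side of $e$ (and $i' = i$ when $i_1 = i_2$), and no edges elsewhere: the $r-2$ edges in $K_e$ incident to $v^*$ avoid the class of $v^*$ and possibly the deleted edge, and if $v^*$ is an endpoint of the deleted edge then that loss is compensated by the extra edge joining the opposite original to $v^*$. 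In either case, both for originals and for copies, the degree of $v^*$ into $U^{i_0}_{j_1}$ equals the degree into $U^{i_0}_{j_2}$ for $j_1, j_2 \neq j_0$, where $i_0$ is the first index of the label of $v^*$; this yields local $\cP$-balance. Summing over $i' \in [k]$ yields $K_r$-divisibility.

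For part (ii), linearity in $H$ reduces the identity to the case where $H$ consists of a single edge $e = w^{a_1}_{b_1}w^{a_2}_{b_2}$. Since $\phi(\theta(e)) \subseteq U^{a_1} \cup U^{a_2}$, the contribution is zero unless $\{i_1, i_2\} \subseteq \{a_1, a_2\}$. In the remaining cases, enumerate the edges of $K_e$ (a copy of $K_r$ minus one edge if $a_1 = a_2$, or of $K_{r,r}$ minus a perfect matching minus one more edge if $a_1 \neq a_2$) together with the two extra edges joining the originals $w^{a_1}_{b_1}$ and $w^{a_2}_{b_2}$ to the opposite copies. This enumeration yields exactly one edge between $U^{i_1}_{j_1}$ and $U^{i_2}_{j_2}$ in every valid pair of classes, with one exception: when $\{(i_1, j_1), (i_2, j_2)\} = \{(a_1, b_1), (a_2, b_2)\}$, the relevant edge of $K_e$ is precisely the one that was deleted, while both extra edges contribute to this specific pair of classes, for a total of two. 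This matches the right-hand side $e_H(W^{i_1}, W^{i_2}) + m_H(w^{i_1}_{j_1}w^{i_2}_{j_2})$ in every case.

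The main obstacle is case bookkeeping: distinguishing $(j_1, j_2) = (b_1, b_2)$ from $(j_1, j_2) = (b_2, b_1)$ when $a_1 \neq a_2$ (the former picks up the two extra edges and loses the corresponding $K_e$-edge, while the latter only sees the $K_e$-edge), and carefully tracking which copies in $K_e$ are the endpoints of the deleted edge. Once the structure of $\theta(e)$ has been written out explicitly, all remaining verifications are short.
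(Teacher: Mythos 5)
Your proposal is correct and follows essentially the same route as the paper: part (i) by a case split on whether a vertex of $\theta(K(N))$ is an original vertex of $K(N)$ or an internal copy inside some $K_e$ (with the deleted edge compensated by the extra edge to the opposite original), and part (ii) by reducing to a single edge of $K(N)$ and reading off the edge counts from the $\cP$-labelling, which is exactly the paper's case distinction $0/1/2$. The only cosmetic difference is that you fold the subcase where both classes lie on the same side of a cross edge (contributing $0$, matching the right-hand side) into the phrase ``valid pair of classes'' rather than into the initial zero case, but the verification is the same.
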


\begin{proof}
We first prove that $J$ is $K_r$-divisible. Consider any $x\in V(\theta(K(N)))$. If $x=w^i_j\in V(K)$, then $d_J(\phi(x), V_{j_1})=Nk$ for all $1\leq j_1\leq r$ with $j_1\neq j$ (since for each edge $w^i_jw^{i_1}_{j_1}\in E(K)$, $x$ has exactly $N$ neighbours labelled $U^{i_1}_{j_1}$ in $\theta(K(N))$). If $x\notin V(K)$, $x$ must appear in a copy of $K_e$ in $\theta(e)$ for some edge $e\in E(K(N))$. In this case, $d_J(\phi(x), V_j)=1$ for all $1\leq j\leq r$ such that $\phi(x)\notin V_j$. So $J$ is $K_r$-divisible.

To see that $J$ is locally $\cP$-balanced, consider any $x\in V(\theta(K(N)))$. If $x=w^i_j\in V(K)$, then $\phi(x)\in U^i_j$ and $d_J(\phi(x), U^i_{j_1})=N$ for all $1\leq j_1 \leq r$ with $j_1\neq j$. Otherwise, $x$ must appear in a copy of $K_e$ in $\theta(e)$ for some edge $e=w^{i_1}_{j_1}w^{i_2}_{j_2}\in E(K(N))$. Let $i,j$ be such that $\phi(x)\in U^{i}_{j}$ (so $i\in \{i_1, i_2\}$). If $i_1\neq i_2$, then $d_J(\phi(x), U^i_{j'})=0$ for all $1\leq j'\leq r$.  If $i_1=i_2$, then $d_J(\phi(x), U^{i}_{j'})=1$ for all $1\leq j'\leq r$ with $j'\neq j$. So $J$ is locally $\cP$-balanced. Thus \eqref{prop:edge1} holds.

We now prove \eqref{prop:edge2}. Let $1\leq i_1, i_2\leq k$ and $1\leq j_1< j_2\leq r$. Consider any edge $w^i_jw^{i'}_{j'}\in E(K(N))$. The $\cP$-labelling of $\theta(K(N))$ gives
	\begin{equation}
	e_{\phi(\theta(w^i_jw^{i'}_{j'}))}(U^{i_1}_{j_1}, U^{i_2}_{j_2})=
	\begin{cases}
	0 & \text{if}\ \{i, i'\}\neq \{i_1, i_2\},\\
	2 & \text{if}\ \{(i,j), (i',j')\}= \{(i_1, j_1), (i_2, j_2)\},\\
	1 & \text{otherwise}.
	\end{cases}
	\label{eq:edgecounter}
	\end{equation}
Let $H\subseteq K(N)$. Then \eqref{prop:edge2} follows from applying \eqref{eq:edgecounter} to each edge in $H$.
\end{proof}

The following proposition allows us to use a copy of $\theta(K(N))$ to correct imbalances in the number of edges between parts $U^{i_1}_{j_1}$ and $U^{i_2}_{j_2}$ when $\text{EM}(H)$ is an irreducible $K_r$-divisible multigraph.

\begin{prop}\label{prop:edgebalancer1}
Let $\cP=\{U^1, \dots, U^k\}$ be a $k$-partition of the vertex set $V=(V_1, \dots, V_r)$ with $|V_1|=\dots=|V_r|=n$. Let $H$ be a graph on $V$ such that $\textnormal{EM}(H)=I$ is an irreducible $K_r$-divisible multigraph. Let $J=\phi(\theta(K(N)))$ be a copy of $\theta(K(N))$ on $V$  which is compatible with its $\cP$-labelling and edge-disjoint from $H$. Then there exists $J'\subseteq J$ such that $J-J'$ is $K_r$-divisible and $H':=H\cup J'$ satisfies
\begin{equation}\label{prop:edgebalancer1:1}
e_{H'}(U^{i_1}_{j_1}, U^{i_2}_{j_2})=e_{H'}(U^{i_1}_{j_1}, U^{i_2}_{j_3})
\end{equation}
for all $1\leq i_1<i_2\leq k$ and all $1\leq j_1, j_2, j_3\leq r$ with $j_1\neq j_2, j_3$.
\end{prop}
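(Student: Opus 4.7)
My plan is to define $J'$ as the image under $\phi\circ\theta$ of the \emph{complement} of $I$ inside $K(N)$. Since $I$ is an irreducible $K_r$-divisible multigraph on $V(K)$, the definition of $N$ (Proposition~\ref{prop:irred}) gives $m_I(e)\le N$ for every edge $e$, so $I$ is a sub-multigraph of $K(N)$. I set $I':=K(N)-I$ (subtraction of multiplicities) and $J':=\phi(\theta(I'))$, viewed as the natural subgraph of $J=\phi(\theta(K(N)))$. This makes sense because the copies of $K_e$ in the construction of $\theta(K(N))$ are vertex-disjoint for distinct edges $e$, so the edge sets of $\theta(e)$ for $e\in E(K(N))$ are pairwise disjoint and $\theta$ distributes additively; in particular, edge-wise $J-J'=\phi(\theta(I))$.

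First I verify that $J-J'$ is $K_r$-divisible. Both $K(N)$ and $I$ are $K_r$-divisible multigraphs (the former trivially, the latter by hypothesis), hence so is $I'$. The argument of Proposition~\ref{prop:edge}\,(i) then applies verbatim with $I$ in place of $K(N)$: at a root vertex $\phi(w^i_j)$, the degree of $\phi(\theta(I))$ into $V_{j_1}$ counts exactly one neighbour per multi-edge of $I$ from $w^i_j$ to $W_{j_1}$, i.e.\ equals $d_I(w^i_j,W_{j_1})$, which is independent of $j_1\neq j$ by divisibility of $I$; at a free vertex inside some copy of $K_e$, the degree to each other class is $1$. Thus $J-J'$ is $K_r$-divisible.

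The main step is the edge-balance check, and the choice of $J'$ is tailored to make the arithmetic work out. Fix $1\le i_1<i_2\le k$ and $1\le j_1,j_2\le r$ with $j_1\neq j_2$, and set $\mu:=\min\{e_H(U^{i_1}_j,U^{i_2}_{j'}):j\neq j'\}$. By the definition of $\text{EM}(H)=I$ we have $m_I(w^{i_1}_{j_1}w^{i_2}_{j_2})=e_H(U^{i_1}_{j_1},U^{i_2}_{j_2})-\mu$, hence
\begin{equation*}
m_{I'}(w^{i_1}_{j_1}w^{i_2}_{j_2})=N-e_H(U^{i_1}_{j_1},U^{i_2}_{j_2})+\mu.
\end{equation*}
Applying Proposition~\ref{prop:edge}\,(ii) with $I'$ in place of $H$ and then adding $e_H(U^{i_1}_{j_1},U^{i_2}_{j_2})$ gives
\begin{equation*}
e_{H\cup J'}(U^{i_1}_{j_1},U^{i_2}_{j_2})=e_H(U^{i_1}_{j_1},U^{i_2}_{j_2})+e_{I'}(W^{i_1},W^{i_2})+N-e_H(U^{i_1}_{j_1},U^{i_2}_{j_2})+\mu=e_{I'}(W^{i_1},W^{i_2})+N+\mu,
\end{equation*}
and the right-hand side depends only on $i_1,i_2$, which establishes \eqref{prop:edgebalancer1:1}.

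The genuinely non-trivial point is exactly this choice of $J'$: the naive attempt $J'=\phi(\theta(I))$ would leave an uncancelled $2e_H(U^{i_1}_{j_1},U^{i_2}_{j_2})$ term in the count of $e_{H\cup J'}(U^{i_1}_{j_1},U^{i_2}_{j_2})$ and fail to yield a quantity independent of $j_1,j_2$. It is precisely to make the \emph{complement} $I'=K(N)-I$ available on the absorber side that $J$ was constructed from the padded multigraph $K(N)$ rather than directly from $I$; with this padding in hand, the rest of the proof is just the bookkeeping above.
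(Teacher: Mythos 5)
Your proof is correct and is essentially the paper's argument: your $J'=\phi(\theta(K(N)-I))$ is exactly the paper's choice $J':=J-\phi(\theta(I))$ (viewing $I\subseteq K(N)$ via Proposition~\ref{prop:irred}), and the edge count via Proposition~\ref{prop:edge}(ii) yielding $e_{H'}(U^{i_1}_{j_1},U^{i_2}_{j_2})=e_{K(N)-I}(W^{i_1},W^{i_2})+N+p_{i_1,i_2}$ is the same computation. Your explicit check that $J-J'=\phi(\theta(I))$ is $K_r$-divisible (via the Proposition~\ref{prop:edge}(i) argument applied to $I$) is a welcome addition that the paper leaves implicit.
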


\begin{proof}
Recall that $N$ denotes the maximum multiplicity of an edge in an irreducible $K_r$-divisible multigraph on $V(K)$. So we may view $I$ as a subgraph of $K(N)$. Let $J':=J-\phi(\theta(I))$. For all $1\leq i_1<i_2\leq k$, let 
$$p_{i_1, i_2}:= \min\{ e_H(U^{i_1}_{j_1},U^{i_2}_{j_2}): 1\leq j_1, j_2, \leq r, j_1\neq j_2\}.$$

Proposition~\ref{prop:edge} gives, for all $1\leq i_1<i_2\leq k$ and all $1\leq j_1, j_2\leq r$ with $j_1\neq j_2$,
	\begin{align*}
	e_{J'}(U^{i_1}_{j_1}, U^{i_2}_{j_2})&=e_{\phi(\theta(K(N)))}(U^{i_1}_{j_1}, U^{i_2}_{j_2})-e_{\phi(\theta(I))}(U^{i_1}_{j_1}, U^{i_2}_{j_2})\\
	&=e_{K(N)}(W^{i_1}, W^{i_2})+N-(e_{I}(W^{i_1}, W^{i_2})+m_I(w^{i_1}_{j_1}w^{i_2}_{j_2}))\\
	&=e_{K(N)-I}(W^{i_1}, W^{i_2})+N-m_I(w^{i_1}_{j_1}w^{i_2}_{j_2}).
	\end{align*}
Recall that $I=\text{EM}(H)$, so $e_{H}(U^{i_1}_{j_1}, U^{i_2}_{j_2})=m_I(w^{i_1}_{j_1}w^{i_2}_{j_2})+p_{i_1, i_2}$
and
\begin{equation*}
	e_{H'}(U^{i_1}_{j_1}, U^{i_2}_{j_2})=e_H(U^{i_1}_{j_1}, U^{i_2}_{j_2})+e_{J'}(U^{i_1}_{j_1}, U^{i_2}_{j_2})=e_{K(N)-I}(W^{i_1}, W^{i_2})+N+p_{i_1, i_2}.
\end{equation*}
Note that the right hand side is independent of $j_1, j_2$. 
Thus \eqref{prop:edgebalancer1:1} holds.
\end{proof}

The following proposition describes a $(\gamma,\cP)$-edge balancing graph based on the construction in Propositions~\ref{prop:edge}~and~\ref{prop:edgebalancer1}

\begin{prop}\label{prop:edgebalancer}
Let $k,r\in \N$ with $r\geq 3$. Let $\cP=\{U^1, \dots, U^k\}$ be a $k$-partition of the vertex set $V=(V_1, \dots, V_r)$ with $|V_1|=\dots=|V_r|=n$. Let $J_1, \dots, J_\ell$ be a collection of $\ell\geq 3\gamma k^2r^2n^2$ copies of $\theta(K(N))$ on $V$ which are compatible with their labellings. Let $\{A_1, \dots, A_m\}$ be an absorbing set for $J_1, \dots, J_\ell$ on $V$. Suppose that $J_1, \dots, J_\ell, A_1, \dots, A_m$ are edge-disjoint. Then $B_{\textnormal{edge}}:=\bigcup_{i=1}^\ell J_i\cup \bigcup_{i=1}^m A_i$ is a $(\gamma, \cP)$-edge balancing graph.
\end{prop}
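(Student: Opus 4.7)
The plan is to handle the excess multigraph $\textnormal{EM}(H)$ one irreducible component at a time, using a distinct copy $J_q$ of $\theta(K(N))$ to correct each component in exactly the way described in Proposition~\ref{prop:edgebalancer1}.

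First I would verify that $B_{\textnormal{edge}}$ itself admits a $K_r$-decomposition. Each $J_i$ is $K_r$-divisible by Proposition~\ref{prop:edge}\textnormal{(i)}, so the absorbing set $\cA := \{A_1, \ldots, A_m\}$ supplies a distinct absorber $A_{J_i} \in \cA$ with both $A_{J_i}$ and $J_i \cup A_{J_i}$ $K_r$-decomposable. Each absorber not used as some $A_{J_i}$ is itself $K_r$-decomposable by definition, and since all pieces are edge-disjoint by hypothesis the union yields a $K_r$-decomposition of $B_{\textnormal{edge}}$.

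Next, given $H$ on $V$ satisfying (P\ref{item:P1}) and (P\ref{item:P2}), I would apply Proposition~\ref{prop:redgraph} to decompose $\textnormal{EM}(H)$ into at most $3\gamma k^2 r^2 n^2 \leq \ell$ irreducible $K_r$-divisible multigraphs $I_1, \ldots, I_p$, viewed as subgraphs of $K(N)$ via the choice of $N$. Writing $\phi_q$ for the embedding with $J_q = \phi_q(\theta(K(N)))$, set $J_q' := J_q - \phi_q(\theta(I_q))$ for $1 \leq q \leq p$ and $B_{\textnormal{edge}}' := \bigcup_{q=1}^p J_q'$. Expanding via Proposition~\ref{prop:edge}\textnormal{(ii)} exactly as in the proof of Proposition~\ref{prop:edgebalancer1}, the contributions $m_{I_q}(w^{i_1}_{j_1} w^{i_2}_{j_2})$ coming from $\phi_q(\theta(I_q))$ precisely cancel those from $\textnormal{EM}(H) = \sum_q I_q$ in the count of $e_{H \cup B_{\textnormal{edge}}'}(U^{i_1}_{j_1}, U^{i_2}_{j_2})$, leaving a quantity that depends only on $i_1, i_2$. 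This gives the required edge balance.

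The main obstacle is to verify that
\begin{equation*}
B_{\textnormal{edge}} - B_{\textnormal{edge}}' = \bigcup_{q=1}^p \phi_q(\theta(I_q)) \cup \bigcup_{q=p+1}^\ell J_q \cup \bigcup_{i=1}^m A_i
\end{equation*}
has a $K_r$-decomposition. Each $J_q$ with $q > p$ is $K_r$-divisible by Proposition~\ref{prop:edge}\textnormal{(i)} and thus has a distinct absorber in $\cA$. The key additional step is to show that every $\phi_q(\theta(I_q))$ is itself a $K_r$-divisible subgraph of $J_q$, which then also has its own distinct absorber in $\cA$. For a root vertex $v = \phi_q(w^i_{j_0})$ and $j \neq j_0$, the construction of $\theta$ gives $d_{\phi_q(\theta(I_q))}(v, V_j) = d_{I_q}(w^i_{j_0}, W_j)$, which is independent of $j$ by the $K_r$-divisibility of $I_q$; for a non-root vertex lying in some $\phi_q(K_e)$, a direct case analysis of $\theta(e)$ (treating $i_1 = i_2$ and $i_1 \neq i_2$ separately) shows it has exactly one neighbour in each $V_j$ not containing it. Pairing each $K_r$-divisible piece with its distinct absorber, and noting that the unused absorbers decompose on their own, then produces the required $K_r$-decomposition of $B_{\textnormal{edge}} - B_{\textnormal{edge}}'$.
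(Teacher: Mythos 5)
Your proposal is correct and follows essentially the same route as the paper: decompose $\textnormal{EM}(H)$ into at most $\ell$ irreducible $K_r$-divisible multigraphs via Proposition~\ref{prop:redgraph}, assign one $J_q$ to each, set $J_q':=J_q-\phi_q(\theta(I_q))$, and use the absorbing set to decompose both $B_{\textnormal{edge}}$ and $B_{\textnormal{edge}}-B_{\textnormal{edge}}'$. The only (harmless) differences are that the paper first partitions $H$ into pieces $H_s$ with $\textnormal{EM}(H_s)=I_s$ so as to invoke Proposition~\ref{prop:edgebalancer1} as a black box, whereas you sum the edge-count identity directly over the components, and that you spell out the $K_r$-divisibility of $\phi_q(\theta(I_q))$, which the paper leaves implicit.
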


\begin{proof}
Let $H$ be any $K_r$-divisible graph on $V$ which is edge-disjoint from $B_{\text{edge}}$ and satisfies (P\ref{item:P2}). Apply Proposition~\ref{prop:redgraph} to find a decomposition of $\text{EM}(H)$ into a collection $\cI=\{I_1, \dots, I_{\ell'}\}$ of irreducible $K_r$-divisible multigraphs, where $\ell'\leq 3\gamma k^2r^2n^2\leq \ell$. If $\ell'=0$, let $B_{\text{edge}}'\subseteq B_{\text{edge}}$ be the empty graph. If $\ell'>0$, we proceed as follows to find $B_{\text{edge}}'$. 
Let $H_1, \dots, H_{\ell'}$ be graphs on $V$ which partition the edge set of $H$ and satisfy $\text{EM}(H_s)=I_s$ for each $1\leq s\leq \ell'$. (To find such a partition, for each $1\leq s<\ell'$ form $H_s$ by taking one $U^{i_1}_{j_1}U^{i_2}_{j_2}$-edge from $H$ for each edge $w^{i_1}_{j_1}w^{i_2}_{j_2}$ in $I_s$. Let $H_{\ell'}$ consist of all the remaining edges.)

Apply Proposition~\ref{prop:edgebalancer1} for each $1\leq s\leq \ell'$ with $H_s$ and $J_s$ playing the roles of $H$ and $J$ to find $J_s'\subseteq J_s$ such that
$J_s-J_s'$ is $K_r$-divisible and $H_s':=H_s\cup J_s'$ satisfies
\begin{equation}
e_{H_s'}(U^{i_1}_{j_1}, U^{i_2}_{j_2})=e_{H_s'}(U^{i_1}_{j_1}, U^{i_2}_{j_3})\label{eq:Hi_bal}
\end{equation}
for all $1\leq i_1< i_2\leq k$ and all $1\leq j_1, j_2, j_3\leq r$ with $j_1\neq j_2, j_3$.
Let $B_{\text{edge}}':=\bigcup_{s=1}^{\ell'}J_s'$. Then \eqref{eq:Hi_bal} implies that the graph $H':=H\cup B_{\text{edge}}'=\bigcup_{s=1}^{\ell'} H_s'$ satisfies
$$e_{H'}(U^{i_1}_{j_1}, U^{i_2}_{j_2})=e_{H'}(U^{i_1}_{j_1}, U^{i_2}_{j_3})$$
for all $1\leq i_1< i_2\leq k$ and all $1\leq j_1, j_2, j_3\leq r$ with $j_1\neq j_2, j_3$.

We now check that $B_{\text{edge}}$ and $B_{\text{edge}}-B_{\text{edge}}'$ are $K_r$-decomposable. Recall that every absorber $A_i$ is $K_r$-decomposable. Also recall that, for every $1\leq s \leq \ell$, $J_s$ is $K_r$-divisible, by Proposition~\ref{prop:edge}\eqref{prop:edge1}. Since $\{A_1, \dots, A_m\}$ is an absorbing set, it contains a distinct absorber for each $J_s$. So for each $1\leq s \leq \ell$, there exists a distinct $1\leq i_s\leq m$ such that $A_{i_s}\cup J_s$ has a $K_r$-decomposition. Therefore $B_{\text{edge}}$ is $K_r$-decomposable. To see that $B_{\text{edge}}-B_{\text{edge}}'$ is $K_r$-decomposable, recall that for each $1\leq s \leq \ell'$, $J_s-J_s'$ is a $K_r$-divisible subgraph of $J_s$. So for each $1\leq s \leq \ell$, there exists a distinct $1\leq j_s\leq m$ such that, if $s\leq \ell'$, $A_{j_s}\cup (J_s-J_s')$ has a $K_r$-decomposition and, if $s>\ell'$, $A_{j_s}\cup J_s$ has a $K_r$-decomposition. So we can find a $K_r$-decomposition of
$$B_{\text{edge}}-B_{\text{edge}}'=\bigcup_{s=1}^{\ell'}(J_s-J_s')\cup \bigcup_{s=\ell'+1}^\ell J_s\cup \bigcup_{s=1}^m A_m.$$
Therefore, $B_{\text{edge}}$ is a $(\gamma, \cP)$-edge balancing graph.
\end{proof}

The next proposition finds a copy of this $(\gamma, \cP)$-edge balancing graph in $G$.

\begin{prop}\label{prop:PE}
Let $1/n \ll \gamma \ll \gamma'\ll 1/k\ll \eps\ll 1/r \leq 1/3$.
Let $G$ be an $r$-partite graph on $(V_1, \dots, V_r)$ with $|V_1|=\dots=|V_r|=n$. Let $\cP=\{U^1, \dots, U^k\}$ be a $k$-partition for $G$. 
Suppose that $d_G(v, U^i_j)\geq (1-1/(r+1)+\eps)|U^i_j|$ for all $1\leq i \leq k$, all $1\leq j\leq r$ and all $v\notin V_j$.
Then there exists a $(\gamma, \cP)$-edge balancing graph $B_{\textnormal{edge}}\subseteq G$ such that $B_{\textnormal{edge}}$ is locally $\cP$-balanced and $\Delta(B_{\textnormal{edge}})<\gamma' n$.
\end{prop}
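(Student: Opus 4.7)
Plan. The construction follows Proposition~\ref{prop:edgebalancer}: let $N := N(V(K))$ be the constant from Proposition~\ref{prop:irred} and let $b$ be an upper bound on $|\theta(K(N))|$ (both depend only on $k$ and $r$). Take $\ell := \lceil 3\gamma k^2 r^2 n^2 \rceil$ edge-disjoint copies of $\theta(K(N))$ in $G$ together with an absorbing set for them, and set $B_{\textnormal{edge}}$ to be their union. Introduce an intermediate parameter $\eta$ with $\gamma \ll \eta \ll \gamma'$.

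First, I would apply Lemma~\ref{lem:emb} to find edge-disjoint copies $J_1, \dots, J_\ell$ of $\theta(K(N))$ in $G$, compatible with their $\cP$-labellings, such that $J := \bigcup_i J_i$ satisfies $\Delta(J) \leq \eta n$. To verify its hypotheses: $\theta(K(N))$ has degeneracy $r-1$ by \eqref{eq:degK(N)} and bounded size at most $b$; none of its vertices is a root vertex, so condition \eqref{lem:emb:4} is vacuous; and the common-neighborhood condition $d_G(S, U^i_j) \geq \eps_0|U^i_j|$ for $|S|\leq r-1$ follows from $d_G(v, U^i_j) \geq (1-1/(r+1)+\eps)|U^i_j|$ by inclusion--exclusion, giving $\eps_0 \geq 2/(r+1)$. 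Parameters are chosen so that $\ell \leq \eta n^2$.

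Next, I would apply Lemma~\ref{lem:absorbset} to $G - J$ with the collection $\cH := \{J_1, \dots, J_\ell\}$ to obtain an absorbing set $\cA = \{A_1, \dots, A_m\}$ for $\cH$ with $\Delta(\bigcup \cA) \leq \gamma' n/2$. The required hypotheses hold: $\hat\delta(G-J) \geq (1-1/(r+1)+\eps/2)n$ since $\Delta(J) \leq \eta n\ll \eps n$; each $|J_i| \leq b$; and since every vertex of $\theta(K(N))$ has positive degree and the $J_i$ are edge-disjoint, each $v\in V(G)$ lies in at most $\Delta(J) \leq \eta n$ of the $J_i$. Setting $B_{\textnormal{edge}} := J \cup \bigcup \cA$, Proposition~\ref{prop:edgebalancer} yields that $B_{\textnormal{edge}}$ is a $(\gamma,\cP)$-edge balancing graph, and $\Delta(B_{\textnormal{edge}}) \leq \Delta(J)+\Delta(\bigcup\cA) < \gamma' n$.

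The main obstacle is to ensure that $B_{\textnormal{edge}}$ is \emph{locally} $\cP$-balanced. Each $J_i$ already is by Proposition~\ref{prop:edge}\eqref{prop:edge1}, so $J$ is, but the absorbers given by Lemma~\ref{lem:absorber} are not automatically so. Observe that $A_i$ is locally $\cP$-balanced precisely when each $A_i[U^{i'}]$ is $K_r$-divisible, and a clean sufficient condition for this is that every copy of $K_r$ in the $K_r$-decomposition of $A_i$ either lies entirely inside a single part of $\cP$ or has at most one vertex in each part of $\cP$. This can be arranged by adapting the proofs of Lemma~\ref{lem:embtransf2} and Lemma~\ref{lem:absorber}: whenever an auxiliary vertex must be chosen (e.g.\ the vertices of $Z^{xy}$, $S^x$, $M_h$, or $hK_r$), one should insist that it is drawn either from the same part of $\cP$ as all other vertices of its eventual clique, or from a part of $\cP$ distinct from the parts used by those other vertices. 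The common-neighborhood bound $d_G(S, U^{i'}_j) \geq 2|U^{i'}_j|/(r+1)$ coming from the minimum degree hypothesis provides ample room to perform these choices, so all of the embedding arguments go through with this extra constraint. The resulting graph $B_{\textnormal{edge}}$ then satisfies every property demanded by the proposition.
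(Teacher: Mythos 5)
Your construction (taking $\ell=\lceil 3\gamma k^2r^2n^2\rceil$ edge-disjoint copies of $\theta(K(N))$ via Lemma~\ref{lem:emb}, adding an absorbing set from Lemma~\ref{lem:absorbset}, and invoking Proposition~\ref{prop:edgebalancer}) is exactly the paper's, and your verification of the hypotheses of Lemmas~\ref{lem:emb} and~\ref{lem:absorbset} is fine. The gap is in the last step, which is precisely the one place where the proposition needs an idea beyond concatenating the lemmas: ensuring that $B_{\textnormal{edge}}$ is \emph{locally} $\cP$-balanced. You correctly note that it suffices for each absorber $A$ to have a $K_r$-decomposition in which every clique lies inside one part of $\cP$ or meets each part in at most one vertex, but the claim that this "can be arranged by adapting" Lemmas~\ref{lem:embtransf2} and~\ref{lem:absorber}, with "all of the embedding arguments go through", is not justified and runs into concrete obstacles. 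The cliques in the decomposition of an absorber include cliques of the form $\{w\}\cup K$ where $w$ is a \emph{root} vertex (a vertex of the $K_r$-divisible subgraph of some $J_s$, whose part of $\cP$ is fixed in advance) and $K$ is a copy of $K_{r-1}$ from one of the matchings $F^x_1$, $F^x_2$ of Lemma~\ref{lem:transformer}; these matchings are produced by Theorem~\ref{thm:hajszem} inside a (random) set $S^x\cup Z^x$, and that theorem gives no control whatsoever over how the cliques of the matching meet $\cP$. Worse, the auxiliary vertices are shared between cliques with conflicting demands: each $Z^{xy}$ lies in a clique $\{x,y\}\cup Z^{xy}$ \emph{and} in cliques anchored at $x$ (or $\phi(x)$) through $F^x_1$, and the anchors of these cliques may lie in different parts of $\cP$, so "put $S^x\cup Z^x$ into the part of the anchor" is not simultaneously satisfiable, while "make everything transversal" would in effect force $A\subseteq G[\cP]$ — which is not something the unmodified construction delivers and would have to be re-proved throughout Lemmas~\ref{lem:transformer}, \ref{lem:embtransf2}, \ref{lem:absorber} and~\ref{lem:absorbset}. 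So as written this step is a genuine missing argument, not a routine adaptation.

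The paper avoids all of this with a one-line device that your proof misses: apply Lemma~\ref{lem:absorbset} not in $G-\bigcup_i J_i$ but in $G':=G[\cP]-\bigcup_i J_i$ (which still satisfies $\hat\delta(G')\geq(1-1/(r+1)+\gamma')n$ since $1/k\ll\eps$). Then the absorbers contain no edges inside any part $U^i$, so $B_{\textnormal{edge}}[U^i]=\bigcup_s J_s[U^i]$, and local $\cP$-balancedness is immediate from Proposition~\ref{prop:edge}\eqref{prop:edge1}; no modification of the absorber machinery is needed. If you replace your patch by this choice of host graph, the rest of your argument is correct.
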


\begin{proof} Let $\gamma_1$ be such that $\gamma\ll \gamma_1 \ll \gamma'$. Recall from (\ref{eq:degK(N)}) that $\theta(K(N))$ is a $\cP$-labelled graph with degeneracy $r-1$ and all vertices of $\theta(K(N))$ are free vertices. Also,%
\COMMENT{$|\theta(K(N))|\leq |K|+2re(K)N$. First term accounts for the $|K|$ original vertices of $K(N)$. Each edge of $K(N)$ contributes a copy of $K_r$ or $(K_{r,r}-\text{perfect matching})$ to $\theta(K(N))$, so at most $2r$ new vertices. Since $e(K(N))=e(K)N$, we have added at most $2re(K)N$ new vertices.}
$$|\theta(K(N))|\leq |K|+2re(K)N= kr+2rk^2\binom{r}{2}N\leq k^2r^3N.$$
Let $\ell:= \lceil 3\gamma k^2r^2n^2\rceil\leq \gamma^{1/2}n^2$. We can apply Lemma~\ref{lem:emb} (with $\gamma^{1/2}$, $\gamma_1$, $r-1$, $k^2r^3N$ playing the roles of $\eta$, $\eps$, $d$, $b$ and with each $H_i$ being a copy of $\theta(K(N))$) to find edge-disjoint copies $J_1, \dots, J_\ell$ of $\theta(K(N))$ in $G$ which are compatible with their labellings and satisfy $\Delta(\bigcup_{i=1}^\ell J_i)\leq \gamma_1n$.

Let $G':=G[\cP]-\bigcup_{i=1}^\ell J_i$ and note that
$$\hat\delta(G')\geq (1-1/(r+1)+\eps)n-\lceil n/k \rceil-\gamma_1 n\geq (1-1/(r+1)+\gamma')n.$$
Apply Lemma~\ref{lem:absorbset} (with $\gamma_1$, $\gamma'/2$, $k^2r^3N$ and $G'$ playing the roles of $\eta$, $\eps$, $b$ and $G$) to find an absorbing set $\cA$ for $J_1, \dots, J_{\ell}$ in $G'$ such that $\Delta(\bigcup \cA)\leq \gamma'n/2$.

Let $B_{\text{edge}}:=\bigcup_{i=1}^\ell J_i\cup \bigcup \cA$. Then $B_{\text{edge}}$ is a $(\gamma, \cP)$-edge balancing graph by Proposition~\ref{prop:edgebalancer}. Also, $\Delta(B_{\text{edge}}) < \gamma'n$. Note that for each $1\leq i \leq k$, $B_{\text{edge}}[U^i]=\bigcup_{s=1}^\ell J_s[U^i]$ (this is the reason for finding $\cA$ in $G[\cP]$). Moreover, each $J_s$ is locally $\cP$-balanced by Proposition~\ref{prop:edge}\eqref{prop:edge1}. Therefore $B_{\text{edge}}$ is also locally $\cP$-balanced.
\end{proof}

\subsection{Degree balancing}\label{sec:degree}

Let $\cP=\{U^1, \dots, U^k\}$ be a $k$-partition of the vertex set $V=(V_1, \dots, V_r)$ with $|V_1|=\dots=|V_r|=n$. Let $\gamma>0$. A \emph{$(\gamma, \cP)$-degree balancing graph} is a $K_r$-decomposable graph $B_{\text{deg}}$ on $V$ such that the following holds.
Let $H$ be any $K_r$-divisible graph on $V$ satisfying:
\begin{enumerate}[(Q1)]
\item $e(H\cap B_{\text{deg}})=0$;\label{item:Q0}
\item $e_{H}(U^{i_1}_{j_1}, U^{i_2}_{j_2})=e_{H}(U^{i_1}_{j_1}, U^{i_2}_{j_3})$
for all $1\leq i_1<i_2\leq k$ and all $1\leq j_1, j_2, j_3\leq r$ with $j_1\neq j_2, j_3$;\label{item:Q1}
\item $|d_H(v, U^i_{j_2})-d_H(v, U^i_{j_3})|<\gamma |U^i_{j_1}|$ for all $2\leq i\leq k$, all $1\leq j_1, j_2, j_3\leq r$ with $j_1\neq j_2, j_3$ and all $v\in U^{<i}_{j_1}$.\label{item:Q2}
\end{enumerate}
Then there exists $B_{\text{deg}}'\subseteq B_{\text{deg}}$ such that $B_{\text{deg}}-B_{\text{deg}}'$ has a $K_r$-decomposition and $$d_{H\cup B_{\text{deg}}'}(v, U^i_{j_1})=d_{H\cup B_{\text{deg}}'}(v, U^i_{j_2})$$
for all $2\leq i\leq k$, all $1\leq j_1, j_2\leq r$ and all $v\in U^{<i}\setminus(V_{j_1}\cup V_{j_2})$.

We will build a degree balancing graph by combining smaller graphs which correct the degrees between two parts of the partition at a time. So, let us assume that the partition has only two parts, i.e., let $\cP=\{U^1, U^2\}$ partition the vertex set $V=(V_1, \dots, V_r)$. We begin by defining those graphs which will form the basic gadgets of the degree balancing graph.  Let $D_0$ be a copy of $K_r(3)$ with vertex classes $\{w^i_j:1\leq i \leq 3\}$ for $1\leq j\leq r$. For each $1\leq i \leq 3$, let $W^i:=\{w^i_j: 1\leq j\leq r\}$. We define a labelling $L:V(D_0)\rightarrow \{U^{1}_j, U^{2}_j:1\leq j\leq r\}$ as follows:
\begin{equation*}
L(w^{i}_{j})=
	\begin{cases}
	U^{1}_{j} &\text{if}\ i=1,2,\\
	U^{2}_{j} &\text{if}\ i=3.
	\end{cases}
\end{equation*}
Suppose that $x,y$ are distinct vertices in $U^{1}_{j_1}$ where $1\leq j_1\leq r$.
Obtain the $\cP$-labelled graph $D_{x,y}$%
\COMMENT{$D_{x,y}$ provides a locally $\cP$-balanced graph to contain each $D^{j_2}_{x\rightarrow y}$.}
by taking the labelled copy of $D_0$ and changing the label of $w^1_{j_1}$ to $\{x\}$ and $w^2_{j_1}$ to $\{y\}$. 
Let $1\leq j_2\leq r$ be such that $j_2\neq j_1$. Let $D^{j_2}_{x\rightarrow y}$
be the $\cP$-labelled subgraph of $D_{x,y}$ which has as its vertex set
$$W^1 \cup \{w^2_{j_1}\}\cup (W^3\setminus\{w^3_{j_1}\}),$$
contains all possible edges in $W^1\setminus\{w^1_{j_1}\}$, all possible edges in $W^3\setminus\{w^3_{j_1}\}$, all edges of the form $w^1_{j_1}w^3_j$ and $w^1_jw^2_{j_1}$ where $1\leq j\leq r$ and $j\neq j_1, j_2$, as well as the edges $w^1_{j_1}w^1_{j_2}$ and $w^2_{j_1}w^3_{j_2}$.  (Note that if we were to identify the vertices $w^1_{j_1}$ and $w^2_{j_1}$ we would obtain two copies of $K_r$ which have only one vertex in common.)

\begin{figure}[h]
\includegraphics[scale=0.5]{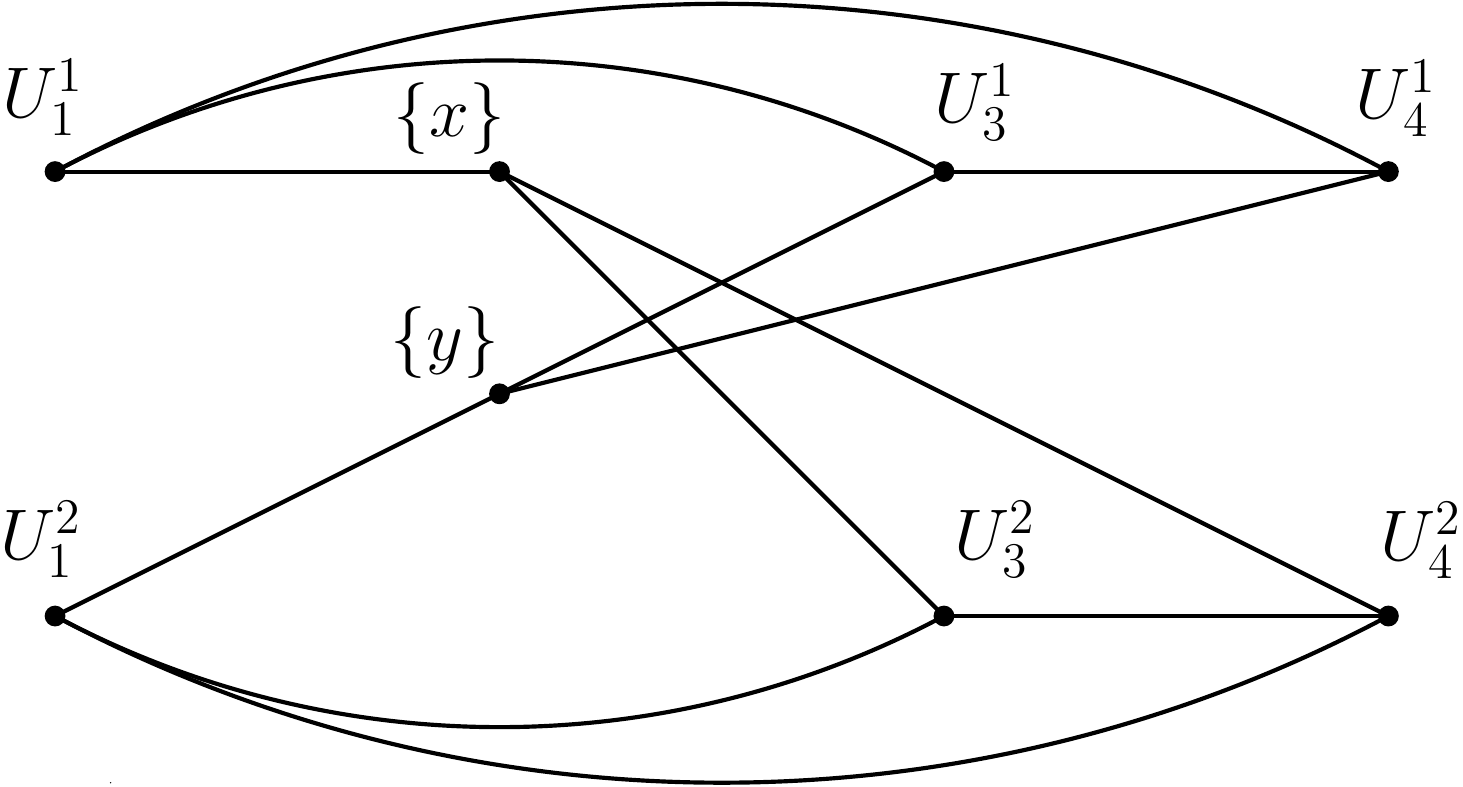}
\caption{A copy of $D^1_{x\rightarrow y}$ when $r=4$ and $x,y\in U^1_2$.}
\end{figure}

As in Section~\ref{sec:edge}, we would like to reduce the degeneracy of $D_{x,y}$. The operation $\theta$ (which will be familiar from Section~\ref{sec:edge}) replaces each edge of $D_{x,y}$ by a $\cP$-labelled graph as follows. Consider any edge $e=w^{i_1}_{j_3}w^{i_2}_{j_4}\in E(D_{x,y})$.
Take a labelled copy $D_e$ of $D_0[W^{i_1}, W^{i_2}]-w^{i_1}_{j_3}w^{i_2}_{j_4}$ ($D_e$ inherits the labelling of $D_0[W^{i_1}, W^{i_2}]$). Note that $D_0[W^{i_1}, W^{i_2}]$ is a copy of $K_r$ if $i_1=i_2$ and a copy of the graph obtained from $K_{r,r}$ by deleting a perfect matching otherwise. Join $w^{i_1}_{j_3}$ to the copy of $w^{i_2}_{j_4}$ in $D_e$ and join $w^{i_2}_{j_4}$ to the copy of $w^{i_1}_{j_3}$ in $D_e$ (so the vertex set of $\theta(e)$ consists of $w^{i_1}_{j_3}$, $w^{i_2}_{j_4}$ as well as all the vertices in $D_e$). Write $\theta(e)$ for the resulting $\cP$-labelled graph.
Choose the graphs $D_e$ to be vertex-disjoint for all $e\in E(D_{x,y})$. 
For any $D'\subseteq D_{x,y}$, let $\theta(D'):=\bigcup\{\theta(e): e\in E(D')\}$. The graph $\theta(D_{x,y})$ has the following properties:
\begin{enumerate}[($\theta$1)]
	\item $|\theta(D_{x,y})|\leq 3r+2r3^2\binom{r}{2}\leq 10r^3$ (since we add at most $2re(K_r(3))$ new vertices to obtain $\theta(D_{x,y})$ from $D_{x,y}$); \label{theta1}
	\item $\theta(D_{x,y})$ has degeneracy $r-1$ (to see this, take the original vertices of $D_{x,y}$ first, followed by the remaining vertices in any order). \label{theta2}
\end{enumerate}

Suppose that $H$ is a graph on $V$ and  $x,y\in U^1_{j_1}$. Suppose that $d_H(x, U^2_{j_2})$ is currently too large and $d_H(y, U^2_{j_2})$ is too small. The next proposition allows us to use copies of $\theta(D_{x\rightarrow y}^{j_2})$ to `transfer' some of this surplus from $x$ to $y$.

\begin{prop}\label{prop:mover}
Let $\cP=\{U^1, U^2\}$ be a partition of the vertex set $V=(V_1, \dots, V_r)$. Let $1\leq j_1, j_2\leq r$ with $j_1\neq j_2$ and suppose $x,y\in U^1_{j_1}$. Suppose that $D_1=\phi(\theta(D_{x,y}))$ is a copy of $\theta(D_{x,y})$ on $V$ which is compatible with its labelling. Let $D_2:=\phi(\theta(D_{x\rightarrow y}^{j_2})) \subseteq D_1$. Then the following hold:
\begin{enumerate}[\rm(i)]
		\item both $D_1$ and $D_2$ are $K_r$-divisible;\label{item:mover1}
		\item $D_1$ is locally  $\cP$-balanced;\label{item:mover3}
		\item for any $1\leq j_3, j_4\leq r$ with $j_4\neq j_2$ and any $v\in U^1\setminus (V_{j_3}\cup V_{j_4})$,\label{item:mover2}
		\begin{equation*}
			d_{D_2}(v, U^2_{j_3})-d_{D_2}(v, U^2_{j_4})=
			\begin{cases}
			-1& \text{if}\ v=x \text{ and } j_3=j_2,\\
			1&\text{if}\ v=y\text{ and } j_3=j_2,\\
			0& \text{otherwise.}
			\end{cases}
		\end{equation*}
\end{enumerate}
\end{prop}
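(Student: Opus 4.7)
The plan is to verify (i), (ii), and (iii) by analysing how the operation $\theta$ transforms the explicit structure of $D_{x,y}$ and $D^{j_2}_{x\to y}$, with careful label bookkeeping.

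For (i), I first note that $D_0 = K_r(3)$ is $K_r$-divisible as a labelled graph because every $w^{i'}_{j'}$ has exactly $3$ neighbours in each class $V_j$ with $j \ne j'$. Since $D_{x,y}$ is obtained by merely relabelling $w^1_{j_1}, w^2_{j_1}$ as $\{x\},\{y\}$ with $x,y\in U^1_{j_1}\subseteq V_{j_1}$, no vertex changes its class, so $D_{x,y}$ is still $K_r$-divisible. I would then show that $\theta$ preserves $K_r$-divisibility: for every edge $e = w^{i_1}_{j_3}w^{i_2}_{j_4}$ of $D_{x,y}$, the two original endpoints each lose the direct edge $e$ but gain an edge to a copy of the other endpoint inside $D_e$, which carries the same label (and thus lies in the same $V_j$); meanwhile, every vertex of $D_e$ ends up with exactly one neighbour per class other than its own, because $D_0[W^{i_1},W^{i_2}]$ is regular with one neighbour per other class on the opposite side, and a pivot vertex replaces its missing copy-edge by an edge to an identically-labelled original. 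This yields that $D_1$ is $K_r$-divisible. For $D_2$, I first verify that $D^{j_2}_{x\to y}$ is $K_r$-divisible using the observation in the excerpt that identifying $w^1_{j_1}$ with $w^2_{j_1}$ produces two copies of $K_r$ meeting in a single vertex; splitting that vertex back (both halves lying in class $j_1$) preserves each vertex's degree to every other class. The same $\theta$-preservation argument then gives that $D_2$ is $K_r$-divisible.

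For (ii), I check the condition $d_{D_1}(v, U^i_{j_1'}) = d_{D_1}(v, U^i_{j_2'})$ for $v\in U^i\setminus(V_{j_1'}\cup V_{j_2'})$. For an original vertex $\phi(w^{i'}_{j'})$, the three neighbours of $w^{i'}_{j'}$ in each class $j\ne j'$ in $D_0$ split as two labelled $U^1_j$ (namely $w^1_j, w^2_j$) and one labelled $U^2_j$ ($w^3_j$), and this distribution is preserved by $\theta$; hence each count $d_{D_1}(\phi(w^{i'}_{j'}), U^l_j)$ is constant in $j\ne j'$. For a new vertex $v$ introduced by $\theta(e)$, all neighbours of $v$ lie inside $\theta(e)$ and share a single label-part $U^{l'}$ (determined by $W^{i_1},W^{i_2}$), with one neighbour per class different from $v$'s own; this yields either $d_{D_1}(v,U^{l_v}_j)=0$ for all $j\ne j_v$ (when $l'\ne l_v$) or $d_{D_1}(v,U^{l_v}_j)=1$ for all $j\ne j_v$ (when $l'=l_v$), as required.

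For (iii), I reduce the problem to an edge-count in $D^{j_2}_{x\to y}$: for $v\in U^1$, $d_{D_2}(v, U^2_{j_3})$ equals the number of edges from the preimage of $v$ to vertices labelled $U^2_{j_3}$, which by the $\theta$-construction equals the number of edges from that preimage to $w^3_{j_3}$. The only cross-edges from $W^1\cup\{w^2_{j_1}\}$ to $W^3\setminus\{w^3_{j_1}\}$ in $D^{j_2}_{x\to y}$ are the families $\{w^1_{j_1}w^3_j : j\ne j_1,j_2\}$ and $\{w^2_{j_1}w^3_{j_2}\}$, so $d_{D_2}(x, U^2_{j_3}) = 1$ for $j_3\ne j_1,j_2$ and $0$ for $j_3 = j_2$; symmetrically $d_{D_2}(y, U^2_{j_3}) = 1$ for $j_3 = j_2$ and $0$ otherwise; and every other original $v^*$ in $U^1$ satisfies $d_{D_2}(\phi(v^*), U^2_{j_3}) = 0$ for all $j_3$. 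For any new vertex $v\in U^1$, the argument of (ii) already shows $d_{D_2}(v, U^2_{j_3})$ is constant in $j_3\ne j_v$. Taking the difference with $j_4\ne j_2$ then yields precisely the stated case distinction. The main obstacle is the bookkeeping for (iii): once one correctly identifies that only these two specific edge-families contribute to $U^2$-degrees from $U^1$-vertices in $D^{j_2}_{x\to y}$, the remaining case analysis is routine.
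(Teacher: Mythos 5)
Your proposal is correct and follows essentially the same route as the paper: a direct case analysis of original versus new vertices, tracking how the $\theta$-operation preserves the labels (and hence class-degrees) of neighbours, and then reading off the only $U^1$--$U^2$ cross-edges of $D^{j_2}_{x\rightarrow y}$ for part (iii). Your only deviation, deriving the $K_r$-divisibility of $D^{j_2}_{x\rightarrow y}$ via the identification of $w^1_{j_1}$ and $w^2_{j_1}$ into two $K_r$'s rather than checking degrees directly, is a cosmetic variant of the same verification.
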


\begin{proof}
First we show that \eqref{item:mover1} holds. Consider any $v\in V(\theta(D_{x,y}))$. If $v\in V(D_{x,y})$, then $d_{D_1}(\phi(v),V_{j})=3$ for all $1\leq j\leq r$ such that $\phi(v) \notin V_j$. Otherwise, $v$ appears in a copy of $D_e$ for some edge $e\in E(D_{x,y})$ and $d_{D_1}(\phi(v),V_{j})=1$ for all $1\leq j\leq r$ such that  $\phi(v)\notin V_{j}$. So $D_1$ is $K_r$-divisible. For $D_2$, consider any $v\in V(\theta(D_{x\rightarrow y}^{j_2}))$. If $v\in V(D_{x\rightarrow y}^{j_2})$, then $d_{D_2}(\phi(v),V_{j})=1$ for all $1\leq j\leq r$ with $\phi(v) \notin V_j$. Otherwise, $v$ appears in a copy of $D_e$ for some edge $e\in E(D_{x\rightarrow y}^{j_2})$ and $d_{D_2}(\phi(v),V_{j})=1$ for all $1\leq j\leq r$ such that $\phi(v)\notin V_{j}$. So $D_2$ is $K_r$-divisible.

For \eqref{item:mover3}, consider any $v\in V(\theta(D_{x,y}))$. First suppose $v=w^i_j\in V(D_{x,y})$. If $i=1,2$, then $\phi(v)\in U^1_j$ and $d_{D_1}(\phi(v),U^1_{j'})=2$ for all $1\leq j'\leq r$ with $j'\neq j$.  If $i=3$, then $\phi(v)\in U^2_j$ and $d_{D_1}(\phi(v),U^2_{j'})=1$ for all $1\leq j'\leq r$ with $j'\neq j$.   Otherwise, $v$ must appear in a copy of $D_e$ in $\theta(e)$ for some edge $e=w^{i_1}_{j_1}w^{i_2}_{j_2}\in E(D_{x,y})$. Let $i,j$ be such that $\phi(v)\in U^{i}_{j}$. If $i_1, i_2\in \{1,2\}$ or if $i_1=i_2=3$, then $d_{D_1}(\phi(v), U^{i}_{j'})=1$ for all $1\leq j'\leq r$ with $j'\neq j$. Otherwise, $d_{D_1}(\phi(v), U^i_{j'})=0$ for all $1\leq j'\leq r$. So $D_1$ is locally $\cP$-balanced.

Property \eqref{item:mover2} will follow from the $\cP$-labelling of $\theta(D_{x\rightarrow y}^{j_2})$.
Note that
\begin{align*}
d_{D_2}(x, U^2_{j'})=\begin{cases}
0&\text{if}\ j'\in \{j_1, j_2\}, \\
1&\text{otherwise}
\end{cases}
\hspace{1cm}\text{and}\hspace{1cm}
d_{D_2}(y, U^2_{j'})=\begin{cases}
1&\text{if}\ j'=j_2, \\
0&\text{otherwise.} 
\end{cases}
\end{align*}
The only other edges $ab$ in $D_2$ of the form $U^1U^2$ are those which appear in the image of $D_e$ for some $e=w^i_{j}w^3_{j'}\in E(D_{x\rightarrow y}^{j_2})$ with $i=1,2$. Note that such $e$ must be incident to $x$ or $y$ and that $a$ and $b$ are new vertices, i.e., $a,b\notin V(D_{x\rightarrow y}^{j_2})$. But for any $v\in \phi(D_e)\cap U^1$, we have $d_{D_2}(v, U^2_{j'})=1$ for every $1\leq j'\leq r$ such that $\phi(v)\notin V_{j'}$.
It follows that \eqref{item:mover2} holds.
\end{proof}

In what follows, given a collection $\cD$ of graphs and an embedding $\phi(D)$ for each $D\in \cD$, we write $\phi(\cD):=\{\phi(D): D\in \cD\}$.

\begin{lemma}\label{lem:mover2}
Let $1/n\ll \gamma\ll \gamma'\leq 1/r\leq 1/3$. Let $V=(V_1, \dots, V_r)$ with $|V_1|=\dots =|V_r|=n$. Let $\cP=\{U^1, U^2\}$ be a $2$-partition of $V$. Let $1\leq j_1\leq r$. Then there exists $\cD\subseteq \{\theta(D_{x\rightarrow y}^j): x,y \in U^1_{j_1}, x\neq y, 1\leq j\leq r, j\neq j_1\}$ such that the following hold.
\begin{enumerate}[\rm(i)]
\item $|\cD|\leq \gamma'n^2$.\label{mover2:1}
\item Each vertex $v\in V$ is a root vertex in at most $\gamma' n$ elements of $\cD$.\label{mover2:2}
\item Suppose that, for each $D\in \cD$, $\phi(D)$ is a copy of $D$ on $V$ which is compatible with its labelling. Suppose further that $\phi(D)$ and $\phi(D')$ are edge-disjoint for all distinct $D, D'\in \cD$. Let $H$ be any $r$-partite graph on $V$ which is edge-disjoint from $\bigcup\phi(\cD)$ and satisfies {\rm(Q\ref{item:Q1})} and {\rm(Q\ref{item:Q2})}. Then there exists $\cD'\subseteq \cD$ such that $H':=H\cup \bigcup\phi(\cD')$
satisfies the following. For all $v\in U^{1}_{j_1}$, and all $1\leq j_2, j_3\leq r$ such that $j_1\neq j_2, j_3$,
\begin{equation*}
d_{H'}(v, U^{2}_{j_2})=d_{H'}(v, U^{2}_{j_3})
\end{equation*}
and for all $1\leq j_2, j_3\leq r$ and all $v\in U^1\setminus (V_{j_1}\cup V_{j_2}\cup V_{j_3})$,
\begin{equation*}
d_{H'}(v, U^{2}_{j_2})-d_{H'}(v, U^{2}_{j_3})=d_{H}(v, U^{2}_{j_2})-d_{H}(v, U^{2}_{j_3}).
\end{equation*}
In particular, $H'$ satisfies {\rm(Q\ref{item:Q1})} and {\rm(Q\ref{item:Q2})}.\label{mover2:3}
\end{enumerate}
\end{lemma}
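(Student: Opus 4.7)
The plan is to build $\cD$ as a sparse random collection of the transfer gadgets $\theta(D^j_{x \to y})$ provided by Proposition~\ref{prop:mover}, and then, for any admissible input $H$, to select a sub-collection $\cD' \subseteq \cD$ whose aggregate effect cancels the imbalances recorded by $H$. First I would fix a reference index $j^* \in \{1,\ldots,r\} \setminus \{j_1\}$ and, for each $v \in U^1_{j_1}$ and each $j \in \{1,\ldots,r\} \setminus \{j_1, j^*\}$, define the imbalance $\delta_v^{(j)} := d_H(v, U^2_j) - d_H(v, U^2_{j^*})$. Condition (Q\ref{item:Q1}) gives $\sum_{v \in U^1_{j_1}} \delta_v^{(j)} = 0$, while (Q\ref{item:Q2}) gives $|\delta_v^{(j)}| < \gamma|U^2_{j_1}| \le \gamma n$. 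By Proposition~\ref{prop:mover}(iii), adding $\phi(\theta(D^j_{x\to y}))$ to $H$ decreases $\delta_x^{(j)}$ by $1$, increases $\delta_y^{(j)}$ by $1$, and leaves every other $\delta_{v}^{(j')}$ (with $v \in U^1_{j_1}$ and $j' \ne j_1,j^*$) unchanged. The task thus reduces, separately for each $j$, to routing an integer flow on $U^1_{j_1}$ with prescribed node-surpluses $(\delta_v^{(j)})_v$, using intermediate vertices (those with $\delta_v^{(j)}=0$) freely and using each ordered pair $(x,y)$ at most once.

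For the construction, I would include, for each $j \in \{1,\ldots,r\} \setminus \{j_1,j^*\}$ and each ordered pair $(x,y)$ of distinct vertices of $U^1_{j_1}$, a copy of $\theta(D^j_{x\to y})$ in $\cD$ independently at random with probability $p := c\gamma'/r^2$ for a small constant $c>0$. Lemma~\ref{lem:chernoff2} together with a union bound then give, with high probability, $|\cD| \le \gamma' n^2$ and each vertex appearing as a root in at most $\gamma' n$ elements of $\cD$, yielding (i) and (ii).

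The main obstacle is verifying (iii): we must ensure the random directed multigraph $B_j$ on $U^1_{j_1}$ (with arcs the sampled ordered pairs indexed by $j$) admits, for \emph{every} admissible imbalance vector, a corresponding integer flow. By the Max-Flow-Min-Cut theorem this is equivalent to the cut condition
\[
|E_{B_j}(X, U^1_{j_1}\setminus X)| \;\ge\; \sum_{v \in X} \delta_v^{(j)} \qquad \text{for every } X \subseteq U^1_{j_1}.
\]
Chernoff concentration makes $|E_{B_j}(X,U^1_{j_1}\setminus X)|$ close to $p|X|(|U^1_{j_1}|-|X|)$, which dominates the right-hand side (bounded in absolute value by $|X|\gamma n$) because $\gamma \ll \gamma'$; a routine union bound over all cuts $X$, with per-cut Chernoff failure probability $\exp(-\Omega(p|X|(n-|X|)))$, establishes this for all $X$ simultaneously. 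Thus $\cD$ satisfies (i), (ii) and the routing property with positive probability.

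Finally, given $H$, for each $j \ne j_1,j^*$ I would pick an integer flow on $B_j$ realising $(\delta_v^{(j)})$ and let $\cD'$ be the union over $j$ of the corresponding gadgets. Proposition~\ref{prop:mover}(iii) yields $\delta_v^{(j)} = 0$ in $H'$ for all $v \in U^1_{j_1}$ and $j \ne j_1, j^*$, hence $d_{H'}(v, U^2_{j_2}) = d_{H'}(v, U^2_{j_3})$ for all $j_2, j_3 \ne j_1$, giving the first assertion of (iii). For the second, any $v \in U^1 \setminus (V_{j_1} \cup V_{j_2} \cup V_{j_3})$ is distinct from every transfer root $x, y \in V_{j_1}$ and lies outside $V_{j_2} \cup V_{j_3}$, so the ``otherwise'' case of Proposition~\ref{prop:mover}(iii) ensures each added gadget leaves $d(v, U^2_{j_2}) - d(v, U^2_{j_3})$ unchanged between $H$ and $H'$; summing these preserved differences over any $U^1_{j_1'}$ with $j_1' \ne j_1, j_2, j_3$ shows (Q\ref{item:Q1}) and (Q\ref{item:Q2}) also pass to $H'$.
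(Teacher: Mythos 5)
Your overall strategy is sound and does prove the lemma, but it is genuinely different from the paper's route. The paper fixes in advance a single sparse auxiliary graph $R$ on $U^1_{j_1}$ with $\Delta(R)<2pm$ and the pair-neighbourhood property $|N_R(S)|\geq p^2m/2$ for $|S|\le 2$, takes $\cD$ to consist of both orientations $\theta(D^j_{x\to y}),\theta(D^j_{y\to x})$ for $xy\in E(R)$ and all $j\neq j_1$, and then, for a given $H$, encodes the surpluses and deficits $f(v,j)=d_H(v,U^2_j)-d_H(v,U^2_{j_{\min}})$ as multisets $U^{\pm}(j)$ with a bijection $g_j$, routes each surplus--deficit pair along a $\cP$-labelled path of length two embedded edge-disjointly into $R$ via Lemma~\ref{lma:finding}, orients these paths, and reads off $\cD'$ from the arcs; the net effect at each vertex is computed by counting initial/final/internal occurrences. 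You instead sample the directed gadget-indices $(x,y,j)$ independently and argue, via Gale--Hoffman/max-flow--min-cut plus a union bound over cuts, that the resulting arc set supports an integer transshipment for \emph{every} admissible supply vector $(\delta^{(j)}_v)_v$. Your reduction of (iii) to such a flow problem is correct (Proposition~\ref{prop:mover}(iii) does give the $\pm1$ transfer and leaves all other relevant differences untouched), as is the deduction of the second displayed identity and of (Q\ref{item:Q1}), (Q\ref{item:Q2}) for $H'$. What the paper's path-routing buys is that all the probabilistic work is hidden in one simple auxiliary graph and the general-purpose embedding lemma, and the same $R$ serves all $j$; what your approach buys is that it avoids the explicit path bookkeeping, at the cost of a cut-by-cut feasibility check.

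One step needs repair. You bound the net supply of a cut $X\subseteq U^1_{j_1}$ by $|X|\gamma n$ and claim this is dominated by the cut capacity $\approx p|X|(|U^1_{j_1}|-|X|)$ because $\gamma\ll\gamma'$. This fails when $|U^1_{j_1}\setminus X|$ is small: for $|X|=|U^1_{j_1}|-1$, say, the capacity is only about $p|U^1_{j_1}|$ while your bound on the right-hand side is about $\gamma n|U^1_{j_1}|$, which is far larger since $p$ is a constant. The fix is the one already implicit in your setup: since $\sum_{v\in U^1_{j_1}}\delta^{(j)}_v=0$ by (Q\ref{item:Q1}), the net supply of $X$ equals $-\sum_{v\notin X}\delta^{(j)}_v$ and is therefore at most $\min\{|X|,|U^1_{j_1}\setminus X|\}\gamma n$; then $p|X|\,|U^1_{j_1}\setminus X|\geq p\,\tfrac{|U^1_{j_1}|}{2}\min\{|X|,|U^1_{j_1}\setminus X|\}$ suffices, as $p\geq 4\gamma$ follows from $\gamma\ll\gamma'\leq 1/r$. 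With this correction (and the routine Chernoff-plus-union-bound over cuts, which does go through), your argument is complete.
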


\begin{proof} Let $p:=\gamma'/4(r-1)$ and $m:=|U^1_{j_1}|$. Define an auxiliary graph $R$ on $U^{1}_{j_1}$ such that $\Delta(R)< 2pm$ and 
\begin{equation}
|N_{R}(S)|\geq p^2m/2\label{eq:NRS}
\end{equation}
for all $S\subseteq U^1_{j_1}$ with $|S|\leq 2$. It is easy to find such a graph $R$; indeed, a random graph with edge probability $p$ has these properties with high probability.

Let $$\cD:=\{\theta(D_{x\rightarrow y}^j), \theta(D_{y\rightarrow x}^j): xy\in E(R), 1\leq j\leq r, j\neq j_1\}.$$
Each vertex of $V$ appears as $x$ or $y$ in some $\theta(D_{x\rightarrow y}^j)$ in $\cD$ at most $2(r-1)\Delta(R)< 4(r-1)pm= \gamma' m$ times. In particular, this implies $|\cD|\leq\gamma' m^2$.
So $\cD$ satisfies \eqref{mover2:1} and \eqref{mover2:2}.

We now show that $\cD$ satisfies \eqref{mover2:3}. Suppose that, for each $D\in \cD$, $\phi(D)$ is a copy of $D$ on $V$ which is compatible with its labelling. Suppose further that $\phi(D)$ and $\phi(D')$ are edge-disjoint for all distinct $D, D'\in \cD$.
Let $H$ be any $r$-partite graph on $V$ which is edge-disjoint from $\bigcup\phi(\cD)$ and satisfies {\rm(Q\ref{item:Q1})} and {\rm(Q\ref{item:Q2})}. 

Let $j_{\text{min}}:=\min\{j: 1\leq j\leq r, j\neq j_1\}$. For each $v\in U^1_{j_1}$ and each $j_\text{min}<j\leq r$ such that $j\neq j_1$, let 
\begin{equation}
f(v,j):=d_{H}(v,U^2_{j})-d_{H}(v, U^2_{j_\text{min}}).\label{eq:fdef}
\end{equation}
By (Q\ref{item:Q2}) and the fact that $\mathcal{P}=\{U_1,U_2\}$ is a $2$-partition, we have
\begin{equation}
|f(v,j)| <\gamma (m+1)<2\gamma m.\label{eq:fbound}
\end{equation}
Let $U^+(j)$ be a multiset such that each $v\in U^{1}_{j_1}$ appears precisely $\max \{f(v,j),0\}$ times. Let $U^-(j)$ be a multiset such that each $v\in U^{1}_{j_1}$ appears precisely $\max \{-f(v,j),0\}$ times. Property
(Q\ref{item:Q1}) implies that $|U^+(j)|=|U^-(j)|$, so there is a bijection $g_j: U^+(j)\rightarrow U^-(j)$.

For each copy $u'$ of $u$  in $U^+(j)$, let $P_{u'}$ be a path of length two whose vertices are labelled, in order, $$\{u\}, U^{1}_{j_1}, \{g_j(u')\}.$$ So $P_{u'}$ has degeneracy two. Let $\cS_j:=\{P_{u'}: u'\in U^+(j)\}$. It follows from \eqref{eq:fbound} that each vertex is used as a root vertex at most $2\gamma m$ times in $\cS_j$ and $|\cS_j|\leq 2\gamma m^2$.
Using \eqref{eq:NRS}, we can apply Lemma~\ref{lma:finding} 
(with $m$, $2$, $3$, $2\gamma$, $p^2/2$ and $R$ playing the roles of $n$, $d$, $b$, $\eta$, $\eps$ and $G$) to find a set of edge-disjoint copies $\cT_j$ of the paths in $\cS_j$ in $R$ which are compatible with their labellings. (Note that we do not require the paths in $\cT_j$ to be edge-disjoint from the paths in $\cT_{j'}$ for $j\neq j'$.) We will view the paths in $\cT_j$ as directed paths whose initial vertex lies in $U^+(j)$ and whose final vertex lies in $U^-(j)$.

For each $j_{\text{min}}<j\leq r$ such that $j\neq j_1$, let $\cD_{j}:=\{\theta(D_{x\rightarrow y}^j): \overrightarrow{xy}\in E(\bigcup \cT_j)\}$. Let
$$\cD':=\bigcup_{\substack{j_{\text{min}}<j\leq r\\j\neq j_1}}\cD_j\subseteq \cD.$$

It remains to show that $H':=H\cup \bigcup\phi(\cD')$ satisfies \eqref{mover2:3}. For each $j_\text{min}< j\leq r$ such that $j\neq j_1$, let $H_j:=\bigcup\phi(\cD_j)$. Consider any vertex $v\in U^1_{j_1}$ and let $j_\text{min}< j_2\leq r$ be such that $j_2\neq j_1$.
Now $v$ will be the initial vertex in exactly $a:=\max\{f(v,j_2), 0\}$ paths and the final vertex in exactly $b:=\max\{-f(v, j_2), 0\}=a-f(v, j_2)$ paths in $\cT_{j_2}$. Let $c$ be the number of paths in $\cT_{j_2}$ for which $v$ is an internal vertex. By definition, $H_{j_2}$ contains $a+c$ graphs $\phi(D)$ where $D$ is of the form $\theta(D_{v\rightarrow y}^{j_2})$ for some $y\in U^1_{j_1}$. Also, $H_{j_2}$ contains $b+c$ graphs $\phi(D)$ where $D$ of the form $\theta(D_{x\rightarrow v}^{j_2})$ for some $x\in U^1_{j_1}$. 
Proposition~\ref{prop:mover}\eqref{item:mover2} then implies that
\begin{align}
d_{H_{j_2}}(v, U^2_{j_2})-d_{H_{j_2}}(v, U^2_{j_\text{min}})&=(b+c)-(a+c)=-f(v, j_2).\label{eq:Hj2}
\end{align}
For any $j_\text{min}< j_3\leq r$ such that $j_3\neq j_1, j_2$, Proposition~\ref{prop:mover}\eqref{item:mover2} implies that
\begin{align}
d_{H_{j_3}} (v, U^2_{j_2})-d_{H_{j_3}}(v, U^2_{j_\text{min}})=0.\label{eq:Hj3}
\end{align}
Equations \eqref{eq:Hj2} and \eqref{eq:Hj3} imply that
$$d_{\bigcup\phi(\cD')}(v, U^2_{j_2})-d_{\bigcup\phi(\cD')}(v, U^2_{j_\text{min}})=d_{H_{j_2}}(v, U^2_{j_2})-d_{H_{j_2}}(v, U^2_{j_\text{min}})=-f(v, j_2),$$
which together with \eqref{eq:fdef} gives
\begin{align}
d_{H'}(v, U^2_{j_2})-d_{H'}(v, U^2_{j_\text{min}})&=d_{H}(v, U^2_{j_2})-d_H(v, U^2_{j_\text{min}})-f(v, j_2)=0.\label{eq:U1jeq}
\end{align}
Thus, for all $v\in U^{1}_{j_1}$ and all $1\leq j_2, j_3\leq r$ such that $j_1\neq j_2, j_3$,
\begin{equation*}
d_{H'}(v, U^{2}_{j_2})=d_{H'}(v, U^{2}_{j_\text{min}})=d_{H'}(v, U^{2}_{j_3}).
\end{equation*}

Finally, consider any $1\leq j_2, j_3\leq r$ and any $v\in U^1\setminus (V_{j_1}\cup V_{j_2}\cup V_{j_3})$. 
Proposition~\ref{prop:mover}\eqref{item:mover2} implies that
\begin{align*}
d_{\bigcup\phi(\cD')}(v, U^2_{j_2})-d_{\bigcup\phi(\cD')}(v, U^2_{j_3})=0,
\end{align*}
so
\begin{align}
d_{H'}(v, U^2_{j_2})-d_{H'}(v, U^2_{j_3})=d_{H}(v, U^2_{j_2})-d_{H}(v, U^2_{j_3}).\label{eq:stillbal}
\end{align}
That $H'$ satisfies (Q\ref{item:Q1}) and (Q\ref{item:Q2}) follows immediately from \eqref{eq:U1jeq} and \eqref{eq:stillbal}.
\end{proof}

Let $\cP=\{U^1,U^2\}$ partition the vertex set $V=(V_1, \dots, V_r)$ with $|V_1|=\dots=|V_r|=n$. We say that a collection $\cD$ of $\cP$-labelled graphs is a \emph{$(\gamma, \gamma')$-degree balancing set}%
\COMMENT{NOTE: degree balancing set is a set of LABELLED graphs}
for the pair $(U^1, U^2)$ if the following properties hold. Suppose that, for each $D\in \cD$, $\phi(D)$ is a copy of $D$ on $V$ which is compatible with its labelling. Suppose further that $\phi(D)$ and $\phi(D')$ are edge-disjoint for all distinct $D, D'\in \cD$.
\begin{enumerate}[\rm(a)]
\item Each $D\in \cD$ has degeneracy at most $r-1$ and $|D|\leq 10r^3$.\label{def:deg:a}
\item $|\cD|\leq \gamma'n^2$.\label{def:deg:b}
\item Each vertex $v\in V$ is a root vertex in at most $\gamma' n$ elements of $\cD$.\label{def:deg:c}
\item For each $D\in \cD$, $\phi(D)$ is $K_r$-divisible and locally $\cP$-balanced.\label{def:deg:d}
\item Let $H$ be any $r$-partite graph on $V$ which is edge-disjoint from $\bigcup\phi(\cD)$ and satisfies {\rm(Q\ref{item:Q1})} and {\rm(Q\ref{item:Q2})}. Then, for each $D\in \cD$, there exists $D'\subseteq D$ such that $\phi(D')$ is $K_r$-divisible and, if $\cD':=\{D': D\in \cD\}$ and  $H':=H\cup \bigcup\phi(\cD')$, then 
\begin{equation*}
d_{H'}(v, U^{2}_{j_1})=d_{H'}(v, U^{2}_{j_2})
\end{equation*}
for all $1\leq j_1,j_2\leq r$ and all $v\in U^{1}\setminus (V_{j_1}\cup V_{j_2})$.\label{def:deg:f}
\end{enumerate}

The following result describes a $(\gamma, \gamma')$-degree balancing set based on the gadgets constructed so far.

\begin{prop}\label{prop:degbalset}
Let $1/n\ll \gamma\ll \gamma'\leq 1/r\leq 1/3$. Let $V=(V_1,\dots, V_r)$ with $|V_1|=\dots=|V_r|=n$. Let $\cP=\{U^1, U^2\}$ be a $2$-partition for $V$. Then $(U^1, U^2)$ has a $(\gamma, \gamma')$-degree balancing set.
\end{prop}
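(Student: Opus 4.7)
The plan is to build $\cD$ by combining $r$ separate applications of Lemma~\ref{lem:mover2}, one for each $j_1\in\{1,\dots,r\}$, and to take the members of $\cD$ to be copies of the enclosing graphs $\theta(D_{x,y})$ rather than the subgraphs $\theta(D^j_{x\rightarrow y})$, so that local $\cP$-balancedness in property (d) of the definition comes for free from Proposition~\ref{prop:mover}\eqref{item:mover3}.

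Choose an intermediate parameter $\gamma''$ with $\gamma\ll\gamma''\ll\gamma'$. For each $j_1\in\{1,\dots,r\}$, apply Lemma~\ref{lem:mover2} with $\gamma''$ in place of $\gamma'$ to obtain a collection
\[
\cC_{j_1}\subseteq\{\theta(D^j_{x\rightarrow y}):x,y\in U^1_{j_1},\ x\neq y,\ 1\le j\le r,\ j\neq j_1\}
\]
of size at most $\gamma'' n^2$ in which each vertex appears as a root in at most $\gamma'' n$ elements. For every $D^\ast=\theta(D^j_{x\rightarrow y})\in\cC_{j_1}$, introduce a fresh $\cP$-labelled copy of the enclosing graph $\theta(D_{x,y})$, and let $\cD$ be the collection of all such copies taken over $j_1$ and $D^\ast$ (with copies from different origins treated as distinct). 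Properties (a)--(d) are then immediate: ($\theta$\ref{theta1}) and ($\theta$\ref{theta2}) give $|D|\le 10r^3$ and degeneracy $r-1$; summing over the $r$ values of $j_1$ yields $|\cD|\le r\gamma'' n^2\le\gamma' n^2$ and the analogous $r\gamma'' n\le\gamma' n$ bound on root multiplicities; and Proposition~\ref{prop:mover}\eqref{item:mover1}, \eqref{item:mover3} states that every embedding $\phi(\theta(D_{x,y}))$ is $K_r$-divisible and locally $\cP$-balanced.

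For property (e), given any $H$ satisfying (Q\ref{item:Q1}) and (Q\ref{item:Q2}) and edge-disjoint from $\bigcup\phi(\cD)$, I would iterate over $j_1=1,2,\dots,r$. At step $j_1$, the current graph $H_{j_1-1}$ satisfies (Q\ref{item:Q1}) and (Q\ref{item:Q2}) by induction and is edge-disjoint from $\bigcup\phi(\cC_{j_1})$, since the copies of $\theta(D_{x,y})$ underlying $\cC_{j_1}$ are distinct members of $\cD$ and therefore have edge-disjoint embeddings by hypothesis. Apply Lemma~\ref{lem:mover2}\eqref{mover2:3} to extract $\cC'_{j_1}\subseteq\cC_{j_1}$ so that $H_{j_1}:=H_{j_1-1}\cup\bigcup\phi(\cC'_{j_1})$ equalises $d(v,U^2_j)$ over $j\neq j_1$ for every $v\in U^1_{j_1}$ and still satisfies (Q\ref{item:Q1}) and (Q\ref{item:Q2}). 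For each $D\in\cD$ arising from a particular $D^\ast\in\cC_{j_1}$, set $D':=D^\ast$ if $D^\ast\in\cC'_{j_1}$ and $D':=\emptyset$ otherwise; in either case $\phi(D')$ is $K_r$-divisible by Proposition~\ref{prop:mover}\eqref{item:mover1}.

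The point requiring care is that the balance achieved for vertices $v\in U^1_{j_v}$ at step $j_v$ must survive all subsequent iterations. At any later step $j_1'>j_v$, the second identity in Lemma~\ref{lem:mover2}\eqref{mover2:3} preserves the difference $d(v,U^2_{j_2})-d(v,U^2_{j_3})$ whenever $v\notin V_{j_1'}\cup V_{j_2}\cup V_{j_3}$; since $v\in V_{j_v}$ and $j_v\neq j_1'$, this condition holds precisely when $j_2,j_3\neq j_v$, which is exactly the range of indices on which balance is required. Therefore no previously established balance is destroyed, and after the final iteration $H':=H_r=H\cup\bigcup\phi(\cD')$ (with $\cD':=\{D':D\in\cD\}$) satisfies the conclusion of (e).
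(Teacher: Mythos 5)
Your proposal is correct and follows essentially the same route as the paper: apply Lemma~\ref{lem:mover2} once for each class $U^1_{j_1}$ (the paper uses $\gamma'/r$ where you use an intermediate $\gamma''$), take $\cD$ to consist of one copy of $\theta(D_{x,y})$ per selected $\theta(D^j_{x\rightarrow y})$, and verify (e) by iterating over $j_1$ with the chosen $D'$ being either the corresponding $\theta(D^j_{x\rightarrow y})$ or the empty graph. Your explicit check that the second identity in Lemma~\ref{lem:mover2}\eqref{mover2:3} preserves previously established balance is exactly the point the paper compresses into ``continuing in this way''.
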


\begin{proof}
Apply Lemma~\ref{lem:mover2} for each $1\leq j_1\leq r$ with $\gamma'/r$ playing the role of $\gamma'$ to find sets $\cD_{j_1}\subseteq \{\theta(D_{x\rightarrow y}^j): x,y\in U^1_{j_1}, x\neq y, 1\leq j\leq r, j\neq j_1\}$ satisfying the properties \eqref{mover2:1}--\eqref{mover2:3}. Let $\cD$ consist of one copy of $\theta(D_{x,y})$ for each $\theta(D_{x\rightarrow y}^j)$ in $\bigcup_{j=1}^r \cD_j$.%
\COMMENT{each copy of $\theta(D_{x,y})$ is a LABELLED graph - we don't actually need to find a copy on $V$ at this point}
We claim that $\cD$ is a $(\gamma, \gamma')$-degree balancing set. Note that each $\theta(D_{x,y})$ satisfies $|\theta(D_{x,y})|\leq 10r^3$ and has degeneracy at most $r-1$ by ($\theta$\ref{theta1}) and ($\theta$\ref{theta2}), so \eqref{def:deg:a} holds. For each $1\leq j\leq r$, $|\cD_j|\leq \gamma'n^2/r$, so \eqref{def:deg:b} holds. Also, each vertex $v\in V$ is used as a root vertex in at most $\gamma' n/r$ elements of each $\cD_j$. Since $\theta(D_{x,y})$ and $\theta(D_{x\rightarrow y}^j)$ have the same set of root vertices, \eqref{def:deg:c} holds. Property \eqref{def:deg:d} follows from Proposition~\ref{prop:mover}\eqref{item:mover1} and \eqref{item:mover3}. 

It remains to show that \eqref{def:deg:f} is satisfied.  Suppose that, for each $D\in \cD$, $\phi(D)$ is a copy of $D$ on $V$ which is compatible with its labelling. Suppose further that $\phi(D)$ and $\phi(D')$ are edge-disjoint for all distinct $D, D'\in \cD$. Let $H$ be any $r$-partite graph on $V$ which is edge-disjoint from $\bigcup\phi(\cD)$ and satisfies {\rm(Q\ref{item:Q1})} and {\rm(Q\ref{item:Q2})}. Using property~\eqref{mover2:3} of $\cD_1$ in Lemma~\ref{lem:mover2}, we can find $\cD_1'\subseteq \cD_1$ such that $H_1:=H\cup \bigcup\phi(\cD_1')$ satisfies {\rm(Q\ref{item:Q1})}, {\rm(Q\ref{item:Q2})} and 
\begin{equation*}
d_{H_1}(v, U^{2}_{j_1})=d_{H_1}(v, U^{2}_{j_2})
\end{equation*}
for all $v\in U^{1}_1$ and all $2\leq j_1, j_2\leq r$.
We can then find $\cD_2'\subseteq \cD_2$ such that $H_2:=H_1\cup \bigcup\phi(\cD_2')$ satisfies
{\rm(Q\ref{item:Q1})}, {\rm(Q\ref{item:Q2})} and 
\begin{equation*}
d_{H_2}(v, U^{2}_{j_1})=d_{H_2}(v, U^{2}_{j_2})
\end{equation*}
for all $v\in U^1_{j}$ where $j=1,2$ and all $1\leq j_1, j_2\leq r$ with $j\neq j_1, j_2$.
Continuing in this way, we eventually find $\cD_r'\subseteq \cD_r$ such that $H_r:=H_{r-1}\cup \bigcup\phi(\cD_{r-1}')$ satisfies
\begin{equation}
d_{H_r}(v, U^{2}_{j_1})=d_{H_r}(v, U^{2}_{j_2})\label{eq:leftright}
\end{equation}
for all $1\leq j_1, j_2\leq r$ and all $v\in U^1\setminus (V_{j_1}\cup V_{j_2})$.

For each $D\in \cD_j$, if $D\in \cD_j'$, then let $D':=D$; otherwise let $D'$ be the empty graph. Let $\cD':=\{D': D\in \bigcup_{j=1}^r\cD_j\}$. For each $D'\in \cD'$, $D'$ is either empty or of the form $\theta(D_{x\rightarrow y}^j)$, so $\phi(D')$ is $K_r$-divisible by Proposition~\ref{prop:mover}\eqref{item:mover1}. By \eqref{eq:leftright}, $\cD'$ satisfies
\eqref{def:deg:f}. So $\cD$ satisfies \eqref{def:deg:a}--\eqref{def:deg:f} and is a $(\gamma, \gamma')$-degree balancing set for $(U^1, U^2)$.
\end{proof}

The following result finds copies of the degree balancing sets described in the previous proposition.

\begin{prop}\label{prop:PD}
Let $1/n \ll \gamma \ll \gamma'\ll 1/k\ll \eps\ll 1/r\leq 1/3$.
Let $G$ be an $r$-partite graph on $(V_1, \dots, V_r)$ with $|V_1|=\dots=|V_r|=n$. Let $\cP=\{U^1, \dots, U^k\}$ be a $k$-partition for $G$. 
Suppose that $d_G(v, U^i_j)\geq (1-1/(r+1)+\eps)|U^i_j|$ for all $1\leq i\leq k$, all $1\leq j\leq r$ and all $v\notin V_j$.
Then there exists a $(\gamma, \cP)$-degree balancing graph $B_{\textnormal{deg}}\subseteq G$ such that $B_{\textnormal{deg}}$ is locally $\cP$-balanced and $\Delta(B_{\textnormal{deg}})<\gamma' n$.
\end{prop}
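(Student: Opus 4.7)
The plan is to follow the strategy of Proposition~\ref{prop:PE}, replacing the edge gadgets $\theta(K(N))$ with the pairwise degree balancing sets of Proposition~\ref{prop:degbalset}. Specifically, for each pair $1\leq i_1<i_2\leq k$, apply Proposition~\ref{prop:degbalset} to the $2$-partition $\{U^{i_1},U^{i_2}\}$ (playing the role of $\{U^1,U^2\}$ on the vertex set $U^{i_1}\cup U^{i_2}$) with a parameter $\gamma_0$ satisfying $\gamma\ll\gamma_0\ll\gamma'/k^2$, to obtain a $(\gamma,\gamma_0)$-degree balancing set $\cD_{i_1,i_2}$. Let $\cD:=\bigcup_{i_1<i_2}\cD_{i_1,i_2}$. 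Properties (a)--(c) of a degree balancing set guarantee that every $D\in\cD$ is a $\cP$-labelled graph of degeneracy at most $r-1$ and order at most $10r^3$, that $|\cD|\leq\binom{k}{2}\gamma_0(n/k)^2\leq\gamma_0 n^2$, and that every $v\in V$ is a root vertex in at most $(k-1)\gamma_0(n/k)\leq\gamma_0 n$ of the graphs in $\cD$.

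Next, pick $\eps_{\mathrm{emb}}$ with $\gamma_0\ll\eps_{\mathrm{emb}}\ll\gamma'$. The codegree hypothesis on $G$ gives (by inclusion--exclusion) the condition required by Lemma~\ref{lem:emb}, so I would apply this lemma with $d=r-1$ and $b=10r^3$ to obtain edge-disjoint embeddings $\phi(D)$ of each $D\in\cD$ in $G$ compatible with the labellings, such that $J:=\bigcup_{D\in\cD}\phi(D)$ satisfies $\Delta(J)\leq\eps_{\mathrm{emb}}n$. Setting $G':=G[\cP]-J$, one has $\hat\delta(G')\geq (1-1/(r+1)+\gamma')n$ (using $1/k\ll\eps$ and $\eps_{\mathrm{emb}}\ll\gamma'\ll\eps$), so Lemma~\ref{lem:absorbset} applied to $G'$ with the collection $\{\phi(D):D\in\cD\}$ yields an absorbing set $\cA\subseteq G[\cP]$ with $\Delta(\bigcup\cA)\leq\gamma'n/2$. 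Define $B_{\mathrm{deg}}:=J\cup\bigcup\cA$; then $\Delta(B_{\mathrm{deg}})<\gamma'n$ as required.

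Three verifications remain. First, $B_{\mathrm{deg}}$ is $K_r$-decomposable because each $\phi(D)$ is $K_r$-divisible by property (d) of a degree balancing set and $\cA$ provides a distinct absorber for every $K_r$-divisible subgraph arising (with unused absorbers being $K_r$-decomposable on their own). Second, $B_{\mathrm{deg}}$ is locally $\cP$-balanced: since $\cA\subseteq G[\cP]$ contributes no edges inside any $U^i$, the within-part degrees of $B_{\mathrm{deg}}$ coincide with those of $J$, and for any $v\in U^i$ only the families $\cD_{i,i'}$ or $\cD_{i',i}$ contribute edges at $v$ within $U^i$, each such $\phi(D)$ being locally $\{U^{i},U^{i'}\}$-balanced by property (d). Third, given any $H$ satisfying (Q\ref{item:Q0})--(Q\ref{item:Q2}), property (f) of each $\cD_{i_1,i_2}$ can be invoked separately: (Q\ref{item:Q1}) of $H$ restricted to $(i_1,i_2)$ and (Q\ref{item:Q2}) restricted to $i=i_2$, $v\in U^{i_1}$ supply the 2-partite versions of (Q\ref{item:Q1}),(Q\ref{item:Q2}) needed in the definition, producing $\cD'_{i_1,i_2}\subseteq\cD_{i_1,i_2}$ with $\phi(D')\subseteq\phi(D)$ being $K_r$-divisible. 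Then $B_{\mathrm{deg}}':=\bigcup_{i_1<i_2}\bigcup\phi(\cD'_{i_1,i_2})$ balances all required cross-part degrees, and $B_{\mathrm{deg}}-B_{\mathrm{deg}}'=\bigcup_D\phi(D-D')\cup\bigcup\cA$ is $K_r$-decomposable since each $\phi(D-D')$ is $K_r$-divisible (as a difference of two $K_r$-divisible subgraphs) and has a distinct absorber in $\cA$.

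The main obstacle is justifying the independence of the pairwise balancing steps: one must observe that $\phi(\cD_{i_1,i_2})$ lives entirely inside $U^{i_1}\cup U^{i_2}$, so that modifying the pair $(i_1,i_2)$ only affects edges between those two parts and leaves all other cross-part edge counts and degrees (in particular, those entering any $U^{i_3}$ with $i_3\notin\{i_1,i_2\}$) untouched. Beyond this and the parameter chase through the chain $\gamma\ll\gamma_0\ll\eps_{\mathrm{emb}}\ll\gamma'\ll 1/k\ll\eps$, the argument is essentially a direct adaptation of Proposition~\ref{prop:PE}.
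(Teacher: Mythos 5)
Your proposal is correct and follows essentially the same route as the paper's proof: pairwise degree balancing sets from Proposition~\ref{prop:degbalset} for each pair $(U^{i_1},U^{i_2})$, embedded edge-disjointly via Lemma~\ref{lem:emb}, with an absorbing set from Lemma~\ref{lem:absorbset} found inside $G[\cP]$ minus the embedded gadgets, and the same three verifications (decomposability via distinct absorbers, local balance via $B_{\textnormal{deg}}[U^i]=\bigcup\phi(\cD)[U^i]$, and the balancing property applied pair by pair using that each $\phi(\cD_{i_1,i_2})$ lives in $U^{i_1}\cup U^{i_2}$). The only differences are cosmetic parameter labels, so there is nothing further to add.
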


\begin{proof}
Choose $\gamma_1$, $\gamma_2$ such that $\gamma \ll \gamma_1\ll \gamma_2 \ll \gamma'$. Proposition~\ref{prop:degbalset} describes a $(\gamma, \gamma_1^2)$-degree balancing set $\cD_{i_1, i_2}$ for each pair $(U^{i_1}, U^{i_2})$ with $1\leq i_1<i_2\leq k$.%
\COMMENT{Apply proposition to the vertex set $((U^{i_1}\cup U^{i_2})_1, \dots, (U^{i_1}\cup U^{i_2})_r)$ to find a $(\gamma, \gamma_1^2)$-degree balancing set for $(U^{i_1}, U^{i_2})$.}
Let $\cD:=\bigcup_{1\leq i_1<i_2\leq k}\cD_{i_1, i_2}$. We have $|\cD|\leq k^2\gamma_1^2n^2\leq \gamma_1n^2$ and each vertex is used as a root vertex in at most $k^2\gamma_1^2n\leq \gamma_1n$ elements of $\cD$. By \eqref{def:deg:a}, we can apply Lemma~\ref{lem:emb} (with $\gamma_1$, $\gamma_2$, $r-1$ and $10r^3$ playing the roles of $\eta$, $\eps$, $d$ and $b$) to find edge-disjoint copies $\phi(D)$ of each $D\in \cD$ in $G$ which are compatible with their labellings and satisfy $\Delta(\bigcup\phi(\cD))\leq \gamma_2n$.

Let $G':=G[\cP]-\bigcup\phi(\cD)$ and note that
$$\hat\delta(G')\geq (1-1/(r+1)+\eps)n-\lceil n/k \rceil-\gamma_2 n\geq (1-1/(r+1)+\gamma')n.$$
Apply Lemma~\ref{lem:absorbset} (with $\gamma_2$,%
\COMMENT{$\Delta(\bigcup\phi(\cD))\leq \gamma_2n$, so each vtx appears in at most $\gamma_2n$ of the $\phi(\cD)$}
$\gamma'/2$, $10r^3$ and $G'$ playing the roles of $\eta$, $\eps$, $b$ and $G$) to find an absorbing set $\cA$ for $\phi(\cD)$ in $G'$ such that $\Delta(\bigcup\cA)\leq \gamma'n/2$.

Let $B_{\text{deg}}:=\bigcup\phi(\cD)\cup \bigcup\cA$. Then, $\Delta(B_{\text{deg}}) < \gamma'n$. For all $1\leq i_1<i_2\leq k$, $\cD_{i_1, i_2}$ is a degree balancing set so $\bigcup\phi(\cD_{i_1, i_2})$ is locally $\cP$-balanced by \eqref{def:deg:d}. Since $B_{\text{deg}}[U^i]=\bigcup\phi(\cD)[U^i]$ for each $1\leq i \leq k$, the graph $B_{\text{deg}}$ must also be locally $\cP$-balanced.

We now check that $B_{\text{deg}}$ is a $(\gamma, \cP)$-degree balancing graph. Let $H$ be any $K_r$-divisible graph on $V$ satisfying (Q\ref{item:Q0})--(Q\ref{item:Q2}). Consider any $1\leq i_1<i_2\leq k$. Note that $H[U^{i_1}\cup U^{i_2}]$ satisfies (Q\ref{item:Q0})--(Q\ref{item:Q2}). Since $\cD_{i_1, i_2}$ is a $(\gamma, \gamma')$-degree balancing set for $(U^{i_1}, U^{i_2})$, 
there exist $D'\subseteq D$ for each $D\in \cD_{i_1, i_2}$ such that $\phi(D')$ is $K_r$-divisible and, if $\cD_{i_1, i_2}':=\{D': D\in \cD_{i_1, i_2}\}$ and  $H_{i_1, i_2}':=H\cup \bigcup\phi(\cD_{i_1, i_2}')$,  then
\begin{equation*}
d_{H_{i_1, i_2}'}(v, U^{i_2}_{j_1})=d_{H_{i_1, i_2}'}(v, U^{i_2}_{j_2})
\end{equation*}
for all $1\leq j_1,j_2\leq r$ and all $v\in U^{i_1}\setminus (V_{j_1}\cup V_{j_2})$.
Let $B_{\text{deg}}':=\bigcup_{1\leq i_1<i_2\leq k}\phi(\cD_{i_1, i_2}')$ and let $H':=H\cup B_{\text{deg}}'$. Note that $V(\bigcup\phi(\cD_{i_1, i_2}'))\subseteq U^{i_1}\cup U^{i_2}$  for all $1\leq i_1<i_2\leq k$. So we have
$d_{H'}(v, U^{i}_{j_1})=d_{H'}(v, U^{i}_{j_2})$ for all $2\leq i \leq k$, all $1\leq j_1, j_2\leq r$ and all $v\in U^{<i}\setminus (V_{j_1}\cup V_{j_2})$.

It remains to show that $B_{\text{deg}}$ and $B_{\text{deg}}-B_{\text{deg}}'$ both have $K_r$-decompositions. 
Recall that $\cA$ is an absorbing set for $\phi(\cD)$. So, for any $K_r$-divisible subgraph $D^*$ of any graph in $\phi(\cD)$, $\cA$ contains an absorber for $D^*$. Also, $A$ is $K_r$-decomposable for each $A\in \cA$. Since $\phi(D)$ is $K_r$-divisible for each $D\in \cD$ by \eqref{def:deg:d}, we see that $B_{\text{deg}}$ has a $K_r$-decomposition. Note that, for each $D\in \cD_{i_1, i_2}$, $\phi(D')$ is $K_r$-divisible by \eqref{def:deg:f} and hence $\phi(D)-\phi(D')$ is also $K_r$-divisible. So
$$B_{\text{deg}}-B_{\text{deg}}'= \bigcup\cA\cup \bigcup_{D\in \cD}(\phi(D)-\phi(D'))$$
has a $K_r$-decomposition.
Therefore, $B_{\text{deg}}$ is a $(\gamma, \cP)$-degree balancing graph.
\end{proof}

\subsection{Finding the balancing graph}
Finally, we combine the edge balancing graph and degree balancing graph from Propositions~\ref{prop:PE}~and~\ref{prop:PD} respectively to find a $(\gamma, \cP)$-balancing graph in $G$.

\begin{proofof}{Lemma~\ref{lem:balance}}
Choose constants $\gamma_1$ and $\gamma_2$ such that $\gamma\ll \gamma_1 \ll \gamma_2 \ll \gamma'$. First apply Proposition~\ref{prop:PE} to find a $(\gamma, \cP)$-edge balancing graph $B_{\text{edge}}\subseteq G$ such that $B_{\text{edge}}$ is locally $\cP$-balanced and $\Delta(B_{\text{edge}})<\gamma_1 n$. Now $G':=G-B_{\text{edge}}$ satisfies $d_{G'}(v, U^i_j)\geq (1-1/(r+1)+\eps/2)|U^i_j|$ for all $v\notin V_j$, so we can apply Proposition~\ref{prop:PD} to find a $(\gamma_2, \cP)$-degree balancing graph $B_{\text{deg}}\subseteq G'$ such that $B_{\text{deg}}$ is locally $\cP$-balanced and $\Delta(B_{\text{deg}})<\gamma' n/2$. Let $B:=B_{\text{edge}}\cup B_{\text{deg}}$. Then $\Delta(B)<\gamma'n$ and $B$ is locally $\cP$-balanced. Also, since both $B_{\text{edge}}$ and $B_{\text{deg}}$ are $K_r$-decomposable, $B$ is $K_r$-decomposable.

We now show that $B$ is a $(\gamma, \cP)$-balancing graph. Let $H$ be any $K_r$-divisible graph on $V$ satisfying (P\ref{item:P1}) and (P\ref{item:P2}). Since $B_{\text{edge}}$ is a $(\gamma, \cP)$-edge balancing graph, there exists $B_{\text{edge}}'\subseteq B_{\text{edge}}$ such that $B_{\text{edge}}-B_{\text{edge}}'$ has a $K_r$-decomposition and $H_1:=H\cup B_{\text{edge}}'$ satisfies
$$e_{H_1}(U^{i_1}_{j_1}, U^{i_2}_{j_2})=e_{H_1}(U^{i_1}_{j_1}, U^{i_2}_{j_3})$$
for all $1\leq i_1<i_2\leq k$ and all $1\leq j_1, j_2, j_3\leq r$ with $j_1\neq j_2, j_3$.

Note that $H_1$ is $K_r$-divisible. Also 
$$|d_{H_1}(v, U^i_{j_2})-d_{H_1}(v, U^i_{j_3})|\leq |d_{H}(v, U^i_{j_2})-d_{H}(v, U^i_{j_3})|+\Delta(B_{\text{edge}})<\gamma n+\gamma_1 n\leq \gamma_2 |U^i_{j_1}|$$ for all $2\leq i\leq k$, all $1\leq j_1, j_2, j_3\leq r$ with $j_1\neq j_2, j_3$ and all $v\in U^{<i}_{j_1}$.
So $H_1$ satisfies (Q\ref{item:Q0})--(Q\ref{item:Q2}) with $H_1$ and $\gamma_2$ replacing $H$ and $\gamma$. Now, $B_{\text{deg}}$ is a $(\gamma_2, \cP)$-degree balancing graph so there exists $B_{\text{deg}}'\subseteq B_{\text{deg}}$ such that $B_{\text{deg}}-B_{\text{deg}}'$ has a $K_r$-decomposition and $H_2:=H_1\cup B_{\text{deg}}'$ satisfies
$$d_{H_2}(v, U^i_{j_1})=d_{H_2}(v, U^i_{j_2})$$
for all $2\leq i\leq k$, all $1\leq j_1, j_2\leq r$ and all $v\in U^{<i}\setminus(V_{j_1}\cup V_{j_2})$.

Let $B':=B_{\text{edge}}'\cup B_{\text{deg}}'$. Then $B-B'=(B_{\text{edge}}-B_{\text{edge}}')\cup (B_{\text{deg}}-B_{\text{deg}}')$ has a $K_r$-decomposition. Note that $H\cup B'=H_2$. So $B$ is a $(\gamma, \cP)$-balancing graph.
\end{proofof}


\section{Proof of Theorem~\ref{thm:main}}\label{sec:proof}

In this section, we prove our main result, Theorem~\ref{thm:main}. The idea is to take a suitable partition $\cP$ of $V(G)$, cover all edges in $G[\cP]$ by edge-disjoint copies of $K_r$ and then absorb all remaining edges using an absorber which we set aside at the start of the process. However, for the final step to work, we need that the classes of $\cP$ have bounded size. A key step towards this is the following lemma which, for a partition $\cP$ into a bounded number of parts, finds an approximate $K_r$-decomposition which covers all edges of $G[\cP]$. We then iterate this lemma inductively to get a similar lemma where the parts have bounded size (see Lemma~\ref{lem:iteration}).

\begin{lemma}\label{lem:big}
Let $1/n\ll \alpha \ll \eta \ll \rho\ll 1/k \ll \eps \ll 1/r\leq 1/3$. Let $G$ be a $K_r$-divisible graph on $(V_1, \dots, V_r)$ with $|V_1|=\dots=|V_r|=n$. Let $\cP$ be a $k$-partition for $G$.
For each $x\in V(G)$, each $U\in \cP$ and each $1\leq j\leq r$, let $0\leq d_{x,U_j}\leq |U_j|$.
Let $G_0\subseteq G-G[\cP]$, $G_1:=G-G_0$ and $R\subseteq G[\cP]$.
Suppose the following hold for all $U, U'\in \cP$ and all $1\leq j,j_1, j_2\leq r$ such that $j\neq j_1, j_2$:
\begin{enumerate}[\rm(a)]
	\item for all $x\in U_j$,
$|d_G(x, U_{j_1})-d_G(x, U_{j_2})|< \alpha |U_j|$;\label{big:item:1}
	\item for all $x\notin V_j$,  $d_{G_1}(x, U_{j})\geq (\hat\delta^\eta_{K_r}+\eps)|U_{j}|$;\label{big:item:2}
	\item for all $x\in V(G)$, $d_{R}(x, U_{j})<\rho d_{x,U_j}+\alpha |U_{j}|$;\label{big:item:3}
	\item for all distinct $x,y\in V(G)$, $d_{R}(\{x,y\}, U_{j})<(\rho^2+\alpha)|U_{j}|$;\label{big:item:6}
	\item for all $x\notin U\cup U'\cup V_{j_1}\cup V_{j_2}$, $|d_{R}(x, U_{j_1})-d_{R}(x, U'_{j_2})|< 3\alpha |U_{j_1}|$;\label{big:item:4}
	\item for all $x\notin U$ and all $y\in U$ such that $x,y\notin V_j$,\label{big:item:5}
	$$d_{G_1}(y, N_{R}(x, U_{j}))\geq \rho(1-1/(r-1)) d_{x,U_j}+\rho^{5/4}|U_{j}|.$$
\end{enumerate}
Then there is a subgraph $H\subseteq G_1-G[\cP]$ such that $G[\cP]\cup H$ has a $K_r$-decomposition and $\Delta(H)\leq 4r\rho n$.
\end{lemma}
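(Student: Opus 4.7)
The plan is to decompose $G[\cP]$ in two stages: first, use Lemma~\ref{lem:degree} applied to $G[\cP]-R$ to find a $K_r$-decomposition of all but a leftover $L$ of low maximum degree; second, use Corollary~\ref{cor:pseud:parts} to cover the combined leftover $F:=L\cup R$ by edge-disjoint copies of $K_r$ that use only edges from inside the parts of $\cP$ (coming from $G_1-G[\cP]$). The output $H$ will be precisely the set of inside-part edges used in the second stage.

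First I would verify that $G[\cP]-R$, viewed as an $r$-partite graph on $(V_1,\dots,V_r)$, satisfies the hypotheses of Lemma~\ref{lem:degree}. The minimum-degree bound $\hat\delta(G[\cP]-R)\geq(\hat\delta^\eta_{K_r}+\eps/4)n$ follows by summing~(b) over parts of $\cP$ to get $d_{G_1}(x,V_j)\geq(\hat\delta^\eta_{K_r}+\eps)n$, passing from $G_1$ to $G[\cP]$ at a cost of at most $|U^i_j|=O(n/k)$, and subtracting $d_R(x,V_j)\leq(\rho+\alpha)n$ obtained by summing~(c). The approximate balance condition $|d_{G[\cP]-R}(v,V_{j_1})-d_{G[\cP]-R}(v,V_{j_2})|<O(\alpha) n$ follows from the $K_r$-divisibility of $G$ together with~(a) (controlling the imbalance contributed by $G[\cP]$) and a triangle-inequality argument using~(e) (controlling the imbalance in $R$ between different vertex classes). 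Lemma~\ref{lem:degree} then produces $L\subseteq G[\cP]-R$ with $\Delta(L)\leq\gamma n$ (for some $\gamma$ chosen so that $\eta\ll\gamma\ll\rho$) such that $(G[\cP]-R)-L$ admits a $K_r$-decomposition $\cF_1$.

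Next I would apply Corollary~\ref{cor:pseud:parts} to the graph $G^*:=F\cup(G_1-G[\cP])$ with the partition $\cP$, noting that $G^*[\cP]=F$. Hypothesis~(ii) of the corollary follows from~(f) since the contribution of $L$ to any neighbourhood is at most $\Delta(L)\leq\gamma n\ll\rho^{5/4}|U|$; hypothesis~(iii) follows from~(d) because codegrees in $L$ are at most $2\Delta(L)\ll\rho^2|U|$; and hypothesis~(iv) follows by summing~(c) over parts and adding the negligible contribution of $L$. The corollary then yields $H\subseteq G_1-G[\cP]$ with $\Delta(H)\leq 3r\rho n$ such that $F\cup H$ has a $K_r$-decomposition $\cF_2$, and then $\cF_1\cup\cF_2$ decomposes $G[\cP]\cup H$.

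The main obstacle is hypothesis~(i) of Corollary~\ref{cor:pseud:parts}, which demands the exact equality $d_F(x,U^i_{j_1})=d_F(x,U^i_{j_2})$ for every relevant $x\in U^{<i}$, whereas combining (a), (e) and $\Delta(L)\leq\gamma n$ only gives $|d_F(x,U^i_{j_1})-d_F(x,U^i_{j_2})|=O(\gamma n)$. To close this gap I would modify both $L$ and $\cF_1$ by a local swapping argument: for each vertex $x$ and each part $U^i$ on which $F$ is off-balance, remove a small number of cliques of $\cF_1$ incident to $x$ (returning their edges to $F$) and choose the swapped edges so as to shift one unit of $F$-degree at $x$ from the over-represented class to the under-represented one. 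Since the imbalance is $O(\gamma n)$ per vertex-part pair, only $O(\gamma n)$ swaps per vertex are needed, so the corrected $L$ still satisfies $\Delta(L)=O(\gamma n)\ll\rho n$ and all the quantitative hypotheses verified above continue to hold. The final bound $\Delta(H)\leq 4r\rho n$ absorbs the negligible additional contribution coming from this correction step.
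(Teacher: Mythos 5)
Your two-stage skeleton (Lemma~\ref{lem:degree} on $G[\cP]-R$ to get a low-degree leftover, then Corollary~\ref{cor:pseud:parts} to cover the cross-edges using inside-part edges of $G_1$) is exactly the paper's skeleton, and your verifications of hypotheses (ii)--(iv) of the corollary from (c), (d), (f) are essentially the computations in the paper. The gap is precisely where you locate the "main obstacle": hypothesis (i), the \emph{exact} balance $d_F(x,U^i_{j_1})=d_F(x,U^i_{j_2})$, and your proposed fix does not work as described. Removing a copy of $K_r$ from $\cF_1$ through $x$ only \emph{adds} edges at $x$ to the leftover -- one edge towards each vertex class $V_j$ with $x\notin V_j$, landing in whichever parts that clique happens to visit -- so it cannot "shift one unit of $F$-degree at $x$ from the over-represented class to the under-represented one"; to equalize by additions alone you would need cliques through $x$ whose part-pattern adds to exactly the deficient $(U^i,V_j)$-pairs and avoids the others, and you have no control over the part-patterns of the cliques that Lemma~\ref{lem:degree} produces. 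Worse, each removed clique simultaneously perturbs the balance of its other $r-1$ vertices (in up to $r$ different parts), so fixing $x$ creates fresh imbalances elsewhere; nothing in your argument shows this cascading process terminates, nor that the total added degree per vertex stays $O(\gamma n)$, so the claim "only $O(\gamma n)$ swaps per vertex are needed" is unjustified. There is also a global constraint hiding behind (i): vertex-level equality forces $e_F(U^{i_1}_{j_1},U^{i_2}_{j_2})=e_F(U^{i_1}_{j_1},U^{i_2}_{j_3})$ for all admissible indices, and local moves at individual vertices give no mechanism for arranging these aggregate equalities.

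This balancing issue is the central new difficulty of the $r$-partite setting (as the proof sketch in Section~\ref{sec:sketch} emphasises), and the paper resolves it not by post-processing $\cF_1$ but by reserving, \emph{before} the approximate decomposition, a $(\gamma,\cP)$-balancing graph $B\subseteq G_1-R$ via Lemma~\ref{lem:balance}: first an edge-balancing stage (gadgets $\theta(K(N))$ together with the decomposition of the excess multigraph into irreducible $K_r$-divisible multigraphs) equalises the pair counts, then degree-balancing gadgets $\theta(D^{j}_{x\rightarrow y})$ move exactly one unit of degree from $x$ to $y$ towards a prescribed class of a prescribed part while leaving all other statistics untouched, and absorbers included in $B$ guarantee that the unused part $B-B'$ is still $K_r$-decomposable. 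In the actual proof one applies Lemma~\ref{lem:degree} to $G[\cP]-R-B$, adds the appropriate $B'\subseteq B$ to the leftover to obtain exact balance, feeds the result to Corollary~\ref{cor:pseud:parts}, and puts the inside-part edges of $B-B'$ into $H$. Without some construction of this kind (or a genuinely new argument replacing it), your proof of hypothesis (i) -- and hence of the lemma -- is incomplete.
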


To prove Lemma~\ref{lem:big}, we apply Lemma~\ref{lem:degree} to cover almost all the edges of $G[\cP]$. We then balance the leftover using Lemma~\ref{lem:balance}. The remaining edges in $G[\cP]$ can then be covered using Corollary~\ref{cor:pseud:parts}. The graph $R$ in Lemma~\ref{lem:big} forms the main part of the graph $G$ in Corollary~\ref{cor:pseud:parts}. Conditions \eqref{big:item:3}--\eqref{big:item:5} ensure that $R$ is `quasirandom'.

\begin{proof}
Write $\cP=\{U^1, \dots, U^k\}$. Let $G_2:=G_1-R=G-G_0-R$. Note that Proposition~\ref{prop:extrem} together with \eqref{big:item:2} and \eqref{big:item:3} implies that for any $1\leq i \leq k$, any $1\leq j\leq r$ and any $x\notin V_j$,
$$d_{G_2}(x, U^i_j)\geq (\hat\delta^\eta_{K_r}+\eps-2\rho)|U^i_j|\geq (1-1/(r+1)+\eps/2)|U^i_j|.$$
Choose constants $\gamma_1, \gamma_2$ such that
$\eta \ll \gamma_1\ll \gamma_2 \ll\rho$. Apply Lemma~\ref{lem:balance} (with
$\gamma_1$, $\gamma_2$, $\eps/2$, $k$, $G_2$, $\cP$
playing the roles of
$\gamma$, $\gamma'$, $\eps$, $k$, $G$, $\cP$)
to find a $(\gamma_1, \cP)$-balancing graph $B\subseteq G_2$ such that 
\begin{equation}
\Delta(B)< \gamma_2n\label{eq:Pdeg}
\end{equation}
and $B$ is locally $\cP$-balanced. As $B$ is also $K_r$-decomposable, for all $1\leq j_1, j_2\leq r$ and all $x\notin V_{j_1}\cup V_{j_2}$,
\begin{equation}
d_{B[\cP]}(x, V_{j_1})=d_{B[\cP]}(x, V_{j_2}).\label{eq:balanced}
\end{equation}

\begin{figure}[t]
\begin{tikzpicture}[node distance=1.6cm]
                    
  \tikzstyle{block} = [rectangle, draw, text centered, rounded corners]
	\tikzstyle{line} = [->,draw]

  \node[block] (1) {$G$};
  \node (2) [above right of=1] {$G_1$};
  \node (3) [right of=1] {$G_0$};
  \node (4) [above right of=2] {$G_2$};
  \node (5) [right of=2] {$R$};
  \node (7) [right of=4] {$G_3$};
  \node[block] (8) [above right of=7] {$\mathcal{F}_1$};
  \node (9) [right of=7] {$G_4$};
  \node (10) [above right of=4] {$B$};
  \node (11) [below right of=9] {$G_5$};
  \node[block] (12) [above right of=10] {$\mathcal{F}_2$};
	\node (space) [right of=12] {};
  \node (13) [right of=space] {$G_6$};
  \node[block] (14) [right of=13] {$\mathcal{F}_3$};
  \node[block] (15) [below of=14, draw, align=left] {$G_6-(G_6[\cP]\cup H_2)$\\ $=G_1-(G[\mathcal{P}]\cup H)$};
  \node[block] (16) [below right of=11] {$G_0$};	
  
  \path[line]  (1) -- (2);
  \path[line]  (1) -- (3);
  \path[line]  (2) -- (4);
  \path[line]  (2) -- (5);
	\path[line]  (3) -- (11);
  \path[line]  (4) -- (7);
  \path[line]  (4) -- (10);	
  \path[line]  (7) -- (8);
  \path[line]  (7) -- (9);
  \path[line]  (5) -- (11);
  \path[line]  (4) -- (11);
  \path[line]  (9) -- (11);
  \path[line]  (10) -- (12);
  \path[line]  (10) -- (13);
  \path[line]  (11) -- (13);
  \path[line]  (11) -- (16);
	\path[line]  (13) -- (14);
  \path[line]  (13) -- (15);
  
\end{tikzpicture}
\caption{Outline for Proof of Lemma~\ref{lem:big}. 
}
\end{figure}
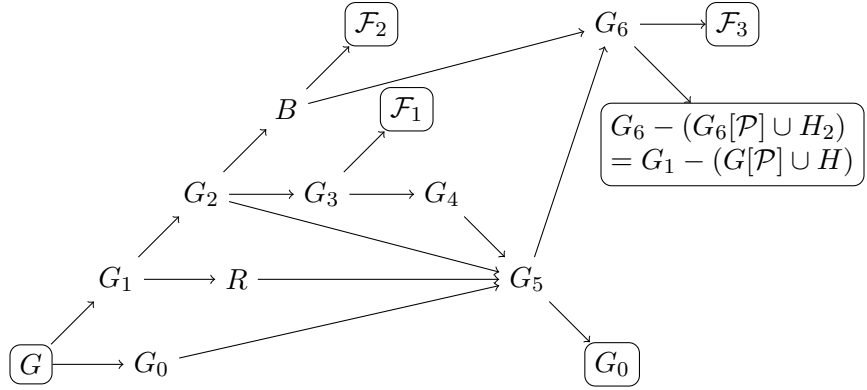

Let $G_3:=G_2[\cP]-B= G[\cP]-R-B$. Then \eqref{big:item:2}, \eqref{big:item:3} and \eqref{eq:Pdeg} give
$$\hat{\delta}(G_3)\geq (\hat\delta^\eta_{K_r}+\eps)n-\lceil n/k\rceil-2\rho n-\gamma_2 n\geq (\hat\delta^\eta_{K_r}+\eps/2)n.$$
Consider any $1\leq j_1, j_2\leq r$ and any $x\notin V_{j_1}\cup V_{j_2}$. Using \eqref{big:item:1}, \eqref{big:item:4} and \eqref{eq:balanced}, we have
\begin{align*}
|d_{G_3}(x, V_{j_1})-d_{G_3}(x, V_{j_2})|&\leq |d_{G[\cP]}(x, V_{j_1})-d_{G[\cP]}(x, V_{j_2})|+|d_{R}(x, V_{j_1})-d_{R}(x, V_{j_2})|\\
&<\alpha n+ 3\alpha n=4\alpha n.
\end{align*}
So we can apply Lemma~\ref{lem:degree} (with $4\alpha$, $\eta$, $\gamma_1/2$, $\eps/2$, $G_3$ playing the roles of $\alpha$, $\eta$, $\gamma$, $\eps$, $G$) to find $G_4\subseteq G_3$ such that $G_3-G_4$ has a $K_r$-decomposition $\cF_1$ and 
\begin{equation}
\Delta(G_4)\leq \gamma_1n/2.\label{eq:G4}
\end{equation}
The graphs $G$, $G_3-G_4$ and $B$ are all $K_r$-divisible (and $G_3-G_4$ and $B$ are edge-disjoint), so
$$G_5:=G-(G_3-G_4)-B=(G-G[\cP]-B)\cup G_4\cup R$$
must also be $K_r$-divisible. Note that $e(G_5\cap B)=0$ and $G_5[\cP]=G_4\cup R$. Consider any $1\leq i \leq k$, any $1\leq j_1, j_2\leq r$ and any $x\notin V_{j_1}\cup V_{j_2}$. If $x\notin U^i$, \eqref{eq:G4} and \eqref{big:item:4} give
\begin{align*}
|d_{G_5}(x, U^i_{j_1})-d_{G_5}(x, U^{i}_{j_2})|&=|d_{G_4\cup R}(x, U^i_{j_1})-d_{G_4\cup R}(x, U^{i}_{j_2})|\\
&\leq \Delta(G_4)+|d_{R}(x, U^i_{j_1})-d_{R}(x, U^{i}_{j_2})|< (\gamma_1/2+3\alpha) n<\gamma_1 n.
\end{align*}
If $x\in U^i$, then we use \eqref{big:item:1}, that $B$ is locally $\cP$-balanced and that $G_4, R\subseteq G[\cP]$ to see that 
\begin{align*}
|d_{G_5}(x, U^i_{j_1})-d_{G_5}(x, U^i_{j_2})|&\leq |d_{G}(x, U^i_{j_1})-d_{G}(x, U^i_{j_2})|+|d_{B}(x, U^i_{j_1})-d_{B}(x, U^i_{j_2})|\\
&<\alpha n\leq \gamma_1n.
\end{align*}
So (P\ref{item:P1}) and (P\ref{item:P2}) in Section~\ref{sec:bal} hold with $G_5$ and $\gamma_1$ replacing $H$ and $\gamma$. Since $B$ is a $(\gamma_1, \cP)$-balancing graph, there exists $B'\subseteq B$ such that $B-B'$ has a $K_r$-decomposition $\cF_2$ and, for all $2\leq i \leq k$, all $1\leq j_1, j_2\leq r$ and all $x\in U^{<i}\setminus (V_{j_1}\cup V_{j_2})$,
\begin{align}
d_{G_5\cup B'}(x, U^i_{j_1})=d_{G_5\cup B'}(x, U^i_{j_2}).\label{eq:degbal}
\end{align}
Write $H_1:=\bigcup_{i=1}^k(B-B')[U^i]$ and let $$G_6:= G_5\cup B'-G_0=(G-G[\cP]-G_0-B)\cup R\cup G_4 \cup B'.$$
Note that 
\begin{equation}\label{eq:G6'}
G_6[\cP]=R\cup G_4 \cup B'[\cP]=G_5[\cP]\cup B'[\cP].
\end{equation}
We now check conditions \eqref{cor:pseud1}--\eqref{cor:pseud4} of Corollary~\ref{cor:pseud:parts} (with $G_6$ playing the role of $G$). Since $G_0\subseteq G-G[\cP]$, \eqref{cor:pseud1} follows immediately from \eqref{eq:degbal}. For \eqref{cor:pseud2}, suppose that $2\leq i \leq k$ and $x\in U^{<i}$. For any $1\leq j\leq r$, using \eqref{big:item:3}, \eqref{eq:G4} and \eqref{eq:Pdeg}, we have
\begin{align}
d_{G_6}(x, U^i_j)&\stackrel{\mathclap{\eqref{eq:G6'}}}{\leq} d_R(x,U^i_j) +\Delta(G_4)+\Delta(B)<\rho d_{x,U^i_j}+ \alpha |U^i_j|+\gamma_1n/2+\gamma_2n\nonumber\\
&\leq \rho d_{x,U^i_j}+2\gamma_2n.\label{eq:G6}
\end{align}
Consider any $y\in N_{G_6}(x, U^i)$.
Note that $G_6[U^i]=G_1[U^i]-(B-B')[U^i]$. So, for any $1\leq j\leq r$ such that $x, y\notin V_{j}$, we have%
\COMMENT{final inequality: $\rho^{1/4}\leq 1/18kr^2\implies 9kr^2\rho^{3/2}\leq \rho^{5/4}/2$.}
\begin{align*}
d_{G_6}(y, N_{G_6}(x, U^i_{j}))&\geq d_{G_6}(y, N_{R}(x, U^i_{ j}))
\geq d_{G_1}(y, N_{R}(x, U^i_{ j}))-\Delta(B)\\
&\stackrel{\mathclap{\eqref{big:item:5}, \eqref{eq:Pdeg}}}{\geq}(1-1/(r-1))\rho d_{x,U^i_j}+\rho^{5/4}|U^i_j|-\gamma_2n\\
&\stackrel{\mathclap{\eqref{eq:G6}}}{\geq} (1-1/(r-1))d_{G_6}(x, U^i_j)+\rho^{5/4}|U^i_j| -3\gamma_2n\\
&> (1-1/(r-1))d_{G_6}(x, U^{i}_j)+9kr\rho^{3/2}|U^i|.
\end{align*}
So \eqref{cor:pseud2} holds.

To see that $G_6$ satisfies property \eqref{cor:pseud3} of Corollary~\ref{cor:pseud:parts}, note that for all $2\leq i\leq k$ and all distinct $x,x'\in U^{<i}$, \eqref{big:item:6}, \eqref{eq:Pdeg}, \eqref{eq:G4} and \eqref{eq:G6'} imply that
\begin{align*}
|N_{G_6}(x,U^i)\cap N_{G_6}(x',U^i)|&\leq d_R(\{x,x'\}, U^i)+\Delta(G_4)+\Delta(B) \\
&< (\rho^2+\alpha)|U^i|+\gamma_1n/2+\gamma_2n\leq 2\rho^2|U^i|.
\end{align*}
Finally, by \eqref{big:item:3}, \eqref{eq:Pdeg}, \eqref{eq:G4} and \eqref{eq:G6'}, for any $y\in U^i$, we have that
\begin{align*}
d_{G_6}(y, U^{<i})\leq \Delta(R)+\Delta(G_4)+\Delta(B) \leq 3\rho n/2\leq  2k\rho |U^i_1|,
\end{align*}
and \eqref{cor:pseud4} holds.
Hence we can apply Corollary~\ref{cor:pseud:parts} to $G_6$ to find a subgraph $H_2\subseteq G_6-G_6[\cP]$ such that $G_6[\cP]\cup H_2$ has a $K_r$-decomposition $\cF_3$ and $\Delta(H_2)\leq 3r\rho n$.
Set $H:=H_1\cup H_2\subseteq G_1-G[\cP]$. We have $\Delta(H)\leq \Delta(H_1)+\Delta(H_2)\leq \Delta(B)+\Delta(H_2)\leq 4r\rho n$. Now,
\begin{align*}
G[\cP]\cup H&=G_2[\cP]\cup R\cup H=G_3\cup R\cup H\cup B[\cP]\\
&=\bigcup\cF_1\cup G_4\cup R\cup H \cup B[\cP]=\bigcup\cF_1\cup G_5[\cP]\cup H_1\cup H_2\cup B[\cP]\\
&=\bigcup(\cF_1\cup \cF_2) \cup G_5[\cP]\cup H_2\cup B'[\cP]\stackrel{\eqref{eq:G6'}}{=}\bigcup(\cF_1\cup \cF_2)\cup G_6[\cP]\cup H_2\\
&=\bigcup(\cF_1\cup \cF_2\cup \cF_3).
\end{align*}
So $G[\cP]\cup H$ has a $K_r$-decomposition $\cF_1\cup \cF_2\cup \cF_3$.
\end{proof}

We now iterate Lemma~\ref{lem:big}, applying it to each partition $\cP_i$ in a partition sequence $\cP_1, \dots, \cP_\ell$ for $G$. This allows us to cover all of the edges in $G[\cP_\ell]$ by edge-disjoint copies of $K_r$, leaving only a small remainder in $\bigcup_{U\in \cP_\ell} G[U]$.

\begin{lemma}\label{lem:iteration}
Let $1/m\ll \alpha \ll \eta \ll \rho\ll 1/k \ll \eps \ll 1/r\leq 1/3$. Let $G$ be a $K_r$-divisible graph on $(V_1, \dots, V_r)$ with $|V_1|=\dots=|V_r|=n$. Let $\cP_1, \dots, \cP_\ell$ be a $(1,k,\hat\delta^\eta_{K_r}+\eps/2, m)$-partition sequence for $G$. For each $1\leq q\leq \ell$, each $1\leq j\leq r$, each $U\in \cP_q$ and each $x\in V(G)$, let $0\leq d_{x,U_j}\leq |U_j|$ be given. Let $\cP_0:=\{V(G)\}$ and, for each $0\leq q \leq \ell$, let $G_q:=G[\cP_q]$. Let $R_1, \dots, R_\ell$ be a sequence of graphs such that $R_q\subseteq G_q-G_{q-1}$ for each $q$. Suppose the following hold for all $1\leq q\leq \ell$, all $1\leq j,j_1, j_2\leq r$ such that $j\neq j_1, j_2$, all $W\in \cP_{q-1}$, all distinct $x,y\in W$ and all $U, U'\in \cP_q[W]$:
\begin{enumerate}[\rm (i)]
\item if $q\geq 2$, $\cP_q[W]$ is a $(1,k,\hat\delta^\eta_{K_r}+\eps)$-partition for $G[W]$;\label{big:item:0'}
\item if $x\in U_j$,
$|d_G(x, U_{j_1})-d_G(x, U_{j_2})|< \alpha |U_j|$;\label{big:item:1'}
	\item $d_{R_q}(x, U_{j})<\rho d_{x, U_j}+\alpha |U_{j}|$;\label{big:item:3'}
	\item $d_{R_q}(\{x,y\}, U_{j})<(\rho^2+\alpha)|U_{j}|;$\label{big:item:6'}
	\item if $x\notin U\cup U'\cup V_{j_1}\cup V_{j_2}$, $|d_{R_q}(x, U_{j_1})-d_{R_q}(x, U'_{j_2})|< 3\alpha |U_{j_1}|$;\label{big:item:4'}
	\item if $x\notin U$, $y\in U$ and $x,y \notin V_j$, then $$d_{G_{q+1}'}(y, N_{R_q}(x, U_j))\geq \rho (1-1/(r-1))d_{x, U_j}+\rho^{5/4}|U_j|$$ where $G_{q+1}':=G_{q+1}-R_{q+1}$ if $q\leq \ell-1$ and $G_{\ell+1}':=G$.\label{big:item:5'}
\end{enumerate}
Then there is a subgraph $H\subseteq \bigcup_{U\in \cP_\ell}G[U]$ such that $G-H$ has a $K_r$-decomposition.
\end{lemma}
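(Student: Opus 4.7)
I will prove Lemma~\ref{lem:iteration} by induction on $q$, peeling off one level of the partition sequence per step. Maintain an auxiliary subgraph $G^*_q \subseteq G$ with the invariants: (I) $G - G^*_q$ admits a $K_r$-decomposition; (II) $R_{q+1} \cup \cdots \cup R_\ell \subseteq G^*_q$; and (III) every edge of $G^*_q$ lies inside a single part of $\cP_q$. Start with $G^*_0 := G$ (where (III) is vacuous since $\cP_0 = \{V(G)\}$). Taking $H := G^*_\ell$ at the end then delivers the conclusion: (III) gives $H \subseteq \bigcup_{U \in \cP_\ell} G[U]$ and (I) gives the required $K_r$-decomposition of $G - H$.

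To pass from $G^*_{q-1}$ to $G^*_q$ I apply Lemma~\ref{lem:big} once for each $W \in \cP_{q-1}$, with ambient graph $G^*_{q-1}[W]$, partition $\cP_q[W]$, quasirandom reserve $R := R_q \cap G[W]$ (which is between parts of $\cP_q[W]$ since $R_q \subseteq G_q - G_{q-1}$), and forbidden set $G_0 := (R_{q+1} \cup \cdots \cup R_\ell) \cap G[W]$ (which lies inside parts of $\cP_q[W]$, because $\cP_{q'-1}$ refines $\cP_q$ for every $q' > q$). The lemma outputs a subgraph $H^W \subseteq (G^*_{q-1}[W] - G_0) \setminus G^*_{q-1}[\cP_q[W]]$ for which $G^*_{q-1}[\cP_q[W]] \cup H^W$ has a $K_r$-decomposition. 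Define $G^*_q$ by deleting, across all $W \in \cP_{q-1}$, the covered edges $G^*_{q-1}[\cP_q[W]] \cup H^W$; invariants (I) and (II) persist because the deleted edges all belong to the new $K_r$-decomposition and none of them lies in $G_0$, while (III) holds because each $W$ now has no remaining cross-$\cP_q[W]$ edges and (III) at step $q-1$ excludes edges between different members of $\cP_{q-1}$.

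The technical heart is verifying the six hypotheses (a)--(f) of Lemma~\ref{lem:big} at each application. Hypotheses (a), (c), (d), (e) follow from conditions (ii), (iii), (iv), (v) of Lemma~\ref{lem:iteration}, up to errors caused by the deviation between $G^*_{q-1}[W]$ and $G[W]$; this deviation consists of the previously-produced leftovers $H^{W'}$ (each satisfying $\Delta(H^{W'}) \leq 4r\rho|W'_j|$ by the conclusion of Lemma~\ref{lem:big}) and the forbidden reserve $G_0$. Hypothesis (f) follows from (vi) because $G_1 = G^*_{q-1}[W] - G_0 \supseteq G'_{q+1}[W]$ up to these same small losses, which fit inside the $\rho^{5/4}|U_j|$ slack on the right-hand side of (vi).

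The main obstacle is the minimum-degree hypothesis (b): $d_{G_1}(x, U_j) \geq (\hat\delta^\eta_{K_r}+\eps)|U_j|$. The base inequality $d_G(x, U_j) \geq (\hat\delta^\eta_{K_r}+\eps)|U_j|$ is supplied by (i) when $q \geq 2$ and by the $(1, k, \hat\delta^\eta_{K_r}+\eps/2, m)$-partition sequence hypothesis when $q = 1$. The loss from $G_0$ is controlled by iterating (iii) along the refinement chain of parts containing $x$, yielding a telescoping geometric series bounded by $O((\rho+\alpha)|U_j|)$. Bounding the cumulative contribution of the previous leftovers $H^{W'}$ requires more care, since the naive bound $4r\rho|W'_j|$ summed across levels can exceed $|U_j|$; the key is that each $H^{W'}$ is confined to the parts of the partition at its level, so only a controlled fraction of its degree at $x$ lands inside $U_j$. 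With the hierarchy $1/m \ll \alpha \ll \eta \ll \rho \ll 1/k \ll \eps$ both loss terms fit inside the $\eps/2$ slack, and (b) holds, closing the induction.
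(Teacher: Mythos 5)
Your overall scheme (iterate over the levels of the partition sequence, applying Lemma~\ref{lem:big} once per part of $\cP_{q-1}$, and carry the within-part remainder down to the next level) is the same as the paper's, but your choice of the protected set $G_0$ is critically weaker than the paper's, and this creates a genuine gap that your last paragraph acknowledges but does not close. The paper takes $G_0$ to be $(G-G[\cP_{q+1}])\cup R_{q+1}$ (restricted to $W$), i.e.\ it protects \emph{all} edges lying inside parts of the next partition $\cP_{q+1}$, not merely the future reserves. Since Lemma~\ref{lem:big} returns a leftover $H'\subseteq G_1-G[\cP]$ with $G_1=G-G_0$, this forces $H'\subseteq G[\cP_{q+1}]$: the level-$q$ leftover consists only of cross-edges of $\cP_{q+1}$, which are then covered in full at level $q+1$ and never touch the graphs $G[U']$ for $U'\in\cP_{q'}$ with $q'\ge q+1$. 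That is exactly why the paper can assert $G^*[W]=G[W]$ for all $W\in\cP_{q-1}$ with $q\ge 3$ and only has to absorb a single level of degradation ($\eps\to\eps/2$) into the hierarchy.

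With your $G_0=(R_{q+1}\cup\cdots\cup R_\ell)\cap G[W]$, the leftover $H^W$ may land anywhere inside the parts of $\cP_q[W]$, including deep inside parts of $\cP_\ell$. The only control you then have is $\Delta(H^W)\le 4r\rho\,|W_1|$, and this is useless against parts several levels further down: for $U\in\cP_{q'}$ with $k^{q'-q+1}\ge 1/(4r\rho)$ one has $4r\rho\,|W_1|\ge |U_j|$, so a single early leftover could in principle saturate $U_j$ entirely at $x$, destroying hypothesis (b) of Lemma~\ref{lem:big} (and conditions \eqref{big:item:1'} and \eqref{big:item:5'}) at level $q'$. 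Your claim that ``only a controlled fraction of its degree at $x$ lands inside $U_j$'' has no justification: Lemma~\ref{lem:big} gives a maximum-degree bound on the leftover but says nothing about how those edges are distributed among the (much smaller) parts of later partitions, and since $\ell\approx\log_k(n/m)$ is unbounded, even a uniform-spread heuristic would only give a bound of order $\rho\ell|U_j|$, which does not fit in any fixed slack. To repair the argument you must enlarge $G_0$ to include all of $(G-G[\cP_{q+1}])[W]$, exactly as the paper does.
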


\begin{proof}
We will use induction on $\ell$. If $\ell=1$, apply Lemma~\ref{lem:big} (with $\eps/2$, $\cP_1$, $R_1$ and the empty graph playing the roles of $\eps$,  $\cP$, $R$ and $G_0$) to find $H'\subseteq G-G[\cP_1]$ such that $G[\cP_1]\cup H'$ has a $K_r$-decomposition. Letting $H:= G-G[\cP_1]-H'\subseteq \bigcup_{U\in \cP_\ell}G[U]$, shows the result holds for $\ell=1$.

Suppose then that $\ell\geq 2$ and the result holds for all smaller $\ell$. Note that for each $1\leq j\leq r$, each $x\notin V_j$ and each $U\in \cP_1$, $d_{G[\cP_2]-R_2}(x, U_{j})\geq (\hat\delta^\eta_{K_r}+\eps/3)|U_{j}|$, since $R_2$ satisfies \eqref{big:item:3'} and $\cP_1, \dots, \cP_\ell$ is a $(1,k,\hat\delta^\eta_{K_r}+\eps/2,m)$-partition sequence for $G$. So we may apply Lemma~\ref{lem:big} (with $\eps/3$, $\cP_1$, $R_1$, $G$ and $(G-G[\cP_2])\cup R_2$ playing the roles of $\eps$,  $\cP$, $R$, $G$ and $G_0$) to find $H'\subseteq G[\cP_2]-(G[\cP_1]\cup R_2)$ such that $G[\cP_1]\cup H'$ has a $K_r$-decomposition $\cF_1$ and $\Delta(H')\leq 4r\rho n$. Let $G^*:=G-G[\cP_1]-H'=G-\bigcup \cF_1$, so $G^*$ is $K_r$-divisible. Observe that $G^*=\bigcup_{U\in \cP_1}G^*[U]$, so $G^*[U]$ is $K_r$-divisible for each $U\in \cP_1$.

Consider any $U\in \cP_1$. We check that $$G^*[U],\cP_2[U], \dots, \cP_\ell[U], R_2[U], \dots, R_\ell[U]$$ satisfy the conditions of Lemma~\ref{lem:iteration}. Since $\Delta(H')\leq 4r\rho n\leq \eps n/4k^2$, $\cP_2[U]$ is a $(1,k,\hat\delta^\eta_{K_r}+\eps/2)$-partition for $G^*[U]$. 
For any $3\leq q \leq \ell$ and any $W\in \cP_{q-1}$, $G^*[W]=G[W]$ since $H'\subseteq G[\cP_2]$. So \eqref{big:item:0'} holds and $\cP_2[U], \dots, \cP_\ell[U]$ is a $(1,k,\hat\delta^\eta_{K_r}+\eps/2,m)$-partition sequence for $G^*[U]$. For \eqref{big:item:1'}, note that for any $2\leq q \leq \ell$, any $1\leq j\leq r$, any $U'\in \cP_q[U]$ and any $x\in U'$, $d_{G^*}(x, U'_j)=d_G(x, U'_j)$. Conditions \eqref{big:item:3'}--\eqref{big:item:4'} are automatically satisfied.
To see that \eqref{big:item:5'} holds, note that for any $2\leq q\leq \ell$ and any $U'\in \cP_q[U]$, $G^*_{q+1}[U']=G_{q+1}[U']$ since $H'\subseteq G[\cP_2]$.

So we can apply the induction hypothesis to $G^*[U], \cP_2[U], \dots, \cP_\ell[U], R_2[U], \dots, R_\ell[U]$ to obtain a subgraph $H_U\subseteq \bigcup_{U'\in \cP_\ell[U]}G^*[U']$ such that $G^*[U]-H_U$ has a $K_r$-decomposition $\cF_U$. 
Set $H:=\bigcup_{U\in \cP_1}H_U$. Then, $H\subseteq \bigcup_{U\in \cP_\ell} G[U]$ and $G-H$ has a $K_r$-decomposition $\cF_1\cup \bigcup_{U\in \cP_1}\cF_U$.
\end{proof}

We are now ready to prove Theorem~\ref{thm:main}.

\begin{proofof}{Theorem~\ref{thm:main}}
Let $n_0\in \N$ and $\eta>0$ be such that $1/n_0\ll\eta\ll\eps$ and choose additional constants $\eta_1$, $m'$, $\alpha$, $\rho$ and $k$ such that
$$1/n_0 \ll \eta_1\ll 1/m'\ll \alpha\ll \eta\ll \rho\ll 1/k\ll \eps.$$
Let $G$ be any $K_r$-divisible graph on $(V_1, \dots, V_r)$ with $|V_1|=\dots=|V_r|=n \geq n_0$ and $\hat\delta(G)\geq (\hat{\delta}^\eta_{K_r}+\eps)n$.
Apply Lemma~\ref{lem:partseq} to find an $(\alpha, k, \hat\delta^\eta_{K_r}+\eps-\alpha, m)$-partition sequence $\cP_1, \dots, \cP_\ell$ for $G$ where $m'\leq m \leq km'$.
So in particular, by (S\ref{S3}), for each $1\leq q \leq \ell$, all $1\leq j_1, j_2, j_3\leq r$ with $j_1\neq j_2, j_3$, each $U\in \cP_{q}$ and each $x\in U_{j_1}$,
\begin{equation}
|d_G(x, U_{j_2})-d_G(x, U_{j_3})|<\alpha |U_{j_1}|.\label{eq:locbal}
\end{equation}
Let $\cP_0:=\{V(G)\}$ and $G_q:=G[\cP_q]$ for $0\leq q\leq \ell$. Note that $\hat\delta^\eta_{K_r}+\eps-\alpha\geq 1-1/r+\eps$ (with room to spare) by Proposition~\ref{prop:extrem}. So we can apply Corollary~\ref{cor:randoms} to find a sequence of graphs $R_1, \dots, R_{\ell}$ such that $R_q\subseteq G_q-G_{q-1}$ for each $1\leq q\leq \ell$ and the following holds. For all $1\leq q\leq \ell$, all $1\leq j, j'\leq r$, all $W\in \cP_{q-1}$, all distinct $x,y\in W$ and all $U, U'\in \cP_q[W]$,
\begin{align}
\begin{split}
d_{R_q}(x, U_j)<\rho d_{G_q}(x, U_j)+\alpha |U_j|;&\\\label{eq:rqprop}
d_{R_q}(\{x,y\}, U_j)<(\rho^2+\alpha)|U_j|;&\\
|d_{R_q}(x, U_{j})-d_{R_q}(x, U'_{j'})|< 3\alpha |U_j|& \hspace{6pt}\text{ if }x\notin U\cup U'\cup V_j \cup V_{j'};\\
d_{G_{q+1}'}(y, N_{R_q}(x, U_j))\geq \rho (1-1/(r-1))d_{G_{q}}(x, U_j)+\rho^{5/4}|U_j|& \hspace{6pt}\text{ if } x\notin U, y\in U \text{ and } x,y \notin V_j,
\end{split}
\end{align}
where $G_{q+1}':=G_{q+1}-R_{q+1}$ if $q\leq \ell-1$ and $G_{\ell+1}':=G$.

Let $\cH:=\{G[U]: U\in \cP_\ell\}$. Each $H\in \cH$ satisfies $|H|\leq rm$.
Note that
$$\hat{\delta}(G[\cP_1]-R_1) \geq (\hat\delta^\eta_{K_r}+\eps)n-\lceil n/k \rceil -2\rho n>(1-1/(r+1)+\eps/2)n.$$
So we can apply Lemma~\ref{lem:absorbset} (with $\eta_1$, $\alpha$, $rm$ and $G[\cP_1]-R_1$ playing the roles of $\eta$, $\eps$, $b$ and $G$) to find an absorbing set $\cA$ for $\cH$ inside $G[\cP_1]-R_1$ such that $A^*:=\bigcup\cA$ satisfies $\Delta(A^*)\leq \alpha n$.

Let $G^*:=G-A^*$. Note that both $G$ and $A^*$ are $K_r$-divisible, so $G^*$ is $K_r$-divisible. Since $\Delta(A^*)\leq \alpha n$ and $A^*\subseteq G[\cP_1]$, $\cP_1, \dots, \cP_\ell$ is an $(1, k, \hat\delta^\eta_{K_r}+\eps/2,m)$-partition sequence for $G^*$. For each $1\leq q\leq \ell$, each $1\leq j\leq r$, each $U\in \cP_q$ and each $x\in V(G)$, set $d_{x,U_j}:=d_{G_q}(x, U_j)$. Using \eqref{eq:locbal}, \eqref{eq:rqprop} and that $A^*\subseteq G[\cP_1]$, we see that $G^*$, the partition sequence $\cP_1, \dots, \cP_\ell$ and the sequence of graphs $R_1, \dots, R_\ell$ satisfy properties \eqref{big:item:0'}--\eqref{big:item:5'} of Lemma~\ref{lem:iteration} (with $\eps-\alpha$ playing the role of $\eps$).  So we may apply Lemma~\ref{lem:iteration} to find $H\subseteq \bigcup_{U\in \cP_\ell}G^*[U]$ such that $G^*-H$ has a $K_r$-decomposition $\cF_1$.

Note that $H$ is a $K_r$-divisible subgraph of $\bigcup_{U\in \cP_\ell}G[U]$, so for each $U\in \cP_\ell$, $H[U]\subseteq G[U]$ is $K_r$-divisible. Since $\cA$ is an absorbing set for $\cH$, it contains a distinct absorber for each $H[U]$. So $H\cup A^*$ has a $K_r$-decomposition $\cF_2$. Thus
$G=(G^*-H)\cup (H\cup A^*)$ has a $K_r$-decomposition $\cF_1\cup \cF_2$.
\end{proofof}

\section*{Acknowledgements}
We are grateful to Peter Dukes and Richard Montgomery for helpful discussions, and to the referees for a careful reading of the manuscript.

\medskip

{\footnotesize \obeylines \parindent=0pt

Ben Barber
Heilbronn Institute for Mathematical Research
School of Mathematics
University of Bristol
University Walk
Bristol BS8 1TW
UK
}
\begin{flushleft}
{\it{E-mail address}:
\tt{b.a.barber@bristol.ac.uk}}
\end{flushleft}

{\footnotesize \obeylines \parindent=0pt

Daniela K\"{u}hn, Allan Lo, Deryk Osthus, Amelia Taylor 
School of Mathematics
University of Birmingham
Edgbaston
Birmingham
B15 2TT
UK
}
\begin{flushleft}
{\it{E-mail addresses}:
\tt{\{d.kuhn, s.a.lo, d.osthus\}@bham.ac.uk}, a.m.taylor@pgr.bham.ac.uk}
\end{flushleft}

\end{document}